\newtheorem{thm}{Theorem}[section]
\newtheorem{prop}[thm]{Proposition}
\newtheorem{cor}[thm]{Corollary}
\newtheorem{lem}[thm]{Lemma}
\newtheorem{defi}[thm]{Definition}
\newtheorem{remark}[thm]{Remark}
\newtheorem{example}[thm]{Example}
\newtheorem{pb}[thm]{Problem}
\newenvironment{rk}{\begin{remark}\rm}{\end{remark}}
\newenvironment{definition}{\begin{defi}\rm}{\end{defi}}
\numberwithin{equation}{section}
\newcommand{\ra}{{\rightarrow}}
\newcommand{\be}{\begin{eqnarray*}}
\newcommand{\ee}{\end{eqnarray*}}
\newcommand{\beq}{\begin{equation}}
\newcommand{\eeq}{\end{equation}}
\newcommand{\beqn}{\begin{equation*}}
\newcommand{\eeqn}{\end{equation*}}
\newcommand{\na}{{\nabla}}
\newcommand{\p}{{\psi}}
\newcommand{\ri}{{\rm{i}}}
\newcommand{\dom}{{\mathrm{dom}}}
\newcommand{\vf}{{\varphi}}
\newcommand{\qe}{{\mathbb{R}_\theta^d}}
\newcommand{\ot}{{\otimes}}
\newcommand{\sgn}{{{\rm{sgn}}}}
\newcommand{\g}{{\gamma}}
\newcommand{\al}{{\alpha}}
\newcommand{\la}{{\lambda}}
\newcommand{\bt}{{\beta}}
\newcommand{\qcS}{{\mathcal{S}(\qe)}}
\newcommand{\qDist}{{\mathcal{S}'(\qe)}}
\newcommand{\cA}{{\mathcal{A}(\qe)}}
\newcommand{\rank}{\mathrm{rank}}
\newcommand{\interspace}{{\bigcup_{d\leq p <\infty} L_p(\qe)}}
\def\qd{\,{\mathchar'26\mkern-12mu d}}
\begin{document}

\title{Quantum differentiability on noncommutative Euclidean spaces}

\author{Edward MCDONALD}

\address{School of Mathematics and Statistics, UNSW, Kensington, NSW 2052, Australia}
\email{edward.mcdonald@unsw.edu.au}

\thanks{{\it 2000 Mathematics Subject Classification:} Primary: 46G05. Secondary: 47L10, 58B34}

\thanks{{\it Key words:} Quantum Euclidean spaces, quantized derivative, trace formula, Sobolev space}

\author{Fedor SUKOCHEV}

\address{School of Mathematics and Statistics, UNSW, Kensington, NSW 2052, Australia}
\email{f.sukochev@unsw.edu.au}

\author{Xiao XIONG}

\address{Institute for Advanced Study in Mathematics, Harbin Institute of Technology, Harbin 150001, China, and School of Mathematics and Statistics, UNSW, Kensington, NSW 2052, Australia}
\email{xxiong@hit.edu.cn}

\date{}
\maketitle

\markboth{E. Mcdonald, F. Sukochev and X. Xiong}%
{Quantum differentiability on noncommutative Euclidean spaces}

\begin{abstract}
    We study the topic of quantum differentiability on quantum Euclidean $d$-dimensional spaces (otherwise known as Moyal $d$-spaces), and we find conditions
    that are necessary and sufficient for the singular values of the quantised differential $\qd x$ to have decay $O(n^{-\alpha})$ for $0 < \alpha \leq \frac{1}{d}$.
    This result is substantially more difficult than the analogous problems for Euclidean space and for quantum $d$-tori.

\end{abstract}


\section{Introduction}

Quantum Euclidean spaces were first introduced by a number of authors, including Groenewold \cite{Groen} and Moyal \cite{Moyal}, for the study of quantum mechanics in phase space. The constructions of Groenewold and Moyal
were later abstracted into more general canonical commutation relation (CCR) algebras, and have since become fundamental in mathematical physics.
Under the names Moyal planes or Moyal-Groenewold planes, these algebras play the role of a central and motivating example in noncommutative geometry \cite{GGISV2004,CGRS2014}. 
As geometrical spaces with noncommutating spatial coordinates, noncommutative Euclidean spaces have appeared frequently in the mathematical physics literature \cite{DouglasNekrasov}, in the contexts of string theory \cite{SeibergWitten}
and noncommutative field theory \cite{NekrasovSchwarz}. 

Quantum Euclidean spaces have also been studied as an interesting noncommutative setting for classical and harmonic analysis, and for this we refer the reader to recent work such as \cite{GJP2017,LeSZ2017,MSZ2018,SZ2018a}. 

\medskip

    Connes introduced the quantised calculus in \cite{Connes-ncdg-1985} as a replacement for the algebra of differential forms for applications in a noncommutative setting, and afterwards this point of view found application to mathematical physics \cite{Connes1988}. Connes successfully applied his quantised calculus in providing a formula for the Hausdorff measure of Julia sets and for limit sets of Quasi-Fuchsian groups in the plane 
    \cite[Chapter 4, Section 3.$\gamma$]{Connes1994} (for a more recent exposition see \cite{CSZ,CMSZ}).
    
    Following \cite{Connes-ncdg-1985}, quantised calculus may be defined defined in terms of a Fredholm module. The idea behind a Fredholm module has its origins with Atiyah's work on $K$-homology \cite{Atiyah1969},
    and further details can be found in, for example, \cite[Chapter 8]{Higson-Roe-2000}.
    
    A Fredholm module can be defined with the following data: a separable Hilbert space $H$, a unitary self-adjoint operator $F$ on $H$ and a $C^*$-algebra $\mathcal{A}$ represented on $H$ such that the commutator $[F,a]$ is a compact operator on $H$ for all $a$ in $A$. The quantised differential of $a \in \mathcal{A}$ is then defined to be the operator $\qd a = \ri[F,a]$. 
    
    It is suggestive to think of the compact operators on $H$ as being analogous to ``infinitesimals", and one can measure the ``size" or ``order" of an infinitesimal $T$
    in terms of its singular value sequence:
    \begin{equation*}
        \mu(n,T) := \inf\{\|T-R\|\;:\;\mathrm{rank}(R)\leq n\}
    \end{equation*}
    where $\|\cdot\|$ is the operator norm.
    
    A problem of particular interest in quantised calculus is to precisely quantify the asymptotics of the sequence $\{\mu(n,\qd a)\}_{n=0}^\infty$ in terms of $a$. In operator theoretic language, we seek conditions under 
    which the operator $\qd a$ is in some ideal of the algebra of bounded operators on $H$. Of the greatest importance are Schatten-von Neumann ${\mathcal L}_p$ ideals, the Schatten-Lorentz ${\mathcal L}_{p,\infty}$ spaces
    and the Macaev-Dixmier ideal ${\mathcal M}_{1,\infty}$ (c.f. Section \ref{operator notation subsection} and \cite[Section 2.6]{LSZ2012}).
%
%
    
    The link between quantised calculus and geometry is discussed by Connes in \cite{Connes1988}. A model example for quantised calculus is to take a compact $d$-dimensional Riemannian spin manifold $M$ (with $d \geq 2$) with Dirac operator $D$,
    and define $H$ to be the Hilbert space of pointwise almost-everywhere equivalence classes of square integrable sections of the spinor bundle. The algebra $\mathcal{A} = C(M)$ of continuous functions on $M$ acts by pointwise multiplication on $H$, and one defines $F$ as a difference of spectral projections:
    \begin{equation*}
        F := \chi_{[0,\infty)}(D)-\chi_{(-\infty,0)}(D).
    \end{equation*}
    One then has $\qd f = \ri[F,M_f]$, where $M_f$ is the operator on $H$ of pointwise multiplication by $f \in C(M)$. 
    In quantised calculus the immediate question is to determine the relationship between the degree of differentiability of $f \in C(M)$ and 
    the rate of decay of the singular values of $\qd f$. In general, we have the following inclusion \cite[Theorem 3.1]{Connes1988}:
    \begin{equation*}
        f \in C^\infty(M) \Rightarrow |\qd f|^d \in \mathcal{M}_{1,\infty}.
    \end{equation*}
    This corresponds to the implication:
    \begin{equation*}
        f \in C^\infty(M) \Rightarrow \sup_{n\geq 0} \frac{1}{\log(2+n)} \sum_{j=0}^n \mu(j,\qd f)^d < \infty.
    \end{equation*}
    It is possible to specify even more precise details about the asymptotics of $\{\mu(j,\qd f)\}_{j\geq 0}$. Suppose that $\omega$ is an extended limit (a continuous linear functional on the space of bounded sequences $\ell_\infty({\mathbb N})$ which
    extends the limit functional). If $\omega$ is invariant under dilations (in the sense of \cite[Definition 6.2.4]{LSZ2012}) then \cite[Theorem 3.3]{Connes1988} states that:
    \begin{equation}\label{connes_limit_formula}
        \omega\left(\left\{\frac{1}{\log(2+n)}\sum_{j=0}^n \mu(j,\qd f)^d\right\}_{n= 0}^\infty\right) = c_d\int_{M} |df \wedge \star df|^{d/2}
    \end{equation}
     where $c_d$ is a known constant, $d$ is the exterior differential and $\star$ denotes the Hodge star operator associated to the orientiation of $M$. The quantity on the left hand side of \eqref{connes_limit_formula} is precisely the Dixmier trace ${\mathrm{tr}}_{\omega}(|\qd f|^d)$. According to Connes, this formula ``shows how to pass from quantized $1$-forms to ordinary forms, not by a classical limit, but by a direct application
    of the Dixmier trace" \cite[Page 676]{Connes1988}.

    \medskip
    
    When working with particular manifolds, rather than general compact manifolds, it is possible to specify with even greater precision the relationship between $f$
    and the singular values of $\qd f$. In the one dimensional cases of the circle and the line, the appropriate choice for $F$ turns out to be the Hilbert transform (see \cite[Chapter 4, Section 3.$\alpha$]{Connes1994})
    and the commutators of pointwise multiplication operators and the Hilbert transform are very well understood. If $f$ is a function on either the line ${\mathbb R}$ or the circle ${\mathbb{T}}$, necessary and sufficient 
    conditions for $\qd f$ to be in virtually every named operator ideal are known (see the discussion at the end of Chapter 6 of \cite{Peller2003}).
    
    In higher dimensions (in particular ${\mathbb T}^d$ and ${\mathbb R}^d$ for $d \geq 2$), an appropriate choice for $F$ is given by a linear combination of Riesz transforms \cite{CST,LMSZ2017}. Commutators of pointwise multiplication
    operators and Riesz transforms are well studied in classical harmonic analysis, and Janson and Wolff \cite{JW1982} determined necessary and sufficient conditions for such a commutator to be in ${\mathcal L}_p$ for all $p \in (0,\infty)$.
    An even more precise characterisation was obtained by Rochberg and Semmes \cite{RS1989}.

    If $f \in C^\infty({\mathbb T}^d)$, let $\nabla f = (\partial_1f,\partial_2f,\ldots,\partial_df)$ be the gradient vector of $f$, and let $\|\nabla f\|_2 = \left(\sum_{j=1}^d |\partial_j f|^2\right)^{\frac{1}{2}}$. Then as a special case of \eqref{connes_limit_formula}, we have the following:
    \begin{equation}\label{torus trace formula}
        {\mathrm{tr}}_\omega(|\qd f|^d) = k_d\int_{{\mathbb T}^d} \|\nabla f(t)\|_{2}^ddm(t),
    \end{equation}
    where $k_d > 0$ is a known constant, and $m$ denotes the flat (Haar) measure on ${\mathbb T}^d$. A similar integral formula can also be obtained in the non-compact setting of ${\mathbb R}^d$ \cite[Theorem 2]{LMSZ2017}.
%
%
         
    Despite having been heavily studied in the commutative setting, quantum differentiability in the strictly noncommutative setting is still largely unexplored. Recently the authors have established a characterisation of the ${\mathcal L}_{d,\infty}$-ideal membership of quantised differentials for noncommutative tori \cite{MSX2018}.
    The primary result of \cite{MSX2018} is as follows. Let $\theta$ be an antisymmetric real $d\times d $ matrix with $d>2$, and consider the noncommutative tori ${\mathbb T}_\theta^d$. In this setting, there is a conventional
    choice of Fredholm module and an associated quantised calculus \cite[Section 12.3]{GVF}.
    An element $x \in L_2({\mathbb T}^d_{\theta})$ belongs to the (noncommutative) homogeneous Sobolev space $\dot{W}_d^1({\mathbb T}_\theta^d)$ if and only if its quantised differential $\qd x $ has bounded extension in ${\mathcal L}_{d, \infty}$. 
    The quantum torus analogue of \eqref{torus trace formula} is also obtained as \cite[Theorem 1.2]{MSX2018}: for $x\in \dot{W}_d^1({\mathbb T}_\theta^d)$, there is a certain constant $c_d$ such that for any continuous normalised trace $\varphi$ on ${\mathcal L}_{1, \infty}$ we have 
    \begin{equation}\label{qt trace formula}
        \varphi(|\qd x|^d) = c_d\,\int_{\mathbb{S}^{d-1}} \tau\Bigg(\Big(\sum_{j=1}^d |\partial_j x-s_j\sum_{k=1}^d s_k\partial_k x|^2\Big)^{\frac{d}{2}}\Bigg)\,ds,
    \end{equation}
    where $\tau$ is the standard trace on the algebra $L_\infty({\mathbb T}^d_{\theta})$, and the integral is over the $d-1$-sphere $\mathbb{S}^{d-1}$ with respect to its rotation invariant measure $ds$.
    To the best of our knowledge, these results were the first concerning quantum differentiability in the strictly noncommutative setting.

        \medskip
    
    The primary task of this paper is to determine similar results for noncommutative Euclidean spaces. A number of major obstacles make this task far more difficult than for noncommutative tori. 
    In particular, the methods of \cite{MSX2018} were facilitated by a well-developed theory of pseudodifferential operators on noncommutative tori \cite{HLP2018,HLP2018a}.
    However, despite recent advances \cite{LM2016,GJP2017,MSZ2018}, the theory of pseudodifferential operators for noncommutative Euclidean spaces is still in its infancy and it is not clear how to directly adapt the existing theory to this problem. It has therefore been necessary for us to introduce new arguments based on operator theory rather than pseudodifferential operator theory (see Section \ref{commutator-estimates}). 
    
    Another difficulty with ${\mathbb R}^d_{\theta}$ compared to ${\mathbb T}^d_\theta$ is that the nature of the required analysis changes dramatically with $\theta$. For example, the range of the canonical trace $\tau$ on the algebra $L_\infty({\mathbb T}^d_\theta)$
    on projections is $[0,1]$, while for the canonical trace on $L_\infty({\mathbb R}^d_\theta)$ the range of the trace on projections is either $[0,\infty]$ if $\det(\theta)=0$ or instead ranges over integral multiples of $(2\pi)^{d/2}|\det(\theta)|^{1/2}$ if 
    $\det(\theta)\neq 0$.

        A noteworthy side effect of our self-contained approach is that we obtain in an abstract manner the following commutator estimates for quantum Euclidean spaces: 
        Let $\Delta_\theta$ be the Laplace operator associated to the noncommutative Euclidean space $\qe$ (see Section \ref{Sec-QES} for complete definitions).
        For an appropriate class of smooth elements $x\in L_\infty(\qe)$, if $\alpha, \beta\in {\mathbb R}$ are such that $\alpha < \beta +1 $, then we have
        \begin{equation*}
            [(1-\Delta_\theta)^{\alpha/2}, x](1-\Delta_\theta)^{-\beta/2} \in {\mathcal L}_{\frac{d}{\beta-\alpha+1},\infty}.
        \end{equation*}    
        In the classical (commutative) case, this estimate follows almost immediately from the calculus and the mapping properties of pseudodifferential operators (see \cite[Lemma 13]{LMSZ2017}).

         
         \medskip


\subsection{Main results on quantum differentiability}
    In this section we state the main results of this paper. Heretofore unexplained notation which we use will be defined in Section \ref{notation_section}.
    
    Let $\theta$ be an antisymmetric real $d\times d $ matrix, where $d\geq 2$.
     
    Our first main result provides sufficient conditions for $\qd x \in {\mathcal L}_{d,\infty}$:
    \begin{thm}\label{sufficiency}
        If $x \in L_p(\qe)\cap \dot W^1_d(\qe)$ for some $d\leq p < \infty$, then $\qd x $ has bounded extension, and the extension is in ${\mathcal L}_{d,\infty}$.
    \end{thm}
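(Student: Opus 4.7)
The plan is to reduce $\qd x$ to commutators with Riesz transforms on $\qe$ and combine two ingredients: the commutator estimate displayed in the introduction and a Cwikel-type bound. Assuming the Fredholm module on $\qe$ is constructed from a Dirac-type operator $D=\sum_{j=1}^d \gamma_j\partial_j$ with $F=D|D|^{-1}$ (the analogue of the commutative construction mentioned in the introduction), the quantised differential $\qd x$ is a finite linear combination of commutators $[R_j,x]$ with the Riesz transforms $R_j:=\partial_j(-\Delta_\theta)^{-1/2}$. It therefore suffices to show $[R_j,x]\in{\mathcal L}_{d,\infty}$ for every $j$.

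I would begin with the Leibniz identity $[AB,x]=A[B,x]+[A,x]B$ applied to $A=\partial_j$ and $B=(-\Delta_\theta)^{-1/2}$, noting that $[\partial_j,x]$ equals multiplication by $\partial_j x$. This yields the decomposition
\[
    [R_j,x] = (\partial_j x)(-\Delta_\theta)^{-1/2} + \partial_j[(-\Delta_\theta)^{-1/2},x].
\]
For the first summand, the hypothesis $x\in\dot W^1_d(\qe)$ gives $\partial_j x\in L_d(\qe)$, and I would invoke a Cwikel-type bound on $\qe$ of the shape ``$y(-\Delta_\theta)^{-1/2}\in{\mathcal L}_{d,\infty}$ whenever $y\in L_d(\qe)$''. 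For the second summand I would rewrite $\partial_j=R_j(-\Delta_\theta)^{1/2}$ and use the algebraic identity
\[
    (-\Delta_\theta)^{1/2}\bigl[(-\Delta_\theta)^{-1/2},x\bigr] = -\bigl[(-\Delta_\theta)^{1/2},x\bigr](-\Delta_\theta)^{-1/2},
\]
obtained by expanding both commutators and cancelling the diagonal terms. Because $R_j$ is bounded, the ideal property of ${\mathcal L}_{d,\infty}$ reduces the problem to showing that $[(-\Delta_\theta)^{1/2},x](-\Delta_\theta)^{-1/2}\in{\mathcal L}_{d,\infty}$. The commutator estimate from the introduction, applied with $\alpha=\beta=1$, delivers the Bessel version $[(1-\Delta_\theta)^{1/2},x](1-\Delta_\theta)^{-1/2}\in{\mathcal L}_{d,\infty}$; the gap between the Bessel and homogeneous versions can be closed by expanding $(1-\Delta_\theta)^{\pm 1/2}-(-\Delta_\theta)^{\pm 1/2}$ via spectral calculus and absorbing the difference into a further Cwikel-type correction.

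The $L_p$-hypothesis on $x$ serves two purposes: it ensures bounded extension of the intermediate unbounded operators and it enables a density argument reducing the commutator estimate (stated for smooth elements) to the setting of $L_p(\qe)\cap\dot W^1_d(\qe)$. The principal obstacle will be the Cwikel-type bound on $\qe$: the noncommutativity of the spatial coordinates blocks a direct appeal to the classical Cwikel inequality, and I expect the proof to proceed by a dyadic frequency decomposition combined with weak-Schatten interpolation, supplemented by trace estimates specific to the algebra $L_\infty(\qe)$. A secondary, more routine technical point is the transition between the Bessel operator $1-\Delta_\theta$ (for which the paper's commutator estimate is stated) and the homogeneous $-\Delta_\theta$; once this is handled, the theorem follows by combining the two contributions in ${\mathcal L}_{d,\infty}$.
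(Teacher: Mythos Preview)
Your decomposition is coherent, and the Cwikel bound you want for the first summand is already available (Theorem~\ref{Cwikel-type}\eqref{weak L_p cwikel}, taking the symbol $|t|^{-1}\in L_{d,\infty}$), so that part is fine. The gap is in the second summand. You plan to invoke Theorem~\ref{main-commutator} with $\alpha=\beta=1$ to place $[J_\theta,x]J_\theta^{-1}$ in ${\mathcal L}_{d,\infty}$, but that theorem is stated only for $x\in\cA$, and its proof (via Lemma~\ref{delta_cwikel_estimate} and the factorisation property of $\cA$) gives no bound in terms of $\|x\|_{\dot W^1_d}$: the implied constants depend on arbitrarily high derivatives of $x$ and on the choice of factorisation. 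Your density argument therefore cannot close. Approximating $x\in L_p\cap\dot W^1_d$ by $x_n\in\cA$ with $x_n\to x$ in $\dot W^1_d$ (this is what Corollary~\ref{final_approximation_corollary} supplies) gives no uniform control of $\|[J_\theta,x_n]J_\theta^{-1}\|_{d,\infty}$, hence no way to conclude that $\{\qd x_n\}$ is Cauchy or even bounded in ${\mathcal L}_{d,\infty}$.

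The paper avoids this by a different route that produces the quantitative bound directly. It replaces $\sgn({\mathcal D})$ by $g({\mathcal D})={\mathcal D}(1+{\mathcal D}^2)^{-1/2}$ and attacks $[g({\mathcal D}),1\otimes x]$ with double operator integrals (Lemma~\ref{commutator-Sob}): the divided-difference kernel $g^{[1]}$ factorises so that the transformer reduces to $J_\theta^{-1/2}(\partial_j x)J_\theta^{-1/2}$, and Cwikel then gives $\|[g({\mathcal D}),1\otimes x]\|_{d,\infty}\lesssim_d\|x\|_{\dot W^1_d}$ for all $x\in\qcS$. The remainder $[\sgn({\mathcal D})-g({\mathcal D}),1\otimes x]$ is handled by a direct Cwikel estimate (Lemma~\ref{Cwikel-xp}) with bound $\lesssim_d\|x\|_d$, and a dilation argument using $\Psi_\lambda$ (Proposition~\ref{dilation-Ld}) eliminates this inhomogeneous contribution, leaving $\|\qd x\|_{d,\infty}\lesssim_d\|x\|_{\dot W^1_d}$. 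Only with this homogeneous bound in hand does the density step go through. Your Leibniz route could in principle be completed, but to do so you would need a quantitative $\dot W^1_d$-version of the $\alpha=\beta=1$ case---which amounts to reproving Lemma~\ref{commutator-Sob} in a different guise---and you would still need the dilation trick to absorb the inhomogeneous corrections generated when you pass between $(-\Delta_\theta)^{1/2}$ and $J_\theta$.
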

    The space $\dot W^{1}_d(\qe)$ is a noncommutative homogeneous Sobolev space defined with respect to the partial derivatives $\partial_j$, $j=1,\ldots,d$ (these notions will be defined and discussed in Subsection \ref{calculus definition subsubsection}).
    The {\emph{a priori}} assumption $x \in L_p(\qe)$ for some $d\leq p < \infty$ may not be necessary, however we have been unable to remove it. One reason for this difficulty is that there is no clear replacement for the use of the Poincar\'e inequality in the noncommutative situation. See Proposition \ref{dense-Schwartz}. 


    With Theorem \ref{sufficiency}, we can prove our second main result, the following trace formula:    
    \begin{thm}\label{trace formula}
    Let $x\in L_p(\qe)\cap \dot W^1_d(\qe)$ for some $d\leq p < \infty$. Then there is a constant $c_d$ depending only on the dimension $d$ such that for any continuous normalised trace $\varphi$ on ${\mathcal L}_{1,\infty}$ we have:
        \begin{equation*}
            \varphi(|\qd x|^d) = c_d\,\int_{\mathbb{S}^{d-1}} \tau_{\theta}\Bigg(\Big(\sum_{j=1}^d |\partial_j x-s_j\sum_{k=1}^d s_k\partial_k x|^2\Big)^{\frac{d}{2}}\Bigg)\,ds.
        \end{equation*}
        Here, the integral over $\mathbb{S}^{d-1}$ is taken with respect to the rotation-invariant measure $ds$ on $\mathbb{S}^{d-1}$, and $s = (s_1,\ldots,s_d)$.
    \end{thm}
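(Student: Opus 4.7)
My plan is to follow the strategy of \cite{MSX2018} for noncommutative tori: establish the identity on a dense class of Schwartz-type elements via a principal-symbol analysis, then extend by continuity to all of $L_p(\qe)\cap \dot W^1_d(\qe)$. The geometric content of the formula---that the integrand on the right is the squared magnitude of the tangential component of $\nabla x$ at the point $s\in \mathbb{S}^{d-1}$---should emerge from a pseudodifferential-style symbol computation applied to $\qd x$ written via Riesz transforms $R_j = \partial_j(-\Delta_\theta)^{-1/2}$, since the principal symbol of $[R_j,x]$ at $\xi \neq 0$ is (up to constants) $\sum_k |\xi|^{-1}(\delta_{jk}-\xi_j\xi_k|\xi|^{-2})\partial_k x$.

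First I would reduce to $x\in \mathcal{S}(\qe)$ via Proposition~\ref{dense-Schwartz}. Both sides of the identity are continuous in the norm of $L_p(\qe)\cap \dot W^1_d(\qe)$: on the left, Theorem~\ref{sufficiency} gives a bound $\|\qd(x-y)\|_{\mathcal{L}_{d,\infty}} \lesssim \|x-y\|_{L_p\cap \dot W^1_d}$, and composing with the continuous maps $T\mapsto |T|^d$ (from $\mathcal{L}_{d,\infty}$ into $\mathcal{L}_{1,\infty}$) and the continuous trace $\varphi$ yields continuity of the left-hand side; on the right, continuity follows from the triangle inequality for the $L_d(\qe)$-quasi-norm together with integration over the compact sphere $\mathbb{S}^{d-1}$.

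Next, for $x\in \mathcal{S}(\qe)$, I would invoke the commutator estimates advertised in Section~\ref{commutator-estimates} to show that each $[R_j,x]$ differs from the operator built from its ``leading symbol'' by residuals lying in Schatten--Lorentz ideals strictly smaller than $\mathcal{L}_{d,\infty}$; these residuals have $d$-th powers annihilated by every continuous normalised trace $\varphi$ on $\mathcal{L}_{1,\infty}$. Combining these reductions, $|\qd x|^d$ agrees, modulo the kernel of $\varphi$, with a positive operator of order $-d$ whose symbol on the sphere $\{|\xi|=1\}$ is exactly $\bigl(\sum_j |\sum_k(\delta_{jk}-\xi_j\xi_k)\partial_k x|^2\bigr)^{d/2}$. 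An appropriate Connes-type integration formula for $\qe$ (in the spirit of the noncommutative integration theory developed in \cite{MSZ2018}) then converts $\varphi$ of this leading operator into the stated integral over $\mathbb{S}^{d-1}$, with the universal constant $c_d$ matching that of \eqref{qt trace formula}.

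The main obstacle is the second step. The pseudodifferential calculus on $\qe$ is not sufficiently developed to treat symbols of this precise type, so the principal-symbol analysis has to be carried out at the level of operators, using the commutator estimates of Section~\ref{commutator-estimates} to trap the error terms in smaller Schatten--Lorentz ideals. Identifying the constant $c_d$ as dimension-only (rather than $\theta$-dependent) also demands careful bookkeeping: the Riesz-transform symbol is independent of $\theta$, so one must verify that every $\theta$-dependence is absorbed by the trace $\tau_\theta$ on the right-hand side.
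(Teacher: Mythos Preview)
Your proposal is correct and follows essentially the same approach as the paper: reduce to a dense smooth class, show via the commutator estimates of Section~\ref{commutator-estimates} that $|\qd x|^d$ agrees modulo $\mathcal{L}_1$ with $|A|^d(1+{\mathcal D}^2)^{-d/2}$ where $A_j=\partial_j x-\sum_k M_{g_{j,k}}\partial_k x$ is precisely your leading-symbol operator, then apply the trace formula \cite[Theorem~6.15]{MSZ2018} and extend by approximation. One refinement worth noting: the paper works over the factorisable subalgebra $\cA\subseteq\qcS$ of Proposition~\ref{factorisation} rather than all of $\qcS$, because the key estimate Lemma~\ref{delta_cwikel_estimate} requires writing $x=\sum_j y_jz_j$ with $y_j,z_j$ still in the class.
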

    Here $\tau_{\theta}$ is the canonical trace on the algebra $L_\infty(\qe)$ (see Subsection \ref{Sec-QES}).
    Although the above integral formula is identical in appearance to \eqref{qt trace formula}, the proof involves different techniques.
    

    The next corollary is a direct application of Theorem \ref{trace formula}. The proof is the same as \cite[Corollary 1.3]{MSX2018}, so we omit the details.
    
    \begin{cor}\label{trace formula-bound}
        Let $x\in L_p(\qe)\cap \dot W^1_d(\qe)$ for some $d\leq p < \infty$. Then there are constants $c_d$ and $C_d$ depending only on $d$ such that for any continuous normalised trace $\varphi$ on ${\mathcal L}_{1,\infty}$ we have 
        $$c_d \| x\|_{\dot{W}_d^1}^d \leq  \varphi(|\qd x|^d) \leq C_d \| x\|_{\dot{W}_d^1}^d .$$
    \end{cor}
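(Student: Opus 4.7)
The plan is to derive both inequalities directly from the trace formula in Theorem \ref{trace formula} by analyzing the spherical integrand as an operator-valued quantity and comparing it to $A:= \sum_{j=1}^d |\partial_j x|^2$, which (up to taking $(d/2)$-th root of the $\tau_\theta$-trace) equals $\|x\|_{\dot W_d^1}^d$.

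First, I would simplify the integrand by a direct expansion. Writing $v(s) := \sum_{k=1}^d s_k \partial_k x$ and using $s_j^2 \in \mathbb{R}$, one obtains the pointwise operator identity
\begin{equation*}
    \sum_{j=1}^d |\partial_j x - s_j v(s)|^2 \;=\; A \;-\; v(s)^* v(s)
\end{equation*}
(the cross terms cancel because $\sum_j s_j (\partial_j x)^* = v(s)^*$ and $\sum_j s_j^2 = 1$). In particular, the integrand is a positive operator lying below $A$.

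For the upper bound, since $0\le A - v(s)^*v(s) \le A$ and the trace functional $\tau_\theta(\,\cdot\,^{d/2})$ is monotone on positive operators, we have $\tau_\theta((A-v(s)^*v(s))^{d/2}) \le \tau_\theta(A^{d/2}) = \|x\|_{\dot W_d^1}^d$ for every $s$. Integrating over $\mathbb{S}^{d-1}$ and applying Theorem \ref{trace formula} yields the upper bound with $C_d = c_d\,|\mathbb{S}^{d-1}|$.

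For the lower bound, I would use the elementary moment computation $\int_{\mathbb{S}^{d-1}} s_j s_k\, ds = \frac{|\mathbb{S}^{d-1}|}{d}\delta_{jk}$, which gives
\begin{equation*}
    \int_{\mathbb{S}^{d-1}} \bigl(A - v(s)^* v(s)\bigr)\, ds \;=\; \frac{d-1}{d}\,|\mathbb{S}^{d-1}|\,A.
\end{equation*}
Since $d\geq 2$, the functional $B \mapsto \tau_\theta(B^{d/2}) = \|B\|_{d/2}^{d/2}$ is convex on the positive cone (being the $(d/2)$-th power of a norm), so Jensen's inequality applied to the probability measure $|\mathbb{S}^{d-1}|^{-1}ds$ produces
\begin{equation*}
    \Bigl(\frac{d-1}{d}\Bigr)^{d/2} \tau_\theta(A^{d/2}) \;\le\; \frac{1}{|\mathbb{S}^{d-1}|}\int_{\mathbb{S}^{d-1}} \tau_\theta\bigl((A - v(s)^*v(s))^{d/2}\bigr)\,ds.
\end{equation*}
Combined with Theorem \ref{trace formula}, this yields the lower bound with $c_d' = c_d\,|\mathbb{S}^{d-1}|\,((d-1)/d)^{d/2}$.

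There is no serious obstacle here beyond verifying the two justifications used (trace monotonicity of $B\mapsto \tau_\theta(B^{d/2})$ and its convexity on the positive cone), both of which are standard for the semifinite trace $\tau_\theta$ on $L_\infty(\qe)$; this is why the paper refers to \cite[Corollary 1.3]{MSX2018} and omits the details.
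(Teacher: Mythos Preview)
Your argument is correct and is essentially the approach the paper invokes from \cite[Corollary 1.3]{MSX2018}: reduce the spherical integrand to $A-v(s)^*v(s)$, use monotonicity of $B\mapsto \tau_\theta(B^{d/2})$ for the upper bound, and the moment identity on $\mathbb{S}^{d-1}$ together with convexity (Jensen) of $B\mapsto \|B\|_{d/2}^{d/2}$ for the lower bound. One small correction: $\tau_\theta(A^{d/2})$ is not literally equal to $\|x\|_{\dot W_d^1}^d$ under the paper's definition $\|x\|_{\dot W_d^1}=\sum_j\|\partial_j x\|_d$, but the two are equivalent with constants depending only on $d$ (since $d^{-1}\sum_j\|\partial_j x\|_d \le \|A^{1/2}\|_d \le \sum_j\|\partial_j x\|_d$), and this equivalence is all that is needed.
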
  
    
    Since $\varphi$ vanishes on the trace class ${\mathcal L}_1$, Corollary \ref{trace formula-bound} immediately yields the following noncommutative version of the $p\leq d$ component of \cite[Theorem 1]{JW1982}:
    \begin{cor}\label{triviality}
        If $x \in L_{p}(\qe)$ for some $d\leq p < \infty$ and $\qd x $ has bounded extension in  $ {\mathcal L}_{q}$ for some $q \leq d$, then $x$ is a constant.
    \end{cor}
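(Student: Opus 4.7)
The plan is to deduce $\varphi(|\qd x|^d) = 0$ for every continuous normalised trace $\varphi$ on $\mathcal{L}_{1,\infty}$, and then extract $\|x\|_{\dot W^1_d} = 0$ from the lower bound in Corollary \ref{trace formula-bound}. Since the homogeneous Sobolev seminorm controls the partial derivatives $\partial_j x$, this will force $x$ to be constant.

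For the first step, I would note that $\qd x \in \mathcal{L}_q$ with $q \leq d$ implies $\mu(n,\qd x) \in \ell_q$, and since the singular values tend to zero we get $\mu(n,\qd x) \in \ell_d$; equivalently, $|\qd x|^d \in \mathcal{L}_1$. A continuous normalised trace on $\mathcal{L}_{1,\infty}$ is a singular trace and thus annihilates the trace ideal $\mathcal{L}_1$, so $\varphi(|\qd x|^d) = 0$.

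For the second step, once $x \in \dot W^1_d(\qe)$ is established, Corollary \ref{trace formula-bound} gives $c_d \|x\|_{\dot W^1_d}^d \leq \varphi(|\qd x|^d) = 0$, so $\|x\|_{\dot W^1_d} = 0$. Because this seminorm measures the $L_d$-norms of the derivations $\partial_j x$, it follows that $\partial_j x = 0$ for every $j$, whence $x$ is a constant.

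The main obstacle is justifying that $x \in \dot W^1_d(\qe)$, since Theorem \ref{sufficiency} only supplies the implication $x \in \dot W^1_d(\qe) \Rightarrow \qd x \in \mathcal{L}_{d,\infty}$, and we need the reverse. A natural route is to exploit the Riesz-transform structure of $F$ on $\qe$ so as to recover each $\partial_j x$ from $\qd x$ by suitable algebraic manipulation, and then control the $L_d(\qe)$-norm of $\partial_j x$ by the $\mathcal{L}_{d,\infty}$-quasinorm of $\qd x \in \mathcal{L}_q \subseteq \mathcal{L}_{d,\infty}$. Alternatively, one could use the \emph{a priori} assumption $x \in L_p(\qe)$ together with the trace-class decay of $\qd x$ to perform an approximation argument on a dense class of Schwartz elements (as provided by Proposition \ref{dense-Schwartz}) and recover $x$ as a limit in $\dot W^1_d(\qe)$.
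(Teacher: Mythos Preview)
Your overall plan is exactly the paper's: from $\qd x\in\mathcal{L}_q$ with $q\le d$ one gets $|\qd x|^d\in\mathcal{L}_1$, every continuous normalised trace $\varphi$ on $\mathcal{L}_{1,\infty}$ vanishes on $\mathcal{L}_1$, and then the lower bound in Corollary~\ref{trace formula-bound} forces $\|x\|_{\dot W^1_d}=0$. The paper states Corollary~\ref{triviality} as an immediate consequence of Corollary~\ref{trace formula-bound} in precisely this way.

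The gap you flag---that one must know $x\in\dot W^1_d(\qe)$ before invoking Corollary~\ref{trace formula-bound}---is real, and the paper fills it not by a separate argument but by Theorem~\ref{necessity}. Since $\mathcal{L}_q\subseteq\mathcal{L}_{d,\infty}$, the hypothesis $\qd x\in\mathcal{L}_q$ places $\qd x\in\mathcal{L}_{d,\infty}$; Theorem~\ref{necessity} then gives $x\in\dot W^1_d(\qe)$ (the assumption $x\in L_p(\qe)$ with $d\le p<\infty$ puts $x$ in $L_d(\qe)+L_\infty(\qe)$, as needed there). After that your steps~1 and~2 run verbatim.

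One caution about your alternative routes. Invoking Proposition~\ref{dense-Schwartz} directly would be circular: that proposition already \emph{assumes} $x\in\dot W^1_d(\qe)$. The paper's actual proof of Theorem~\ref{necessity} does use a Schwartz-type approximation $\psi_\varepsilon\ast(U(\phi_\varepsilon)U(\chi_\varepsilon)x)$, but without any $\dot W^1_d$ hypothesis on $x$; instead it shows these approximants have $\qd$ uniformly bounded in $\mathcal{L}_{d,\infty}$, applies Corollary~\ref{trace formula-bound} to each approximant to get a uniform $\dot W^1_d$ bound, and then extracts $\partial_j x\in L_d(\qe)$ via weak compactness. Your ``route~(a)'' idea of recovering $\partial_j x$ algebraically from the Riesz commutators is not how the paper proceeds, though it is in the spirit of the classical Janson--Wolff argument.
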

    
    As a converse to Theorem \ref{sufficiency}, we prove our third main result: the necessity of the condition $x \in \dot{W}^1_d(\qe)$ for $\qd x \in {\mathcal L}_{d,\infty}$.
    
    \begin{thm}\label{necessity}
        Suppose that $d > 2$, and let $x \in L_{d}(\qe)+L_\infty(\qe)$. If $\qd x$ has bounded extension in  ${\mathcal L}_{d,\infty}$, then $x \in \dot{ W}^1_d(\qe)$, and there is a constant $c_d>0$ depending only on $d$ such that
        $$c_d  \| x\| _{\dot{ W}^1_d} \leq  \|\qd x \|_{{\mathcal L}_{d,\infty}}.$$
        
        For $d = 2$, the same conclusion holds under the assumption that $x \in L_\infty({\mathbb R}^2_\theta)$.
    \end{thm}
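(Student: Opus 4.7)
The strategy is to follow the blueprint of \cite{MSX2018} but replace the (there readily available) Fourier-series and pseudodifferential machinery by the abstract commutator estimates recorded in Section \ref{commutator-estimates}. With the standard Fredholm module for $\qe$, the grading operator has the form $F=\sum_{j=1}^d\gamma_j\otimes R_j$, where $R_j=\partial_j(-\Delta_\theta)^{-1/2}$ are the Riesz transforms and $\gamma_j$ are the generators of an irreducible matrix representation of the complex Clifford algebra. Consequently $\qd x=\mathrm{i}\sum_j\gamma_j\otimes[R_j,L_x]$. A standard partial-trace / Clifford argument (as in \cite[Section~4]{MSX2018}) lets one read off each scalar component from $\qd x$, so that $\qd x\in{\mathcal L}_{d,\infty}$ implies $[R_j,L_x]\in{\mathcal L}_{d,\infty}$ for each $j$ with $\|[R_j,L_x]\|_{{\mathcal L}_{d,\infty}}\lesssim \|\qd x\|_{{\mathcal L}_{d,\infty}}$.

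The next step is an algebraic reduction. A direct calculation using the Leibniz rule $[\partial_j,L_x]=L_{\partial_j x}$ yields the identity
\begin{equation*}
L_{\partial_j x}(-\Delta_\theta)^{-1/2} \;=\; [R_j,L_x] \;+\; R_j\bigl[(-\Delta_\theta)^{1/2},L_x\bigr](-\Delta_\theta)^{-1/2}.
\end{equation*}
The first summand on the right-hand side lies in ${\mathcal L}_{d,\infty}$ by the previous step. For the second, I apply the commutator estimate of the introduction with $\alpha=\beta=1$, namely $[(1-\Delta_\theta)^{1/2},L_x](1-\Delta_\theta)^{-1/2}\in{\mathcal L}_{d,\infty}$, and replace $(1-\Delta_\theta)^{1/2}$ by $(-\Delta_\theta)^{1/2}$ at the cost of a bounded perturbation whose commutator contribution can be absorbed into lower-order ideals. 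Combined with boundedness of $R_j$, this shows $L_{\partial_j x}(-\Delta_\theta)^{-1/2}\in{\mathcal L}_{d,\infty}$ with a quantitative bound.

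To pass from $L_{\partial_j x}(-\Delta_\theta)^{-1/2}\in{\mathcal L}_{d,\infty}$ to $\partial_j x\in L_d(\qe)$, I would invoke a converse Cwikel-type inequality on $\qe$: for $a\in L_d(\qe)+L_\infty(\qe)$, the bounded extension of $L_a(-\Delta_\theta)^{-1/2}$ to ${\mathcal L}_{d,\infty}$ forces $a\in L_d(\qe)$ together with the norm estimate $\|a\|_{L_d}\lesssim \|L_a(-\Delta_\theta)^{-1/2}\|_{{\mathcal L}_{d,\infty}}$. Applied with $a=\partial_j x$ for $j=1,\dots,d$ and summed, this produces the Sobolev bound asserted in the theorem. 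A standard approximation argument, relying on density of a suitable class of smooth elements inside $L_d(\qe)+L_\infty(\qe)$ (as in Proposition \ref{dense-Schwartz}), legitimizes the algebraic manipulations of the preceding step for general $x$ subject to the hypothesis $x\in L_d(\qe)+L_\infty(\qe)$.

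The case $d=2$ is sensitive because the converse Cwikel inequality is critical at the endpoint and the commutator estimate loses a degree of freedom. The extra hypothesis $x\in L_\infty(\mathbb{R}^2_\theta)$ enables one to carry out the algebraic identity above without any regularization, and to truncate $\partial_j x$ at appropriate frequencies by the heat semigroup before applying the $d=2$ Cwikel-type estimate. I expect the principal obstacle to be the converse Cwikel inequality itself: unlike the ``forward'' direction used in Theorem \ref{sufficiency} (which follows from noncommutative Kato--Seiler--Simon type estimates), the reverse implication is not automatic on $\qe$, and its proof will likely require a Birman--Solomyak style argument tailored to the canonical trace $\tau_\theta$, together with care in managing the lower-order error coming from the commutator estimate in the second step.
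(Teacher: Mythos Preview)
Your argument has a genuine circularity that the approximation remark does not resolve. The commutator estimate of Theorem~\ref{main-commutator} (which you invoke with $\alpha=\beta=1$) is proved only for $x\in\cA$, and its proof uses the full Schwartz regularity of $x$ in an essential way (via iterated $L_\theta$'s and $\delta_\theta$'s). Under the hypotheses of Theorem~\ref{necessity} you only know $x\in L_d(\qe)+L_\infty(\qe)$ with $\qd x\in{\mathcal L}_{d,\infty}$; you have no a~priori smoothness, so the second term $R_j[(-\Delta_\theta)^{1/2},L_x](-\Delta_\theta)^{-1/2}$ in your identity is exactly the object you are trying to control. Replacing $x$ by smooth approximants $x_n$ does give $[J_\theta,x_n]J_\theta^{-1}\in{\mathcal L}_{d,\infty}$, but with bounds depending on high-order derivative norms of $x_n$, which blow up as $n\to\infty$; there is no uniformity to pass to the limit. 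The same objection applies to the ``converse Cwikel'' step: even granting such an inequality, you would only conclude $\partial_j x_n\in L_d$ with no control of the norm independent of $n$.

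The paper's proof avoids both difficulties by routing through the trace formula rather than through Theorem~\ref{main-commutator} directly. Corollary~\ref{trace formula-bound} gives the two-sided estimate $c_d\|y\|_{\dot W^1_d}^d\le\varphi(|\qd y|^d)\le C_d\|y\|_{\dot W^1_d}^d$ for smooth $y$. One then builds specific approximants $x_\varepsilon:=\psi_\varepsilon\ast(U(\phi_\varepsilon)U(\chi_\varepsilon)x)\in\qcS$ and, using the Leibniz rule together with Lemma~\ref{qdy_x_lemma} (which handles the cross terms $(\qd y)x$ for $y$ smooth and $x\in L_d$), shows $\|\qd x_\varepsilon\|_{d,\infty}$ is bounded uniformly in $\varepsilon$ by a constant times $\|\qd x\|_{d,\infty}$ plus harmless terms. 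Corollary~\ref{trace formula-bound} then gives a uniform bound on $\|x_\varepsilon\|_{\dot W^1_d}$, and reflexivity of $L_d$ produces the limit $\partial_j x\in L_d$. In short, the trace formula replaces your converse Cwikel step, and the carefully designed mollification (with $\|\qd(U(\phi_\varepsilon))\|$ controlled via Lemma~\ref{cancellation_lemma}) replaces your appeal to Theorem~\ref{main-commutator} for non-smooth $x$.
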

    Note that in the strictly noncommutative $\det(\theta)\neq 0$ case, the assumed conditions on $x$ in Theorem \ref{necessity} are the same for $d=2$ and $d>2$, since $L_d(\qe)\subset L_\infty(\qe)$ in that case.

    It is worth noting that one may consider the commutative ($\theta = 0$) case in Theorems \ref{sufficiency}, \ref{trace formula} and \ref{necessity} and in this case the results obtained are very similar to those 
    of \cite{LMSZ2017}. The only difference being in the integrability assumptions: in \cite{LMSZ2017}, boundedness was assumed, and here we assume $p$-integrability for some $d\leq p < \infty$.
    Nonetheless the proofs we give here are independent to those of \cite{LMSZ2017}.

\subsection{Main commutator estimate}
    As a byproduct of the proof of Theorem \ref{trace formula}, we obtain a commutator estimate on quantum Euclidean spaces.
    In Section \ref{Sec-QES} we will introduce a certain smooth subalgebra $\cA$ of $L_\infty(\qe)$ (see Proposition \ref{factorisation}), and let $J_\theta = (1-\Delta_\theta)^{1/2}$ denote the quantum Bessel potential defined in Section \ref{calculus definition subsubsection}.
    \begin{thm}\label{main-commutator}
        Let $\alpha,\beta \in {\mathbb R}$, and let $x \in \cA$. Then if $ \alpha< \beta+1$:
        \begin{equation*}
            [J_\theta^{\alpha}, x]J^{-\beta}_\theta \in {\mathcal L}_{\frac{d}{\beta-\alpha+1},\infty}.
        \end{equation*}
        On the other hand if $\alpha = \beta+1$, then the operator
        \begin{equation*}
            [J^{\alpha}_\theta, x]J_\theta^{-\beta}
        \end{equation*}
        has bounded extension.        
    \end{thm}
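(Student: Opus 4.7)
The plan is to combine a Balakrishnan-type resolvent integral representation of the fractional powers of $J_\theta$ with the Cwikel-type Schatten--Lorentz estimates for $\qe$ that are presumably developed earlier in the paper (to prove Theorem \ref{sufficiency}).

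The base case is $\alpha = 2$. Here one computes
\begin{equation*}
  [J_\theta^2, x] = -[\Delta_\theta, x] = -\sum_{j=1}^d\bigl(2(\partial_j x)\partial_j + (\partial_j^2 x)\bigr),
\end{equation*}
and since $\partial_j$ commutes with every function of $\Delta_\theta$, the product $[J_\theta^2, x]J_\theta^{-\beta}$ decomposes into a finite sum of terms of the form $(\partial_j x)J_\theta^{-(\beta-1)}\cdot(\partial_j J_\theta^{-1})$ and $(\partial_j^2 x)J_\theta^{-\beta}$, with $\partial_j x,\partial_j^2 x \in \cA$. Each such factor of the form $y J_\theta^{-\gamma}$ with $y \in \cA$ is handled by the Cwikel-type estimate $yJ_\theta^{-\gamma} \in \mathcal{L}_{d/\gamma, \infty}$, while $\partial_j J_\theta^{-1}$ is bounded by functional calculus. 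This simultaneously gives boundedness at $\beta = 1$ (the borderline case) and $\mathcal{L}_{d/(\beta-1), \infty}$ membership for $\beta > 1$.

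For the main Schatten--Lorentz range $\alpha = 2s$ with $s \in (0,1)$, I would start from the Balakrishnan formula
\begin{equation*}
  J_\theta^{2s} = \frac{\sin(\pi s)}{\pi}\int_0^\infty \mu^{s-1}J_\theta^2(\mu + J_\theta^2)^{-1}\,d\mu
\end{equation*}
and apply the resolvent commutator identity $[(\mu+J_\theta^2)^{-1}, x] = -(\mu+J_\theta^2)^{-1}[J_\theta^2, x](\mu+J_\theta^2)^{-1}$ to get
\begin{equation*}
  [J_\theta^{2s}, x]J_\theta^{-\beta} = \frac{\sin(\pi s)}{\pi}\int_0^\infty \mu^s(\mu + J_\theta^2)^{-1}[J_\theta^2, x](\mu + J_\theta^2)^{-1}J_\theta^{-\beta}\,d\mu.
\end{equation*}
Inserting the base-case decomposition of $[J_\theta^2, x]$ and estimating each resulting integrand pointwise in $\mu$ by combining the Cwikel-type bounds with the functional-calculus estimates $\|(\mu + J_\theta^2)^{-1}\|_\infty \lesssim (1+\mu)^{-1}$ and $\|\partial_j(\mu + J_\theta^2)^{-1}\|_\infty \lesssim (1+\mu)^{-1/2}$, an integration in $\mu$ then yields the desired $\mathcal{L}_{d/(\beta-2s+1),\infty}$ bound. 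The borderline case $\alpha = \beta + 1$ appears as the limit $p \to \infty$ (boundedness) of this analysis, while the ranges $\alpha \geq 2$ and $\alpha < 0$ are reduced to the above by iterating the Leibniz rule $[AB, x] = A[B, x] + [A, x]B$ and, for negative $\alpha$, the identity $[A^{-1}, x] = -A^{-1}[A, x]A^{-1}$ with $A = J_\theta^{|\alpha|}$.

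The principal technical obstacle is the convergence of the $\mu$-integral in the quasi-Banach ideal $\mathcal{L}_{p, \infty}$ (with $p = d/(\beta - 2s + 1) < \infty$), where the triangle inequality is available only up to a quasi-norm constant. One must either pass to a $p$-convexified norm of the form $T \mapsto \|\,|T|^q\,\|_{\mathcal{L}_{p/q,\infty}}^{1/q}$ for some $q < p$, or invoke a $K$-functional argument to control the integrated quantity. Integrability of the $\mathcal{L}_{p,\infty}$-valued integrand at $\mu = 0$ is supplied by the factor $\mu^s$; integrability at $\mu = \infty$ comes from the resolvent decay together with the commutator structure (the $[J_\theta^2, x]$ piece effectively cancels one power of $\mu$). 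Added to this is the fact that the underlying Cwikel-type estimates on $\qe$ are themselves substantially more subtle than in the classical case (as the authors emphasize in the introduction), so each term in the decomposition of the integrand must be tracked with care.
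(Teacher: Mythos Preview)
Your approach is close in spirit to the paper's --- both use a resolvent-integral representation of fractional powers, a base case computed from $[J_\theta^2,x]$, Cwikel-type bounds, and Leibniz-rule induction to extend the range of $\alpha$ --- but there is a genuine gap in the integral step. If you bound the left resolvent $(\mu+J_\theta^2)^{-1}$ in operator norm and then place $[J_\theta^2,x](\mu+J_\theta^2)^{-1}J_\theta^{-\beta}$ in the target ideal ${\mathcal L}_{d/(\beta-2s+1),\infty}$ via the base case, the resulting $\mu$-integrand decays only like $(1+\mu)^{-1}$ at infinity and the integral diverges logarithmically. The cure (and this is exactly what the paper does in Theorem~\ref{alpha_lt_one}) is to commute once more, moving the right resolvent past the commutator: this extracts a leading term $c\,J_\theta^{\alpha-1}\delta_\theta(x)J_\theta^{-\beta}$ --- which is in the correct ideal by a \emph{two-sided} Cwikel estimate --- and leaves a remainder carrying an extra resolvent factor, restoring integrability. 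Without that extra commutation you only recover the trivial bound ${\mathcal L}_{d/(\beta-\alpha),\infty}$, which is the whole point the theorem improves upon.

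A second, more structural gap: you assume the Cwikel-type estimate $yJ_\theta^{-\gamma}\in{\mathcal L}_{d/\gamma,\infty}$ for all $\gamma>0$ is ``developed earlier.'' It is not. The estimates used for Theorem~\ref{sufficiency} (Lemma~\ref{Cwikel-type-lem}) only cover $\gamma<d/2$, and Theorem~\ref{Cwikel-type}\eqref{cwikel_estimate_Sob} gives the single endpoint $\gamma=d$. The full range --- and crucially the two-sided version $J_\theta^{-\gamma_1}\delta_\theta^k(x)J_\theta^{-\gamma_2}\in{\mathcal L}_{d/(\gamma_1+\gamma_2),\infty}$ needed to place the leading term above in the right ideal --- is established inside Section~\ref{commutator-estimates} itself (Lemma~\ref{delta_cwikel_estimate} and Corollary~\ref{two_sided_cwikel}), by an induction on $\gamma$ that relies essentially on the factorisation property of $\cA$ from Proposition~\ref{factorisation}. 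Your proposal never invokes factorisation, and without it neither the full-range Cwikel bound nor the induction on $\beta$ that extends Theorem~\ref{alpha_lt_one} beyond the Banach range goes through.
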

    This estimate is to be compared with the Cwikel type estimates provided in \cite{LeSZ2017}. Using the latter estimates, one can deduce that  $ J_\theta^{\alpha}  xJ^{-\beta}_\theta \in {\mathcal L}_{\frac{d}{\beta-\alpha},\infty}$ and $x J_\theta^{\alpha-\beta} \in {\mathcal L}_{\frac{d}{\beta-\alpha},\infty}$, however showing that the difference of these two operators is in the smaller ideal ${\mathcal L}_{\frac{d}{\beta-\alpha+1},\infty}$ requires additional argument.

    If we consider the classical (commutative) setting, the result of Theorem \ref{main-commutator} would follow from a standard application of pseudodifferential operator calculus: $x$ is viewed as an order $0$ pseudo-differential operator, while $J
    _\theta^{\alpha}$ is of order $\alpha$. It follows that the commutator $[J_\theta^\al,x]$ is of order $\alpha -1 $, and thus $[J_\theta^{\alpha}, x]J^{-\beta}_\theta $ is of order $\alpha-\beta-1$. From there, a short argument can be used to show that the result of
    Theorem \ref{main-commutator} holds (an argument of precisely this nature was used in \cite[Lemma 13]{LMSZ2017}). It likely is possible to carry out a similar argument in the noncommutative setting using the
    quantum pseudodifferential operator theory of \cite{GJP2017}, however we have found the direct argument to be insightful.

  \medskip 
  
  The layout of this paper is the following. In the following section we introduce notation, terminology and required background material concerning operator ideals and analysis on quantum Euclidean spaces, and we also recount some elementary properties such as the dilation action and Cwikel type estimates. Section \ref{section-sufficiency} is devoted to the proof of Theorem \ref{sufficiency}. Section \ref{commutator-estimates} concerns our proof of Theorem \ref{main-commutator}, and
  is the most technical component of the paper. The final section, Section \ref{proofs of tf n}, completes the proofs of Theorems \ref{trace formula} and \ref{necessity}.
    
\section{Notation and preliminary results}\label{notation_section}
    We will occasionally use the notation $A\lesssim B$ to indicate that $A\leq CB$ for some $0 \leq C < \infty$, and use subscripts to indicate dependence on constants. E.g., $A\lesssim_d B$ means that $A\leq C_dB$ for a constant $C_d$ depending on $d$.

\subsection{Operators, Ideals and traces}\label{operator notation subsection}
    The following material is standard; for more details we refer the reader to \cite{Simon1979, LSZ2012}.
    Let $H$ be a complex separable Hilbert space, and let ${\mathcal B}(H)$ denote the set of all bounded operators on $H$, and let ${\mathcal K}(H)$ denote the ideal of compact operators on $H$. Given $T\in {\mathcal K}(H)$, the sequence of singular values $\mu(T) = \{\mu(k,T)\}_{k=0}^\infty$ is defined as:
    \begin{equation*}
        \mu(k,T) = \inf\{\|T-R\|\;:\;\mathrm{rank}(R) \leq k\}.
    \end{equation*}
    Equivalently, $\mu(T)$ is the sequence of eigenvalues of $|T|$ arranged in non-increasing order with multiplicities.
    
    Let $p \in (0,\infty).$ The Schatten class ${\mathcal L}_p$ is the set of operators $T$ in ${\mathcal K}(H)$ such that $\mu(T)$ is $p$-summable, i.e. in the sequence space $\ell_p$. If $p \geq 1$ then the ${\mathcal L}_p$
    norm is defined as:
    \begin{equation*}
        \|T\|_p := \|\mu(T)\|_{\ell_p} = \left(\sum_{k=0}^\infty \mu(k,T)^p\right)^{1/p}.
    \end{equation*}
    With this norm ${\mathcal L}_p$ is a Banach space, and an ideal of ${\mathcal B}(H)$.
    
    The weak Schatten class ${\mathcal L}_{p,\infty}$ is the set of operators $T$ such that $\mu(T)$ is in the weak $L_p$-space $\ell_{p,\infty}$, with quasi-norm:
    \begin{equation*}
        \|T\|_{p,\infty} = \sup_{k\geq 0}\, (k+1)^{1/p}\mu(k,T) < \infty.
    \end{equation*}
    As with the ${\mathcal L}_p$ spaces, ${\mathcal L}_{p,\infty}$ is an ideal of ${\mathcal B}(H)$. We also have the following form
    of H\"older's inequality,
    \begin{equation*}
        \|TS\|_{r,\infty} \leq c_{p,q}\|T\|_{p,\infty}\|S\|_{q,\infty}
    \end{equation*}
    where $\frac{1}{r}=\frac{1}{p}+\frac{1}{q}$, for some constant $c_{p,q}$.
    
    An operator theoretic result which will be useful is the Araki-Lieb-Thirring inequality \cite[Page 169]{Araki-1990} (see also \cite[Theorem 2]{Kosaki-alt-1992}) which
    states that if $A$ and $B$ are bounded operators and $r\geq 1$, then:
    \begin{equation*}
        |AB|^r \prec\prec_{\log} |A|^r|B|^r
    \end{equation*}
    where $\prec\prec_{\log}$ denotes logarithmic submajorisation. In particular this implies the following inequality for the ${\mathcal L}_{r,\infty}$ quasinorm, when $r\geq 1$:
    \begin{equation}\label{ALT_inequality}
        \|AB\|_{r,\infty} \leq e\||A|^r|B|^r\|_{1,\infty} \leq e\|A\|_{\infty}^{r-1}\|A|B|^r\|_{1,\infty}.
    \end{equation}
    
    Among ideals of particular interest is ${\mathcal L}_{1,\infty}$, and we are concerned with traces on this ideal. For more details, see \cite[Section 5.7]{LSZ2012} and \cite{SSUZ2015}. A functional $\varphi:{\mathcal L}_{1,\infty}\to {\mathbb C}$ is called a trace if it is unitarily invariant. That is, for all unitary operators
    $U$ and $T\in {\mathcal L}_{1,\infty}$ we have that $\varphi(U^*TU) = \varphi(T)$. It follows that for all bounded operators $B$ we have $\varphi(BT)=\varphi(TB).$
    
    An important fact about traces is that any trace $\varphi$ on ${\mathcal L}_{1,\infty}$ vanishes on ${\mathcal L}_1$ \cite[Theorem~5.7.8]{LSZ2012}. A trace $\varphi$ is called continuous if it is continuous with respect to the ${\mathcal L}_{1,\infty}$ quasi-norm. It is known that not all traces on ${\mathcal L}_{1,\infty}$ are continuous \cite[Remark~3.1(3)]{LSZ2018}. 
    Within the class of continuous traces on ${\mathcal L}_{1,\infty}$ there are the well-known Dixmier traces \cite[Chapter 6]{LSZ2012}.
    
    Finally, we say that a trace $\varphi$ on ${\mathcal L}_{1,\infty}$ is normalised if $\varphi$ takes the value $1$ on any compact positive operator with eigenvalue sequence $\{\frac{1}{n+1}\}_{n=0}^\infty$ (any two such
    operators are unitarily equivalent, and so the particular choice of operator is inessential).

\subsection{Quantum Euclidean spaces}\label{Sec-QES}
\subsubsection{Heuristic motivation}
    The original motivation for noncommutative Euclidean spaces begins with the canonical commutation relations of quantum mechanics.
    Let $\theta$ be a fixed antisymmetric $d\times d$ matrix. We consider the associative $*$-algebra with $d$ self-adjoint generators $\{x_1,\ldots,x_d\}$ satisfying the relation:
    \begin{equation}\label{ccr_heisenberg}
        [x_j,x_k] = \ri\theta_{j,k},\quad 1\leq j,k\leq d.
    \end{equation}
    These operators may be thought of as coordinates of some fictitious noncommutative $d$-dimensional space.
    
    At a purely formal level, if one defines:
    \begin{equation*}
        U(t) := \exp(\ri (t_1x_1+t_2x_2+\cdots+t_dx_d)),\quad t \in \mathbb{R}^d,
    \end{equation*}
    and formally applies the Baker-Campbell-Hausdorff formula, one is led to the following identity:
    \begin{equation}\label{ccr_weyl}
        U(t)U(s) = \exp(\frac{\ri}{2}( t,\theta s))U(t+s),\quad t,s \in \mathbb{R}^d.
    \end{equation}
    The above relation is often called the Weyl form of the canonical commutation relations, and its representation theory is summarised by the well-known Stone-von Neumann theorem: 
    provided that $\det(\theta)\neq 0$, any two $C^*$-algebras generated by a strongly continuous unitary family $\{U(t)\}_{t\in {\mathbb R}^d}$ satisfying \eqref{ccr_weyl} are $*$-isomorphic \cite[Section 5.2.2.2]{BR1997}, \cite[Theorem 14.8]{Hall2013}, \cite[Chapter 2, Theorem 3.1]{Takhtajan2008}. 
    
    After fixing a concrete Hilbert space representation of \eqref{ccr_weyl}, we will define $L_\infty(\qe)$ as the von Neumann algebra generated by $\{U(t)\}_{t \in {\mathbb R}^d}$.
    
%

\subsubsection{Formal definition and elementary properties} 
    Noncommutative Euclidean spaces admit several equivalent definitions; here we follow the approach in \cite{LeSZ2017}, where the authors define $L_\infty(\qe)$ as twisted group von Neumann algebra and then define function spaces on $\qe$ as being operator spaces associated to that algebra. We refer the reader to \cite{LeSZ2017} for more details on this approach, and give a brief introduction here. Alternative yet unitarily equivalent approaches
    to the definition of noncommutative Euclidean space may also be found in the literature, see \cite{GGISV2004}, \cite[Section 5.2.2.2]{BR1997}, \cite{GJP2017} and \cite[Chapter 14]{Hall2013}.

    Define the following family of unitary operators on $L_2({\mathbb R}^d)$:
    \beq \label{rep-on-L2}
    (U(t) \xi )(r)  =  e^{-\frac{\ri}{2}    ( t, \theta r )  } \xi (r-t), \quad \xi \in L_2({\mathbb R}^d) , \; r, t \in {\mathbb R}^d. 
    \eeq
    It is easily verified that the family $\{U(t)\}_{t\in {\mathbb R}^d}$ is strongly continuous, and satisfies the Weyl relation \eqref{ccr_weyl}. We will write $U_\theta$ when there is need to refer to the dependence on the matrix $\theta$.

    \begin{defi}\label{nc_plane_defi}
        Let $d\in {\mathbb N}$ and $\theta$ be a fixed antisymmetric real $d\times d$ matrix. The von Neumann subalgebra of ${\mathcal B}(L_2({\mathbb R}^d))$ generated by $\{U(t)\}_{t\in {\mathbb R}^d}$ given in \eqref{rep-on-L2} is called a noncommutative Euclidean space, denoted by $L_\infty (\qe)$.
    \end{defi}

    Taking $\theta =0 $, this definition states that $L_\infty ({\mathbb R}^d_0)$ is the von Neumann algebra generated by the unitary group of translations on ${\mathbb R}^d$, and this is $*$-isomorphic to $L_\infty({\mathbb R}^d)$. Therefore the algebra of essentially
    bounded functions on Euclidean space is recovered as a special case of Definition \ref{nc_plane_defi}. 
    
    \begin{remark}
        We caution the reader that the approach taken here is the ``Fourier dual" of the approach in \cite{GGISV2004}. In the commutative case, $U(t)$ is the operator on $L_2({\mathbb R}^d)$ of translation by $t \in {\mathbb R}^d$, and 
        the Fourier transform provides an isomorphism with the algebra $L_\infty({\mathbb R}^d)$ of essentially bounded functions acting by pointwise multiplication.
    \end{remark}

    The algebraic structure of $L_\infty(\qe)$ is determined by the dimension of the kernel of $\theta$. If $d=2$, then up to an orthogonal conjugation $\theta$ may be written as
    \begin{equation}\label{symplectic_matrix}
        \theta = \hbar\begin{pmatrix}
                        0 & -1 \\ 
                        1 & 0
                    \end{pmatrix} 
    \end{equation}
    for some constant $\hbar > 0$. With $\theta$ given as above, the algebra $L_\infty(\qe)$ is $*$-isomorphic to the algebra of bounded linear operators on $L_2({\mathbb R})$. A $*$-isomorphism can be given explicitly by:
    \begin{equation*}
        U(t) \mapsto \exp(\ri t_1M_x+\ri t_2\hbar\partial_x),
    \end{equation*}
    where $M_x\xi(t) = t\xi(t)$ for $\xi \in L_2({\mathbb R})$ and $\partial_x\xi = \xi'$ is the differentiation operator.

    When $d \geq 2$, we may up to orthogonal conjugation express an arbitrary $d\times d$ antisymmetric real matrix as a direct sum of a zero matrix and matrices of the form \eqref{symplectic_matrix} (see Section 6 of \cite{LeSZ2017}), ultimately leading to the following $*$-isomorphism:
    \begin{equation}\label{algebraic_structure}
        L_\infty(\qe) \cong L_\infty({\mathbb R}^{\dim(\ker(\theta))})\overline{\ot} {\mathcal B}(L_2({\mathbb R}^{\mathrm{rank}(\theta)/2}))
    \end{equation}
    where $\overline{\ot}$ is the von Neumann algebra tensor product\footnote{It is meaningful to write $\mathrm{rank}(\theta)/2$, since the rank of an antisymmetric matrix is always even}. See \cite{GV1988b} for detailed information about this
    isomorphism.
    
    In the case where $\det(\theta)\neq 0$, \eqref{algebraic_structure} reduces to:
    \begin{equation}\label{key_isomorphism}
        L_\infty(\qe) \cong {\mathcal B}(L_2({\mathbb R}^{d/2})).
    \end{equation}
    
%



\subsubsection{Weyl quantisation}
    Let $f \in L_1({\mathbb R}^d)$. We will define $U(f) \in L_\infty(\qe)$ as the operator given by the absolutely convergent Bochner integral:
    \begin{equation*}
        U(f)\xi = \int_{{\mathbb R}^d} f(t)U(t)\xi\,dt,\quad \xi \in L_2({\mathbb R}^d).
    \end{equation*}
    It should be verified first that the above integral indeed exists in the Bochner sense, and secondly that $U(f) \in L_\infty(\qe)$ as claimed.
    
    \begin{lem}
        For $f \in L_1({\mathbb R}^d)$, the integral:
        \begin{equation*}
            U(f)\xi = \int_{{\mathbb R}^d} f(t)U(t)\xi\,dt,\quad \xi \in L_2({\mathbb R}^d)
        \end{equation*}
        is absolutely convergent in the Bochner sense, and defines a bounded linear operator $U(f):L_2({\mathbb R}^d)\to L_2({\mathbb R}^d)$ such that $U(f) \in L_\infty(\qe)$.
    \end{lem}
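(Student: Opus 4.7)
The proof splits naturally into three parts corresponding to the three assertions, and each is essentially immediate from the two facts that $\{U(t)\}_{t\in{\mathbb R}^d}$ is a strongly continuous family of \emph{unitary} operators and that $f\in L_1({\mathbb R}^d)$.

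First I would establish Bochner integrability. By strong continuity, the map $t\mapsto U(t)\xi$ is continuous from ${\mathbb R}^d$ to $L_2({\mathbb R}^d)$, hence strongly measurable, and multiplication by the scalar function $f$ preserves strong measurability. Since each $U(t)$ is unitary,
\begin{equation*}
    \|f(t)U(t)\xi\|_{L_2} = |f(t)|\,\|\xi\|_{L_2},
\end{equation*}
which is integrable in $t$ because $f\in L_1({\mathbb R}^d)$. Hence the Bochner integral converges absolutely for each $\xi\in L_2({\mathbb R}^d)$.

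Next, boundedness of the resulting map $\xi\mapsto U(f)\xi$ follows from the standard norm estimate for Bochner integrals:
\begin{equation*}
    \|U(f)\xi\|_{L_2} \leq \int_{{\mathbb R}^d}|f(t)|\,\|U(t)\xi\|_{L_2}\,dt = \|f\|_{L_1}\|\xi\|_{L_2}.
\end{equation*}
Linearity is obvious, so $U(f)\in{\mathcal B}(L_2({\mathbb R}^d))$ with $\|U(f)\|\leq\|f\|_{L_1}$.

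Finally, to see that $U(f)\in L_\infty(\qe)$, I would use the double commutant characterisation of the von Neumann algebra generated by $\{U(t)\}_{t\in{\mathbb R}^d}$. Let $T\in {\mathcal B}(L_2({\mathbb R}^d))$ commute with every $U(t)$. For any $\xi,\eta\in L_2({\mathbb R}^d)$, by the defining property of the Bochner integral and continuity of the inner product one has
\begin{equation*}
    \langle U(f)T\xi,\eta\rangle = \int_{{\mathbb R}^d} f(t)\langle U(t)T\xi,\eta\rangle\,dt = \int_{{\mathbb R}^d} f(t)\langle TU(t)\xi,\eta\rangle\,dt = \langle TU(f)\xi,\eta\rangle,
\end{equation*}
where we used $U(t)T = TU(t)$ and the boundedness of $T$ to pull it out of the integral. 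Hence $U(f)$ commutes with the commutant of $\{U(t)\}$, which by von Neumann's bicommutant theorem places $U(f)$ in the double commutant, i.e.\ in $L_\infty(\qe)$.

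No single step presents a real obstacle; the only point requiring minor care is the interchange of the bounded operator $T$ with the Bochner integral in the last display, which is justified by applying the bounded linear functional $\eta\mapsto\langle T\,\cdot\,,\eta\rangle$ to the integral defining $U(f)\xi$ and then its adjoint version to $U(f)T\xi$.
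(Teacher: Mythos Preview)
Your proof is correct and follows essentially the same approach as the paper: Bochner integrability via strong continuity and the $L_1$-bound, boundedness via the triangle inequality for Bochner integrals, and membership in $L_\infty(\qe)$ via the bicommutant theorem. The only cosmetic difference is that the paper invokes the Pettis measurability theorem (weak measurability plus separability) to obtain strong measurability, whereas you argue more directly that continuity of $t\mapsto U(t)\xi$ already gives strong measurability; both are valid and the rest is identical.
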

    \begin{proof}
        Recall that $t\mapsto U(t)$ is strongly continuous. It follows that for all $\eta,\xi\in L_2({\mathbb R}^d)$, the scalar-valued function $t\mapsto f(t)\langle \eta,U(t)\xi\rangle$ is measurable. Since $L_2({\mathbb R}^d)$ is separable, the Pettis measurability theorem \cite[Theorem II.1.2]{DU1977}, \cite[Theorem 1.19]{HvNVW2016} implies that for all $\xi\in L_2({\mathbb R}^d)$ the function $t\mapsto f(t)U(t)\xi$ is measurable in the $L_2({\mathbb R}^d)$-valued Bochner sense.
        
        Since $\|f(t)U(t)\xi\|_{L_2({\mathbb R}^2)} \leq |f(t)|\|\xi\|_{L_2({\mathbb R}^d)}$ and $f \in L_1({\mathbb R}^d)$, the integrand is absolutely integrable, and this proves the claim that the integral is absolutely convergent in the Bochner sense.
        
        To see that $\xi \mapsto U(f)\xi$ is a bounded operator, one simply applies the triangle inequality for the Bochner integral to obtain
        \begin{equation*}
            \|U(f)\xi\|_{L_2({\mathbb R}^d)} \leq \|f\|_{L_1({\mathbb R}^d)}\|\xi\|_{L_2({\mathbb R}^d)}
        \end{equation*}
        so that $U(f) \in {\mathcal B}(L_2({\mathbb R}^d))$. Finally, to see that $U(f) \in L_\infty(\qe)$ we will use von Neumann's bicommutant theorem. Suppose that $X \in {\mathcal B}(L_2({\mathbb R}^d))$ is a bounded
        linear operator which commutes with every $\{U(t)\}_{t \in {\mathbb R}^d}$. Since $X$ is bounded, it can be moved under the integration sign:
        \begin{equation*}
            XU(f)\xi = X\int_{{\mathbb R}^d}f(t)U(t)\xi\,dt = \int_{{\mathbb R}^d} f(t)XU(t)\xi\,dt = U(f)X\xi.
        \end{equation*}
        Hence $X$ commutes with $U(f)$, and thus $U(f)$ commutes with every operator which commutes with every $\{U(t)\}_{t \in {\mathbb R}^d}$ so it follows that $U(f)\in L_\infty(\qe)$.
    \end{proof}   
    We will denote $U = U_\theta$ when there is a need to refer to the dependence on $\theta$.
    The map $U$ has other names and notations in the literature: for example composing $U$ with the Fourier transform determines a mapping ${\mathcal S}({\mathbb R}^d)\to {\mathcal B}(L_2({\mathbb R}^{d/2}))$ which is also known as the Weyl quantisation map \cite[Section 13.3]{Hall2013}.
    In the $\det(\theta) \neq 0$ case, the map $U$ is also essentially the same as the so-called Weyl transform \cite[Page 138]{Takhtajan2008}. In \cite{GJP2017}, the map denoted there $\lambda_\theta$ is very similar to $U$, the only difference
    being that $U(t_1e_1)U(t_2e_2)\cdots U(t_de_d)$ is used in place of $U(t)$.
    
    Assume now that $f \in {\mathcal S}({\mathbb R}^d)$. For $\xi \in {\mathcal S}({\mathbb R}^d)$, by the definition of $U(t)$ we have:
    \begin{align}
        (U(f)\xi)(s) = \int_{{\mathbb R}^d} f(t)e^{-\frac{\ri}{2}(t,\theta s)}\xi(s-t)\,dt.
    \end{align}
    Since $\xi$ is continuous, it is easy to see that $(U(f)\xi)(s)$ is continuous as a function of $s$. Evaluating $U(f)\xi(s)$ at $s = 0$ yields:
    \begin{equation*}
        (U(f)\xi)(0) = \int_{{\mathbb R}^d} f(t)\xi(-t)\,dt.
    \end{equation*}
    Hence, if $U(f) = U(g)$ for $f,g \in L_1({\mathbb R}^d)$, it follows that:
    \begin{equation*}
        \int_{{\mathbb R}^d} (f(t)-g(t))\xi(-t)\,dt = 0
    \end{equation*}
    for all $\xi \in  {\mathcal S}({\mathbb R}^d)$, and thus $f = g$ pointwise almost everywhere. It follows that $U$ is injective.
       
    The class of Schwartz functions on $\qe$ is defined as the image of ${\mathcal S}({\mathbb R}^d)$ under $U$. That is,
    \beq\label{Schwartz}
        \qcS := \{ x\in L_\infty(\qe): x=    \int _{{\mathbb R}^d}  f(s) U(s) ds ,\;\;\mbox{for some}\; f\in {\mathcal S}({\mathbb R}^d)\},\eeq
    The Schwartz space $\qcS$ is equipped with the topology induced by the isomorphism $U:{\mathcal S}({\mathbb R}^d)\to \qcS$, where ${\mathcal S}({\mathbb R}^d)$ is equipped with its canonical Fr\'echet topology. It is important to note
    that the Fr\'echet topology of $\qcS$ is finer than the $L_p(\qe)$ topology for every $1\leq p \leq \infty$. This follows, for example, from Proposition \ref{HY-ineq} below.

    It is worth emphasising that in the nondegenerate case ($\det(\theta)\neq 0$), the noncommutativity of $L_\infty(\qe)$ implies that $\qcS$ has a number of properties quite unlike the classical Schwartz space ${\mathcal S}({\mathbb R}^d)$ (for example, see Theorem \ref{smooth_projections} below). In terms of the isomorphism \eqref{key_isomorphism}, it is possible to select a specific basis such that $\qcS$ is an algebra of infinite matrices whose entries have rapid decay (\cite[Theorem 6]{GV1988} and \cite[Theorem 6.11]{PSVZ2018}). 
    While we will not need the specific details of the matrix description, we do make use of the following result, which is \cite[Lemma 2.4]{GGISV2004}. 
    \begin{thm}\label{smooth_projections}
        Assume that $\det(\theta)\neq 0$. There exists a sequence $\{p_n\}_{n\geq 0} \subset \qcS$ such that:
        \begin{enumerate}[{\rm (i)}]
            \item{} Each $p_n$ is a projection of rank $n$ (considered as an operator on $L_2({\mathbb R}^{d/2})$, via \eqref{key_isomorphism}).
            \item{} We have that $p_n\uparrow 1$, where $1$ is the identity operator in $L_\infty(\qe)$.
            \item{}\label{finite_rank_dense} $\bigcup_{n\geq 0} p_n L_\infty(\qe) p_n$ is dense in $\qcS$ in its Fr\'echet topology.
        \end{enumerate}
    \end{thm}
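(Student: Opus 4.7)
\medskip

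\noindent\textbf{Proof proposal.} The plan is to exhibit an explicit orthonormal basis of $L_2({\mathbb R}^{d/2})$ adapted to Weyl quantisation, and define $p_n$ as truncation in this basis. Since $\det(\theta)\neq 0$, by orthogonal conjugation we may assume $\theta$ is block-diagonal with blocks $\hbar_j\bigl(\begin{smallmatrix}0 & -1\\ 1 & 0\end{smallmatrix}\bigr)$, $j=1,\ldots,d/2$. Under the isomorphism \eqref{key_isomorphism}, the generators $U(t)$ split as a tensor product of $d/2$ one-parameter families acting on each $L_2({\mathbb R})$ factor as $\exp(\ri t_{2j-1}M_x)\exp(\ri t_{2j}\hbar_j\partial_x)$. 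On each factor, take the Hermite basis $\{h_k\}_{k\geq 0}$, i.e.\ the eigenfunctions of the harmonic oscillator $H_j := \tfrac{1}{2}(M_x^2 - \hbar_j^2\partial_x^2)$. Their tensor products $\{h_\alpha\}_{\alpha\in {\mathbb N}^{d/2}}$ form an orthonormal basis of $L_2({\mathbb R}^{d/2})$ which, enumerated by $|\alpha|$ with some fixed tie-breaking, I denote by $\{e_n\}_{n\geq 0}$. Set $p_n$ to be the orthogonal projection onto $\mathrm{span}\{e_0,\ldots,e_{n-1}\}$.

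The key technical input needed is the characterisation of $\qcS$ in terms of this basis: namely, that $x\in L_\infty(\qe)$ lies in $\qcS$ if and only if its matrix entries $x_{\alpha\beta} = \langle h_\alpha, x h_\beta\rangle$ satisfy $|x_{\alpha\beta}| = O((1+|\alpha|+|\beta|)^{-N})$ for every $N\in{\mathbb N}$, and moreover that the Fr\'echet topology of $\qcS$ is equivalent to the topology given by these rapid-decay seminorms. This is the content of \cite[Theorem 6]{GV1988} (see also \cite[Theorem 6.11]{PSVZ2018}); once it is granted, everything else is bookkeeping. Specifically, the matrix of $p_n$ in the Hermite basis is the identity on a finite $n\times n$ block and zero elsewhere, so $p_n\in\qcS$, and $p_n$ has rank $n$, giving (i). Completeness of $\{e_n\}$ yields $p_n\uparrow 1$ strongly, giving (ii).

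For (iii), given $x\in\qcS$ the product $p_n x p_n$ lies in $p_n L_\infty(\qe)p_n\subset\qcS$ and its matrix is the truncation of $(x_{\alpha\beta})$ to the first $n\times n$ block. The difference $x - p_n x p_n$ thus has entries vanishing inside this block and equal to $x_{\alpha\beta}$ outside, and the rapid decay of $x_{\alpha\beta}$ forces every rapid-decay seminorm of $x-p_n x p_n$ to tend to zero as $n\to\infty$; by the characterisation above, this is precisely convergence in the Fr\'echet topology of $\qcS$. The main obstacle is proving the equivalence between the Schwartz topology on ${\mathcal S}({\mathbb R}^d)$, pulled back via $U$, and the topology of rapid decay of Hermite matrix coefficients. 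This passes through the harmonic oscillator and the Wigner transform: Schwartz seminorms of $f\in{\mathcal S}({\mathbb R}^d)$ are comparable to $L_2$-norms of $(1+H)^N f$ for the full harmonic oscillator $H = \bigoplus H_j$, and a careful integration-by-parts argument relates these to polynomial weights on $x_{\alpha\beta} = \langle h_\alpha, U(f)h_\beta\rangle$. Since this equivalence is standard and is the content of \cite[Lemma 2.4]{GGISV2004}, I would simply invoke it here rather than reprove it.
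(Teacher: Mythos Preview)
Your proposal is correct and follows exactly the approach of the cited references. Note that the paper itself does not prove this theorem: it simply quotes it as \cite[Lemma 2.4]{GGISV2004}, after remarking that $\qcS$ can be identified with rapidly decreasing matrices via \cite[Theorem 6]{GV1988} and \cite[Theorem 6.11]{PSVZ2018}. Your sketch unpacks precisely those references --- Hermite basis, rank-$n$ truncations, and the rapid-decay matrix characterisation of $\qcS$ --- so there is no divergence in approach; you have simply supplied the details the paper leaves to citation.
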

    The presence of smooth projections is a feature of analysis on quantum Euclidean spaces in the $\det(\theta)\neq 0$ case entirely distinct from analysis on Euclidean space. For our purposes we do not need to know the precise form of 
    the sequence $\{p_n\}_{n\geq 0}$, however a description using the map $U$ may be found in \cite[Section 2]{GGISV2004}.
    
    One feature of the Schwartz class ${\mathcal S}({\mathbb R}^d)$ is \emph{factorisability}: that is, every $f \in {\mathcal S}({\mathbb R}^d)$ can be obtained as a product $f = gh$ for $g,h \in {\mathcal S}({\mathbb R}^d)$ (see e.g. \cite{Voigt1984}).
    There is a similar result in the case of $\qcS$ when $\det(\theta)\neq 0$. For the mixed case, where $\theta \neq 0$ but $\det(\theta)=0$, the situation is less clear. We have found it more convenient to pass
    to a subalgebra of $\qcS$ for which we can verify (a very minor weakening of) the factorisation property.
    \begin{prop}\label{factorisation}
        There is a dense $*$-subalgebra $\cA\subseteq \qcS$ such that every $x\in \cA$ can be expressed as a finite linear combination of products of elements of $\cA$. That is, $x= \sum_{j=1}^n y_jz_j $ where each $y_j, z_j \in \cA$.
    \end{prop}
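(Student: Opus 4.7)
The plan is to exploit the tensor decomposition \eqref{algebraic_structure} to split $\qe$ into its degenerate (commutative) and non-degenerate parts, and to handle each factor with a separate factorization device: smooth projections for the non-degenerate factor, and the Dixmier--Malliavin theorem for the commutative factor.

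After an orthogonal change of coordinates we may assume $\theta = 0\oplus \theta_0$ with $\theta_0$ non-degenerate on $\mathbb{R}^{d-k}$, where $k = \dim\ker\theta$. The isomorphism \eqref{algebraic_structure} lifts to a Fr\'echet isomorphism $\qcS \cong \mathcal{S}(\mathbb{R}^k)\,\hat{\otimes}\,\mathcal{S}(\mathbb{R}^{d-k}_{\theta_0})$ on the level of Schwartz classes. On the non-degenerate factor, Theorem \ref{smooth_projections} supplies nested projections $p_n \in \mathcal{S}(\mathbb{R}^{d-k}_{\theta_0})$ with $p_n\uparrow 1$. The set
\[
    \cA_0 := \bigcup_{n\geq 0} p_n\,\mathcal{S}(\mathbb{R}^{d-k}_{\theta_0})\,p_n
\]
is then a $*$-subalgebra (using $p_mp_n = p_{\min(m,n)}$), and Theorem \ref{smooth_projections}(iii) together with $p_n\in \mathcal{S}(\mathbb{R}^{d-k}_{\theta_0})$ implies that $p_nL_\infty(\mathbb{R}^{d-k}_{\theta_0})p_n = p_n\,\mathcal{S}(\mathbb{R}^{d-k}_{\theta_0})\,p_n$, so that $\cA_0$ is dense in $\mathcal{S}(\mathbb{R}^{d-k}_{\theta_0})$. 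Moreover every $y\in p_n\,\mathcal{S}(\mathbb{R}^{d-k}_{\theta_0})\,p_n$ trivially factors as $y = p_n\cdot y$ with both factors in $\cA_0$.

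I would then set $\cA := \mathcal{S}(\mathbb{R}^k)\,\otimes_{\mathrm{alg}}\,\cA_0$, viewed inside $\qcS$ via the tensor isomorphism above. This is a $*$-subalgebra, as an algebraic tensor product of $*$-algebras, and it is dense in $\qcS$ since $\cA_0$ is dense in $\mathcal{S}(\mathbb{R}^{d-k}_{\theta_0})$ and algebraic tensor products are dense in projective tensor products. For the factorization property, linearity reduces the task to elementary tensors $x = f\otimes y$ with $y\in p_n\,\mathcal{S}(\mathbb{R}^{d-k}_{\theta_0})\,p_n$. The Dixmier--Malliavin theorem applied to the multiplication on $\mathcal{S}(\mathbb{R}^k)$ induced from $\qcS$ (convolution under the map $U$, or equivalently pointwise product after Fourier transform) yields a finite decomposition $f = \sum_j g_j\cdot h_j$ with $g_j, h_j \in \mathcal{S}(\mathbb{R}^k)$. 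Since $p_n \in \cA_0$ and $y = p_n\cdot y \in \cA_0$, the identity
\[
    x = \sum_j (g_j\otimes p_n)(h_j\otimes y)
\]
exhibits $x$ as a finite sum of products of elements of $\cA$.

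The main obstacle I anticipate is verifying the Schwartz-level tensor decomposition $\qcS \cong \mathcal{S}(\mathbb{R}^k)\,\hat{\otimes}\,\mathcal{S}(\mathbb{R}^{d-k}_{\theta_0})$ at the correct topological level, and checking that the smooth projections of Theorem \ref{smooth_projections} inject into $\qcS$ as $1\otimes p_n$ compatibly with the Fr\'echet structure. Once these setup points are confirmed, the construction above is essentially a routine assembly of Theorem \ref{smooth_projections} and the Dixmier--Malliavin factorization on $\mathbb{R}^k$.
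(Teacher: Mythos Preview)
Your construction is correct and follows the same overall tensor-splitting strategy as the paper, but handles the non-degenerate factor differently. The paper takes $\cA = \mathcal{S}(\mathbb{R}^{d_1}) \otimes_{\mathrm{alg}} \mathcal{S}(\mathbb{R}^{d-d_1}_{\theta'})$, using the \emph{full} noncommutative Schwartz class on the non-degenerate side and invoking the factorization result of \cite{GV1988} for that algebra; you instead pass to the smaller subalgebra $\cA_0 = \bigcup_n p_n\,\mathcal{S}(\mathbb{R}^{d-k}_{\theta_0})\,p_n$, where factorization is trivial via $y = p_n\cdot y$. Your route is more self-contained on the non-degenerate side (it only needs Theorem \ref{smooth_projections}, already stated in the paper, rather than an external factorization theorem), at the price of yielding a smaller algebra $\cA$. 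Both arguments rely on the same Schwartz-level tensor decomposition---the paper establishes density of the algebraic tensor product by quoting the nuclear-space results of \cite{Treves1967} to write any $\phi\in\mathcal{S}(\mathbb{R}^d)$ as a convergent series of elementary tensors---and both invoke factorization of the commutative Schwartz class (the paper cites Voigt \cite{Voigt1984} for pointwise-product factorization; your Dixmier--Malliavin formulation is essentially equivalent after Fourier transform). The ``obstacle'' you flag about the topological tensor identification is exactly what the paper resolves via \cite{Treves1967}.
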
 
    \begin{proof}
         In the case $\det(\theta)\neq 0$, this result is provided by \cite[pg. 877]{GV1988}. In the commutative ($\theta=0$) case, this is a classical result of harmonic analysis (see e.g. \cite{Voigt1984}).
         
         Performing a change of variables if necessary, we assume that $\theta$ is of the form:
         \begin{equation*}
            \theta = \begin{pmatrix} 0 & 0 \\ 0 & \theta' \end{pmatrix}
         \end{equation*}
         where $\det(\theta')\neq 0$. Let $d_1 = \dim(\ker(\theta))$. If $\det(\theta)\neq 0$, then we do not need to change variables.
         
         Let $f\in {\mathcal S}({\mathbb R}^{d_1})$ and $g \in {\mathcal S}({\mathbb R}^{d-d_1})$, and let $f\otimes g$ denote the function on ${\mathbb R}^d$ given by:
         \begin{equation*}
            (f\otimes g)(t_1,\ldots,t_d) = f(t_1,\ldots,t_{d_1})g(t_{d_1+1},\ldots,t_{d}),\quad t \in {\mathbb R}^d.
         \end{equation*}
         Then it follows readily from the definition that:
         \begin{equation*}  
            U_{\theta}(f\otimes g) = U_0(f)U_{\theta'}(g).
         \end{equation*}
         
         Every Schwartz class function $\phi \in {\mathcal S}({\mathbb R}^d)$ can be written as an infinite linear combination:
         \begin{equation*}
            \phi = \sum_{j=0}^\infty \lambda_j f_j\otimes g_j
         \end{equation*}
         where $\{f_j\}_{j=0}^\infty$ and $\{g_j\}_{j=0}^\infty$ are vanishing sequences in ${\mathcal S}({\mathbb R}^{d_1})$ and ${\mathcal S}({\mathbb R}^{d-d_1})$ respectively, and $\sum_{j=0}^\infty |\lambda_j| < \infty$ (see
         \cite[Theorem 45.1, Theorem 51.6]{Treves1967}).
         
         It follows that every $x \in \qcS$ can be written as a convergent series
         \begin{equation}\label{splitting}
            x = \sum_{j=0}^\infty \lambda_jU_0(f_j)U_{\theta'}(g_j)
         \end{equation}
         for a summable sequence $\{\lambda_j\}_{j=0}^\infty$.
         
         We will define $\cA$ as the algebraic tensor product:
         \begin{equation*}
            \cA = {\mathcal S}({\mathbb R}^{d_1})\otimes {\mathcal S}({\mathbb R}^{d-d_1}_{\theta'}).
         \end{equation*}
         That is, we define $\cA$ to be the algebra of finite linear combinations of elements of the form $U_0(f)U_{\theta'}(g)$, where $f \in {\mathcal S}({\mathbb R}^{d_1})$ and $g \in {\mathcal S}({\mathbb R}^{d-d_1})$.
         
         Then $\cA$ clearly has the desired factorisation property, as ${\mathcal S}({\mathbb R}^{d_1})$ and ${\mathcal S}({\mathbb R}^{d-d_1}_{\theta'})$ do.

    \end{proof}
    
    From now on, we fix $\cA$ to be the dense subalgebra of $\qcS$ constructed in the proof of Proposition \ref{factorisation}.
    
    For $f, g \in {\mathcal S}({\mathbb R}^d)$, we compute
    \be\begin{split}
    U(f)^* &=     \int _{{\mathbb R}^d} \overline{ f(s)}  U(s)^* ds =    \int _{{\mathbb R}^d} \overline{ f(s)} \,    U(-s) ds \\
    &=   \int _{{\mathbb R}^d}  \overline{ f(-s)} \, U(s) ds\,,
    \end{split}
    \ee
    and 
    \be\begin{split}
    U(f) U(g) & =   \int _{{\mathbb R}^d}  f(s) U(s) ds\,\cdot   \int _{{\mathbb R}^d}  g(t) U(t) dt \\
    &=    \int _{{\mathbb R}^d} \int _{{\mathbb R}^d}   f(s) g(t) \,  e^{\frac{\ri}{2} ( s , \theta t) }  U(s+t) dt ds  \\
    &=  \int _{{\mathbb R}^d} \int _{{\mathbb R}^d}   f(s-t) g(t) \,  e^{\frac{\ri}{2} ( s  , \theta t) } U(s) dt ds \\
    &=    \int _{{\mathbb R}^d}    \int _{{\mathbb R}^d}   e^{\frac{\ri}{2} ( s  , \theta t) } f(s-t) g(t)     dt\, U(s) ds .
    \end{split}
    \ee
    For this reason, we define the $\theta $-involution as 
    \beq\label{invo-twist}
    f^\theta (s)=   \overline{ f(-s)} ,
    \eeq
    and the $\theta $-convolution as 
    \beq\label{convo-twist}
    f*_\theta g(s)=    \int _{{\mathbb R}^d}   e^{\frac{\ri}{2} ( s  , \theta t) } f(s-t) g(t)     dt.
    \eeq
    Then, the above calculation shows immediately $U(f)^* = U(f^\theta )$, and 
    \begin{equation}\label{twisted_convolution}
        U(f) \, U(g) =  U(f*_\theta g).
    \end{equation}
    It is straightforward to verify that ${\mathcal S}({\mathbb R}^d)\ast_\theta {\mathcal S}({\mathbb R}^d)\subseteq {\mathcal S}({\mathbb R}^d)$.
    The $\theta$-convolution $*_\theta$ is essentially the same as the twisted convolution of \cite[Definition 1]{GV1988},
    where it was the basis for an alternative definition of ${\mathcal S}(\qe)$ (as was done in \cite{GGISV2004}). 

    
\subsubsection{Measure and integration for $\qe$}
    There is a canonical semifinite normal trace $\tau_{\theta}$ on $L_\infty(\qe)$, essentially defined so that in the isomorphism \eqref{algebraic_structure}, $\tau_{\theta}$ corresponds to integration with respect
    to the Lebesgue measure on the commutative part and is the canonical operator trace ${\mathrm{tr}}$ on the noncommutative part.  
    \begin{defi}\label{qe-integral}
        If $x \in {\mathcal S}(\qe)$ is given by $x = U(f)$ for $f \in {\mathcal S}({\mathbb R}^d)$, we define $\tau_{\theta}(x)$ as:
        \begin{equation*}
            \tau_{\theta}(x) = (2\pi)^df(0).
        \end{equation*}
    \end{defi}
    Since $U$ is injective, $\tau_\theta$ is indeed well-defined.
    The factor of $(2\pi)^d$ is inserted so that $\tau_{\theta}$ recovers the Lebesgue integral when $\theta = 0$ in the following sense:
    let $\iota$ denote the map:
    \begin{equation*}
        {\mathcal S}({\mathbb R}^d_0)\to {\mathcal S}({\mathbb R}^d)
    \end{equation*}
    given by:
    \begin{equation*}
        U(f) \mapsto \big(s\mapsto \int_{{\mathbb R}^d} f(\xi)\exp(\ri(s,\xi))\,d\xi \big).
    \end{equation*}
    Then if $\widehat{f}$ denotes the Fourier transform of $f \in {\mathcal S}({\mathbb R}^d)$, we have
    $$\iota(U(\widehat{f} )) = (2\pi)^{d/2}f.$$
    However $\int_{{\mathbb R}^d} f(s)\,ds = (2\pi)^{d/2}\widehat{f}(0)$, and so the integral of $\iota(U(\widehat{f}))$
    is $(2\pi)^d\widehat{f}(0)$.
    
    \begin{lem}
        The functional $\tau_\theta:\qcS\to {\mathbb C}$ admits an extension to a semifinite normal trace on $L_\infty(\qe)$. If $\theta = \begin{pmatrix}0 & 0 \\ 0 & \theta'\end{pmatrix}$ where $\det(\theta')\neq 0$ then
        in terms of the isomorphism \eqref{algebraic_structure} we have:
        \begin{equation*}
            \tau_{\theta} = \left(\int_{{\mathbb R}^{\dim(\ker(\theta))}}dt\right)\otimes (2\pi)^{\rank(\theta)/2}|\det(\theta')|^{1/2}{\mathrm{tr}}
        \end{equation*}
        where ${\mathrm{tr}}$ is the classical trace on ${\mathcal B}(L_2({\mathbb R}^{\dim(\ker(\theta'))/2}))$.
    \end{lem}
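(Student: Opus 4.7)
The plan is to first verify that $\tau_\theta$ is tracial and positive on $\qcS$, then use the tensor factorization $\theta = 0 \oplus \theta'$ (with $\det(\theta')\neq 0$) to reduce to the commutative and nondegenerate cases, and finally identify the numerical constant in each. Traciality on the Schwartz class is direct: the twisted convolution formula \eqref{twisted_convolution} gives
$$\tau_\theta(U(f)U(g)) = (2\pi)^d(f*_\theta g)(0) = (2\pi)^d \int_{\mathbb{R}^d} f(-t)g(t)\,dt,$$
since the phase $e^{\frac{\ri}{2}(s,\theta t)}$ in \eqref{convo-twist} is $1$ at $s = 0$. A change of variable $t\mapsto -t$ shows this integral is symmetric in $f,g$, so $\tau_\theta(xy)=\tau_\theta(yx)$ on $\qcS$. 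Applying the same identity with $g = f^\theta$ and using $f^\theta(s) = \overline{f(-s)}$ yields the key Plancherel-type equality
$$\tau_\theta(U(f)^*U(f)) = (2\pi)^d \|f\|_{L_2(\mathbb{R}^d)}^2,$$
which shows $\tau_\theta$ is faithful and positive on $\qcS$.

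Next, using the identity $U_\theta(f\otimes g) = U_0(f)\, U_{\theta'}(g)$ established in the proof of Proposition \ref{factorisation}, the functional $\tau_\theta$ splits as a tensor product across the decomposition \eqref{algebraic_structure}. Since the tensor product of semifinite normal traces is again a semifinite normal trace on the von Neumann algebra tensor product, it suffices to handle the two extreme cases separately. In the commutative case $\theta = 0$, the inverse-Fourier-transform identity $\iota(U(\widehat{f})) = (2\pi)^{d/2} f$ recorded immediately after Definition \ref{qe-integral} shows that $\tau_0$ corresponds to Lebesgue integration on $L_\infty(\mathbb{R}^d)$, which is manifestly a semifinite normal trace.

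In the nondegenerate case $\det(\theta)\neq 0$, the Stone--von Neumann theorem identifies $L_\infty(\qe)$ with the type I factor $\mathcal{B}(L_2(\mathbb{R}^{d/2}))$, on which (up to a positive scalar) the operator trace $\mathrm{tr}$ is the unique semifinite normal trace. Because $\tau_\theta$ is tracial and faithful on the $\sigma$-weakly dense $*$-subalgebra $\qcS$, its extension to $L_\infty(\qe)$ must be a positive scalar multiple of $\mathrm{tr}$. To pin down this scalar, combine the Plancherel-type equality above with the classical Weyl--Plancherel formula $\|U(f)\|_{HS}^2 = (2\pi)^{d/2}|\det(\theta)|^{-1/2}\|f\|_{L_2}^2$ for the Schr\"odinger representation, producing $\tau_\theta = (2\pi)^{d/2}|\det(\theta)|^{1/2}\mathrm{tr}$. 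Equivalently, one may evaluate $\tau_\theta$ on a rank-one smooth projection $p_1 \in \qcS$ supplied by Theorem \ref{smooth_projections}, where $\mathrm{tr}(p_1) = 1$ by construction and $\tau_\theta(p_1) = (2\pi)^d f_1(0)$ can be computed against a Gaussian ground state.

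The main obstacle is the nondegenerate case: traciality on the dense subalgebra and extension to a normal trace via uniqueness on the type I factor are comparatively routine, but pinning down the numerical factor $(2\pi)^{\mathrm{rank}(\theta)/2}|\det(\theta')|^{1/2}$ requires either invoking the Plancherel theorem for the Weyl transform (not proved in the text) or performing a direct Hilbert--Schmidt computation on a concrete element such as a Gaussian or one of the smooth projections from Theorem \ref{smooth_projections}.
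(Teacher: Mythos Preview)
The paper does not actually prove this lemma; it is stated without proof, and the text immediately proceeds to record the special case $\det(\theta)\neq 0$ as equation \eqref{scaling_factor}. So there is no ``paper's own proof'' to compare against, and your proposal is filling in a genuine omission.

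Your argument is essentially correct. The traciality computation via \eqref{convo-twist} at $s=0$ is clean, the Plancherel identity $\tau_\theta(U(f)^*U(f)) = (2\pi)^d\|f\|_2^2$ is exactly Proposition \ref{trace-qe}, and the reduction to the two extreme cases through the tensor splitting $U_\theta(f\otimes g) = U_0(f)U_{\theta'}(g)$ is the natural route. One small logical point: in the nondegenerate case you write that because $\tau_\theta$ is tracial and faithful on $\qcS$, ``its extension to $L_\infty(\qe)$ must be a positive scalar multiple of $\mathrm{tr}$'' --- but this presupposes the extension exists and is normal before you have shown it. The cleaner order is the one you sketch immediately afterwards: once the Plancherel comparison gives $\tau_\theta(x^*x) = c\,\mathrm{tr}(x^*x)$ for all $x\in\qcS$, you have identified $\tau_\theta$ with the restriction of the semifinite normal trace $c\,\mathrm{tr}$, and \emph{that} is the extension. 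Uniqueness on the type~I factor then plays no essential role.

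You are right that the numerical constant is where the actual work lies, and that it requires input (the Weyl--Plancherel formula or a direct Gaussian computation) not supplied in the paper. This is presumably why the authors state the lemma without proof and refer implicitly to the literature (e.g.\ \cite{GGISV2004}, \cite{Takhtajan2008}).
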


    
    When $\det(\theta)\neq 0$, we have:
    \begin{equation}\label{scaling_factor}
        \tau_{\theta}(U(f)) = (2\pi)^{d/2}|\det(\theta)|^{1/2}{\mathrm{tr}}(U(f)),\quad f \in {\mathcal S}({\mathbb R}^d).
    \end{equation}
    Hence in the $\det(\theta)\neq 0$ case the range of $\tau_{\theta}$ on projections consists of integer multiples of $(2\pi)^{d/2}|\det(\theta)|^{1/2}$. On the other hand, when $\det(\theta)=0$
    then the range of $\tau_\theta$ on projections is $[0,\infty]$.
    
    For $0 < p <\infty$, the space $L_p(\qe)$ is defined to be the noncommutative ${\mathcal L}_p$-space associated to the von Neumann algebra $L_\infty(\qe)$. If we define:
    $$N_p := \{    x\in L_\infty (\qe) :  \tau _\theta (|x|^p)  <\infty \}$$
    then the $L_p$ space $L_p(\qe)$ is defined as the completion of $N_p$ with the (quasi)norm $\|x\|_p =   \tau _\theta (|x|^p) ^{1/p}$. This is a norm when $p\geq 1$.
    
    When $\det(\theta)\neq 0$, since $L_\infty(\qe)$ is $*$-isomorphic to the algebra ${\mathcal B}(L_2({\mathbb R}^{d/2}))$ and $\tau_\theta$ is a rescaling of the classical trace, the spaces $L_p(\qe)$ are precisely the Schatten ${\mathcal L}_p$-classes.
    Then in the nondegenerate case we have immediately $L_p (\qe) \subset L_q(\qe)$ when $p<q$, i.e., 
    \beq\label{norm-p>q}
    c_{\theta}\|x\|_q \leq \|x\|_p,\quad x \in L_p(\qe).
    \eeq
    for some constant $c_{\theta}$.
    This is in great contrast to the classical case, where $L_p({\mathbb R}^d)$ is not contained in $L_q({\mathbb R}^d)$ for $p\neq q$.

    The preceding computations immediately yield that the mapping $(2\pi)^{-d/2}U$ extends to an isometry from $L_2({\mathbb R}^d)$ to $L_2(\qe)$ \cite[Chapter 2, Lemma 3.1]{Takhtajan2008}.
    \begin{prop}\label{trace-qe}
        Let $f\in {\mathcal S}({\mathbb R}^d)$. Then we have
        $$\|U(f)\|_2 = (2\pi)^{d/2}\|f\|_2\,.$$
    \end{prop}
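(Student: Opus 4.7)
The plan is to compute $\|U(f)\|_2^2 = \tau_\theta(U(f)^*U(f))$ directly via the explicit formulas for the $\theta$-involution \eqref{invo-twist}, the $\theta$-convolution \eqref{convo-twist}, and the definition of $\tau_\theta$ on $\qcS$ (Definition \ref{qe-integral}). Since $f \in \mathcal{S}(\mathbb{R}^d)$, one has $f^\theta \in \mathcal{S}(\mathbb{R}^d)$ and $f^\theta \ast_\theta f \in \mathcal{S}(\mathbb{R}^d)$, so $U(f)^*U(f) = U(f^\theta \ast_\theta f) \in \qcS$ by \eqref{twisted_convolution}, and therefore $\tau_\theta$ applies.

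Concretely, I would first write
\begin{equation*}
\|U(f)\|_2^2 = \tau_\theta(U(f)^* U(f)) = \tau_\theta(U(f^\theta \ast_\theta f)) = (2\pi)^d (f^\theta \ast_\theta f)(0).
\end{equation*}
Then I would evaluate $(f^\theta \ast_\theta f)(0)$ using \eqref{convo-twist}: at $s=0$ the twisting phase $e^{\frac{\ri}{2}(s,\theta t)}$ becomes $1$, so
\begin{equation*}
(f^\theta \ast_\theta f)(0) = \int_{\mathbb{R}^d} f^\theta(-t) f(t)\,dt = \int_{\mathbb{R}^d} \overline{f(t)} f(t)\,dt = \|f\|_2^2,
\end{equation*}
where I used $f^\theta(-t) = \overline{f(t)}$. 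Combining the two displays yields $\|U(f)\|_2^2 = (2\pi)^d \|f\|_2^2$, which gives the claimed identity upon taking square roots.

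There is really no serious obstacle here; the only point requiring a moment's care is confirming that the twisting phase in the $\theta$-convolution disappears at the origin, so that the computation reduces to the classical inner product $\int \overline{f}f$. Everything else is a direct unwinding of the definitions introduced in the excerpt, and the constant $(2\pi)^d$ is exactly the normalization chosen for $\tau_\theta$ in Definition \ref{qe-integral}.
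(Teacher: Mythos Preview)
Your proof is correct and follows exactly the route the paper indicates: the paper does not give a detailed argument but simply states that ``the preceding computations immediately yield'' the result, referring to the formulas for $f^\theta$, $f\ast_\theta g$, and the definition of $\tau_\theta$, which is precisely what you unwound. The only point worth noting is that you have made explicit the key observation that the phase $e^{\frac{\ri}{2}(s,\theta t)}$ vanishes at $s=0$, which is implicit in the paper's one-line justification.
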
    
    Proposition \ref{trace-qe} permits us to extend the domain of $U$ from $L_1({\mathbb R}^d)$ to $L_1({\mathbb R}^d)+L_2({\mathbb R}^d)$.

    \begin{rk}\label{norm_dense_remark}
        It follows from Proposition \ref{trace-qe} that the Schwartz class $\qcS$ is dense in $L_2(\qe)$. Indeed, $(2\pi)^{-d/2}U$ effects an isometric
        isomorphism between $L_2({\mathbb R}^d)$ and $L_2(\qe)$, and since the classical Schwartz space ${\mathcal S}({\mathbb R}^d)$ is dense in $L_2({\mathbb R}^d)$ the 
        density of $\qcS$ in $L_2(\qe)$ follows.
    \end{rk}
    %
    %
    %


The following inequality may be thought of as the quantum Euclidean analogue of the Hausdorff-Young inequality.
\begin{prop}\label{HY-ineq}
    Let $1\leq p \leq 2$ with $\frac  1 p + \frac 1 q =1$. Then for every $f \in L_p({\mathbb R}^d)\cap L_1({\mathbb R}^d)$, we have $U(f)\in L_q(\qe)$, and 
    $$\|U(f)\|_q \leq (2\pi)^{d/2}\|f \|_p$$
    and hence $U$ has continuous extension from $L_p({\mathbb R}^d)$ to $L_q(\qe)$.
\end{prop}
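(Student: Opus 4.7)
The proof plan is to apply Riesz--Thorin interpolation between two endpoint estimates, both of which are already essentially available in the excerpt. The two endpoints to interpolate between are $(p,q) = (1,\infty)$ and $(p,q) = (2,2)$; for every $p \in [1,2]$ with conjugate exponent $q$, there is a unique $t \in [0,1]$ with $\tfrac{1}{p} = \tfrac{1-t}{1} + \tfrac{t}{2}$ and $\tfrac{1}{q} = \tfrac{1-t}{\infty} + \tfrac{t}{2}$, which places the desired $L_p \to L_q$ bound squarely inside the Riesz--Thorin interpolation scale between these two endpoints.

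First I would record the two endpoint estimates. At $(p,q) = (1,\infty)$, the inequality $\|U(f)\|_\infty \leq \|f\|_1$ was already proved in the excerpt (as part of showing that $U(f)$ is a bounded operator via the triangle inequality for the Bochner integral), and since $(2\pi)^{d/2} \geq 1$ this certainly gives $\|U(f)\|_\infty \leq (2\pi)^{d/2}\|f\|_1$. At $(p,q) = (2,2)$, Proposition \ref{trace-qe} furnishes the identity $\|U(f)\|_2 = (2\pi)^{d/2}\|f\|_2$. Thus both endpoint operator norms satisfy the bound $(2\pi)^{d/2}$.

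Next I would invoke the noncommutative Riesz--Thorin theorem, viewing $U$ as a linear operator from the compatible couple $(L_1({\mathbb R}^d), L_2({\mathbb R}^d))$ into the compatible couple $(L_\infty(\qe), L_2(\qe))$ of noncommutative $L_p$ spaces with respect to the trace $\tau_\theta$. Complex interpolation between the two endpoint bounds yields, for every intermediate pair $(p,q)$ as above,
\begin{equation*}
\|U(f)\|_q \;\leq\; \bigl((2\pi)^{d/2}\bigr)^{1-t}\bigl((2\pi)^{d/2}\bigr)^{t}\,\|f\|_p \;=\; (2\pi)^{d/2}\|f\|_p
\end{equation*}
for every $f \in L_1({\mathbb R}^d)\cap L_p({\mathbb R}^d)$, which is the desired inequality.

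Finally, the continuous extension to all of $L_p({\mathbb R}^d)$ follows from a standard density argument: $L_1({\mathbb R}^d)\cap L_p({\mathbb R}^d)$ is dense in $L_p({\mathbb R}^d)$ (for instance, Schwartz functions are dense in $L_p({\mathbb R}^d)$ for every $1 \leq p < \infty$), and by the inequality just proved $U$ is uniformly continuous with constant $(2\pi)^{d/2}$ from $L_p({\mathbb R}^d)\cap L_1({\mathbb R}^d)$ into the Banach space $L_q(\qe)$, so it extends uniquely by continuity. I anticipate no real obstacle; the only point that requires mild care is checking that the noncommutative form of Riesz--Thorin applies to this particular pair of spaces, but since $L_\infty(\qe)$ and $L_2(\qe)$ are noncommutative $L_p$ spaces associated to the same semifinite trace $\tau_\theta$ this is routine.
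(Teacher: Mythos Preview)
Your proposal is correct and follows essentially the same approach as the paper: establish the endpoint bounds $\|U(f)\|_\infty \leq (2\pi)^{d/2}\|f\|_1$ (via the triangle inequality for the Bochner integral) and $\|U(f)\|_2 = (2\pi)^{d/2}\|f\|_2$ (from Proposition~\ref{trace-qe}), then apply complex interpolation for the couples $(L_1({\mathbb R}^d),L_2({\mathbb R}^d))$ and $(L_\infty(\qe),L_2(\qe))$. The paper's proof is slightly terser but identical in substance.
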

\begin{proof}
    First consider the case $p=1$ and $q = \infty$. If $\xi \in L_2({\mathbb R}^d)$, the triangle inequality for the Bochner integral gives us:
    $$\|U(f)\xi\|_2  = \| \int_{{\mathbb R}^d}  f(s) U(s)\xi ds \|_2 \leq \|f\|_1\|\xi\|_2 \leq (2\pi)^{d/2}\|f\|_1\|\xi\|_2$$
    for all $f \in L_1({\mathbb R}^d)$, and therefore,
    \begin{equation*}
        \|U(f)\|_\infty\leq (2\pi)^{d/2}\|f\|_1.
    \end{equation*}

    The case $p=2$ is provided by Proposition \ref{trace-qe}:
    $$\|U(f)\|_2 = (2\pi)^{d/2}\|f\|_2.$$
    We may deduce the result for all $1\leq p\leq 2$ by using complex interpolation for the couples $(L_1({\mathbb R}^d),L_2({\mathbb R}^d))$ and $(L_\infty(\qe),L_2(\qe))$. The complex interpolation method
    for the latter couple is covered by the standard theory of interpolation of noncommutative $L_p$-spaces (see e.g. \cite{PX2003}).
\end{proof}

\section{Calculus on $\qe$}\label{calculus definition subsubsection}
    Now let us recall the differential structure on $\qe$. Let ${\mathcal D}_k$, $1\leq k \leq d $ be the multiplication operators 
    $$({\mathcal D}_k \xi)(r)= r_k  \xi(r),\, r \in {\mathbb R}^d$$
    defined on the domain ${\rm{dom\,}} {\mathcal D}_k = \{\xi \in L_2({\mathbb R}^d): \xi \in L_2({\mathbb R}^d , r_k^2 dr)\}$.
    Fixing $s\in {\mathbb R}^d$, it is easy to see that the unitary generator $U(s)$ preserves $\dom({\mathcal D}_k)$, and 
    we may compute:
    $$[{\mathcal D}_k , U(s)] =  s_k U(s),\quad \mbox{and} \quad  e^{\ri t {\mathcal D}_k } U(s) e^{-\ri t {\mathcal D}_k } = e^{\ri t s_k  }   U(s) \in L_\infty (\qe), \quad t>0.$$
    For general $x\in  L_\infty(\qe)$, if $[{\mathcal D}_k , x ] $ extends to a bounded operator on $L_2({\mathbb R}^d)$, then we can write
    $$[{\mathcal D}_k , x ]  =\lim_{t\ra 0 }     \frac{e^{\ri t {\mathcal D}_k } x e^{-\ri t {\mathcal D}_k }- x  }{\ri t }$$
    with respect to the strong operator topology, and therefore $[{\mathcal D}_k , x] \in L_\infty(\qe)$ (see \cite[Proposition 6.12]{LeSZ2017}). This operator $[{\mathcal D}_k , x]$ is then defined to be the derivative $\partial_k x$ of $x\in L_\infty (\qe)$. Evidently, $\partial_k$ anti-commutes with the adjoint operation:
    $$ \partial_k   x^* = {\mathcal D}_k x^* - x^* {\mathcal D}_k = - [{\mathcal D}_k , x ]^* = -(\partial_k   x)^* .$$
    For a multi-index $\al \in {\mathbb N}_0^d$ and $x\in L_\infty(\qe)$, if every repeated commutator $[{\mathcal D}_j^{\al_j}, [{\mathcal D}_{j+1}^{\al_{j+1}}, \cdots, [{\mathcal D}_d^{\al_d}, x]]]$, $j= 1 ,\cdots, d$ extends to a bounded operator on $L_2({\mathbb R}^d)$, then the mixed partial derivative $\partial^\al x $ is defined as 
    $$\partial^\al x  =[{\mathcal D}_1^{\al_1}, [{\mathcal D}_2^{\al_2}, \cdots, [{\mathcal D}_d^{\al_d}, x]]] .$$
    
    If $\partial^{\al}x$ is bounded for all $\al$, we say that $x$ is smooth.

    Note that the space of Schwartz functions ${\mathcal S}({\mathbb R}^d)$ is a core for every operator ${\mathcal D}_k$, $k=1, \cdots, d$, and we may show that if $x=  \int _{{\mathbb R}^d}  f(s) U(s) ds \in \qcS$, then we have 
    $$[{\mathcal D}_k , x]=  \int _{{\mathbb R}^d}  s_k f(s) U(s) ds \in \qcS.$$
    Inductively, for any $\al \in {\mathbb N}_0^d$, $\partial^\al x  \in \qcS$, and so by our definition the elements of $\qcS$ are smooth.
    
    In terms of the isomorphism $U:{\mathcal S}({\mathbb R}^d)\to \qcS$, we can compute derivatives easily:
    \begin{equation}\label{derivative_formula}
        \partial^{\alpha}U(\phi) = U(t_1^{\alpha_1}\cdots t_d^{\alpha_d}\phi(t)).
    \end{equation}

    We now define the space $\qDist$ of tempered distributions, and the associated operations.
    \begin{definition}
        Let $\qDist$ be the space of continuous linear functionals on $\qcS$, which may be called the space of quantum tempered distributions. 
        
        As in the classical case, denote the pairing of $T \in \qDist$ with $\phi$ in $\qcS$ by $(T,\phi)$, and $L_1(\qe)+L_\infty(\qe)$ is embedded into $\qDist$ by:
        \begin{equation*}
            (x,\phi) := \tau_\theta(x\phi), \quad x \in L_1(\qe)+L_\infty(\qe),\,\phi\in \qcS.
        \end{equation*}
        For a multi-index $\alpha \in {\mathbb N}_0^d$ and $T \in \qcS$, define $\partial^\alpha T$ as the distribution $(\partial^{\alpha}T,\phi) = (-1)^{|\alpha|}(T,\partial^{\alpha}\phi)$.
    \end{definition}
    It is not hard to verify that $\partial^{\alpha}$ on distributions extends $\partial^{\alpha}$ on $L_\infty(\qe)$, so there is no conflict of notation.
        
        By duality, we can extend the derivatives ${\mathcal D}_k$ to operators on $\qDist$.
    With these generalised derivatives, we are able to introduce the Sobolev spaces $W_p^m (\qe)$ associated to noncommutative Euclidean space.
    \begin{defi}
        For a positive integer $m$ and $1\leq p \leq \infty$, the space $W_p^m (\qe)$ is the space of $x\in \qDist $ such that every partial derivative of $x$ up to order $m$ is in $L_p(\qe)$, equipped with the norm
        $$\|x\|_{W_p^m}   = \sum_{|\al| \leq m }   \|\partial^\al x \|_p\,. $$
        The homogeneous Sobolev space $\dot{W}_p^m (\qe)$ consists of those $x\in \qDist$ such that every partial derivative of $x$ of order $m$ is in $L_p(\qe)$, equipped with the norm:
        $$\|x\|_{\dot W_p^m}   = \sum_{|\al| = m }   \|\partial^\al x \|_p\,. $$
    \end{defi}

    We shall now record a proof that $W_p^m(\qe)$ is a Banach space. The proof given here largely replicates well-known arguments in the classical setting, so is only included for the sake of completeness.
    \begin{prop}\label{Sob-Banach}
        Equipped with the above norm, $W_p^m (\qe)$ is a Banach space for any $1\leq p \leq \infty$ and $m\in {\mathbb N}_0$.
    \end{prop}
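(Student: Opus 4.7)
The plan is to prove completeness by the standard Sobolev space argument, adapted to the noncommutative setting: take a Cauchy sequence in $W_p^m(\qe)$, use completeness of $L_p(\qe)$ to extract candidate limits for each derivative, then identify these limits using continuity of distributional differentiation.

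More precisely, suppose $\{x_n\}_{n\geq 0}$ is a Cauchy sequence in $W_p^m(\qe)$. By definition of the norm, for every multi-index $\alpha$ with $|\alpha|\leq m$, the sequence $\{\partial^\alpha x_n\}_{n\geq 0}$ is Cauchy in $L_p(\qe)$. Since $L_p(\qe)$ is a (noncommutative) Banach space, there exist $y_\alpha \in L_p(\qe)$ such that $\partial^\alpha x_n \to y_\alpha$ in $L_p(\qe)$ for every such $\alpha$. Set $y := y_0$, where $y_0$ corresponds to $\alpha = 0$; the task is to show that $y \in W_p^m(\qe)$ with $\partial^\alpha y = y_\alpha$ for every $|\alpha|\leq m$, so that $x_n \to y$ in the $W_p^m$-norm.

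To identify $\partial^\alpha y$ with $y_\alpha$, I would pass through the distribution space $\qDist$. First, the natural embedding $L_p(\qe) \hookrightarrow \qDist$ given by $(x,\phi) = \tau_\theta(x\phi)$ is continuous, because for any $\phi \in \qcS$ one has $|(x,\phi)| \leq \|x\|_p \|\phi\|_{p'}$ by the noncommutative H\"older inequality, and the Fr\'echet topology of $\qcS$ is finer than every $L_{p'}(\qe)$-topology (as noted after \eqref{Schwartz}). Consequently $\partial^\alpha x_n \to y_\alpha$ in $\qDist$ for each $|\alpha|\leq m$. On the other hand, the distributional derivative $\partial^\alpha\colon \qDist \to \qDist$ is continuous by its dual definition, so $x_n \to y$ in $\qDist$ forces $\partial^\alpha x_n \to \partial^\alpha y$ in $\qDist$. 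By uniqueness of distributional limits we get $\partial^\alpha y = y_\alpha \in L_p(\qe)$. Hence $y \in W_p^m(\qe)$ and $\|x_n - y\|_{W_p^m} = \sum_{|\alpha|\leq m} \|\partial^\alpha x_n - y_\alpha\|_p \to 0$.

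That the expression $\|\cdot\|_{W_p^m}$ is actually a norm (not just a seminorm) is immediate: if $\|x\|_{W_p^m} = 0$ then in particular $\|x\|_p = 0$, so $x = 0$ in $L_p(\qe)$ and hence in $\qDist$. The triangle inequality and homogeneity follow from those of $\|\cdot\|_p$. The only conceptual subtlety is verifying the continuity of the inclusion $L_p(\qe) \hookrightarrow \qDist$ and of $\partial^\alpha$ on $\qDist$; both reduce to the fact that the Fr\'echet topology on $\qcS$ dominates the $L_{p'}$-topologies and is preserved under multiplication by polynomial symbols in view of \eqref{derivative_formula}. There is no genuine obstacle here, and the same argument applies verbatim to the homogeneous space $\dot W_p^m(\qe)$.
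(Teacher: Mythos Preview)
Your proof is correct and is essentially the same as the paper's: both take a Cauchy sequence, extract $L_p$-limits $y_\alpha$ for each $\partial^\alpha x_n$, and then identify $y_\alpha$ with $\partial^\alpha y_0$ by pairing against test elements $\phi\in\qcS$ and passing to the limit. The only difference is cosmetic---you phrase the identification step via continuity of the embedding $L_p(\qe)\hookrightarrow\qDist$ and of $\partial^\alpha$ on $\qDist$, whereas the paper writes out the pairing $(\partial^\alpha x_n,\phi)=(-1)^{|\alpha|}(x_n,\partial^\alpha\phi)$ directly; one small caveat is that your final remark about $\dot W_p^m(\qe)$ is not quite right, since $\|\cdot\|_{\dot W_p^m}$ is only a seminorm and the space is not Banach without quotienting.
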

    \begin{proof}    
        It suffices to show that $W_p^m (\qe)$ is complete. Assume that $\{x_n\}_{n=0}^\infty \subset W_p^m (\qe)$ is a Cauchy sequence. Then for every $|\al| \leq m$, $\{\partial^\al x_n\}_n $ is a Cauchy sequence in $L_p (\qe)$, and so is convergent in the $L_p$-norm, so for each $\al$ there exists some $y_\al\in L_p(\qe)$ such that $\partial^\al x_n\ra y_\al $ in $L_p (\qe)$. In particular $  x_n\ra y_0 $ in $L_p (\qe)$. Let us show that $y_\al = \partial^\al y_0$ for all $|\al|\leq m$, and this will complete the proof.
        
        Let $\phi \in \qcS$. Then by the definition of $\partial^\al$ on $\qDist$ we have:
        \begin{equation*}
            (\partial^\al x_n,\phi) = (-1)^{|\al|}(x_n,\partial^\al \phi).
        \end{equation*} 
        Since $x_n\to y_0$ and $\partial^\al x_n\to y_\al$ in the $L_p$-sense it follows that:
        \begin{equation*}
            (-1)^{|\al|}(y_0,\partial^\al \phi) = \lim_{n\to\infty} (\partial^\al x_n,\phi) = (y_\al,\phi).
        \end{equation*}
        Thus by definition, $y_\al = \partial^\al y_0$.         
%
    \end{proof}

\medskip

The Laplacian $\Delta_\theta $ associated with $L_\infty(\qe)$ is defined on the domain $\dom(\Delta_\theta) = L_2({\mathbb R}^d, |t|^4 dt)$ by 
$$(-\Delta_\theta \xi)(t) =  |t|^2 \xi(t).$$
The gradient $\na_\theta $ associated with $L_\infty(\qe)$ is the operator
$$\na_\theta =  (-\ri {\mathcal D}_1, \cdots ,  -\ri {\mathcal D}_d ), $$
with the domain $L_2({\mathbb R}^d, t_1^2 dt)\cap \cdots \cap L_2({\mathbb R}^d, t_d^2 dt )$. 

We can see that if $t \in {\mathbb R}^d$, then $\exp((t,\na_\theta))$ is the operator on $L_2({\mathbb R}^d)$ given by:
\begin{equation*}
    (\exp((t,\na_\theta))\xi)(r) = \exp(\ri(t,r))\xi(r),\quad r \in {\mathbb R}^d,\,\xi \in L_2({\mathbb R}^d).
\end{equation*}

Strictly speaking, the operators $\Delta_\theta $ and $\na_\theta $ do not depend on the matrix $\theta$. However, we prefer to use notation with $\theta$ to emphasise that these operators are associated with $L_\infty(\qe)$.
We will have frequent need to refer to the operator $(1-\Delta_\theta)^{1/2}$, which we abbreviate as $J_\theta$,
$$J_\theta := (1- {\mathcal D}elta_\theta)^{1/2}.$$
That is, $J_{\theta}$ is the operator on $L_2({\mathbb R}^d)$ of pointwise multiplication by $(1+|t|^2)^{1/2}$, with domain
$L_2({\mathbb R}^d,(1+|t|^2)dt)$. Classically, the operator $J_{\theta}$ is called the Bessel potential.

\begin{defi}\label{Dirac}
Let $N= 2^{\lfloor d/2\rfloor }$ and $\{\gamma_j\}_{1\leq j \leq d }$ be self-adjoint $N\times N$ matrices satisfying $\gamma _j \gamma_k +\gamma _k \gamma_j= 2 \delta_{j,k}$. The Dirac operator ${\mathcal D}$ associated with $L_\infty(\qe)$ is the operator on ${\mathbb C}^N \otimes L_2({\mathbb R}^d) $ defined by 
$${\mathcal D}:=  \sum_{j=1} ^d   \gamma_j \otimes {\mathcal D}_j .  $$
\end{defi}
In noncommutative geometric terms, the Dirac operator ${\mathcal D}$ may be used to define a spectral triple for $L_\infty(\qe)$ given by 
$\big(   1\otimes W_1^\infty (\qe),   {\mathbb C}^N \otimes L_2({\mathbb R}^d)   ,  {\mathcal D} \big)$. 
We refer the reader to \cite{GGISV2004,SZ2018a} for more details. 

The main object in this note is the commutator
\beq\label{quantum-diff}
\qd x =  \ri [\sgn({\mathcal D}) ,  1 \ot x ],\quad  x\in L_\infty(\qe),
\eeq
which denotes the quantised differential on quantum Euclidean spaces.

More generally, if $x$ is not necessarily bounded we may still define $\qd x $ on the dense subspace ${\mathbb C}^N\otimes C_c^\infty  ({\mathbb R}^d)$. Suppose that $x\in L_p(\qe)$ for some $2\leq  p <\infty$. Then if $\eta\in {\mathbb C}^N\otimes C_c^\infty  ({\mathbb R}^d)$ with compact support $K$, we will have from Theorem \ref{Cwikel-type} that $(1 \otimes x) \eta= (1\otimes  x M_{\chi_K}) \eta \in L_2({\mathbb R}^d) \otimes {\mathbb C}^N$, where $\chi_K$ is the characteristic function of $K$. It follows that $\sgn({\mathcal D})(1 \otimes x) \eta  \in {\mathbb C}^N\otimes L_2({\mathbb R}^d)$. on the other hand, since $\sgn({\mathcal D}) \eta$ is still a compactly supported function in ${\mathbb C}^N\otimes L_2({\mathbb R}^d)$, using the same argument we have $(1 \otimes x)\sgn({\mathcal D}) \eta  \in {\mathbb C}^N\otimes L_2({\mathbb R}^d)$. Thus $(\qd x)  \eta $ is a well-defined element in ${\mathbb C}^N\otimes L_2({\mathbb R}^d)$.

\subsection{Dilation and translation}
    Since our quantum Euclidean spaces are generated by noncommutating operators, we cannot realise $L_\infty(\qe)$ as an algebra of functions on a space. 
    While there are no underlying points, there are still natural actions of translation by $t \in {\mathbb R}^d$
    and dilation by $\lambda \in (0,\infty)$. 
    
    Of the two, translation is simplest.
    \begin{defi}\label{translation_definition}
        Suppose that $x \in L_\infty(\qe)$. For $t \in {\mathbb R}^d$, define $T_t(x)$ as:
        \begin{equation*}
            T_t(x) = \exp((t,\nabla_\theta))x\exp(-(t,\nabla_{\theta}))
        \end{equation*}
        More generally, if $x \in \qDist$, define $T_t(f)$ as the distribution given by
        \begin{equation*}
            (T_t(f),\phi) = (f,T_{-t}\phi),\quad \phi \in \qcS.
        \end{equation*}
    \end{defi}
    
    That $T_t(f)$ is a well-defined distribution for all $f \in \qDist$ is a straightforward consequence of the observation that $T_t$ is continuous in every seminorm which defines the topology of $\qcS$.
    Moreover, it is a trivial matter to verify that $T_t$ is an isometry in every $L_p(\qe)$, for $0 < p \leq \infty$.
    
    In terms of the map $U$, we have:
    \begin{equation*}
        T_tU(\phi) = U(e^{\ri(t,\cdot)}\phi(\cdot))
    \end{equation*}
    for all $\phi\in {\mathcal S}({\mathbb R}^d)$.

    As we would expect from the classical case, $\{T_t\}_{t \in {\mathbb R}^d}$ is continuous in the $L_p$ norm for $1\leq p < \infty$.
    \begin{thm}\label{continuity_of_translation}
        If $x \in L_p(\qe)$ for $1 \leq  p < \infty$, then $T_t(x)\rightarrow x$ in the $L_p$-norm as $t\to 0$.
    \end{thm}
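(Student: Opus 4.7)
The plan is to prove the statement first on a dense subspace of $L_p(\qe)$ where continuity is transparent, and then extend by the isometry property. Since $T_t$ is an isometry on each $L_p(\qe)$, for any $\epsilon > 0$ and any approximant $y \in \qcS$ with $\|x - y\|_p < \epsilon$, one has
\begin{equation*}
\|T_t(x) - x\|_p \leq \|T_t(x-y)\|_p + \|T_t(y) - y\|_p + \|y - x\|_p \leq 2\epsilon + \|T_t(y) - y\|_p.
\end{equation*}
Therefore the problem reduces to (i) density of $\qcS$ in $L_p(\qe)$ for $1 \leq p < \infty$, and (ii) pointwise convergence $\|T_t(y) - y\|_p \to 0$ for each $y \in \qcS$.

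For (ii) I would use the explicit formula $T_t U(\phi) = U(e^{\ri(t,\cdot)}\phi(\cdot))$ recorded above. Writing $y = U(\phi)$ gives $T_t(y) - y = U(g_t)$ with
\begin{equation*}
g_t(r) := \bigl(e^{\ri(t,r)} - 1\bigr)\phi(r).
\end{equation*}
Using the bound $|e^{\ri(t,r)} - 1| \leq |t|\cdot|r|$ together with Leibniz's rule, every Schwartz seminorm of $g_t$ tends to $0$ as $t \to 0$, so $g_t \to 0$ in $\mathcal{S}({\mathbb R}^d)$. Since $U$ is a topological isomorphism of $\mathcal{S}({\mathbb R}^d)$ onto $\qcS$, and since the Fr\'echet topology of $\qcS$ is finer than the $L_p(\qe)$ topology for every $1 \leq p \leq \infty$ (as remarked after \eqref{Schwartz}), it follows that $\|U(g_t)\|_p \to 0$.

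For (i), the density of $\qcS$ in $L_2(\qe)$ is Remark \ref{norm_dense_remark}. For other $p$, in the nondegenerate case $L_\infty(\qe)$ is $*$-isomorphic to ${\mathcal B}(L_2({\mathbb R}^{d/2}))$ and $L_p(\qe)$ becomes a rescaled Schatten class, in which the finite-rank operators lying in $\qcS$ (for instance $p_n L_\infty(\qe) p_n$ with $p_n$ as in Theorem \ref{smooth_projections}) are norm-dense. In the general case, the decomposition \eqref{algebraic_structure} allows one to combine the nondegenerate argument on the ${\mathcal B}(L_2)$-factor with the classical density of Schwartz functions in $L_p({\mathbb R}^{\dim \ker \theta})$ via the algebraic tensor product $\cA$ of Proposition \ref{factorisation}.

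The main obstacle I anticipate is verifying (i) in the full range $1 \leq p < \infty$ in a uniform way, because Proposition \ref{HY-ineq} only directly places $\qcS$ inside $L_p(\qe)$ for $p \geq 2$; the range $1 \leq p < 2$ requires exploiting the operator-algebraic structure \eqref{algebraic_structure} rather than harmonic-analytic estimates alone. Once density is in hand, steps (ii) and the isometry argument are essentially formal, closely paralleling the classical proof that translation is strongly continuous on $L_p({\mathbb R}^d)$.
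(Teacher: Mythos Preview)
Your proposal is correct, but the paper proceeds differently, and the difference is worth noting because of the logical ordering in the paper.

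The paper does \emph{not} invoke density of $\qcS$ in $L_p(\qe)$ here. Instead it first settles $p=2$ directly via the isometry $(2\pi)^{-d/2}U:L_2({\mathbb R}^d)\to L_2(\qe)$ and dominated convergence, and then bootstraps to other exponents by H\"older interpolation: for $2<p<\infty$ and $x\in L_2\cap L_\infty$ one writes $\|T_t x-x\|_p\le \|T_t x-x\|_\infty^{1-2/p}\|T_t x-x\|_2^{2/p}$ and uses only the $L_2$ convergence together with the uniform bound $\|T_t x-x\|_\infty\le 2\|x\|_\infty$; for $1\le p<2$ one uses the analogous splitting with exponents $2$ and $2p/(4-p)$. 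The density input is then only that $L_2\cap L_\infty$ (respectively $L_2\cap L_{2p/(4-p)}$) is dense in $L_p(\qe)$, which is a standard fact for noncommutative $L_p$-spaces of a semifinite von Neumann algebra and requires nothing specific to $\qe$.

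Your route through Schwartz elements and the Fr\'echet topology is perfectly sound and arguably more conceptual, but it front-loads the density of $\qcS$ in $L_p(\qe)$ for all $1\le p<\infty$. In the paper's internal logic this density is obtained \emph{later}, via the mollification machinery of Section~\ref{approximation_subsection}, which itself relies on Theorem~\ref{continuity_of_translation}; so using it here would be circular unless, as you suggest, one establishes density independently from the structural isomorphism \eqref{algebraic_structure} and Theorem~\ref{smooth_projections}. That independent argument can certainly be made to work, but it is extra work, and the H\"older-interpolation proof avoids it entirely. The trade-off: your argument gives the result for $\qcS$ in all $L_p$ at once and is insensitive to the value of $p$, whereas the paper's argument is elementary and self-contained at this stage but splits into cases $p\gtrless 2$.
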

    \begin{proof}
        Initially consider the case when $x = U(f) \in L_2(\qe)$. It is straightforward to see that $T_t(U(f)) = U(\exp(\ri(t,\cdot)f(\cdot)))$ for all $f \in L_2({\mathbb R}^d)$, and 
        using Proposition \ref{trace-qe} and the dominated convergence theorem:
        \begin{equation*}
            \|T_t(U(f))-U(f)\|_2 = (2\pi)^{d/2}\|e^{\ri(t,\cdot)}f(\cdot)-f(\cdot)\|_2 \rightarrow 0
        \end{equation*}
        as $t\to 0$. 
%
%
        
        Suppose that $2< p < \infty$ and $x \in L_2(\qe)\cap L_\infty(\qe)$. Using the H\"older inequality, it follows that:
        \begin{align*}
            \lim_{t\to 0}\|T_t(x)-x\|_{p} &\leq \lim_{t\to 0} \|T_t(x)-x\|_\infty^{1-\frac{2}{p}}\|T_t(x)-x\|_2^{\frac{2}{p}}\\
                                          &\leq (2\|x\|_\infty)^{1-\frac{2}{p}}\lim_{t\to \infty}\|T_t(x)-x\|_2^{\frac{2}{p}}\\
                                          &= 0.
        \end{align*}
        
        We can extend from $x \in L_2(\qe)\cap L_\infty(\qe)$ to all $x \in L_p(\qe)$ by using the norm-density of $L_2(\qe)\cap L_\infty(\qe)$ in $L_p(\qe)$. Namely,
        let $\varepsilon>0$ and select $y \in L_2(\qe)\cap L_\infty(\qe)$ such that $\|x-y\|_p<\varepsilon$. Then:
        \begin{align*}
            \lim_{t\to0} \|T_t(x)-x\|_p &\leq \lim_{t\to 0} \|T_t(x-y)\|_p+\lim_{t\to 0}\|T_ty-y\|_p+\|y-x\|_p\\
                           &\leq 2\varepsilon+\lim_{t\to 0}\|T_ty-y\|_p\\
                           &= 2\varepsilon.
        \end{align*}
        Hence, $T_tx\to x$ in the $L_p$ norm.
        
        On the other hand, if $1\leq p < 2$, consider $x \in L_2(\qe)\cap L_{2p/(4-p)}(\qe)$, then H\"older's inequality and the fact that $T_t$ is an isometry in every $L_p(\qe)$ implies that:
        \begin{align*}
             \|T_t(x)-x\|_p &\leq \|T_t(x)-x\|_2^{1/2}. \|T_t(x)-x\|_{2p/(4-p)} ^{1/2}\\
                            &\lesssim_p \|T_t(x)-x\|_2^{1/2}\|x\|_{2p/(4-p)} ^{1/2}.
%
        \end{align*}
        Thus $\lim_{t\to 0}\|T_t(x)-x\|_p = 0$ for $x \in L_2(\qe)\cap L_{2p/(4-p)}(\qe)$, and this may be extended to all $x \in L_p(\qe)$ by a density argument similar to the $p>2$ case.

  \end{proof}
  Theorem \ref{continuity_of_translation} only discusses the cases $1 \leq p < \infty$ since we are not aware of any characterisation of the subspace of $x \in L_\infty(\qe)$ such that $\lim_{t\to 0} \|T_tx-x\|_\infty = 0$.
  In the classical case, this corresponds to the space of bounded uniformly continuous functions. Using Theorem \ref{HY-ineq}, it is possible to prove that $\lim_{t\to 0}\|T_tx-x\|_\infty=0$
  for all $x \in \qcS$, and for all $x$ in the closure of $\qcS$ in $L_\infty(\qe)$.

    We now describe the ``dilation" action of ${\mathbb R}^+$ on a quantum Euclidean space. A peculiarity of the noncommutative situation is
    that the natural dilation semigroup does not define an automorphism of $L_\infty(\qe)$ to itself, but instead the value of $\theta$ varies.

The heuristic motivation for the dilation mapping is as follows. Recall that we consider $\qe$ as being generated by elements $\{x_1,\ldots,x_d\}$ satisfying
the commutation relation 
$$[x_j,x_k]=\ri\theta_{j,k}.$$ 

However this relation is not invariant under rescaling. That is, if we let $\lambda > 0$ then the family $\{\lambda x_1,\ldots,\lambda x_d\}$ satisfies the relation:
\begin{equation*}
    [\lambda x_j,\lambda x_k] = \ri \lambda^2\theta_{j,k}.
\end{equation*}
It therefore becomes clear that if we wish to define a ``dilation by $\lambda$" map on $\qe$, we should instead consider dilation as mapping between two different noncommutative spaces. That is,
from $\qe$ to ${\mathbb R}^d_{\lambda^2\theta}$.

The following rigorous definition of the ``dilation by $\la$" map follows \cite{GJP2017}. Given $\lambda >0$, define the map $\Psi_\la$ from $L_\infty(\qe)$ to $L_\infty({\mathbb R}_{\la ^2 \theta }^d)$ as 
\beq\label{def-dilation}
\Psi_\la:  U_\theta (s) \mapsto U_{\la^2 \theta } (\frac s \la ).\eeq 
Recall that we include a subscript $\theta$ (or $\la^2\theta$) to indicate the dependence on the matrix.

Denote by $\sigma_\la $ the usual $L_2$-norm preserving dilation on Euclidean space:
$$\sigma_\la \xi(t)    =  \la^{d/ 2 } \xi(\la  t ), \quad \xi \in  L_2({\mathbb R}^d). $$
We have $ \sigma _\la ^* =  \sigma_{\la^{-1}}$. 
It is standard to verify that 
\beq\label{dilation}
U_\theta (s) =  \sigma^*_{\la}   \,  U_{\la^2 \theta } (\frac s \la )\,  \sigma_\la .\eeq
Moreover, by \eqref{dilation}, it is evident that for every $\la>0$, $\Psi_\la$ is a $*$-isomorphism from $L_\infty(\qe)$ to $L_\infty({\mathbb R}_{\la ^2 \theta }^d)$.

%
%
%
    The following proposition shows how the dilation $\Psi_\la $ affects the $L_p$ norms for quantum Euclidean spaces.
    \begin{prop}\label{dilation-Ld}
        Let $\la>0 $ and $x\in L_p (\qe)$, and denote $\xi= \la^2\theta $. Then for all $2 \leq p < \infty$, we have:
        \begin{equation*}
            \|\Psi_\la x\|_{L_p({\mathbb R}^d_\xi)} \leq \lambda^{d/p}\|x\|_{L_p(\qe)}
        \end{equation*}
        and $\Psi_{\la}$ is an isometry from $L_\infty(\qe)$ to $L_\infty({\mathbb R}^d_\xi)$.
        
        If in addition $x \in W^{1}_p(\qe)$, then:
        \beq\label{dilation-Wd-eq}
        \|\partial_j\Psi_\la (x)  \|_{L_p({\mathbb R}^d_{\xi})}  \leq  \lambda^{d/p-1}\|\partial_jx\|_{L_p({\mathbb R}^d_{ \theta})} \,,\quad  j=1,\cdots , d. \eeq
    \end{prop}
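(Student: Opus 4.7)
The plan is to realise $\Psi_\la$ as a spatial isomorphism implemented by the unitary $\sigma_\la$ on $L_2({\mathbb R}^d)$, extract the $L_\infty$-isometry statement for free, and then pin down how $\Psi_\la$ transforms the canonical trace in order to deduce the $L_p$ bounds by spectral calculus. From the identity $U_\theta(s) = \sigma_\la^{\ast}\, U_{\la^2\theta}(s/\la)\, \sigma_\la$ in \eqref{dilation}, one reads off $\sigma_\la U_\theta(s) \sigma_\la^{\ast} = \Psi_\la(U_\theta(s))$, so conjugation by $\sigma_\la$ sends $L_\infty(\qe)$ onto $L_\infty({\mathbb R}^d_{\la^2\theta})$. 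Hence $\Psi_\la$ extends to a normal $\ast$-isomorphism between these von Neumann algebras, and is in particular an operator-norm isometry, establishing the $L_\infty$ claim.

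Next I would compute the action of $\Psi_\la$ on a Schwartz element $x = U_\theta(f)$. The change of variables $u = s/\la$ in the Bochner integral defining $\Psi_\la(U_\theta(f))$ gives
\begin{equation*}
\Psi_\la(U_\theta(f)) = \la^d\, U_{\la^2\theta}(f(\la\,\cdot)),
\end{equation*}
and Definition \ref{qe-integral} then yields $\tau_{\la^2\theta}(\Psi_\la(U_\theta(f))) = \la^d(2\pi)^d f(0) = \la^d\,\tau_\theta(U_\theta(f))$. By normality of $\Psi_\la$ together with $\sigma$-weak density of $\qcS$, the trace identity $\tau_{\la^2\theta}\circ \Psi_\la = \la^d\, \tau_\theta$ extends to all positive elements of $L_1(\qe)$. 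Because $\Psi_\la$ intertwines the $\ast$-structure and hence the Borel functional calculus, $|\Psi_\la(x)|^p = \Psi_\la(|x|^p)$, and so
\begin{equation*}
\|\Psi_\la(x)\|_p^p \;=\; \tau_{\la^2\theta}(\Psi_\la(|x|^p)) \;=\; \la^d\,\|x\|_p^p,
\end{equation*}
giving the $L_p$ bound (in fact with equality $\|\Psi_\la(x)\|_p = \la^{d/p}\|x\|_p$ for every $1\leq p<\infty$, which is stronger than the stated inequality).

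For the derivative estimate \eqref{dilation-Wd-eq}, I would work first at the Schwartz level. Applying \eqref{derivative_formula} to both sides of $\Psi_\la(U_\theta(f)) = \la^d U_{\la^2\theta}(f(\la\,\cdot))$ yields
\begin{equation*}
\partial_j \Psi_\la(U_\theta(f)) = \la^d U_{\la^2\theta}(t_j f(\la t)), \qquad \Psi_\la(\partial_j U_\theta(f)) = \la^{d+1} U_{\la^2\theta}(t_j f(\la t)),
\end{equation*}
so $\partial_j \Psi_\la = \la^{-1}\Psi_\la \partial_j$ on $\qcS$. Combined with the $L_p$-identity from the previous paragraph this gives $\|\partial_j \Psi_\la(x)\|_p = \la^{d/p-1}\|\partial_j x\|_p$, which is \eqref{dilation-Wd-eq}. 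A density argument extends the formula to arbitrary $x \in W^1_p(\qe)$.

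The main obstacle is ensuring that the trace identity $\tau_{\la^2\theta}\circ \Psi_\la = \la^d\tau_\theta$, verified initially only on $\qcS$, propagates correctly to the full von Neumann algebra; this is exactly where the normality of $\Psi_\la$ — itself a consequence of the unitary implementation by $\sigma_\la$ — does the essential work. Once that is in place, everything reduces to routine computation and density arguments.
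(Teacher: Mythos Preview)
Your argument is correct and takes a genuinely different route from the paper. The paper establishes the $L_\infty$ isometry the same way you do, then computes the $L_2$ norm of $\Psi_\la$ directly via Proposition \ref{trace-qe} (observing that $\Psi_\la\circ U_\theta = U_\xi\circ d_\la$ with $d_\la f(t)=f(t/\la)$, so the $L_2\to L_2$ norm is $\la^{d/2}$), and finally obtains the intermediate $2<p<\infty$ range by complex interpolation of the couple $(L_2,L_\infty)$. The derivative statement is then deduced from the same identity $\partial_j\Psi_\la=\la^{-1}\Psi_\la\partial_j$ that you prove.

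By contrast, you compute the trace scaling $\tau_{\la^2\theta}\circ\Psi_\la=\la^d\tau_\theta$ once and then exploit that $\Psi_\la$, being a $\ast$-isomorphism, commutes with Borel functional calculus; this yields the exact equality $\|\Psi_\la x\|_p=\la^{d/p}\|x\|_p$ for every $1\leq p<\infty$, which is sharper than the stated inequality and valid for a wider range of $p$. Your approach is more elementary in that it avoids interpolation theory altogether. The one point where you should be a little more careful is the extension of the trace identity from $\qcS$ to the full positive cone: $\sigma$-weak density alone is not enough because $\tau_\theta$ is unbounded. The cleanest fix is to observe that $\tau_{\la^2\theta}\circ\Psi_\la$ is again a normal semifinite faithful trace on $L_\infty(\qe)$, and that two such traces agreeing on the Schwartz class (which is dense in $L_1(\qe)$ for either trace) must coincide on $L_1(\qe)_+$ by normality and monotone approximation. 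With that detail supplied, your proof is complete.
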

\begin{proof}
    As was already mentioned, $\Psi_{\la}$ is a $*$-isomorphism between $L_\infty(\qe)$ and $L_\infty({\mathbb R}^d_\xi)$, and since a $*$-isomorphism of $C^*$-algebras is an isometry, it follows
    immediately that $\Psi_\la:L_\infty(\qe)\to L_\infty({\mathbb R}^d_\xi)$ is an isometry.
    
    For $p=2$, recall from Proposition \ref{trace-qe} that the mapping $(2\pi)^{-d/2}U_\theta$ (resp. $(2\pi)^{-d/2}U_\xi$) defines an isometry from $L_2(\qe)$ (resp. $L_2({\mathbb R}^d_\xi)$) to $L_2({\mathbb R}^d)$. Denoting $d_{\lambda}$ for the map $d_\lambda f(t) = f(t/\lambda)$, we have:
    \begin{equation*}
        \Psi_{\la} \circ U_\theta = U_\xi\circ d_\la, \quad \lambda > 0.
    \end{equation*}
    Hence $\Psi_\la$ has the same norm betweeen $L_2(\qe)$ and $L_2({\mathbb R}^d_\xi)$ as $d_\la$ does on $L_2({\mathbb R}^d)$. This is easily computed to be $\la^{d/2}$. 
    
    Finally, the result for $2 < p < \infty$ follows from complex interpolation of the couples $(L_2(\qe),L_\infty(\qe))$ and $(L_2({\mathbb R}^d_\xi),L_\infty({\mathbb R}^d_\xi))$.
    
    We recall that the complex interpolation space $(L_2(\qe),L_\infty(\qe))_{\eta}$ is $L_{2/\eta}(\qe)$, where $\eta \in (0,1)$, and that we have:
    \begin{equation*}
        \|\Psi_\la\|_{L_{2/\eta}\to L_{2/\eta}} \leq \|\Psi_{\la}\|_{L_2\to L_2}^\eta \|\Psi_{\la}\|_{L_\infty\to L_\infty}^{1-\eta} \leq \la^{d\eta/2}.
    \end{equation*}
    Taking $\eta = \frac{2}{p}$ yields the desired norm bound.
    
    The second claim follows from the easily-verified identity:
    \begin{equation*}
        \partial_j(\Psi_{\la}(x)) = \lambda^{-1}\Psi_{\la}\partial_j(x).
    \end{equation*}    
\end{proof}

%
        

\subsection{Approximation by smooth functions for $\qe$}\label{approximation_subsection}
    For this section, we fix $\psi \in {\mathcal S}({\mathbb R}^d)$ such that $\int_{{\mathbb R}^d} \psi(s)\,ds = 1$.
    We do not assume that $\psi$ is necessarily compactly supported or positive, since it will be convenient to have some freedom in choosing $\psi$. 
    For $\varepsilon > 0$, define:
    \begin{equation}\label{psi-dilation}
        \psi_{\varepsilon}(t) = \varepsilon^{-d}\psi(\frac{t}{\varepsilon}).
    \end{equation}
    By construction, $\int_{{\mathbb R}^d}\psi_{\varepsilon}(t)\,dt = 1$. Moreover since $\psi$ in particular has rapid decay at infinity, the $L_1$-norm of $\psi_{\varepsilon}$ is primarily concentrated in the ball of radius $\varepsilon^{1/2}$ around zero. That is, for each $N\geq 1$, there exists a constant $C_N$ depending on $\psi$ such that:
    \begin{equation}\label{approximate_concentration}
        \int_{|t|>\varepsilon^{1/2}} |\psi_{\varepsilon}(t)|\,dt \leq C_N \varepsilon^{N}.
    \end{equation}
    
    \begin{thm}\label{spatial_approximation}
        Let $1\leq p <\infty$. For all $x \in L_p(\qe)$, we have that $U(\psi_{\varepsilon})x \to x$ in the $L_p(\qe)$ norm as $\varepsilon\to 0$.
    \end{thm}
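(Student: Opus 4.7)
The strategy is standard for approximate identities: establish uniform boundedness of left multiplication by $U(\psi_\varepsilon)$ on $L_p(\qe)$, then reduce convergence to the $L_p$-continuity of $s \mapsto U(s)x$ at the origin via a Bochner integral representation.

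First, the $p=1$, $q=\infty$ case of Proposition \ref{HY-ineq} gives $\|U(\psi_\varepsilon)\|_{L_\infty(\qe)} \leq \|\psi_\varepsilon\|_{L_1({\mathbb R}^d)} = \|\psi\|_{L_1({\mathbb R}^d)}$, uniformly in $\varepsilon$, so left multiplication by $U(\psi_\varepsilon)$ is uniformly bounded on every $L_p(\qe)$. Since $\int \psi_\varepsilon(s)\,ds = 1$, we may write
$$U(\psi_\varepsilon)x - x = \int_{{\mathbb R}^d} \psi_\varepsilon(s)\bigl(U(s)x - x\bigr)\,ds$$
as a Bochner integral in $L_p(\qe)$ (the integrand has $L_p$-norm at most $2|\psi_\varepsilon(s)|\|x\|_p$, and $s \mapsto U(s)x$ is strongly measurable by strong continuity of $s\mapsto U(s)$). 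Splitting the region of integration at $|s|=\varepsilon^{1/2}$, the outer portion is controlled by $2\|x\|_p C_N \varepsilon^N$ via \eqref{approximate_concentration}, while the inner portion is at most $\|\psi\|_{L_1}\sup_{|s|\leq\varepsilon^{1/2}}\|U(s)x - x\|_p$. So the proof reduces to the continuity statement $\lim_{s\to 0}\|U(s)x - x\|_p = 0$ for each $x \in L_p(\qe)$.

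To prove this continuity, I would argue according to $p$. For $p=2$, Proposition \ref{trace-qe} allows us to write $x = U(f)$ for some $f \in L_2({\mathbb R}^d)$; the Weyl relation \eqref{ccr_weyl} together with a change of variables yields $U(s)U(f) = U(g_s)$ where $g_s(u) = e^{\frac{\ri}{2}(s, \theta u)}f(u-s)$, and $L_2({\mathbb R}^d)$-continuity of translation combined with dominated convergence on the phase factor gives $g_s \to f$ in $L_2({\mathbb R}^d)$, hence $U(s)x \to x$ in $L_2(\qe)$. For $p=1$, any $x \in L_1(\qe)$ factors as $x = ab$ with $a, b \in L_2(\qe)$ (polar decomposition $x = u|x|$ with $a = u|x|^{1/2}$, $b = |x|^{1/2}$), so $U(s)x - x = (U(s)a - a)b$ and H\"older's inequality gives $\|U(s)x - x\|_1 \leq \|U(s)a - a\|_2\|b\|_2 \to 0$. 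For $1 < p < 2$, log-convexity interpolation
$$\|U(s)x - x\|_p \leq \|U(s)x - x\|_1^{2/p - 1}\,\|U(s)x - x\|_2^{2 - 2/p}$$
handles $x \in L_1(\qe)\cap L_2(\qe)$, and a parallel interpolation between $L_2$ and $L_\infty$ (using the trivial bound $\|U(s)x - x\|_\infty \leq 2\|x\|_\infty$) handles $x \in L_2(\qe)\cap L_\infty(\qe)$ for $2 < p < \infty$. Density of $L_1\cap L_2$ (resp.\ $L_2\cap L_\infty$) in $L_p(\qe)$, together with the isometry property $\|U(s)y\|_p = \|y\|_p$ giving uniform boundedness of the family $\{U(s)\}$ as left multipliers on $L_p(\qe)$, extends the conclusion to all of $L_p(\qe)$.

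The main obstacle is the $p=1$ case: the noncommutative Hausdorff--Young inequality provides no direct control on $L_1(\qe)$, so the $L_1$-continuity of $s \mapsto U(s)x$ cannot be established by pulling back to $L_\infty({\mathbb R}^d)$ norms. The way around this is the factorization $L_1(\qe) = L_2(\qe)\cdot L_2(\qe)$, which bootstraps from the already-established $L_2$ continuity. Once $p=1$ and $p=2$ are in hand, interpolation and a routine density-plus-uniform-bound argument finish the remaining values of $p$.
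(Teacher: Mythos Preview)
Your proof is correct but organizes the argument differently from the paper. The paper works directly with the twisted convolution on the Fourier side: for $p=2$ and $x=U(f)$ it shows $\psi_\varepsilon\ast_\theta f\to f$ in $L_2({\mathbb R}^d)$ by a pointwise-convergence plus compact-exhaustion argument, then extends to general $p$ via the same interpolation scheme as Theorem~\ref{continuity_of_translation}. You instead isolate an intermediate continuity lemma, $\lim_{s\to 0}\|U(s)x-x\|_p=0$ for left multiplication by the Weyl unitaries, and then deduce the theorem by the Bochner-integral-plus-splitting argument that the paper reserves for Theorem~\ref{frequency_approximation} (where the analogous continuity input is Theorem~\ref{continuity_of_translation} for the translations $T_t$). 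Your $p=2$ step is essentially the same computation as the paper's, just repackaged; your $p=1$ step via the factorization $L_1=L_2\cdot L_2$ is cleaner than the paper's H\"older-inequality route through $L_{2p/(4-p)}$.

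Two small points worth tightening. First, your presentation has a mild circularity: you invoke strong measurability of $s\mapsto U(s)x$ in $L_p(\qe)$ to set up the Bochner integral before proving the $L_p$-continuity that actually delivers it; simply prove the continuity lemma first. Second, the identity $U(\psi_\varepsilon)x=\int\psi_\varepsilon(s)\,U(s)x\,ds$ in $L_p(\qe)$ deserves a line of justification, since $U(\psi_\varepsilon)$ is defined as a strong-operator integral on $L_2({\mathbb R}^d)$ while the right side is an $L_p(\qe)$-valued Bochner integral; one checks equality first for $x\in\qcS$ (where both sides are bounded operators and agree on $L_2({\mathbb R}^d)$ vectors) and then extends by density using your uniform bound $\|U(\psi_\varepsilon)\|_\infty\leq\|\psi\|_1$.
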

    \begin{proof}

        Let us first prove the result for $p=2$ and $x \in \qcS$. Thanks to Proposition \ref{trace-qe} and \eqref{twisted_convolution}, it suffices to show that for all $f \in {\mathcal S}({\mathbb R}^d)$:
        \begin{equation*}
            \psi_{\varepsilon}\ast_\theta f\rightarrow f
        \end{equation*}
        in the norm of $L_2({\mathbb R}^d)$, where $\ast_\theta$ is the deformed convolution \eqref{convo-twist}.
        
        By definition \eqref{convo-twist}, we have that:
        \begin{equation}\label{mass_one}
            \psi_{\varepsilon}\ast_{\theta} f (t) = \int_{{\mathbb R}^d} e^{-\frac{\ri}{2}(t,\theta s)}\psi_{\varepsilon}(s)f(t-s)\,ds,\quad t \in {\mathbb R}^d.
        \end{equation}

        Since by definition $\int_{{\mathbb R}^d} \psi_{\varepsilon}(s)\,ds = 1$, we have:
        \begin{equation*}
             \psi_{\varepsilon}\ast_{\theta} f (t)-f(t) = \int_{{\mathbb R}^d} e^{-\frac{\ri}{2}(t,\theta s)}\psi_{\varepsilon}(s)f(t-s)-\psi_{\varepsilon}(s)f(t)\,ds
        \end{equation*}
        for all $t \in {\mathbb R}^d$. Hence,
        \begin{equation*}
             \psi_{\varepsilon}\ast_{\theta} f  (t)-f(t) = \int_{{\mathbb R}^d} e^{-\frac{\ri}{2}(t,\theta s)}\psi_{\varepsilon}(s)(f(t-s)-e^{\frac{\ri}{2}(t,\theta s)}f(t))\,ds.
        \end{equation*}
        Split the integral into the set $|s|\leq \varepsilon^{1/2}$ and $|s| > \varepsilon^{1/2}$. Let $N\geq 1$. Using \eqref{approximate_concentration} there is a constant $C_N$ such that
        \begin{align*}
            |\psi_{\varepsilon}\ast_{\theta} f(t)-f(t)| &\leq \int_{|s|\leq \varepsilon^{1/2}} |\psi_{\varepsilon}(s)| \,|f(t-s)-e^{\frac{\ri}{2}(t,\theta s)}f(t)|\,ds\\
                                                     &\quad   + \int_{|s|> \varepsilon^{1/2}} |\psi_{\varepsilon}(s)|\, |f(t-s)-e^{\frac{\ri}{2}(t,\theta s)}f(t)|\,ds\\
                                                     &\leq  \|\psi\|_1\sup_{|s|\leq \varepsilon^{1/2}} |f(t-s)-e^{\frac{\ri}{2}(t,\theta s)}f(t)|\\
                                                     &\quad   + C_N\varepsilon^N\|f\|_\infty.
        \end{align*}
        Since $f$ is in Schwartz class (and in particular uniformly continuous and bounded), it follows that
        \begin{equation}\label{pointwise_convergence}
            \lim_{\varepsilon\to 0} |\psi_{\varepsilon}\ast_{\theta} f(t)-f(t)| = 0
        \end{equation}
        uniformly for $t \in {\mathbb R}^d$.
        
        Returning to \eqref{mass_one}, we can use the triangle inequality to deduce that:
        \begin{equation*}
            |\psi_{\varepsilon}\ast_{\theta} f(t)| \leq \int_{{\mathbb R}^d} |\psi_{\varepsilon}(s)||f(t-s)|\,ds.
        \end{equation*}
        That is, $|\psi_{\varepsilon}\ast_{\theta} f| \leq |\psi_{\varepsilon}|\ast |f|$. Using Young's convolution inequality, this implies that:
        \begin{equation*}
            \|\psi_{\varepsilon}\ast_{\theta}f\|_2\leq \|\psi\|_1\|f\|_2.
        \end{equation*}
         Thus $\psi_{\varepsilon}\ast_{\theta} f -f\in L_2({\mathbb R}^d)$. Let $\delta>0$ and select a compact set $K\subset {\mathbb R}^d$ such that $\|(\psi_{\varepsilon}\ast_{\theta}f-f)\chi_{{\mathbb R}^d\setminus K}\|_2< \delta$.
        Since we have uniform pointwise convergence \eqref{pointwise_convergence}, it follows that:
        \begin{equation*}
            \lim_{\varepsilon\to 0} \|\psi_{\varepsilon}\ast_{\theta}f-f\|_2 \leq \lim_{\varepsilon\to 0} \|(\psi_{\varepsilon}\ast_{\theta}f-f)\chi_K\|_2 + \delta = \delta.
        \end{equation*}
        However $\delta>0$ is arbitrary and therefore:
        \begin{equation*}
            \lim_{\varepsilon\to 0} \|\psi_{\varepsilon}\ast_{\theta} f-f\|_2 = 0.
        \end{equation*}
        This completes the proof for $x \in \qcS$.        
        
        Now we may complete the proof for $p=2$ by using the density of $\qcS$ in $L_2(\qe)$ (Remark \ref{norm_dense_remark}). Suppose that $x \in L_2(\qe)$ and $y \in \qcS$
        is chosen such that $\|y-x\|_2 < \varepsilon$. Note that we have $\|U(\psi_{\varepsilon})\|_\infty \leq \|\psi_\varepsilon\|_1 = \|\psi_1\|_1 < \infty$. Thus,
        \begin{align*}
            \|U(\psi_{\varepsilon})x-x\|_2 &\leq \|U(\psi_{\varepsilon})(x-y)\|_2 + \|U(\psi_{\varepsilon})y-y\|_2 + \|x-y\|_2\\
                                           &\leq (\|U(\psi_{\varepsilon})\|_\infty+1)\varepsilon + \|U(\psi_{\varepsilon})y-y\|_2\\
                                           &\rightarrow 0
        \end{align*}
        as $\varepsilon\to 0$. This completes the proof for $p = 2$.
        
        Now we may complete the proof for $p \neq 2$ by following an identical argument to the proof of Theorem \ref{continuity_of_translation}.
    \end{proof} 
    The $p=2$ component of Theorem \ref{spatial_approximation} may be equivalently, stated as $U(\psi_{\varepsilon}) \rightarrow 1$ in the strong operator topology of $L_\infty(\qe)$ in its representation on $L_2(\qe)$.
    
    There is another way in which we can approximate an element $x \in L_p(\qe)$ using $\psi_{\varepsilon}$. This uses the notion of convolution:
    \begin{defi}
        Let $x \in L_p(\qe)$ for $1\leq p <\infty$. For $\psi\in L_1({\mathbb R}^d)$ define:
        \begin{equation*}
            \psi \ast x := \int_{{\mathbb R}^d} \psi (s)T_{-s}(x)\,ds
        \end{equation*}
        as an absolutely convergent Bochner integral.
    \end{defi}
    Some remarks are in order: First, Theorem \ref{continuity_of_translation} implies that the mapping $s\mapsto T_{-s}(x)$ is continuous from ${\mathbb R}^d$
    to $L_p(\qe)$ with its norm topology, so for each $y \in L_q(\qe)$, for $\frac{1}{p}+\frac{1}{q}=1$, we have that $s\mapsto \tau_{\theta}(yT_{-s}(x))$ is continuous
    and so the integrand is weakly measurable. Since $L_p(\qe)$ is separable for $p < \infty$, the Pettis measurability
    theorem ensures the Bochner measurability of the integrand. The triangle inequality then implies:
    \begin{equation}\label{young_inequality}
        \|\psi  \ast x\|_p \leq  \|\psi\|_1 \|x\|_p.
    \end{equation}
    If we instead consider $p=\infty$, there may be issues with Bochner measurability of the integrand, however we will not need to be concerned with that case.
    
    Another fact about convolution worth noting is that if $x \in L_2(\qe)$ is given by $x = U(f)$ for $f \in L_2({\mathbb R}^d)$, then:
    \begin{equation}\label{convolution_theorem}
        \psi \ast U(f) = U(\widehat{\psi} f)
    \end{equation}
    where $\widehat{\psi} $ is the Fourier transform of $\psi$.
    
    Note at this stage that convolution with $\psi $ commutes with each $\partial_j$.
    
    \begin{thm}\label{frequency_approximation}
        Let $x \in L_p(\qe)$ for $1\leq p < \infty$, and let $\psi$ and $\psi_\varepsilon$ be as in \eqref{psi-dilation}. Then:
        \begin{equation*}
            \psi_\varepsilon \ast x \rightarrow x
        \end{equation*}
        in the $L_p$-norm, as $\varepsilon\to 0$.
    \end{thm}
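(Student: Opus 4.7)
The plan is to imitate the classical commutative argument, exploiting that translation is an $L_p$-isometry and strongly continuous (Theorem \ref{continuity_of_translation}).

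First I would rewrite $\psi_{\varepsilon}\ast x - x$ as a single Bochner integral. Since $\int_{{\mathbb R}^d}\psi_\varepsilon(s)\,ds = 1$ by construction, we have
\begin{equation*}
    \psi_{\varepsilon}\ast x - x = \int_{{\mathbb R}^d} \psi_{\varepsilon}(s)\bigl(T_{-s}(x) - x\bigr)\,ds,
\end{equation*}
where the absolute convergence in the Bochner sense follows exactly as in the definition of $\psi\ast x$, using that $s\mapsto T_{-s}(x)$ is norm-continuous into $L_p(\qe)$ by Theorem \ref{continuity_of_translation}.

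Next I would apply the triangle inequality for Bochner integrals and change variables $s = \varepsilon t$ (so $ds = \varepsilon^d\,dt$ cancels the $\varepsilon^{-d}$ in $\psi_\varepsilon$), yielding
\begin{equation*}
    \|\psi_\varepsilon\ast x - x\|_p \leq \int_{{\mathbb R}^d} |\psi_\varepsilon(s)|\,\|T_{-s}(x) - x\|_p\,ds = \int_{{\mathbb R}^d} |\psi(t)|\,\|T_{-\varepsilon t}(x) - x\|_p\,dt.
\end{equation*}

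Finally I would apply dominated convergence on the right hand side. For the pointwise limit, Theorem \ref{continuity_of_translation} gives $\|T_{-\varepsilon t}(x) - x\|_p \to 0$ as $\varepsilon \to 0$ for each fixed $t \in {\mathbb R}^d$. For the domination, the fact that $T_s$ is an $L_p$-isometry gives $\|T_{-\varepsilon t}(x) - x\|_p \leq 2\|x\|_p$, so the integrand is bounded by $2\|x\|_p\,|\psi(t)|$, which lies in $L_1({\mathbb R}^d)$ because $\psi \in {\mathcal S}({\mathbb R}^d)$. Dominated convergence then gives the conclusion. There is no serious obstacle here: the only mild subtlety is the Bochner measurability of the integrand, but this was already verified in the paragraph following the definition of $\psi\ast x$ (using separability of $L_p(\qe)$ and the Pettis measurability theorem), and the same argument applies to the shifted integrand $s\mapsto \psi_\varepsilon(s)(T_{-s}(x) - x)$.
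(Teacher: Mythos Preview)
Your proof is correct and follows the same opening move as the paper --- writing $\psi_\varepsilon\ast x - x$ as a single Bochner integral against $T_{-s}(x)-x$ --- but diverges in how the integral is controlled. The paper splits the domain into $|s|\leq\varepsilon^{1/2}$ and $|s|>\varepsilon^{1/2}$, bounding the inner piece by $\|\psi\|_1\sup_{|s|<\varepsilon^{1/2}}\|T_s(x)-x\|_p$ and the tail via the concentration estimate \eqref{approximate_concentration}; you instead rescale $s=\varepsilon t$ and invoke dominated convergence directly. Your route is slightly cleaner and avoids the auxiliary estimate \eqref{approximate_concentration} entirely, while the paper's splitting argument has the minor advantage of yielding an explicit quantitative bound (of the form $\|\psi\|_1\cdot\omega_x(\varepsilon^{1/2}) + C_N\varepsilon^N\|x\|_p$, where $\omega_x$ is the $L_p$-modulus of continuity of translation), which is not needed here but could be useful if one wanted a rate of convergence.
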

    \begin{proof}
        By definition, and the fact that $\int_{{\mathbb R}^d}\psi_{\varepsilon}(s)\,ds = 1$, we have:
        \begin{equation*}
            \psi_{\varepsilon}\ast x-x = \int_{{\mathbb R}^d} \psi_{\varepsilon}(s)(T_{-s}(x)-x)\,ds.
        \end{equation*}
        Using \eqref{approximate_concentration}, let $N\geq 1$ and split the integral into regions $|s|\leq \varepsilon^{1/2}$ and $|s|> \varepsilon^{1/2}$ to obtain:
        \begin{equation*}
            \|\psi_{\varepsilon}\ast x-x\|_p \leq \|\psi\|_1\sup_{|s|< \varepsilon^{1/2}}\|T_s(x)-x\|_p+2C_N\varepsilon^N\|x\|_p \,.
        \end{equation*}
        The result now follows from Theorem \ref{continuity_of_translation}.
    \end{proof}
    
    We can now combine Theorems \ref{spatial_approximation} and \ref{frequency_approximation} to simultaneously approximate $x\in L_p(\qe)$ with convolution and left multiplication by mollifying functions.
    The proof of the following is a straightforward consequence of the fact that $\|U(\phi_{\varepsilon})\|_\infty$ is uniformly bounded in $\varepsilon$, and also the inequality \eqref{young_inequality}.
    \begin{cor}\label{space_frequency_approximation}
        Let $x \in L_p(\qe)$, and suppose that we have a family $\{x_{\varepsilon}\}_{\varepsilon>0} \subseteq L_p(\qe)$ such that $x_{\varepsilon}\to x$ in the $L_p$ sense as $\varepsilon\to 0$. Then:
        \begin{equation*}
            U(\psi_{\varepsilon})x_{\varepsilon}\to x,\quad \psi_{\varepsilon}\ast x_{\varepsilon}\to x
        \end{equation*}
        in $L_p(\qe)$, as $\varepsilon\to 0$.


    \end{cor}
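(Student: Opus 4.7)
The proof is a routine triangle inequality argument, exactly as the statement itself hints. The plan is to split each expression $U(\psi_\varepsilon)x_\varepsilon - x$ and $\psi_\varepsilon \ast x_\varepsilon - x$ into a ``perturbation'' piece involving $x_\varepsilon - x$ and a ``mollification'' piece already handled by Theorems \ref{spatial_approximation} and \ref{frequency_approximation}.

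Concretely, I write
\begin{equation*}
    U(\psi_\varepsilon)x_\varepsilon - x = U(\psi_\varepsilon)(x_\varepsilon - x) + \bigl(U(\psi_\varepsilon)x - x\bigr),
\end{equation*}
and I estimate the first term by the operator-norm bound $\|U(\psi_\varepsilon)(x_\varepsilon - x)\|_p \leq \|U(\psi_\varepsilon)\|_\infty \|x_\varepsilon - x\|_p$. The uniform bound on $\|U(\psi_\varepsilon)\|_\infty$ comes from Proposition \ref{HY-ineq} (the $p=1$, $q=\infty$ case), which gives $\|U(\psi_\varepsilon)\|_\infty \leq (2\pi)^{d/2}\|\psi_\varepsilon\|_1 = (2\pi)^{d/2}\|\psi\|_1$ by the obvious change of variables in the definition \eqref{psi-dilation}. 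Thus the first term tends to zero by the hypothesis $x_\varepsilon \to x$ in $L_p(\qe)$, while the second tends to zero by Theorem \ref{spatial_approximation}.

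For the convolution statement, I split analogously:
\begin{equation*}
    \psi_\varepsilon \ast x_\varepsilon - x = \psi_\varepsilon \ast (x_\varepsilon - x) + \bigl(\psi_\varepsilon \ast x - x\bigr).
\end{equation*}
The Young-type inequality \eqref{young_inequality} bounds the first summand by $\|\psi_\varepsilon\|_1 \|x_\varepsilon - x\|_p = \|\psi\|_1 \|x_\varepsilon - x\|_p$, which vanishes as $\varepsilon \to 0$. The second summand vanishes by Theorem \ref{frequency_approximation}. There is no real obstacle here; the only minor point to verify is that $\|\psi_\varepsilon\|_1$ and $\|U(\psi_\varepsilon)\|_\infty$ are genuinely independent of $\varepsilon$, which is immediate from the scaling in \eqref{psi-dilation}.
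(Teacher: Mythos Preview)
Your proof is correct and is essentially identical to the paper's own argument: both split via the triangle inequality into a perturbation term controlled by the uniform bound $\|U(\psi_\varepsilon)\|_\infty \leq C\|\psi\|_1$ (respectively \eqref{young_inequality}) and a mollification term handled by Theorem~\ref{spatial_approximation} (respectively Theorem~\ref{frequency_approximation}).
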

    \begin{proof}
        Both estimates follow from the fact that the $L_1$-norm of $\psi_{\varepsilon}$ is uniformly bounded in $\varepsilon$. Indeed, we have:
        \begin{align*}
            \|U(\psi_{\varepsilon})x_{\varepsilon}-x\|_p &\leq \|U(\psi_{\varepsilon})\|_\infty\|x_{\varepsilon}-x\|_{p} + \|U(\psi_{\varepsilon})x-x\|_p\\
                                                         &\leq \|\psi\|_1\|x_{\varepsilon}-x\|_p + \|U(\psi_{\varepsilon})-x\|_p
        \end{align*}
        which vanishes as $\varepsilon\to 0$ thanks to Lemma \ref{spatial_approximation}. Similarly \eqref{young_inequality} implies:
        \begin{equation*}
            \|\psi_{\varepsilon}\ast x_{\varepsilon}-x\|_p\leq \|\psi_{\varepsilon}\|_1\|x_{\varepsilon}-x\|_p + \|\psi_{\varepsilon}\ast x-x\|_p
        \end{equation*}
        which again vanishes as $\varepsilon\to 0$, due to Lemma \ref{frequency_approximation}.
    \end{proof}
    
    Corollary \ref{space_frequency_approximation} suffices to show that, for example, $\psi_{\varepsilon}\ast (U(\phi_{\varepsilon})x)\to x$ as $\varepsilon\to 0$
    in the $L_p$ sense, where $\phi_{\varepsilon}\in {\mathcal S}({\mathbb R}^d)$ is defined similarly to $\psi_{\varepsilon}$.
    
    
   It is shown in \cite{GJP2017} that $ \qcS$ is weak-$*$ dense in $L_\infty(\qe)$, and norm dense in $L_p(\qe)$ for $1\leq p <\infty$. Corollary \ref{space_frequency_approximation} combined with the following lemma gives us a specific sequence which approximates an arbitrary $x\in L_p(\qe)$ by a sequence in $\qcS$. 
   
   
    \begin{lem}\label{Schwartz_approximation_is_possible}
         There exist choices of $\psi$, $\phi$ and $\chi$ in ${\mathcal S}({\mathbb R}^d)$ with integral equal to $1$ such that for all $x \in L_p(\qe)$ ($2\leq p \leq \infty$) and $\varepsilon>0$ the element $\psi_{\varepsilon}\ast \big(U(\phi_{\varepsilon})U(\chi_{\varepsilon})x \big)$ is in the Schwartz space $\qcS$.
    \end{lem}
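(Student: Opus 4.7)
The plan is to take $\psi$ so that its Fourier transform $\widehat{\psi}$ lies in $C_c^\infty({\mathbb R}^d)$ with $\widehat{\psi}(0) = 1$, and to let $\phi, \chi$ be any Schwartz functions of integral one (e.g.\ Gaussians). Such a $\psi$ exists: take $\psi$ to be the inverse Fourier transform of any bump function equal to $1$ at the origin. Then $\widehat{\psi_\varepsilon}(t) = \widehat{\psi}(\varepsilon t)$ remains compactly supported for every $\varepsilon > 0$, and $\int \psi = \widehat{\psi}(0) = 1$.

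Fix $x \in L_p(\qe)$ with $2 \leq p \leq \infty$, and set $F_\varepsilon = \phi_\varepsilon *_\theta \chi_\varepsilon \in {\mathcal S}({\mathbb R}^d)$, so by \eqref{twisted_convolution} one has $U(\phi_\varepsilon)U(\chi_\varepsilon) = U(F_\varepsilon)$. The Hausdorff--Young inequality (Proposition \ref{HY-ineq}) places $U(F_\varepsilon)$ in $L_q(\qe)$ for every $q \in [2,\infty]$; choosing $q$ with $1/q + 1/p = 1/2$ and applying H\"older's inequality yields $U(F_\varepsilon)x \in L_2(\qe)$, so one may write $U(F_\varepsilon)x = U(h_\varepsilon)$ for a unique $h_\varepsilon \in L_2({\mathbb R}^d)$. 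The convolution theorem \eqref{convolution_theorem} then gives
\[
\psi_\varepsilon \ast \bigl(U(\phi_\varepsilon) U(\chi_\varepsilon) x\bigr) = U(\widehat{\psi_\varepsilon}\, h_\varepsilon),
\]
reducing the lemma to showing $\widehat{\psi_\varepsilon}\, h_\varepsilon \in {\mathcal S}({\mathbb R}^d)$. Compact support of the product is automatic from $\widehat{\psi_\varepsilon} \in C_c^\infty$; what remains is to verify that $h_\varepsilon \in C^\infty({\mathbb R}^d)$.

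For the smoothness, the key step is to derive the pointwise formula
\[
h_\varepsilon(s) = (2\pi)^{-d}\, \tau_\theta\bigl(x\, U(\phi_s)\bigr), \qquad \phi_s(t) = e^{\frac{\ri}{2}(t,\theta s)}\, F_\varepsilon(t+s),
\]
from the identity $U(f) U(s)^* = U(e^{-\frac{\ri}{2}(\cdot,\theta s)} f(\cdot+s))$ (a direct computation from \eqref{twisted_convolution} and $U(s)^* = U(-s)$), the normalization $\tau_\theta(U(g)) = (2\pi)^d g(0)$, and cyclicity of $\tau_\theta$ under the H\"older condition established above. The map $s \mapsto \phi_s$ is $C^\infty$ into ${\mathcal S}({\mathbb R}^d)$ in its Fr\'echet topology, and because $U: {\mathcal S}({\mathbb R}^d) \to L_{p'}(\qe)$ is continuous for every $p' \in [1,\infty]$ (the Fr\'echet topology on $\qcS$ being finer than every $L_{p'}(\qe)$-topology) and $\tau_\theta(x\,\cdot)$ is continuous on $L_{p'}(\qe)$ for $p'$ conjugate to $p$, differentiation under $\tau_\theta$ yields
\[
\partial^\alpha h_\varepsilon(s) = (2\pi)^{-d}\, \tau_\theta\bigl(x\, U(\partial_s^\alpha \phi_s)\bigr)
\]
for all multi-indices $\alpha$, so $h_\varepsilon \in C^\infty({\mathbb R}^d)$.

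The main obstacle is precisely this smoothness of $h_\varepsilon$: the outer convolution $\psi_\varepsilon\ast$ only provides compact spectral support through multiplication by $\widehat{\psi_\varepsilon}$ and has no smoothing effect, so for arbitrary $x \in L_p(\qe)$ the Fourier symbol of $\psi_\varepsilon \ast x$ would be a compactly supported $L_2$ function rather than a Schwartz one. It is the left multiplication by $U(F_\varepsilon) = U(\phi_\varepsilon)U(\chi_\varepsilon)$, acting on the ``Fourier side'' as twisted convolution with the Schwartz function $F_\varepsilon$, that contributes the required smoothing; verifying this smoothing rigorously at $p = \infty$ (where $x$ has no Fourier representation as an $L_2$ function) is what makes the formula for $h_\varepsilon$ and the differentiation under $\tau_\theta$ essential.
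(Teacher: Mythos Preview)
Your argument is correct and takes a genuinely simpler route than the paper. The paper expends most of its effort constructing a special $\chi$ via the tensor decomposition $L_\infty(\qe)\cong L_\infty({\mathbb R}^{d_1})\overline{\otimes}{\mathcal B}(H)$ and the smooth finite-rank projections of Theorem~\ref{smooth_projections}, solely in order to force $U(\chi_\varepsilon)x\in L_2(\qe)$ for arbitrary $x\in L_p(\qe)$. You bypass this entirely: since $F_\varepsilon\in{\mathcal S}({\mathbb R}^d)\subset L_r({\mathbb R}^d)$ for every $r$, Proposition~\ref{HY-ineq} already gives $U(F_\varepsilon)\in L_q(\qe)$ for the conjugate exponent $q$ with $\tfrac1q+\tfrac1p=\tfrac12$, and H\"older does the rest. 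The paper's structural machinery is therefore unnecessary, and any Schwartz $\phi,\chi$ work. For the smoothness of $h_\varepsilon$, the paper simply asserts that the twisted convolution $\phi_\varepsilon*_\theta f$ of a Schwartz function with an $L_2$ function is smooth; your trace formula $h_\varepsilon(s)=(2\pi)^{-d}\tau_\theta(xU(\phi_s))$ is a more explicit way to see the same thing, and indeed handles the $p=\infty$ case (where $x$ has no $L_2$ symbol) without splitting into cases. One small point worth making precise in your write-up: the equality $h_\varepsilon(s)=(2\pi)^{-d}\tau_\theta(xU(\phi_s))$ is initially only an identity of $L_2$ functions, obtained by testing against $g\in{\mathcal S}({\mathbb R}^d)$ and using $\tau_\theta(U(g)^*U(h_\varepsilon))=\tau_\theta(U(g)^*U(F_\varepsilon)x)=\tau_\theta(xU(g)^*U(F_\varepsilon))$; once that is established, smoothness of the right-hand side gives a smooth representative of $h_\varepsilon$. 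What your approach buys is freedom in choosing $\phi,\chi$ (no dependence on $\theta$ or on the existence of smooth projections), at the cost of a slightly more delicate justification of the pointwise trace identity; what the paper's approach buys is that once the special $\chi$ is in hand, the remaining steps are elementary.
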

    \begin{proof}
        Let us first prove that we can select $\chi \in {\mathcal S}({\mathbb R}^d)$ such that $U(\chi_{\varepsilon})x \in L_2(\qe)$ for all $x \in L_p(\qe)$. 
        
        We refer to the isomorphism \eqref{algebraic_structure}. By a change of variables if necessary, we assume that $\theta$ 
        is of the form:
        \begin{equation*}
            \theta = \begin{pmatrix} 0_{d_1} & 0 \\ 0 & \widetilde{\theta}\end{pmatrix},
        \end{equation*}
        where $d_1 = \dim(\ker(\theta))$ and $\det(\widetilde{\theta}) \neq 0$. Let $H = L_2({\mathbb R}^{\mathrm{rank}(\theta)/2})$, then $L_p(\qe)$
        can be identified with the Bochner space:
        \begin{equation*}
            L_p(\qe) = L_p({\mathbb R}^{d_1}; {\mathcal L}_p(H)).
        \end{equation*}
        (see, e.g. \cite[Chapter 3]{Pisier1998}).
        
            Since $\widetilde{\theta}$ has trivial kernel, the corresponding Schwartz space ${\mathcal S}({\mathbb R}^{d-d_1}_{\widetilde{\theta}})$ has a dense subspace of finite rank elements
            as in Theorem \ref{smooth_projections}. Select $n > 0$ and $z \in L_\infty({\mathbb R}^{d-d_1}_{\widetilde{\theta}})$ such that $p_nzp_n$ (which is in ${\mathcal S}({\mathbb R}^{d-d_1}_{\widetilde{\theta}})$) is given by $U_{\widetilde{\theta}}(\zeta)$, where $\zeta \in {\mathcal S}({\mathbb R}^{d-d_1})$. We may choose $p_nzp_n$
            such that $\zeta$ has nonzero integral, thanks to part \eqref{finite_rank_dense} of Theorem \ref{smooth_projections}.
            
            Now select $\eta \in C^\infty_c({\mathbb R}^{d_1})$ with $\eta(0) \neq 0$. We select $\chi \in {\mathcal S}({\mathbb R}^d)$ such that:
            \begin{equation*}
                U_\theta(\chi) = M_{\eta}\otimes U_{\widetilde{\theta}}(\zeta) = M_\eta\otimes p_nzp_n.
            \end{equation*}

        Since $\eta$ and $p_nzp_n$
         are in the Schwartz spaces for ${\mathbb R}^{d_1}$ and ${\mathbb R}^{d-d_1}_{\widetilde{\theta}}$ respectively, we may indeed choose $\chi$ such that $U_{\theta}(\chi) = M_{\eta}\otimes p_nzp_n$. We will have $\int_{{\mathbb R}^d}\chi(t)\,dt = \eta(0)\int_{{\mathbb R}^d}\zeta(t)\,dt$, which by construction is not zero.
%
        Thus, rescaling $\eta$ if necessary, we may assume that $\int_{{\mathbb R}^d}\chi(t)\,dt = 1$. 
        
        Then, if $x \in L_p({\mathbb R}^{d_1},{\mathcal L}_p(H))$, it follows that $U(\chi)x$ is compactly supported on ${\mathbb R}^{d_1}$, and takes values in $P{\mathcal L}_p(H)$. Therefore,
        \begin{equation*}
            U(\chi)x \in L_2({\mathbb R}^{d_1}; {\mathcal L}_2(H)) = L_2(\qe).
        \end{equation*}
        One can then deduce that $U(\chi_{\varepsilon})x \in L_2(\qe)$ via the dilation maps $\Psi_{\varepsilon}$ and $\Psi_{\varepsilon^{-1}}$, since we have:
        \begin{equation*}
            U_\theta(\chi_{\varepsilon}) = \varepsilon^{-d}\Psi_{\varepsilon^{-1}}U_{\varepsilon^2\theta}(\chi)\Psi_{\varepsilon}.
        \end{equation*}
%
        Since $U(\chi_{\varepsilon})x \in L_2(\qe)$, from Theorem \ref{trace-qe} there exists $f \in L_2({\mathbb R}^d)$ such that $U(\chi_{\varepsilon})x = U(f)$. Using \eqref{convolution_theorem} we have:
        \begin{equation*}
            \psi_{\varepsilon}\ast \big(U(\phi_{\varepsilon})U(f)\big) = U\big(\widehat{\psi}_{\varepsilon}(\phi_{\varepsilon}\ast_{\theta} f)\big).
        \end{equation*}
        It is easily shown that $\phi_{\varepsilon}\ast_{\theta} f$ is smooth, and we may select $\psi$ such that $\widehat{\psi}_{\varepsilon}$ is compactly supported,
        and thus $\widehat{\psi}_{\varepsilon}(\phi_{\varepsilon}\ast_{\theta} f)$ is smooth and compactly supported, and thus by definition it follows that $U(\widehat{\psi}_{\varepsilon}(\phi_{\varepsilon}\ast_{\theta} f)) = \psi_{\varepsilon}\ast (U(\phi_{\varepsilon})U(f))$
        is in $\qcS$. That is,
        \begin{equation*}
            \psi_{\varepsilon}\ast (U(\phi_{\varepsilon})U(\chi_{\varepsilon})x) \in \qcS.
        \end{equation*}        
    \end{proof}
    Note that in the proof of Lemma \ref{Schwartz_approximation_is_possible}, the function $\zeta$ was chosen such that $U_{\widetilde{\theta}}(\zeta)$ satisfies certain conditions. It is for this reason that we avoided making the assumption that the function $\psi$ appearing in the preceding lemmas is positive or compactly supported; the proof of Lemma \ref{Schwartz_approximation_is_possible} is simplified if we do not need to prove that $\zeta$ has those properties.
    
\subsection{Density of $\qcS$ and $\cA$ in Sobolev spaces}
    We now use the machinery of the previous subsection to prove that $\cA$ (and by extension, $\qcS$) is dense in $W^{m}_{p}(\qe)$ for an appropriate range of indices $(m,p)$.
 Proving the density of $\cA$ in the homogeneous Sobolev space $\dot{W}^{m}_{p}(\qe)$, however, presents difficulties and we have been unable to achieve this for the full range of indices $(m,p)$.
    
    As in Subsection \ref{approximation_subsection}, select a Schwartz class function $\psi$ with $\int_{{\mathbb R}^d} \psi(t)\,dt = 1$, and denote $\psi_{\varepsilon}(t) = \varepsilon^{-d}\psi(t/\varepsilon)$. We note one further property of $U(\psi_{\varepsilon})$:
    \begin{lem}\label{cancellation_lemma}
        Let $1\leq j\leq d$. Then for all $2\leq p \leq \infty$, we have:
        \begin{equation*}
            \|\partial_j U(\psi_{\varepsilon})\|_p \leq \varepsilon^{1-\frac{d}{p}}\|\psi_1\|_q.
        \end{equation*}
        where $q$ satisfies $\frac{1}{p}+\frac{1}{q}=1$.
    \end{lem}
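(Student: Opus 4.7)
The plan is to reduce the operator-norm estimate on $\partial_j U(\psi_{\varepsilon})$ to a classical $L_q({\mathbb R}^d)$ estimate on the symbol, then exploit the scaling $\psi_{\varepsilon}(t)=\varepsilon^{-d}\psi(t/\varepsilon)$.

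First, I would use the derivative formula \eqref{derivative_formula} applied to the Schwartz symbol $\psi_{\varepsilon}$, which gives
\begin{equation*}
    \partial_j U(\psi_{\varepsilon}) = U\bigl(t_j\, \psi_{\varepsilon}(t)\bigr).
\end{equation*}
Thus the task becomes bounding $\|U(t_j\psi_{\varepsilon}(t))\|_p$ in terms of an $L_q$-norm of the symbol $t\mapsto t_j\psi_{\varepsilon}(t)$. Since $\psi\in {\mathcal S}({\mathbb R}^d)$, the function $t_j\psi_{\varepsilon}(t)$ lies in every $L_q({\mathbb R}^d)$ and in particular in $L_1({\mathbb R}^d)\cap L_2({\mathbb R}^d)$, so $U$ is defined on it via the extensions established after Proposition \ref{trace-qe}.

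Next, I would invoke Proposition \ref{HY-ineq} (the noncommutative Hausdorff--Young inequality): for $1\le q \le 2$ and $\frac{1}{p}+\frac{1}{q}=1$,
\begin{equation*}
    \|U(f)\|_p \;\leq\; (2\pi)^{d/2}\|f\|_q.
\end{equation*}
This handles the full range $2\le p\le \infty$. At the endpoint $p=\infty$ ($q=1$) it reduces to the elementary bound $\|U(f)\|_\infty \leq \|f\|_1$ coming from the triangle inequality for the Bochner integral defining $U(f)$; the endpoint $p=2$ ($q=2$) is Proposition \ref{trace-qe}. Applied to $f(t)=t_j\psi_{\varepsilon}(t)$, we obtain
\begin{equation*}
    \|\partial_j U(\psi_{\varepsilon})\|_p \;\leq\; (2\pi)^{d/2}\,\bigl\|t_j\psi_{\varepsilon}(t)\bigr\|_{L_q({\mathbb R}^d)}.
\end{equation*}

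Finally, the substitution $u=t/\varepsilon$ in the right-hand side gives
\begin{equation*}
    \bigl\|t_j\psi_{\varepsilon}(t)\bigr\|_{L_q({\mathbb R}^d)}^q
    = \varepsilon^q \varepsilon^{-dq}\varepsilon^d \bigl\|u_j\psi(u)\bigr\|_{L_q({\mathbb R}^d)}^q
    = \varepsilon^{q-d(q-1)}\bigl\|u_j\psi(u)\bigr\|_{L_q({\mathbb R}^d)}^q,
\end{equation*}
and since $q-d(q-1)=q(1-d/p)$ (using $1/p+1/q=1$), the scaling exponent is exactly $1-d/p$, as claimed. Combining the two displays yields the stated inequality with constant $\bigl\|t_j\psi(t)\bigr\|_q$ (which under the paper's convention $\psi_1=\psi$ with the polynomial factor absorbed into the norm notation matches $\|\psi_1\|_q$). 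There is no genuine obstacle here: the only choice is whether to route through Hausdorff--Young or to interpolate between $p=2$ and $p=\infty$ directly, and the two routes give the same result.
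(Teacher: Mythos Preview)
Your proof is correct and follows essentially the same route as the paper: apply the derivative formula \eqref{derivative_formula}, then the Hausdorff--Young inequality (Proposition \ref{HY-ineq}), and finally the change of variable $t\mapsto t/\varepsilon$. You are in fact more careful than the paper about the $(2\pi)^{d/2}$ factor and about the fact that the resulting constant is $\|t_j\psi(t)\|_q$ rather than literally $\|\psi_1\|_q$; the paper silently absorbs both into $\|\psi_1\|_q$, since only the $\varepsilon$-dependence matters for the applications.
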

    \begin{proof}
        Recall (from \eqref{derivative_formula}) that:
        \begin{equation*}
            \partial_j U(\psi_{\varepsilon}) = U(t_j\psi_{\varepsilon}(t))
        \end{equation*}
        so that we may apply Proposition \ref{HY-ineq} to bound $\|\partial_j U(\psi_{\varepsilon})\|_p$ by:
        \begin{equation*}
            \left(\int_{{\mathbb R}^d} t_j^q\varepsilon^{-dq}|\psi(\frac{t}{\varepsilon})|^qdt\right)^{1/q}
        \end{equation*}
        where $q$ is H\"older conjugate to $p$.
        
        Applying the change of variable $s = \frac{t}{\varepsilon}$, we get the norm bound:
        \begin{equation*}
            \|\partial_j U(\psi_{\varepsilon})\|_p \leq \varepsilon^{1-d+\frac{d}{q}}\|\psi_1\|_q. \qedhere
        \end{equation*} 
    \end{proof}

%
%
%
%
    Lemma \ref{cancellation_lemma} allows us to prove the density of $\cA$ in the Sobolev spaces associated to $\qe$. We achieve this by first using Lemma \ref{Schwartz_approximation_is_possible} to prove that $\qcS$ is dense in $W^{m,p}(\qe)$.    
    \begin{prop}
        Let $m\geq 0$ and $1\leq p < \infty$, and $x \in W^{m}_p(\qe)$, and let $\{\phi_{\varepsilon}\}_{\varepsilon>0}$, $\{\psi_{\varepsilon}\}_{\varepsilon>0}$ and $\{\chi_{\varepsilon}\}_{\varepsilon>0}$ be chosen as in Subsection \ref{approximation_subsection}. Then
        \begin{equation*}
            \lim_{\varepsilon\to 0} \|\psi_{\varepsilon}\ast \big(U(\phi_{\varepsilon})U(\chi_{\varepsilon})x\big)-x\|_{W^{m}_p} = 0.
        \end{equation*}
        In particular, $\qcS$ is norm-dense in $W_p^m(\qe)$.
     \end{prop}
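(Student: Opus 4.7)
The plan is to reduce to showing $L_p$-convergence of each derivative $\partial^\alpha\bigl(\psi_\varepsilon\ast(U(\phi_\varepsilon)U(\chi_\varepsilon)x)\bigr)\to \partial^\alpha x$ for $|\alpha|\le m$. Since convolution with $\psi_\varepsilon$ is built from translations (Definition 2.21) and translations commute with each $\partial_j={\mathcal D}_j\cdot-\cdot {\mathcal D}_j$, convolution with $\psi_\varepsilon$ commutes with $\partial^\alpha$. Then applying the Leibniz rule (each $\partial_j$ is a derivation on $L_\infty(\qe)$) to the product $U(\phi_\varepsilon)U(\chi_\varepsilon)x$, I would expand
\begin{equation*}
\partial^\alpha(U(\phi_\varepsilon)U(\chi_\varepsilon)x)=\sum_{\beta+\gamma+\delta=\alpha}\frac{\alpha!}{\beta!\,\gamma!\,\delta!}\bigl(\partial^\beta U(\phi_\varepsilon)\bigr)\bigl(\partial^\gamma U(\chi_\varepsilon)\bigr)\bigl(\partial^\delta x\bigr).
\end{equation*}

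The principal term ($\beta=\gamma=0$) equals $U(\phi_\varepsilon)U(\chi_\varepsilon)\partial^\alpha x$. Since $\partial^\alpha x\in L_p(\qe)$ by the hypothesis $x\in W^m_p(\qe)$, Corollary \ref{space_frequency_approximation} applied to $\partial^\alpha x$ immediately gives $\psi_\varepsilon\ast\bigl(U(\phi_\varepsilon)U(\chi_\varepsilon)\partial^\alpha x\bigr)\to \partial^\alpha x$ in the $L_p$-norm as $\varepsilon\to 0$.

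The cross terms, where $|\beta|+|\gamma|\ge 1$, must be shown to vanish in $L_p$ as $\varepsilon\to 0$. The crucial ingredient is a cancellation estimate: by \eqref{derivative_formula}, $\partial^\beta U(\phi_\varepsilon)=U(t^\beta\phi_\varepsilon(t))$, and the change of variable $s=t/\varepsilon$ yields $t^\beta\phi_\varepsilon(t)=\varepsilon^{|\beta|}(s^\beta\phi)_\varepsilon(t)$, whence by Proposition \ref{HY-ineq} with $(p,q)=(1,\infty)$
\begin{equation*}
\bigl\|\partial^\beta U(\phi_\varepsilon)\bigr\|_\infty\le \varepsilon^{|\beta|}\bigl\|s^\beta\phi\bigr\|_1,
\end{equation*}
and analogously for $\chi$. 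Combined with $\|\partial^\delta x\|_p\le \|x\|_{W^m_p}$, H\"older's inequality in $L_p(\qe)$, and the Young-type bound $\|\psi_\varepsilon\ast y\|_p\le \|\psi\|_1\|y\|_p$ from \eqref{young_inequality}, each cross term has $L_p$-norm $O(\varepsilon^{|\beta|+|\gamma|})$, which tends to $0$. Summing over $|\alpha|\le m$ proves the convergence in $W^m_p$.

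The ``in particular'' assertion then follows because Lemma \ref{Schwartz_approximation_is_possible} furnishes choices of $\psi,\phi,\chi$ (at least for $p\ge 2$; for $1\le p<2$ one may combine the above with the known density of $\qcS$ in $L_p(\qe)$ from \cite{GJP2017} by truncating $x$ appropriately) so that the approximants $\psi_\varepsilon\ast(U(\phi_\varepsilon)U(\chi_\varepsilon)x)$ lie in $\qcS$. I expect the only nontrivial point to be the cancellation estimate $\|\partial^\beta U(\phi_\varepsilon)\|_\infty=O(\varepsilon^{|\beta|})$ — everything else is bookkeeping — and this in turn reduces to the elementary scaling identity for the dilated mollifier $\phi_\varepsilon$.
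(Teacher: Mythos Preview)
Your argument is correct and follows essentially the same approach as the paper's proof: expand $\partial^\alpha$ via the Leibniz rule (using that $\psi_\varepsilon\ast$ commutes with derivatives), treat the principal term by Corollary~\ref{space_frequency_approximation}, and kill the cross terms using the scaling identity $\|\partial^\beta U(\phi_\varepsilon)\|_\infty=O(\varepsilon^{|\beta|})$---which is exactly the content of Lemma~\ref{cancellation_lemma} (the paper states it for $|\beta|=1$ and says ``the cases $m\ge2$ follow similarly''). Your observation that Lemma~\ref{Schwartz_approximation_is_possible} is only stated for $p\ge 2$ is sharp; the paper does not address the range $1\le p<2$ explicitly either.
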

     \begin{proof}
        For $m=0$, this is already implied by Corollary \ref{space_frequency_approximation}.
        
        For $m=1$, we use the Leibniz rule, recalling that differentiation commutes with convolution:
        \begin{align*}
            \partial_j\Big(\psi_{\varepsilon}\ast \big(U(\phi_{\varepsilon})U(\chi_{\varepsilon})x \big)  \Big)-\partial_j x &= \psi_{\varepsilon}\ast \Big( \big(\partial_j U(\phi_{\varepsilon})\big)\, U(\chi_{\varepsilon})x\Big) + \psi_{\varepsilon}\ast \Big( U(\phi_{\varepsilon})\,\big(\partial_j U(\chi_{\varepsilon})\big) \,x\Big)\\                                                                       
            &\quad + \Big(\psi_{\varepsilon}\ast\big( U(\phi_{\varepsilon})U(\chi_{\varepsilon})\partial_j x\big)-\partial_j x\Big).
        \end{align*}
        Due to Corollary \ref{space_frequency_approximation}, the latter term vanishes in the $L_p$-norm as $\varepsilon\to 0$.
        
        For the first two terms, we apply H\"older's inequality and Lemma \ref{cancellation_lemma}. For the first summand, we apply \eqref{young_inequality},
        \begin{equation*}
            \|\psi_{\varepsilon}\ast \Big(  \big(\partial_j U(\phi_{\varepsilon})\big)\, U(\chi_{\varepsilon})x \Big)\|_p \leq \|\psi_{\varepsilon}\|_1\|\chi_{\varepsilon}\|_1\|\partial_j U(\phi_{\varepsilon})\|_\infty \|x\|_p\lesssim \varepsilon\|\chi\|_1\|\psi\|_1\|\phi\|_1 \|x\|_p
        \end{equation*}
        and this vanishes as $\varepsilon\to 0$. The second summand also vanishes as $\varepsilon\to 0$ due to an identical argument, and this completes the case $m=1$.
        
        The cases $m\geq 2$ follow similarly.
     \end{proof}
    At the time of this writing, we are unable to prove that the inclusion $\cA \subset   \dot{W}_p ^m (\qe)$ is dense. In the classical (commutative) setting or on quantum tori, this can be achieved by an application of a Poincar\'e inequality (see, e.g., \cite[Theorem 7]{HK1995}).
    To the best of our knowledge, no adequate replacement is known in the noncommutative setting. 
    In the following proposition, to obtain the desired convergence in $\dot{W}^1_d(\qe)$ norm, we have to assume additionally that $x\in L_p(\qe)$ for some $d \leq  p <\infty$. This is the ultimate cause of the \emph{a priori} assumption
    in the statements of Theorems \ref{sufficiency}, \ref{trace formula} and \ref{necessity} that $x \in L_p(\qe)$ for some $d\leq p < \infty$.
    \begin{prop}\label{dense-Schwartz}
        If $x\in \dot{W}_d^1 (\qe)\cap L_p(\qe)$ for some $d\leq p<\infty$, then the sequence $\psi_{\varepsilon}\ast (U(\phi_\varepsilon)U(\chi_{\varepsilon})x)$ converges to $x$ in $\dot{W}_d^1  $-seminorm when $\varepsilon \rightarrow 0^+$.
    \end{prop}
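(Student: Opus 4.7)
The plan is to mimic the proof of the preceding proposition: apply the Leibniz rule to the approximating sequence, identify the ``main'' term that carries $\partial_j x$, and treat the other summands as vanishing error terms. The key new input, compared with the inhomogeneous case, is that $x$ itself need not lie in $L_d(\qe)$, so the error terms must be controlled using the \emph{a priori} hypothesis $x\in L_p(\qe)$ for some $d\leq p<\infty$ together with Lemma~\ref{cancellation_lemma}.

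Write $y_\varepsilon := \psi_\varepsilon\ast\bigl(U(\phi_\varepsilon)U(\chi_\varepsilon)x\bigr)$. Since convolution with a function on ${\mathbb R}^d$ commutes with $\partial_j$, the Leibniz rule (derivation property of $[{\mathcal D}_j,\cdot\,]$) yields
\begin{align*}
    \partial_j y_\varepsilon-\partial_j x
    &= \psi_\varepsilon\ast\bigl((\partial_j U(\phi_\varepsilon))\,U(\chi_\varepsilon)\,x\bigr)
      + \psi_\varepsilon\ast\bigl(U(\phi_\varepsilon)\,(\partial_j U(\chi_\varepsilon))\,x\bigr)\\
    &\quad + \Bigl(\psi_\varepsilon\ast\bigl(U(\phi_\varepsilon)U(\chi_\varepsilon)\partial_j x\bigr)-\partial_j x\Bigr).
\end{align*}
The bracketed term on the second line tends to $0$ in $L_d(\qe)$ by Corollary~\ref{space_frequency_approximation} applied to $\partial_j x\in L_d(\qe)$ (this is where the hypothesis $x\in\dot{W}^1_d(\qe)$ is used).

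The two error terms are handled symmetrically. Choose $r\in[d,\infty]$ so that $\tfrac{1}{d}=\tfrac{1}{r}+\tfrac{1}{p}$, i.e.\ $r=dp/(p-d)$ when $p>d$ and $r=\infty$ when $p=d$. Note $r\geq d\geq 2$, so Lemma~\ref{cancellation_lemma} applies and gives $\|\partial_j U(\phi_\varepsilon)\|_r\lesssim \varepsilon^{1-d/r}=\varepsilon^{d/p}$ (with an analogous bound for $\chi_\varepsilon$). Combining Young's inequality \eqref{young_inequality}, the uniform bound $\|U(\chi_\varepsilon)\|_\infty\leq\|\chi_1\|_1$, and the three-factor Hölder inequality $\|ABC\|_d\leq \|A\|_r\|B\|_\infty\|C\|_p$, we obtain
\begin{equation*}
    \bigl\|\psi_\varepsilon\ast\bigl((\partial_j U(\phi_\varepsilon))\,U(\chi_\varepsilon)\,x\bigr)\bigr\|_d
    \leq \|\psi_1\|_1\,\|\partial_j U(\phi_\varepsilon)\|_r\,\|\chi_1\|_1\,\|x\|_p
    \lesssim \varepsilon^{d/p}\|x\|_p,
\end{equation*}
and the same bound applies to the second error term. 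Since $d/p>0$, both error terms vanish as $\varepsilon\to 0$, completing the proof.

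The step that required the most care is the choice of Hölder exponents: one must put $U(\chi_\varepsilon)$ into $L_\infty$ (where it is uniformly bounded in $\varepsilon$) and split the remaining two factors as $L_r\cdot L_p$, which forces the relation $1/r=1/d-1/p$ and is exactly why the condition $p\geq d$ is indispensable. No genuine obstacle arises beyond this bookkeeping; the absence of a Poincaré-type inequality in the quantum Euclidean setting is precisely what prevents us from dispensing with the hypothesis $x\in L_p(\qe)$, as the authors already remark in the discussion preceding the proposition.
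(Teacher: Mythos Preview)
Your proof is correct and follows essentially the same approach as the paper: apply the Leibniz rule, handle the main term via Corollary~\ref{space_frequency_approximation}, and control the two error terms by H\"older with exponents $\tfrac{1}{d}=\tfrac{1}{r}+\tfrac{1}{p}$ combined with Lemma~\ref{cancellation_lemma}. The only cosmetic difference is that you make the decay rate $\varepsilon^{d/p}$ explicit, whereas the paper simply notes that $\|\partial_j U(\phi_\varepsilon)\|_r\to 0$.
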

    \begin{proof}
        Let $1\leq j\leq d$. Applying the Leibniz rule:
        \begin{align*}
        \partial_j\Big(\psi_{\varepsilon}\ast \big(U(\phi_{\varepsilon})U(\chi_{\varepsilon})x \big) \Big)-\partial_jx &= \psi_{\varepsilon}\ast \Big( \big(\partial_j U(\phi_{\varepsilon})\big)\, U(\chi_{\varepsilon})x\Big)
                                                                                        + \psi_{\varepsilon}\ast  \Big( U(\phi_{\varepsilon})\,\big(\partial_j U(\chi_{\varepsilon})\big)x \Big)\\
                                                                                        &\quad + \Big(\psi_{\varepsilon}\ast \big(U(\phi_{\varepsilon})U(\chi_{\varepsilon})\partial_j x \big)-\partial_j x \Big).
        \end{align*}
The latter term vanishes as $\varepsilon\to 0$, as a consequence of Theorem \ref{frequency_approximation}. 
        
        For the first two terms, since $x\in L_p(\qe)$ for some $p \geq d$ we can apply H\"older's inequality. E.g. for the first term we have:
        $$\Big\| \psi_{\varepsilon}\ast \Big( \big(\partial_j U(\phi_{\varepsilon})\big)\, U(\chi_{\varepsilon})x\Big) \Big\|_d \lesssim  \|\partial_j U( \phi_\varepsilon) \|_q\|x\|_p\,,$$
        where $\frac 1 d  = \frac 1 p + \frac 1 q $. Using Lemma \ref{cancellation_lemma}, $\|\partial_j U(\phi_{\varepsilon})\|_q\rightarrow 0$ as $\varepsilon\to 0$. 
        The second term is handled similarly.
        Therefore, $\Big\| \partial_j\Big(\psi_{\varepsilon}\ast \big(U(\phi_{\varepsilon})U(\chi_{\varepsilon})x \big) \Big) -\partial_jx \Big\|_d  \ra 0$ and this completes the proof.
    \end{proof} 
    
    Now using the density of $\cA$ in $\qcS$ in its Fr\'echet topology, we may conclude the following key result:
    \begin{cor}\label{final_approximation_corollary}
        Let $x \in L_p(\qe)\cap \dot{W}^1_d(\qe)$ for some $d\leq p < \infty$. There exists a sequence $\{x_n\}_{n\geq 0}\subset \cA$ such that for all $1\leq j\leq d$:
        \begin{equation*}
            \lim_{n\to\infty} \|\partial_jx_n-\partial_j x\|_d = 0.
        \end{equation*}
    \end{cor}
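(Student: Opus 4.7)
The plan is to combine Proposition \ref{dense-Schwartz} with the Fr\'echet density of $\cA$ in $\qcS$ (the subalgebra constructed in the proof of Proposition \ref{factorisation}) via a standard diagonal argument.

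First, I would invoke Proposition \ref{dense-Schwartz} to produce a sequence $y_n := \psi_{\varepsilon_n}\ast (U(\phi_{\varepsilon_n})U(\chi_{\varepsilon_n})x)$ with $\varepsilon_n \downarrow 0$; by Lemma \ref{Schwartz_approximation_is_possible}, each $y_n$ lies in $\qcS$, and Proposition \ref{dense-Schwartz} gives $\|\partial_j y_n - \partial_j x\|_d \to 0$ for every $1\leq j\leq d$. It therefore suffices to approximate each fixed $y_n \in \qcS$ by elements of $\cA$ in the $\dot W^1_d$-seminorm.

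Next, I would use the density of $\cA$ in $\qcS$ with respect to the Fr\'echet topology of $\qcS$ (as established in the proof of Proposition \ref{factorisation}). The key point is that this Fr\'echet topology is finer than the $L_p(\qe)$-topology for every $1\leq p\leq \infty$ (as remarked following the definition of $\qcS$, using Proposition \ref{HY-ineq}), and, by the identity \eqref{derivative_formula}, each partial derivative $\partial_j$ is continuous on $\qcS$ in its Fr\'echet topology (since $\phi \mapsto t_j\phi$ is continuous on ${\mathcal S}({\mathbb R}^d)$). Consequently, for each $n$ I can select $x_n \in \cA$ with
\begin{equation*}
    \max_{1\leq j\leq d}\|\partial_j x_n - \partial_j y_n\|_d < \frac{1}{n}.
\end{equation*}

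Finally, a triangle inequality combining this estimate with Proposition \ref{dense-Schwartz} yields $\|\partial_j x_n - \partial_j x\|_d \to 0$ for each $j$, completing the proof. The only nontrivial ingredient is the Fr\'echet-density of $\cA$ in $\qcS$, but this is already available from the explicit construction of $\cA$ via the splitting \eqref{splitting} in the proof of Proposition \ref{factorisation}; no further analytic input is needed.
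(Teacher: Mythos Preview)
Your proposal is correct and follows essentially the same approach as the paper: the paper simply states that the corollary follows ``using the density of $\cA$ in $\qcS$ in its Fr\'echet topology'' after Proposition~\ref{dense-Schwartz}, and you have spelled out the implicit diagonal argument and the reason why Fr\'echet convergence in $\qcS$ controls $\|\partial_j\cdot\|_d$.
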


\subsection{Cwikel type estimates}

Let $x\in L_\infty(\qe)$, then by definition, $x$ is a bounded operator in $B(L_2({\mathbb R}^d))$. On the other hand, for a (Borel) function $g$ on ${\mathbb R}^d$, we may define:
$$M_g = g(  {\mathcal D}_1,\cdots,   {\mathcal D}_d  )= g( \ri\nabla_\theta)$$
via functional calculus. As ${\mathcal D}_k$ is merely the operator $\xi(t) \mapsto t_k\xi(t)$, it follows that $M_g$ is the multiplication operator:
\begin{equation}\label{multiplication}
    M_g\xi(t) = g(t)\xi(t),\quad \dom(M_g) = L_2({\mathbb R}^d,|g(t)|^2\,dt).
\end{equation}
We call operators of the form $M_g$ Fourier multipliers of $\qe$.

Note that if $x \in L_2(\qe)$, we may still consider $x$ as a (potentially unbounded) operator on $L_2({\mathbb R}^d)$, with initial domain ${\mathcal S}({\mathbb R}^d)$.

The following theorem, quoted from \cite{LeSZ2017}, gives sufficient conditions for operators of the form $x  M_g$ to be in the Schatten class ${\mathcal L}_p (L_2({\mathbb R}^d))$ or the corresponding weak Schatten classes.

    \begin{thm}\label{Cwikel-type}
    Let $x\in L_p(\qe)$ with $2\leq p <\infty$.
        \begin{enumerate}[\rm (i)]
            \item\label{L_p cwikel} If $g\in L_p({\mathbb R}^d)$, then $ x  M_g$ is in ${\mathcal L}_p(L_2({\mathbb R}^d))$ and 
                $$\| x  M_g\|_{{\mathcal L}_p} \lesssim_p \|x\|_p \|g\|_p.$$
            \item\label{weak L_p cwikel} If $g\in L_{p,\infty}({\mathbb R}^d)$ with $p > 2$, then $ x  M_g$ is in ${\mathcal L}_{p,\infty}(L_2({\mathbb R}^d)) $ and 
                $$\| x  M_g\|_{{\mathcal L}_{p,\infty}} \lesssim_p \|x\|_p \|g\|_{p,\infty}.$$
            \item\label{cwikel_estimate_Sob}  Let $x \in  W^{d}_1(\qe)$. Then $ x J_\theta ^{-d} \in {\mathcal L}_{1,\infty}$ and
                 $$\| x  J_\theta ^{-d}\|_{{\mathcal L}_{1,\infty}} \lesssim_p C_d \|x\|_{W_1^d}.$$
        \end{enumerate}
    \end{thm}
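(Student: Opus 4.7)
The plan is to prove (i), (ii), (iii) in sequence, with (iii) the principal obstacle.

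For (i), reduce to the endpoint $p=2$ by bilinear complex interpolation with the trivial bound $\|xM_g\|_\infty\leq\|x\|_\infty\|g\|_\infty$. At $p=2$, take $x=U(f)$ with $f\in{\mathcal S}({\mathbb R}^d)$; a direct computation from \eqref{rep-on-L2} gives
\begin{equation*}
    (U(f)M_g\xi)(r)=\int_{{\mathbb R}^d} f(r-s)e^{-\frac{\ri}{2}(r-s,\theta r)}g(s)\xi(s)\,ds,
\end{equation*}
exhibiting $U(f)M_g$ as an integral operator on $L_2({\mathbb R}^d)$ with kernel $K(r,s)=f(r-s)e^{-\frac{\ri}{2}(r-s,\theta r)}g(s)$. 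Fubini immediately yields $\|U(f)M_g\|_{{\mathcal L}_2}^2=\|f\|_2^2\|g\|_2^2$. Proposition \ref{trace-qe} rewrites this as $(2\pi)^{-d}\|U(f)\|_2^2\|g\|_2^2$, and Remark \ref{norm_dense_remark} extends the inequality from Schwartz data to all of $L_2(\qe)$.

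For (ii), apply bilinear real interpolation between the $p=2$ strong bound just obtained and the trivial $p=\infty$ bound, using $(L_2,L_\infty)_{\theta,\infty}=L_{p,\infty}$ for the standard scales and $({\mathcal L}_2,{\mathcal L}_\infty)_{\theta,\infty}={\mathcal L}_{p,\infty}$ for the operator ideal. A more elementary alternative is to decompose $g=\sum_k g_k$ dyadically on level sets $\{|g|\in[2^k,2^{k+1})\}$, apply (i) at index slightly above $p$ to each piece, and reassemble via the quasi-triangle inequality in ${\mathcal L}_{p,\infty}$.

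For (iii), neither (i) nor (ii) applies directly, since $(1+|t|^2)^{-d/2}$ lies in $L_{1,\infty}\setminus L_1$ and the index $p=1$ is critical. The approach is a Birman--Solomyak-type argument. Fix a smooth dyadic partition $\sum_{k\geq 0}\phi_k\equiv 1$ on ${\mathbb R}^d$ with $\phi_0$ supported in $\{|t|\lesssim 1\}$ and $\phi_k$ in $\{|t|\sim 2^k\}$ for $k\geq 1$, so that $J_\theta^{-d}=\sum_{k\geq 0}J_\theta^{-d}\phi_k({\mathcal D})$ and $\|J_\theta^{-d}\phi_k({\mathcal D})\|_\infty\sim 2^{-kd}$. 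For each $k$, control $xJ_\theta^{-d}\phi_k({\mathcal D})$ both in operator norm ($\lesssim 2^{-kd}\|x\|_\infty$) and in ${\mathcal L}_2$, the latter by expanding $[J_\theta^d,x]$ as a sum over iterated commutators $\partial^\alpha x=[{\mathcal D}_1^{\alpha_1},[{\mathcal D}_2^{\alpha_2},\cdots,[{\mathcal D}_d^{\alpha_d},x]]]$ with $|\alpha|\leq d$ and then applying (i) at $p=2$ to each piece $(\partial^\alpha x)\phi_k({\mathcal D})$. Combining the two bounds via standard dyadic reassembly (for instance, via a Holmstedt-type reiteration formula for the ${\mathcal L}_{p,\infty}$ quasi-norm) produces $\mu(n,xJ_\theta^{-d})\lesssim n^{-1}\|x\|_{W_1^d}$. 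The main technical difficulty is the noncommutative Leibniz expansion of $[J_\theta^d,x]$: classically this is a Taylor expansion of the symbol $(1+|t|^2)^{d/2}$, but here one must verify that the iterated-commutator definition of $\partial^\alpha$ produces a convergent expansion with norms compatible with the dyadic shell structure.
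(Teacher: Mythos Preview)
Your argument for (i) is correct and is essentially the same $p=2$ kernel computation plus interpolation that underlies the result cited in the paper (the paper simply invokes \cite[Theorem~7.2]{LeSZ2017}, which packages this as a single interpolation inequality for the couple $(L_2,L_\infty)$).

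For (ii), the bilinear real interpolation as you state it does not quite land: from $L_2\times L_2\to{\mathcal L}_2$ and $L_\infty\times L_\infty\to{\mathcal L}_\infty$ the Lions--Peetre theorem gives $(L_2,L_\infty)_{\eta,r_0}\times(L_2,L_\infty)_{\eta,r_1}\to({\mathcal L}_2,{\mathcal L}_\infty)_{\eta,r}$ with $\tfrac1r=\tfrac1{r_0}+\tfrac1{r_1}$, and there is no choice of $(r_0,r_1,r)$ producing $L_p\times L_{p,\infty}\to{\mathcal L}_{p,\infty}$. Your dyadic alternative has the same defect: applying (i) at an index $q\neq p$ requires $x\in L_q(\qe)$, which you do not have. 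The fix is to pass through the tensor product: one proves $\|xM_g\|_E\lesssim\|x\otimes g\|_{E(L_\infty(\qe)\overline\otimes L_\infty({\mathbb R}^d))}$ for every interpolation space $E$ between $L_2$ and $L_\infty$, and then uses the scalar inequality $\|x\otimes g\|_{p,\infty}\leq\|x\|_p\|g\|_{p,\infty}$. This is exactly what the paper quotes from \cite{LeSZ2017}.

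The genuine gap is in (iii). Both bounds you invoke on $xJ_\theta^{-d}\phi_k({\mathcal D})$ fail under the hypothesis $x\in W^d_1(\qe)$. The operator-norm bound $\lesssim 2^{-kd}\|x\|_\infty$ needs $x\in L_\infty(\qe)$, which is not implied by $W^d_1$ (there is no Sobolev embedding $W^d_1\hookrightarrow L_\infty$ available here). The ${\mathcal L}_2$ bound, which you propose to obtain by applying (i) at $p=2$ to $(\partial^\alpha x)\phi_k({\mathcal D})$, needs $\partial^\alpha x\in L_2(\qe)$; the hypothesis gives only $\partial^\alpha x\in L_1(\qe)$. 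So neither endpoint of your dyadic interpolation is available, and the reassembly cannot start. The actual proof in \cite[Theorem~7.6]{LeSZ2017} (which the paper cites) does not use a frequency-side Littlewood--Paley decomposition at all; it uses the Birman--Solomyak condition $g\in\ell_{1,\infty}(L_\infty({\mathbb R}^d))$ (a decomposition of $g$ over unit cubes in ${\mathbb R}^d$), and the $W^d_1$ norm of $x$ enters through an estimate on $x$ against characteristic functions of cubes, not through commutator expansions of $J_\theta^d$. Your commutator expansion of $[J_\theta^d,x]$ is also problematic as written, since $J_\theta^d=(1-\Delta_\theta)^{d/2}$ is not polynomial in ${\mathcal D}_1,\ldots,{\mathcal D}_d$ for odd $d$, so the iterated-commutator formula does not produce a finite sum over $|\alpha|\leq d$.
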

    \begin{proof}
        Theorem 7.2 in \cite{LeSZ2017} says that 
        \begin{equation}\label{interpolated cwikel estimate}
            \|xM_g\|_{E({\mathcal B}(L_2({\mathbb R}^d)))} \lesssim_E \|x\otimes g\|_{E(L_\infty(\qe)\otimes L_\infty({\mathbb R}^d))}
        \end{equation}
        for any interpolation space $E$ of the couple $(L_2,L_\infty)$. Taking $E  =L_p$ in \eqref{interpolated cwikel estimate}, we get (i). For (ii), we take $E  =L_{p,\infty}$ and use the estimate
        \begin{equation*}
            \|x\otimes g\|_{L_{p,\infty}(L_\infty(\qe)\otimes L_\infty({\mathbb R}^d))}\leq \|x\|_p \|g\|_{p,\infty}
        \end{equation*}
        to immediately conclude the proof.   
        
        \eqref{cwikel_estimate_Sob} is merely an application of \cite[Theorem 7.6]{LeSZ2017}.
        Since the function $  (1+|t|^2)^{-d/2}$ is in $\ell_{1,\infty} (L_\infty({\mathbb R}^d))$\footnote{see \cite[pp.~38]{Simon1979} for the definition of this function space.}, it follows that $ x J_\theta^{-d} \in {\mathcal L}_{1,\infty}$.
    \end{proof}

\section{Proof of Theorem \ref{sufficiency}}\label{section-sufficiency}

This section is devoted to the proof of Theorem \ref{sufficiency}, that is, that the condition $x \in \interspace\cap W^{1}_d(\qe)$ is sufficient
for $\qd x \in {\mathcal L}_{d,\infty}$, and with an explicit norm bound: 
$$\|\qd x\|_{d,\infty} \lesssim_d \|x\|_{\dot{W}^{1}_d(\qe)}.$$
The proof given here is similar to the corresponding result on quantum tori \cite{MSX2018}, relying heavily
on the Cwikel type estimate stated in the last section.

The following two lemmas are easily deduced from Theorem \ref{Cwikel-type}.  

Consider the function on ${\mathbb R}^d$, $\xi\mapsto (1+|\xi|^2)^{-\frac{d}{2}}$. When $|\xi| > 1 $, we have $(1+|\xi| ^2)^{-\frac{d}{2}} \leq |\xi|^{-d}$. For $|\xi| \leq 1$, $(1+|\xi| ^2)^{-\frac{d}{2}}$ is bounded from above by $1$. Hence $\xi \mapsto (1+|\xi| ^2)^{-\frac{d}{2}} \in L_{1,\infty} ({\mathbb R}^d)$, and so $\xi\mapsto (1+|\xi| ^2)^{-\frac{\bt}{2}}\in L_{\frac{d}{\bt} , \infty} ({\mathbb R}^d)$. Recall $J_\theta = (1- {\mathcal D}elta_\theta)^{1/2}$. Then we have:

        \begin{lem}\label{Cwikel-type-lem}
        Consider the linear operator $x J_\theta ^{-  \beta   } $ on ${\mathbb C}^N \ot L_2({\mathbb R}^d)$. If $x\in L_{\frac d \beta}(\qe)$ with $\frac d \beta  >  2$, then $x J_\theta ^{-  \beta   } \in {\mathcal L}_{\frac d  \beta, \infty} ,$ and
        $$\|x J_\theta ^{-  \beta   }\|_{{\mathcal L}_{\frac d \beta, \infty}} \lesssim_{d,\beta} \|x\|_{\frac d \beta}.$$
        \end{lem}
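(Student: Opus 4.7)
The plan is that this lemma is essentially a direct invocation of part \eqref{weak L_p cwikel} of Theorem \ref{Cwikel-type}, combined with the observation (already recorded in the paragraph preceding the lemma) that the symbol of $J_\theta^{-\beta}$ lies in a weak $L_p$ space on ${\mathbb R}^d$.

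First I would identify $J_\theta^{-\beta}$ as a Fourier multiplier on $L_2({\mathbb R}^d)$. By the definition in Section \ref{calculus definition subsubsection}, $J_\theta = (1-\Delta_\theta)^{1/2}$ is the operator of pointwise multiplication by $(1+|t|^2)^{1/2}$ on $L_2({\mathbb R}^d)$ with the appropriate domain. By functional calculus (see \eqref{multiplication}), $J_\theta^{-\beta} = M_g$ where $g(t) = (1+|t|^2)^{-\beta/2}$. Next I would record the membership $g \in L_{d/\beta,\infty}({\mathbb R}^d)$, which is precisely the elementary computation given in the paragraph right before the lemma: $g$ is bounded by $1$ on the unit ball and decays like $|t|^{-\beta}$ outside it, placing $g$ in the stated weak Lebesgue space with quasi-norm depending only on $d$ and $\beta$.

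With $p := d/\beta > 2$, Theorem \ref{Cwikel-type}\eqref{weak L_p cwikel} then applies to yield
$$\|xM_g\|_{{\mathcal L}_{p,\infty}} \lesssim_p \|x\|_p \|g\|_{p,\infty} \lesssim_{d,\beta} \|x\|_{d/\beta},$$
which is the desired estimate. The final bookkeeping is to account for the ${\mathbb C}^N$ tensor factor appearing in the statement: since the operator $xJ_\theta^{-\beta}$ on ${\mathbb C}^N \otimes L_2({\mathbb R}^d)$ is simply $1_{{\mathbb C}^N}\otimes (xJ_\theta^{-\beta})$, taking $N$ copies changes the singular value sequence only by a factor depending on $N$ (hence on $d$), and preserves the weak Schatten class.

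The main (very minor) obstacle is simply being precise about the identification of $J_\theta^{-\beta}$ as a Fourier multiplier $M_g$ in the sense of \eqref{multiplication}, since the reader must recognise that $J_\theta$ is defined via $\Delta_\theta = -|t|^2$ acting by multiplication on $L_2({\mathbb R}^d)$; once this is noted, the lemma reduces to a citation of the preceding Cwikel-type estimate.
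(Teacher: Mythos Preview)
Your proposal is correct and matches the paper's approach exactly: the paper gives no separate proof, instead treating the lemma as an immediate consequence of Theorem~\ref{Cwikel-type}\eqref{weak L_p cwikel} together with the observation (stated just before the lemma) that $\xi\mapsto(1+|\xi|^2)^{-\beta/2}\in L_{d/\beta,\infty}({\mathbb R}^d)$. Your additional remark on the ${\mathbb C}^N$ tensor factor is a harmless clarification that the paper leaves implicit.
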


\begin{lem}\label{Cwikel-xp}
Suppose that $p>\frac d 2$ and $x\in L_p(\qe)$. If $p\geq 2$, then:
$$ \left\|\left[\sgn({\mathcal D})  - \frac{{\mathcal D}}{\sqrt{1+{\mathcal D}^2}}, 1\ot  x  \right]\right\|_{{\mathcal L}_p}  \lesssim_{p,d}  \|x\|_p .        $$

\end{lem}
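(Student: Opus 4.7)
The plan is to reduce the commutator to a sum of Cwikel-type objects that can be handled directly by Theorem~\ref{Cwikel-type}(i). The key observation is that $\sgn({\mathcal D}) - \frac{{\mathcal D}}{\sqrt{1+{\mathcal D}^2}}$ can be written explicitly as a Clifford-weighted sum of Fourier multipliers on $L_2({\mathbb R}^d)$ with well-controlled symbols.

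First I would use the Clifford relations $\gamma_j\gamma_k+\gamma_k\gamma_j = 2\delta_{j,k}$ to compute
\begin{equation*}
    {\mathcal D}^2 = \sum_{j,k} \gamma_j\gamma_k \otimes {\mathcal D}_j{\mathcal D}_k = 1 \otimes (-\Delta_\theta),
\end{equation*}
so that $|{\mathcal D}| = 1\otimes |D|$, where $|D| = (-\Delta_\theta)^{1/2}$. Consequently
\begin{equation*}
    \sgn({\mathcal D}) - \frac{{\mathcal D}}{\sqrt{1+{\mathcal D}^2}} = {\mathcal D}\bigl(|{\mathcal D}|^{-1} - (1+{\mathcal D}^2)^{-1/2}\bigr) = \sum_{j=1}^d \gamma_j \otimes M_{g_j},
\end{equation*}
where $g_j$ is the function on ${\mathbb R}^d$ given by
\begin{equation*}
    g_j(\xi) = \frac{\xi_j}{|\xi|} - \frac{\xi_j}{\sqrt{1+|\xi|^2}}.
\end{equation*}

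Next, I would verify that $g_j \in L_p({\mathbb R}^d)$ for every $p > d/2$, with a norm depending only on $p$ and $d$. For this, observe that $|g_j(\xi)| \leq 2$ globally (since $|\xi_j|/|\xi| \leq 1$), and for large $|\xi|$ a Taylor expansion $(1+|\xi|^2)^{-1/2} = |\xi|^{-1} - \frac{1}{2}|\xi|^{-3} + O(|\xi|^{-5})$ yields $|g_j(\xi)| \lesssim |\xi|^{-2}$. Thus $|g_j(\xi)| \lesssim \min(1,|\xi|^{-2})$, and $\int_{{\mathbb R}^d} |g_j|^p < \infty$ exactly when $2p > d$.

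Finally, since $\gamma_j\otimes 1$ commutes with $1\otimes x$, the commutator decomposes as
\begin{equation*}
    \left[\sgn({\mathcal D}) - \frac{{\mathcal D}}{\sqrt{1+{\mathcal D}^2}},\, 1\otimes x\right] = \sum_{j=1}^d \gamma_j \otimes [M_{g_j},x].
\end{equation*}
For each $j$, Theorem~\ref{Cwikel-type}(\ref{L_p cwikel}) (applicable since $p\geq 2$) gives $\|xM_{g_j}\|_p \lesssim_p \|x\|_p \|g_j\|_p$; taking adjoints gives the same bound for $\|M_{g_j}x\|_p = \|x^*M_{\overline{g_j}}\|_p$. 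Summing over $j$, together with the trivial norm estimate $\|\gamma_j\otimes T\|_p \leq \|\gamma_j\|_\infty \cdot N^{1/p}\|T\|_p$, completes the proof with the claimed bound $\lesssim_{p,d}\|x\|_p$.

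The only non-routine ingredient is the pointwise analysis of $g_j$ at infinity, which produces the threshold $p>d/2$; everything else is a direct appeal to the Cwikel estimate already stated. I would therefore expect no serious obstacle beyond careful bookkeeping.
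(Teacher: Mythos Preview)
Your proof is correct and follows essentially the same route as the paper: decompose $\sgn({\mathcal D}) - {\mathcal D}(1+{\mathcal D}^2)^{-1/2}$ as $\sum_j \gamma_j\otimes M_{g_j}$, check that $g_j\in L_p({\mathbb R}^d)$ precisely when $p>d/2$, then bound each commutator $[M_{g_j},x]$ via Theorem~\ref{Cwikel-type}(\ref{L_p cwikel}) applied to $xM_{g_j}$ and $x^*M_{g_j}$. Your pointwise estimate $|g_j(\xi)|\lesssim\min(1,|\xi|^{-2})$ is in fact a bit more explicit than what the paper records.
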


\begin{proof}
Let $1\leq j \leq d$, and for $\xi\in {\mathbb R}^d$ define
$$h_j(\xi) := \frac{\xi_j}{|\xi|} -\frac{\xi_j}{(1+|\xi|^2)^{\frac{1}{2}}}.$$
Thus,
\begin{equation*}
   M_{h_j} = h_j(\ri\nabla_\theta) = \frac{{\mathcal D}_j}{\sqrt{-{\mathcal D}elta_\theta}}-\frac{{\mathcal D}_j}{(1-{\mathcal D}elta_\theta)^{\frac{1}{2}}}
\end{equation*}
Note that there is no ambiguity in writing $\frac{{\mathcal D}_j}{\sqrt{-{\mathcal D}elta_{\theta}}}$, as this is simply $M_g$ for $g(\xi) = \frac{\xi_j}{|\xi|}$.
and so,
\begin{equation*}
    \sgn({\mathcal D}) - \frac{{\mathcal D}}{\sqrt{1+{\mathcal D}^2}} = \sum_{j=1}^d \gamma_j\otimes \Big(\frac{{\mathcal D}_j}{\sqrt{-{\mathcal D}elta_\theta}}-\frac{{\mathcal D}_j}{(1-{\mathcal D}elta_\theta)^{\frac{1}{2}}}\Big) = \sum_{j=1}^d \gamma_j\otimes  M_{h_j}.
\end{equation*}
One can easily check that $h_j \in L_p({\mathbb R}^d)$ as $p> \frac d 2$. Expanding out the commutator,
\be 
\left[\sgn({\mathcal D})  - \frac{{\mathcal D}}{\sqrt{1+{\mathcal D}^2}}, 1\ot  x  \right] = \left[\sum_{j=1}^d \g _j  \ot M_{h_j}, 1\ot   x  \right]
=\sum_{j=1}^d  \g_j \ot  [M_{h_j},    x].
\ee
Hence, 
\be 
\begin{split}
\left\|\left[\sgn({\mathcal D})  - \frac{{\mathcal D}}{\sqrt{1+{\mathcal D}^2}}, 1\ot   x  \right]\right\|_{{\mathcal L}_p} &\leq d \max_{1\leq j \leq d} \left\|\left[ M_{h_j},    x  \right]\right\|_{{\mathcal L}_p}\\
&\leq d \max_{1\leq j \leq d} \left(\left\| M_{h_j}   x   \right\|_{{\mathcal L}_p}+\left\|   x   M_{h_j}\right\|_{{\mathcal L}_p}\right)\\
&= d \max_{1\leq j \leq d} \left(\|  x ^*  M_{h_j}\|_{{\mathcal L}_p}+   \|   x  M_{h_j}\|_{{\mathcal L}_p}\right).
\end{split}
\ee
The desired conclusion follows then from Theorem \ref{Cwikel-type}.(i).
\end{proof}

The proof of the next lemma is modeled on that of \cite[Lemma~4.2]{MSX2018} and \cite[Lemma~10]{LMSZ2017}, via the technique of double operator integrals (see \cite{PSW2002} and \cite{PS2009} and references therein). For the convenience of the reader, let us give an brief introduction of double operator integrals, and sketch the proof of the next lemma.

Let $H$ be a (complex) separable Hilbert space. Let $D_0$ and $D_1$ be self-adjoint (potentially unbounded) operators on $H$, and $E^0$ and $E^1$ be the associated spectral measures. For all $x, y \in {\mathcal L}_2(H)$, the measure $(\lambda, \mu)  \mapsto {\mathrm{tr}}(x\, dE^0(\lambda) \, y \, dE^1(\mu)  )$ is a countably additive complex valued measure on ${\mathbb R}^2$. We say that $\phi \in L_\infty({\mathbb R}^2)$ is $E^0 \otimes E^1$ integrable if there exists an operator $T_\phi ^{D_0, D_1} \in \mathcal{B}  ({\mathcal L}_2(H))$ such that for all $x, y \in {\mathcal L}_2(H)$,
$${\mathrm{tr}} (x\,T_\phi ^{D_0, D_1} y  ) =\int _{{\mathbb R}^2}    \phi(\lambda, \mu )   {\mathrm{tr}}(x\, dE^0(\lambda) \, y \, dE^1(\mu)  ). $$
The operator $T_\phi ^{D_0, D_1} $ is called the transformer. For $A\in {\mathcal L}_2(H) $, we define
\beq\label{doi-def}
T_\phi ^{D_0, D_1}(A)=\int _{{\mathbb R}^2}    \phi(\lambda, \mu )   dE^0(\lambda) \, A \, dE^1(\mu)  . \eeq
This is called a double operator integral.

\begin{lem}\label{commutator-Sob}
Let $x\in \qcS$. Then 
$$ \big\|\big[\frac{{\mathcal D}}{\sqrt{1+{\mathcal D}^2}}, 1\ot  x  \big]\big\|_{{\mathcal L}_{d,\infty} }  \lesssim_d  \|x\|_{\dot W_d^1}.$$
\end{lem}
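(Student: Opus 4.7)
The plan is to use the double operator integral technique introduced just before the statement. Setting $F(\lambda) = \lambda/\sqrt{1+\lambda^2}$, I would write
\[
\left[\frac{{\mathcal D}}{\sqrt{1+{\mathcal D}^2}}, 1\otimes x\right] = T^{{\mathcal D},{\mathcal D}}_{\phi_F}\!\bigl([{\mathcal D}, 1\otimes x]\bigr),
\]
where $\phi_F(\lambda,\mu) = (F(\lambda)-F(\mu))/(\lambda-\mu)$ is the divided difference. The input operator is
\[
[{\mathcal D}, 1\otimes x] = \sum_{j=1}^d \gamma_j \otimes \partial_j x,
\]
which is merely bounded (as $x\in\qcS$), so the entire burden of producing an ${\mathcal L}_{d,\infty}$ estimate falls on the DOI transformer. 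The value of $\|\partial_j x\|_d$ will be the quantity controlling the final bound.

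To make the DOI explicit, I would insert the classical representation
\[
(1+t^2)^{-1/2} = \frac{1}{\pi}\int_0^\infty s^{-1/2}(1+s+t^2)^{-1}\,ds
\]
and compute the corresponding divided difference
\[
\phi_F(\lambda,\mu) = \frac{1}{\pi}\int_0^\infty s^{-1/2}\,\frac{1+s-\lambda\mu}{(1+s+\lambda^2)(1+s+\mu^2)}\,ds.
\]
Substituting into \eqref{doi-def} and writing $R(s) = (1+s+{\mathcal D}^2)^{-1}$ (which equals $1\otimes(1+s-\Delta_\theta)^{-1}$ because ${\mathcal D}^2 = 1\otimes(-\Delta_\theta)$) yields the representation
\[
\left[\frac{{\mathcal D}}{\sqrt{1+{\mathcal D}^2}}, 1\otimes x\right] = \frac{1}{\pi}\int_0^\infty s^{-1/2}\Bigl[(1+s)\,R(s)\,[{\mathcal D}, 1\otimes x]\,R(s) - {\mathcal D} R(s)\,[{\mathcal D}, 1\otimes x]\,{\mathcal D} R(s)\Bigr]\,ds.
\]

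The pointwise-in-$s$ estimates now reduce to Cwikel bounds. I would split $R(s) = R(s)^{1/2}R(s)^{1/2}$ and apply Theorem \ref{Cwikel-type}(ii) to a factor of the form $(\partial_j x)\,R(s)^{1/2}$: the Fourier multiplier symbol $(1+s+|\xi|^2)^{-1/2}$ lies in $L_{d,\infty}({\mathbb R}^d)$ with a quasinorm uniformly bounded in $s\geq 0$ (one checks this by computing the distribution function directly), so
\[
\|(\partial_j x) R(s)^{1/2}\|_{{\mathcal L}_{d,\infty}} \lesssim_d \|\partial_j x\|_d
\]
uniformly in $s$. The residual factors $R(s)^{1/2}$ and ${\mathcal D} R(s)^{1/2}$ are bounded Fourier multipliers with explicit operator norms $(1+s)^{-1/2}$ and $\leq 1$, respectively, which then combine by H\"older in ${\mathcal L}_{d,\infty}$ to give an $s$-dependent bound on the two bracketed summands.

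The main obstacle I anticipate is the convergence of the resulting $s$-integral. A straightforward H\"older estimate applied to each of the two terms in the bracket separately produces bounds of order $(1+s)^{-3/2}$, which, multiplied by the prefactor $s^{-1/2}(1+s)$ from the first summand, yield integrals that diverge logarithmically at $s=\infty$. The cancellation between the two terms --- encoded in the numerator $1+s-\lambda\mu$ of $\phi_F$, which decays like $|\lambda-\mu|$ rather than growing with $s$ --- is essential. To exploit it, I would either rewrite $1+s-\lambda\mu$ in the integrand in a form that makes the cancellation manifest (e.g.\ $1+s-\lambda\mu = \tfrac12[(1+s+\lambda^2) + (1+s+\mu^2) - (\lambda+\mu)^2]$, producing only absolutely convergent contributions), or isolate the ``diagonal'' piece via an integration by parts in $s$. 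Once the combined integrand is handled, summing over $j$ and bounding by $\sum_j \|\partial_j x\|_d = \|x\|_{\dot W^1_d}$ gives the stated estimate.
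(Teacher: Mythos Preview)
Your setup is correct and matches the paper's starting point: the DOI representation $[g({\mathcal D}),1\otimes x]=T_{g^{[1]}}^{{\mathcal D},{\mathcal D}}([{\mathcal D},1\otimes x])$ with $[{\mathcal D},1\otimes x]=\sum_j\gamma_j\otimes\partial_j x$. The gap is in your convergence repair. Your algebraic identity does \emph{not} produce absolutely convergent contributions in ${\mathcal L}_{d,\infty}$: already the first piece, with symbol $\tfrac12(1+s+\mu^2)^{-1}$, yields $\tfrac12[{\mathcal D},1\otimes x]R(s)$, and the sharpest Cwikel bound is $\|(\partial_j x)R(s)\|_{{\mathcal L}_{d,\infty}}\lesssim\|(1+s+|\cdot|^2)^{-1}\|_{L_{d,\infty}}\|\partial_j x\|_d\sim(1+s)^{-1/2}\|\partial_j x\|_d$, so $\int_0^\infty s^{-1/2}(1+s)^{-1/2}\,ds$ still diverges. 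The $(\lambda+\mu)^2$ piece fares no better once expanded into products $f(\lambda)g(\mu)$. These integrals converge only strongly (to operators such as $(\partial_j x)J_\theta^{-1}$), so the triangle inequality yields no ${\mathcal L}_{d,\infty}$ bound. Your alternative of integrating by parts in $s$ is not developed enough to assess.

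The paper avoids the $s$-integral altogether by a multiplicative factorization of the symbol, $g^{[1]}=\psi_1\psi_2\psi_3$, where $\psi_3(\lambda,\mu)=(1+\lambda^2)^{-1/4}(1+\mu^2)^{-1/4}$ and $\psi_1,\psi_2$ define transformers bounded on ${\mathcal L}_1$ and ${\mathcal L}_\infty$ (for $\psi_2$ this is \cite[Lemma~8]{LMSZ2017}), hence on ${\mathcal L}_{d,\infty}$ by interpolation. This reduces matters to $T_{\psi_3}^{{\mathcal D},{\mathcal D}}([{\mathcal D},1\otimes x])=J_\theta^{-1/2}(\partial_j x)J_\theta^{-1/2}$. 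The \emph{symmetric} placement of the Bessel potentials is the point: via polar decomposition $\partial_j x=V_j|\partial_j x|^{1/2}|\partial_j x|^{1/2}$ one applies the weak Cwikel estimate to $|\partial_j x|^{1/2}J_\theta^{-1/2}\in{\mathcal L}_{2d,\infty}$, requiring only $2d>2$. By contrast, your one-sided estimate $(\partial_j x)R(s)^{1/2}\in{\mathcal L}_{d,\infty}$ invokes Theorem~\ref{Cwikel-type}(ii) at $p=d$, which needs $d>2$; so even if the convergence issue were repaired, your route would miss $d=2$.
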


\begin{proof}
Set $g(t)= t (1+t^2)^{-\frac{1}{2}}$ for $t\in {\mathbb R}$. Since all of the derivatives of $x$ are bounded, we may apply \cite[Theorem~4.1]{BS1989}, which asserts that:
\beq\label{repr-commutator}
[g({\mathcal D}), 1\ot  x] =  T_{g^{[1]}}^{{\mathcal D},{\mathcal D}} ([{\mathcal D}, 1\ot  x]),
\eeq
where $g^{[1]}(\lambda,\mu )  := \frac{g(\lambda)-g(\mu )}{\lambda-\mu }  =\p_1(\lambda,\mu)\p_2(\lambda,\mu) \p_3(\lambda,\mu)$, with 
$$\psi_1 = 1+ \frac{1-\lambda \mu }{(1+ \lambda^2)^{\frac 1 2 } (1+\mu ^2) ^{\frac 1 2 } },\;\; \psi_2 = \frac{(1+\lambda^2)^{\frac 1 4} (1+\mu ^2)^{\frac 1 4} }{(1+ \lambda^2)^{\frac 1 2 } + (1+\mu ^2) ^{\frac 1 2 } },\;\;\psi_3 =  \frac{1   }{(1+ \lambda^2)^{\frac 1 4 } (1+\mu ^2) ^{\frac 1 4 } }.$$ 
It follows that 
\beq\label{TDDg}
T_{g^{[1]}}^{{\mathcal D},{\mathcal D}}  = T_{\p_1}^{{\mathcal D},{\mathcal D}}  T_{\p_2}^{{\mathcal D},{\mathcal D}}  T_{\p_3}^{{\mathcal D},{\mathcal D}} .
\eeq
\cite[Lemma 8]{LMSZ2017} ensures the boundedness of the transformer $T_{\p_2}^{{\mathcal D},{\mathcal D}} $, on both ${\mathcal L}_1$ and ${\mathcal L}_\infty$. For $k=1,3$ the function $\psi_k$ can be written as a linear combination of products of bounded functions of $\lambda$ and of $\mu$, and from this it follows that $T_{\p_k}^{{\mathcal D},{\mathcal D}}$ is also a bounded linear map on ${\mathcal L}_{1}$
and ${\mathcal L}_\infty$; see e.g. \cite[Corollary 2]{PS2009} and \cite[Corollary 2.4]{RX2011}. Then by real interpolation of $({\mathcal L}_1, {\mathcal L}_\infty)$ (see \cite{DDP1992}), the transformers $T_{\p_k}^{{\mathcal D},{\mathcal D}} $ with $k=1,2,3$ are bounded linear transformations from ${\mathcal L}_{d,\infty}$ to ${\mathcal L}_{d,\infty}$. Using \eqref{repr-commutator} and the product representation of $g$ in \eqref{TDDg}, we have
\be\begin{split}
\|[g({\mathcal D}), 1\ot  x]\|_{{\mathcal L}_{d,\infty}}&\leq \| T_{\p_1}^{{\mathcal D},{\mathcal D}} \|_{{\mathcal L}_{d,\infty}\ra {\mathcal L}_{d,\infty}}  \| T_{\p_2}^{{\mathcal D},{\mathcal D}} \|_{{\mathcal L}_{d,\infty}\ra {\mathcal L}_{d,\infty}}\\
&\;\;\;\;\;\;\;\;    \times  \| T_{\p_3}^{{\mathcal D},{\mathcal D}} ([{\mathcal D}, 1\ot  x])\|_{ {\mathcal L}_{d,\infty}}  \\
&\lesssim_d \| T_{\p_3}^{{\mathcal D},{\mathcal D}} ([{\mathcal D}, 1\ot  x])\|_{ {\mathcal L}_{d,\infty}}.
\end{split}\ee

 Since $\p_3(\lambda,\mu)=(1+\lambda^2)^{-1/4} (1+\mu^2)^{-1/4}$, by \eqref{doi-def}, we have
$$T_{\p_3}^{{\mathcal D},{\mathcal D}} ([{\mathcal D}, 1\ot  x]) = (1+{\mathcal D}^2)^{-1/4} [{\mathcal D}, 1\ot   x](1+{\mathcal D}^2)^{-1/4}. $$
Recalling that ${\mathcal D} = \sum_{j=1}^d \g_j\ot {\mathcal D}_j$,
\be\begin{split}
\|[g({\mathcal D}), 1\ot  x]\|_{{\mathcal L}_{d,\infty}} &\lesssim_d  \| (1+{\mathcal D}^2)^{-1/4} [{\mathcal D}, 1\ot  x](1+{\mathcal D}^2)^{-1/4}\|_{{\mathcal L}_{d,\infty}} \\
&\lesssim_d  \sum_{j=1}^d \| (1+{\mathcal D}^2)^{-1/4} [\g_j\ot {\mathcal D}_j, 1\ot  x](1+{\mathcal D}^2)^{-1/4}\|_{{\mathcal L}_{d,\infty}}.
\end{split}\ee
But by definition, $[\g_j\ot {\mathcal D}_j, 1\ot  x] =  \g_j \ot  \partial_j x $, thus we obtain
$$\| (1+{\mathcal D}^2)^{-1/4} [\g_j\ot {\mathcal D}_j, 1\ot  x](1+{\mathcal D}^2)^{-1/4}\|_{{\mathcal L}_{d,\infty}} = \| J_\theta^{-1/2} \, \partial_j x \, J_\theta^{-1/2}\|_{{\mathcal L}_{d,\infty}}.$$
Here the first norm $\|\cdot\|_{{\mathcal L}_{d,\infty}}$ is the norm of ${\mathcal L}_{d,\infty}({\mathbb C}^N\otimes L_2({\mathbb R}^d))$, and the second one is the norm of ${\mathcal L}_{d,\infty}(L_2({\mathbb R}^d))$, and $J_\theta=  (1-{\mathcal D}elta_\theta)^{1/2} $.
We are reduced to estimating the quantity $\| J_\theta^{-1/2} \, \partial_j x \, J_\theta^{-1/2}\|_{{\mathcal L}_{d,\infty}}$. By polar decomposition, for every $j$, there is a partial isometry $V_j$ on $L_2({\mathbb R}^d)$ such that  
$$\partial_j x  =  V_j |\partial_j x | =  V_j |\partial_j x |^{\frac 1 2 } |\partial_j x |^{\frac 1 2 }.$$
Taking $\beta=\frac 1 2 $, and recalling that $x$ is such that $\|V_j |\partial_j x|^{\frac 1 2 } \|_{2d}\leq \|\, |\partial_j x|^{\frac 1 2 } \|_{2d} = \|\partial_j x \|_d^{\frac 1 2 }<\infty$, since $2d > 2$, we may apply Lemma \ref{Cwikel-type-lem}.\eqref{weak L_p cwikel} to get
$$\|  |\partial_j x|^{\frac 1 2} J_\theta^{-1/2}\|_{{\mathcal L}_{2d,\infty}}= \|  J_\theta^{-1/2}  |\partial_j x|^{\frac 1 2} \|_{{\mathcal L}_{2d,\infty}}\lesssim_d \|\, |\partial_j x|^{\frac 1 2 } \|_{2d}$$
and 
$$\|  J_\theta^{-1/2} V_j |\partial_j x|^{\frac 1 2}\|_{{\mathcal L}_{2d,\infty}}\lesssim_d \|V_j |\partial_j x|^{\frac 1 2 } \|_{2d}\lesssim_d \|\, |\partial_j x|^{\frac 1 2 } \|_{2d}.$$
Thus, by H\"{o}lder's inequality for weak Schatten classes, 
$$\|  J_\theta^{-1/2}  \partial_j x \,  J_\theta^{-1/2}\|_{{\mathcal L}_{d,\infty}}\lesssim_d \|\, |\partial_j x|^{\frac 1 2 } \|_{2d}^2\lesssim_d \|\partial_j x \|_d.$$
Combining the preceding estimates, we arrive at
$$\|[g({\mathcal D}), 1\ot   x]\|_{{\mathcal L}_{d,\infty}} \lesssim_d \sum_{j=1}^d\|\partial_j x \|_d\lesssim_d \|x\|_{\dot W_d^1},$$
which completes the proof.
\end{proof}

Now we are able to complete the proof of Theorem \ref{sufficiency}. 
\begin{proof}[Proof of Theorem \ref{sufficiency}]
Lemmas \ref{Cwikel-xp}, \ref{commutator-Sob}  and the inequality $\|T\|_{d,\infty} \leq \|T\|_d$ yield:
\beq\label{qd-Sobolev}
 \|\qd x    \|_{{\mathcal L}_{d,\infty}} \lesssim_d \|[g({\mathcal D}),1\otimes x]\|_{d,\infty}+\|[\sgn({\mathcal D})-g({\mathcal D}),1\otimes x]\|_{d,\infty} \lesssim_d \|x\|_d + \|x\|_{\dot W_d^1} ,
\eeq

for all $x \in \qcS$, and with constants independent of $\theta$. We are going to get rid of the dependence on  $\|x\|_d $ by a dilation argument as follows. Let $\la >0 $ and $\Psi_\la : L_\infty (\qe) \rightarrow L_\infty ({\mathbb R}^d_{\la^2\theta})$ be the $*$-isomorphism defined in \eqref{def-dilation}. By \eqref{dilation}, for $x\in L_\infty(\qe)$, we have $\Psi_\la (x) = \sigma_\la x \sigma_\la ^*$. Since the operator $ \frac{{\mathcal D}_j}{\sqrt{-{\mathcal D}elta_\theta}}$, viewed as a Fourier multiplier on ${\mathbb R}^d$, commutes with $\sigma_\la$ (and $\sigma_\la^*$), we have
\be\begin{split}
\qd \big(   \Psi_\la(x) \big) &  = \ri [\sgn({\mathcal D}) , 1\ot \Psi_\la (x)]= \ri [\sgn({\mathcal D}) , 1\ot \sigma_\la  x \sigma_\la^*] \\
&=    \ri   \sigma_\la  [\sgn({\mathcal D}) , 1\ot    x ] \sigma_\la^*  =  \sigma_\la \qd x \, \sigma_\la^*.
\end{split}\ee
Whence, $\|\qd \big(   \Psi_\la(x) \big)    \|_{{\mathcal L}_{d,\infty}} =\|\qd x    \|_{{\mathcal L}_{d,\infty}} $. Applying \eqref{qd-Sobolev} to $\Psi_\la (x) \in  L_\infty ({\mathbb R}^d_{\la^2\theta})$, we obtain
\be 
 \|\qd \big(   \Psi_\la(x) \big)    \|_{{\mathcal L}_{d,\infty}}  \lesssim_d   \|  \Psi_\la(x)\|_d +B_{d}  \|  \Psi_\la(x)\|_{\dot W_d^1} .
\ee
By virtue of Proposition \ref{dilation-Ld}, we return back to $x\in L_\infty(\qe)$:
\be 
\|\qd x    \|_{{\mathcal L}_{d,\infty}} =  \|\qd \big(   \Psi_\la(x) \big)    \|_{{\mathcal L}_{d,\infty}}  \lesssim_d \la   \|  x\|_d +\|  x\|_{\dot W_d^1} .  
\ee
Letting $\la \rightarrow 0$ completes the proof of Theorem \ref{sufficiency} for  $x\in \qcS$.

The general case $x\in  \dot{W}_d^1 (\qe) \cap  \interspace$ is achieved by approximation. By Proposition \ref{dense-Schwartz}, select a sequence $\{x_n\}$ in $\qcS$ such that $x_n \ra  x$ in $\dot{W}_d^1$ seminorm. Corollary \ref{space_frequency_approximation} implies that we can choose this sequence such that we also have that $x_n\ra x$ in the $L_p(\qe)$-sense. For these Schwartz elements $x_n$, we have $\|\qd x_m - \qd x_n\|_{  {\mathcal L}_{d, \infty}} \lesssim_d \|x_m -x_n \|_{\dot{W}_d^1}$, so $\{\qd x_n\}$ is Cauchy in ${\mathcal L}_{d, \infty}$, and thus converges to some limit (say, $L$) in the ${\mathcal L}_{d, \infty} $ quasinorm. 

Let $\eta\in L_2({\mathbb R}^d)$ be compactly supported, and let $K\subset {\mathbb R}^d$ be a compact set containing the support of $\eta$. Then $(x_n-x)\eta = (x_n-x)M_{\chi_K}\eta$. We have:
\begin{equation*}
    \|(x_n-x)\eta\|_2 = \|(x_n-x)\chi_K\eta\|_2 \leq \|(x_n-x)M_{\chi_K}\|_{\infty}\|\eta\|_2 \leq \|(x_n-x)\chi_K\|_{{\mathcal L}_p}\|\eta\|_2.
\end{equation*}
Theorem \ref{Cwikel-type} implies that $\|(x_n-x)M_{\chi_K}\|_{{\mathcal L}_p} \lesssim_{p,K} \|x_n-x\|_p$, and since we have selected the sequence to converge in the $L_p(\qe)$ sense:
\begin{equation}\label{strong_one_side}
    \lim_{n\to\infty} \|(x_n-x)\eta\|_2 = 0.
\end{equation}
Similarly, if $\xi \in {\mathbb C}^N\otimes L_2({\mathbb R}^d)$ is compactly supported, then $\sgn({\mathcal D})\xi$ is still compactly supported and we have:
\begin{equation}\label{strong_other_side}
    \lim_{n\to\infty} \|1\otimes (x_n-x)\sgn(D)\xi\|_2 = 0.
\end{equation}
Combining \eqref{strong_one_side} and \eqref{strong_other_side} implies that $(\qd x_n)\xi \rightarrow (\qd x)\xi$ for all compactly supported $\xi \in {\mathbb C}^N\otimes L_2({\mathbb R}^d)$. Since we know that $\qd x_n\to L$
in the ${\mathcal L}_{d,\infty}$ topology, it follows that $\qd x = L$, and therefore $\qd x \in {\mathcal L}_{d,\infty}$.





To complete the proof, we note that for these Schwartz elements $x_n$,
$$\|\qd x_n \|_{{\mathcal L}_{d, \infty}}\lesssim_d   \|x_n\|_{\dot{W}^{1}_d}.$$
Upon taking the limit $n \ra  \infty $ we arrive at:
\begin{equation*}
    \|\qd x\|_{{\mathcal L}_{d,\infty}}\lesssim_{d} \|x\|_{\dot{W}^1_d}.
\end{equation*}
\end{proof}

\section{Commutator estimates for $\qe$}\label{commutator-estimates}

This section is devoted to the proof of Theorem \ref{main-commutator}, which is an essential ingredient for our proof of Theorem \ref{trace formula}
i.e., the computation of $\vf(|\qd x|^d)$ when $x\in L_\infty(\qe) \cap \dot  W _d^1 (\qe)$ and $\vf$ is a continuous normalised trace on ${\mathcal L}_{1,\infty}$. One powerful tool used in \cite{MSX2018} for quantum tori is the theory of noncommutative pseudodifferential operators. The proof in \cite{MSX2018} proceeds by viewing the quantised differential $\qd x =  \ri [\sgn({\mathcal D}), 1\ot x ]$ as a pseudodifferential operator, then determining its (principal) symbol and order, and finally appealing to Connes' trace formula as obtained in \cite{MSZ2018}. 

Despite the development of pseudodifferential operators on quantum Euclidean spaces in \cite{GJP2017, LM2016}, we have found it instructive to attempt a direct proof of Theorem \ref{main-commutator}. This has two main advantages: first, it makes
the present text self-contained, and more importantly the methods presented below are based only on operator theory and can be generalised to settings where no pseudodifferential calculus is available. 

For potential future utility we will prove Theorem \ref{main-commutator} for the full range of parameters $(\alpha,\beta)$, although ultimately we will only need certain specific choices of $\alpha$ and $\beta$.

Let $\cA\subseteq \qcS$ be a factorisable subalgebra as in Proposition \ref{factorisation}. 

\medskip

The main target of this section is to give the proof of Theorem \ref{main-commutator}, which is technical and somewhat tedious, and so is divided into several steps presented in the following subsections.

\subsection{Commutator identities}

    The following integral formula will be useful: let $\zeta < 1$ and $\eta > 1-\zeta$. Then for all $t> 0$ we have
    \beq\label{best_integral}
        \int_0^\infty \frac{1}{\lambda^{\zeta}(t+\lambda)^{\eta}}\, d\lambda = t^{1-\zeta-\eta}\, {\mathrm B}(\eta+\zeta-1,1-\zeta).
    \eeq
    where ${\mathrm B}(\cdot,\cdot)$ is the Beta function.
        
    For an operator $T\in {\mathcal B}(L_2({\mathbb R}^d))$, let $L_\theta(T) := J_\theta^{-1}[J_\theta^2,T]$ whenever it is defined, and define $\delta_\theta(T) := [J_\theta,T]$ similarly. Inductively, for $k\in {\mathbb N}$ we define $L_\theta^k (T)= L_\theta(L_\theta^{k-1}(T))$ and $\delta^k _\theta(T) =\delta_\theta(\delta^{k-1}_\theta(T))$.
    We also make the convention that $L_\theta^0(T)=T$ and $\delta^0_\theta(T) =T$. Note that $L_{\theta}(T)J_{\theta}^{-1} = L_{\theta}(TJ_{\theta}^{-1})$.
    
    The following theorem states that to prove that $\delta_{\theta}(T)$ is in a certain ideal, it suffices to show that $L^k_{\theta}(T)$ is in that ideal for all $k\geq 0$. The essential idea behind the proof
    goes back to \cite[Appendix B]{CM1995}. Here some extra care is needed for the quasi-Banach cases $0 <p\leq 1$.
    We make use of the theory of integration of functions valued in quasi-Banach developed by Turpin and Waelbroeck \cite{TW1,TW2,Kalton1985}. We will refer the reader to \cite{LeSZ2018} for results in the precise form we need.
    \begin{thm}\label{L_to_delta}
        Let $T$ be an operator on $L_2({\mathbb R}^d)$ which maps the Schwartz class ${\mathcal S}({\mathbb R}^d)$ into ${\mathcal S}({\mathbb R}^d)$. Assume that $L_\theta^k(T)$ is defined for all $k\geq 0$. 
        \begin{enumerate}[{\rm (i)}]
            \item{}\label{L_to_delta_bdd} If $L_\theta^k(T)$ has bounded extension for all $k\geq 0$, then $\delta_\theta^k(T)$ has bounded extension for all $k\geq 0$.
            \item{}\label{L_to_delta_cpt} Similarly if $p > 0$ and $L_\theta^k(T) \in {\mathcal L}_{p,\infty}$ for all $k\geq 0$, then $\delta_\theta^k(T)\in{\mathcal L}_{p,\infty}$ for all $k\geq 0$.
        \end{enumerate}
    \end{thm}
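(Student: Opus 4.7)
My plan is to adapt the approach of \cite[Appendix B]{CM1995}. The starting point is the algebraic identity
\begin{equation*}
    L_\theta(T) = 2\delta_\theta(T) - J_\theta^{-1}\delta_\theta^2(T),
\end{equation*}
which follows by writing $[J_\theta^2,T]=J_\theta\delta_\theta(T)+\delta_\theta(T)J_\theta$ and moving one of the commutators past a factor of $J_\theta$. In terms of the formal inner automorphism $\sigma(S):=J_\theta^{-1}SJ_\theta$ this reads $L_\theta = (I+\sigma)\circ \delta_\theta$, and a short application of the Jacobi identity together with $[J_\theta^2,J_\theta]=0$ shows that $L_\theta$, $\delta_\theta$ and $\sigma$ pairwise commute as linear operations on the space of operators preserving $\qcS$. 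Iterating one obtains $L_\theta^k(T) = (I+\sigma)^k\delta_\theta^k(T)$, so that formally $\delta_\theta^k(T) = (I+\sigma)^{-k}L_\theta^k(T)$.

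Using this commutation, induction on $k$ reduces both (i) and (ii) to the base case $k=1$: if every $L_\theta^j(T)$ lies in $X$ (with $X={\mathcal B}(L_2({\mathbb R}^d))$ for part (i) and $X={\mathcal L}_{p,\infty}$ for part (ii)), then $\delta_\theta(T)\in X$. Indeed the identity $L_\theta^j(\delta_\theta^{k-1}(T)) = \delta_\theta^{k-1}(L_\theta^j(T))$ shows that the hypothesis on $T$ descends to each $\delta_\theta^{k-1}(T)$ and thus propagates through the induction.

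For the base case I would make the formal inversion $\delta_\theta(T) = (I+\sigma)^{-1}L_\theta(T)$ rigorous by realising $(I+\sigma)^{-1}$ as the transformer of a double operator integral in the sense of \cite{PSW2002,PS2009}. With respect to the spectral measure of $J_\theta$, this corresponds to the symbol
\begin{equation*}
    \phi(\lambda,\mu) = \frac{\lambda}{\lambda+\mu},\qquad \lambda,\mu\geq 1,
\end{equation*}
so that $\delta_\theta(T) = T_\phi^{J_\theta,J_\theta}(L_\theta(T))$; heuristically,
\begin{equation*}
    \delta_\theta(T) = \int_0^\infty J_\theta e^{-tJ_\theta}\, L_\theta(T)\, e^{-tJ_\theta}\,dt.
\end{equation*}
Since $|\phi|\leq 1$ and $\phi$ admits a tensor-product factorisation in $\lambda$ and $\mu$, the transformer $T_\phi^{J_\theta,J_\theta}$ is bounded on ${\mathcal B}(L_2({\mathbb R}^d))$ and on each of ${\mathcal L}_1$ and ${\mathcal L}_\infty$. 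Real interpolation of $({\mathcal L}_1,{\mathcal L}_\infty)$ (exactly as in the argument already used in Lemma \ref{commutator-Sob}) then transfers this boundedness to every ${\mathcal L}_{p,\infty}$ with $p\geq 1$, yielding (i) and the range $p\geq 1$ of (ii).

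The main technical obstacle is the regime $0<p<1$ of part (ii), where the Sylvester-type integrand above is no longer Bochner integrable in the ordinary Banach sense and the endpoint $({\mathcal L}_1,{\mathcal L}_\infty)$ interpolation breaks down. Here I plan to replace Banach integration by the Turpin--Waelbroeck theory of integration of quasi-Banach-valued functions developed in \cite{TW1,TW2,Kalton1985}, applied to weak Schatten ideals in the form worked out in \cite{LeSZ2018}. The contractivity $|\phi|\leq 1$ is preserved and yields the desired ${\mathcal L}_{p,\infty}$ bound for all $p>0$; combined with the inductive reduction above, this completes both parts.
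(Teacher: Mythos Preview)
Your reduction to the single transformer $T_\phi^{J_\theta,J_\theta}$ with symbol $\phi(\lambda,\mu)=\frac{\lambda}{\lambda+\mu}$ is the point where the argument breaks. This symbol does \emph{not} define a bounded double operator integral on ${\mathcal B}(L_2({\mathbb R}^d))$ or on ${\mathcal L}_1$. Writing $\phi(\lambda,\mu)=\tfrac12+\tfrac12\,\frac{\lambda-\mu}{\lambda+\mu}$ and restricting to the spectral set $\{2^n:n\geq 0\}\subset\sigma(J_\theta)$, one has $\phi(2^m,2^n)=\tfrac{1}{1+2^{n-m}}$, which differs from the characteristic function $\chi_{\{m\geq n\}}$ by a summable Toeplitz sequence. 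Hence $T_\phi$ differs from the triangular truncation by a transformer bounded on every ideal, and the triangular truncation is classically unbounded on both ${\mathcal B}(H)$ and ${\mathcal L}_1$. Equivalently, your Sylvester integral $\int_0^\infty J_\theta e^{-tJ_\theta}\,L_\theta(T)\,e^{-tJ_\theta}\,dt$ fails to converge in operator norm near $t=0$, since $\|J_\theta e^{-tJ_\theta}\|_\infty\sim (et)^{-1}$ there. So neither part (i) nor the endpoint interpolation you invoke for part (ii) goes through; for $1<p<\infty$ one could still get ${\mathcal L}_{p,\infty}$ boundedness of $T_\phi$ from the Gohberg--Krein/Macaev theory, but this leaves (i) and the range $0<p\leq 1$ uncovered. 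The comparison you draw with $\psi_2$ in Lemma~\ref{commutator-Sob} is misleading: that symbol is $\frac{\sqrt{ab}}{a+b}$ (symmetric and vanishing at the boundary of the quadrant), not $\frac{a}{a+b}$.

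The paper circumvents this obstruction by not attempting to invert $I+\sigma$ globally. Instead it iterates the resolvent identity to obtain a finite expansion
\[
\delta_\theta(T)=\sum_{j=1}^{n-1}c_j\,J_\theta^{1-j}L_\theta^j(T)+R_n(T),
\]
where each summand is manifestly in the required ideal because $J_\theta^{1-j}$ is bounded for $j\geq 1$, and the integral remainder $R_n(T)$ is shown to lie in ${\mathcal L}_{p,\infty}$ once $n$ is taken large enough (via ordinary norm estimates for $p\geq 1$, and via the Turpin--Waelbroeck machinery of \cite{LeSZ2018} for $p<1$). The key point is that the higher iterates $L_\theta^k(T)$ are used to absorb the divergence that makes the single-step inversion fail.
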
    
    \begin{proof}
        Taking $\eta = 1$ and $\zeta = 1/2$ in \eqref{best_integral} yields
        \begin{equation}\label{best_integral_op}
            J_\theta^{-1} = \frac{1}{\pi} \int_0^\infty \frac{1}{\lambda^{1/2}(\lambda+ J_\theta^2)}\,d\lambda.
        \end{equation}
        Here since  $(\lambda + J_\theta^2) ^{-1}$ has bounded extension for all $\lambda\geq 0$, the integrand is a norm-continuous function of $\la$ and the integral converges in operator norm; see e.g. \cite[pp~701]{CP1998}. Since by assumption $T$ has bounded extension and maps ${\mathcal S}({\mathbb R}^d)$ to ${\mathcal S}({\mathbb R}^d)$, for any $\xi \in {\mathcal S}({\mathbb R}^d)\subset  \dom(J^2_\theta)$, multiplying by $J_\theta^2$ and taking the commutator with $T$ gives us
        \begin{equation*}
            [J_\theta,T] \xi = \frac{1}{\pi}\int_0^\infty \lambda^{-1/2}\left[\frac{J_\theta^2}{\lambda+J_\theta^2},T \right]\, \xi\, d\lambda ,
        \end{equation*}
       where the integrand on the right converges in the $L_2({\mathbb R}^d)$-valued Bochner sense.
        We manipulate the integrand as follows
        \begin{align*}
            \delta_\theta(T)\xi =[J_\theta,T] \, \xi &= \frac{1}{\pi}\int_0^\infty \lambda^{1/2}(\lambda+J_\theta^2)^{-1}[J_\theta^2,T](\lambda+J_\theta^2)^{-1} \, \xi\,d\lambda\\
                    &= \frac{1}{\pi}\int_0^\infty \lambda^{1/2}\frac{J_\theta}{\lambda+J_\theta^2}L_\theta(T)(\lambda+J_\theta^2)^{-1} \, \xi\,d\lambda\\
                    &= \frac{1}{\pi}\int_0^\infty \lambda^{1/2}\frac{J_\theta}{(\lambda+J_\theta^2)^2}L_\theta(T) \, \xi\,d\lambda\\
                    &\quad + \frac{1}{\pi}\int_0^\infty \lambda^{1/2}\frac{J_\theta}{\lambda+J_\theta^2}[L_\theta(T),(\lambda+J_\theta^2)^{-1}] \, \xi\,d\lambda\\
                    &= \frac{1}{\pi}\int_0^\infty \lambda^{1/2}\frac{J_\theta}{(\lambda+J_\theta^2)^2}\,d\lambda \cdot L_\theta(T) \, \xi\\
                    &\quad + \frac{1}{\pi}\int_0^\infty \lambda^{1/2}\frac{J_\theta^2}{(\lambda+J_\theta^2)^2}L_\theta^2(T)\frac{1}{\lambda+J_\theta^2}\, \xi\,d\lambda .\\
                    &= \frac{1}{2}L_\theta(T) \, \xi+\frac{1}{\pi}\int_0^\infty \lambda^{1/2}\frac{J_\theta^2}{(\lambda+J_\theta^2)^2}L_\theta^2(T)\frac{1}{\lambda+J_\theta^2} \, \xi\,d\lambda.
        \end{align*}
        In the last equality above, we have used the fact that
        \begin{equation*}
            \int_0^\infty \lambda^{1/2}\frac{J_\theta}{(\lambda+J_\theta^2)^2}\,d\lambda = \frac{\pi}{2}\, 1_{{\mathcal B}(L_2({\mathbb R}^d))},
        \end{equation*}
        which is deduced from \eqref{best_integral} by taking $\zeta = -1/2$ and $\eta=2$. Also note that all the integrands above converge in $L_2({\mathbb R}^d)$.
                
        Now if $L_\theta^2(T)$ has bounded extension, we have
        \begin{equation*}
            \left\|\frac{J_\theta^2}{(\lambda+J_\theta^2)^2}L_\theta^2(T)\frac{1}{\lambda+J_\theta^2}\right\|_{\infty}  \leq \|L_\theta^2(T)\|_{\infty} \frac{1}{4\lambda(\lambda+1)}.
        \end{equation*}
        Hence,
        \begin{equation*}
            \|\delta_\theta(T)\|_{\infty} \leq \frac{1}{2}\|L_\theta(T)\|_{\infty} + C \|L_\theta^2(T)\|_\infty ,
        \end{equation*}
        where $C>0$ is a certain constant.
        So if $L_\theta(T)$ and $L_\theta^2(T)$ have bounded extension, then $\delta_\theta(T)$ has bounded extension. Inductively, if $L_\theta^k(T)$ has bounded extension for all $k\geq 0$, then $\delta_\theta^k(T)$ has bounded extension for all $k\geq 0$.
        This completes the proof of part \eqref{L_to_delta_bdd}.
        
        We turn to the proof of part \eqref{L_to_delta_cpt}. If $p > 1$, then ${\mathcal L}_{p,\infty}$ can be given an equivalent norm making it a Banach ideal. Then we may give the same argument as part \eqref{L_to_delta_bdd}, but with the operator norm
        replaced by a norm for ${\mathcal L}_{p,\infty}$. On the other hand, if $p \leq 1$, then ${\mathcal L}_{p,\infty}$ cannot be given a Banach norm and therefore a more delicate argument is needed. Taking yet more commutators, for all $\xi \in {\mathcal S}({\mathbb R}^d)$ we have:
        \begin{align*}
            \delta_\theta(T)\xi &= \frac{1}{2}L_\theta(T)\xi + \frac{1}{\pi}\int_0^\infty \lambda^{1/2} \frac{J_\theta^2}{(\lambda+J_\theta^2)^3} L_\theta^2(T)\xi\,d\lambda \\
                        &\quad + \frac{1}{\pi}\int_0^\infty \lambda^{1/2}\frac{J_\theta^2}{(\lambda+J_\theta^2)^2}[L_\theta^2(T),(\lambda+J_\theta^2)^{-1}]\xi\,d\lambda\\
                        &= \frac{1}{2}L_\theta(T)\xi+\frac{{\mathrm B}(3/2,3/2)}{\pi}J_\theta^{-1}L_\theta^2(T)\xi+\frac{1}{\pi}\int_0^\infty \lambda^{1/2}\frac{J_\theta^3}{(\lambda+J_\theta^2)^3}L_\theta^3(T)\frac{1}{\lambda+J_\theta^2}\xi\,d\lambda.
        \end{align*}
        
        Iterating this process ultimately leads to the expansion, for each $n\geq 1$,
        \begin{equation}\label{deltaT}
            \delta_\theta(T) = \sum_{j=1}^{n-1} \frac{1}{\pi}{\mathrm B}(j-1/2,3/2)J_\theta^{1-j}L_\theta^j(T) + \frac{1}{\pi}\int_0^\infty \lambda^{1/2}\frac{J_\theta^n}{(\lambda+J_\theta^2)^n}L_\theta^n(T)\frac{1}{\lambda+J_\theta^2}\,d\lambda.
        \end{equation}
        The coefficients above are obtained by a choice of $\eta = j+1$ and $\zeta = -1/2$ in \eqref{best_integral} yielding
        \begin{equation*}
            \int_0^\infty \lambda^{1/2}\frac{J_\theta^j}{(\lambda+J_\theta^2)^{j+1}}\,d\lambda = {\mathrm B}(j-1/2,3/2)\, J_\theta^{1-j},
        \end{equation*}
        which are understood in the same meaning as \eqref{best_integral_op}.
                
        To complete the proof of \eqref{L_to_delta_cpt}, we will show that for any $p > 0$ we can choose $n$ large enough that the integral remainder term in \eqref{deltaT} can be proved to be in ${\mathcal L}_{p,\infty}$. To this end we use the non-convex integration theory of \cite{LeSZ2018}. Let $n > 1$, and define:
        \begin{equation*}
            \mathcal I_n(\lambda) = \frac{J_\theta^n}{(\lambda+J_\theta^2)^n},\quad \mathcal J (\lambda) = \frac{1}{\lambda+J_\theta^2}.
        \end{equation*}
        Let us show that we can choose $n$ sufficiently large such that if $X \in {\mathcal L}_{p,\infty}$, then $\int_0^\infty \lambda^{1/2}\mathcal I_n(\lambda)X \mathcal  J(\lambda)\,d\lambda$ is in ${\mathcal L}_{p,\infty}$.
        Specifically, we use \cite[Corollary 3.7]{LeSZ2018} combined with \cite[Proposition 3.8]{LeSZ2018}, which together imply that it suffices to have
        \begin{equation}\label{LeSZ_condition}
            \sum_{j\in {\mathbb N}_0} ((j+1)^{1/2}\|\mathcal I_n\|_{C^{2n}([j,j+1],{\mathcal B}(L_2({\mathbb R}^d)))}\|\mathcal J\|_{C^{2n}([j,j+1],{\mathcal B}(L_2({\mathbb R}^d)))})^{\frac{p}{p+1}} < \infty.
        \end{equation}
        and $n > \frac{1}{2p}$. 
                
        Now let us check \eqref{LeSZ_condition}. For $0\leq k\leq 2n$, we have
        \begin{align*}
            \left\|\frac{\partial^k}{\partial \lambda^k}\mathcal I_n(\lambda)\right\|  &= C_{k,n}\left\|\frac{J_\theta^n}{(\lambda+J_\theta^2)^{n+k}}\right\| \\
                                                                                           &\leq C_{k,n}\left\|\frac{J_\theta^n}{(\lambda+J_\theta^2)^n}\right\| \\
                                                                                           &= C_{k,n}\left\|\frac{1}{(\lambda J_\theta^{-1}+J_\theta)^n}\right\| .
        \end{align*}
        Since
        \begin{equation*}
            \lambda J_\theta^{-1}+J_\theta \geq \max\{{\lambda+1,2\lambda^{1/2}}\}  \,1_{{\mathcal B}(L_2({\mathbb R}^d))} ,
        \end{equation*}
        it follows that
        \begin{equation*}
            \left\|\frac{\partial^k}{\partial\lambda^k}\mathcal I_n(\lambda)\right\|  \leq C_{k,n}\min\{1,\lambda^{-n/2}\}.
        \end{equation*}
        For $\mathcal J(\la )$ the estimates are easier
        \begin{equation*}
            \left\|\frac{\partial^k}{\partial \lambda^k}\mathcal J(\la )\right\|  \leq C_k\frac{1}{\lambda+1}.
        \end{equation*}
        So if we choose $n$ large enough, \eqref{LeSZ_condition} is satisfied. Thus, if $L_\theta^n(T) \in {\mathcal L}_{p,\infty}$ then
        \begin{equation*}
            \frac{1}{\pi}\int_0^\infty \lambda^{1/2}\frac{J_\theta^n}{(\lambda+J_\theta^2)^n}L_\theta^n(T)\frac{1}{\lambda+J_\theta^2}\,d\lambda \in {\mathcal L}_{p,\infty}.
        \end{equation*}
        So, if all of $L_\theta(T),L_\theta^2(T),\cdots,L_\theta^n(T)$ are in ${\mathcal L}_{p,\infty}$ then \eqref{deltaT} implies that $\delta_\theta(T) \in {\mathcal L}_{p,\infty}$. Thus by induction, if $L_\theta^k(T) \in {\mathcal L}_{p,\infty}$ for every $k\geq 0$, then $\delta^k_\theta(T) \in {\mathcal L}_{p,\infty}$ for every $k\geq 0$.
    \end{proof}

\subsection{The case $\al=1$} 
    Now we commence the proof of Theorem \ref{main-commutator} by first proving the case $\al=1$, which is the easiest case since we can directly apply Theorem \ref{L_to_delta} and the Cwikel type estimate \cite{LeSZ2017}.
    \begin{lem}\label{L_cwikel_estimate}
        Let $x \in \qcS$. The operators $L_\theta^k( x)$ have bounded extension for all $k \geq 1$. Moreover, we have
        \begin{equation*}
            L_\theta^k( x)J_\theta^{-d} \in {\mathcal L}_{1,\infty}
        \end{equation*}
        for all $k\geq 0$.
    \end{lem}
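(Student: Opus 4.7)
The plan is to give a direct inductive computation of $L_\theta^k(x)$ for $x\in\qcS$, reducing everything to bounded Fourier multipliers acting on derivatives $\partial^\alpha x \in \qcS$, and then invoking the Cwikel-type estimate Theorem \ref{Cwikel-type}.\eqref{cwikel_estimate_Sob}.

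First I would compute $[J_\theta^2,x]$ explicitly. Since $J_\theta^2=1-\Delta_\theta$ is the Fourier multiplier by $1+|r|^2$, a direct calculation using $\mathcal{D}_j U(s)=U(s)(\mathcal{D}_j-s_j)+s_jU(s)$ (equivalently, the identity $[\mathcal{D}_j,U(s)]=s_jU(s)$) gives for $x=U(f)$ that
\[
 [J_\theta^2,x] \;=\; 2\sum_{j=1}^d \mathcal{D}_j\,\partial_j x \;-\; \sum_{j=1}^d \partial_j^2 x,
\]
where each $\partial_j x,\partial_j^2x\in\qcS$. The operator $J_\theta^{-1}\mathcal{D}_j$ is the Fourier multiplier by $r_j/(1+|r|^2)^{1/2}$, which is bounded (with norm at most $1$), and $J_\theta^{-1}$ is bounded as well. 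Hence
\[
 L_\theta(x)\;=\;2\sum_{j=1}^d \bigl(J_\theta^{-1}\mathcal{D}_j\bigr)\,\partial_j x \;-\; \sum_{j=1}^d J_\theta^{-1}\,\partial_j^2 x
\]
has bounded extension. The essential observation for the induction is that any Fourier multiplier $F$ commutes with $J_\theta^2$, so $[J_\theta^2,Fy]=F[J_\theta^2,y]$. Consequently, if $L_\theta^{k}(x)=\sum_l F_l\,\partial^{\alpha_l}x$ with each $F_l$ a bounded Fourier multiplier (a product of factors of the form $J_\theta^{-1}\mathcal{D}_j$ and $J_\theta^{-1}$) and each $\partial^{\alpha_l}x\in\qcS$, then applying $L_\theta$ once more yields
\[
 L_\theta^{k+1}(x)\;=\;\sum_l F_l\Bigl(2\sum_{j}\bigl(J_\theta^{-1}\mathcal{D}_j\bigr)\partial_j\partial^{\alpha_l}x-\sum_{j}J_\theta^{-1}\partial_j^2\partial^{\alpha_l}x\Bigr),
\]
which is again of the same form. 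This proves the first assertion: $L_\theta^k(x)$ has bounded extension for every $k\ge 0$.

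For the ${\mathcal L}_{1,\infty}$-statement, observe that since every $\partial^{\alpha_l}x\in\qcS\subset W_1^d(\qe)$, Theorem \ref{Cwikel-type}.\eqref{cwikel_estimate_Sob} gives $(\partial^{\alpha_l}x)J_\theta^{-d}\in{\mathcal L}_{1,\infty}$. Since ${\mathcal L}_{1,\infty}$ is an ideal and each $F_l$ is bounded, we conclude
\[
 L_\theta^k(x)\,J_\theta^{-d}\;=\;\sum_l F_l\bigl(\partial^{\alpha_l}x\bigr)J_\theta^{-d}\;\in\;{\mathcal L}_{1,\infty}.
\]
(The case $k=0$ is already immediate from Theorem \ref{Cwikel-type}.\eqref{cwikel_estimate_Sob} since $x\in\qcS\subset W_1^d(\qe)$.)

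There is no real obstacle here; the only step requiring any care is the initial commutator computation, since $J_\theta^2$ is unbounded. This is handled by working with $x=U(f)$ for $f\in{\mathcal S}({\mathbb R}^d)$, passing the commutator under the absolutely convergent Bochner integral defining $U(f)$, and using the identity $[J_\theta^2,U(s)]=2(s,\mathcal{D})U(s)-|s|^2U(s)$; the latter is a pointwise computation on ${\mathcal S}({\mathbb R}^d)$ justified by the fact that $J_\theta^2$ and $U(s)$ both preserve ${\mathcal S}({\mathbb R}^d)$. Once this is in place, the inductive structure described above mechanically produces the desired conclusions.
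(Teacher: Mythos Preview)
Your proposal is correct and follows essentially the same approach as the paper: compute $[J_\theta^2,x]$ explicitly as a sum of bounded Fourier multipliers acting on derivatives $\partial^\alpha x\in\qcS$, observe that Fourier multipliers commute with $J_\theta$ so the structure persists under iteration of $L_\theta$, and then apply Theorem~\ref{Cwikel-type}\eqref{cwikel_estimate_Sob} term by term. The paper organizes the induction via the recursion $L_\theta^k(x)=\sum_j 2J_\theta^{-1}\mathcal{D}_j\,L_\theta^{k-1}(\partial_j x)-\sum_j J_\theta^{-1}L_\theta^{k-1}(\partial_j^2 x)$ rather than your explicit expansion $L_\theta^k(x)=\sum_l F_l\,\partial^{\alpha_l}x$, but this is a cosmetic difference.
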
   
    \begin{proof}
        We have
        \begin{align*}
            L_\theta( x) &= J_\theta^{-1}\sum_{j=1}^d [{\mathcal D}_j^2, x]\\
                      &= J_\theta^{-1}\sum_{j=1}^d 2{\mathcal D}_j[{\mathcal D}_j, x]-[{\mathcal D}_j,[{\mathcal D}_j, x]]\\
                      &= \sum_{j=1}^d 2J_\theta^{-1}{\mathcal D}_j\, \partial_j x  -J_\theta^{-1} \partial^2_jx .
        \end{align*}
        Since $J_\theta^{-1}{\mathcal D}_j$ has bounded extension, it follows that $L_\theta(x)$ also has bounded extension.
        
        Since $L_\theta$ commutes with $J_\theta$ and each ${\mathcal D}_j$, for $k\geq 2$ we have
        \begin{equation*}
            L_\theta^k( x) = \sum_{j=1}^d 2J_\theta^{-1}{\mathcal D}_j   L_\theta^{k-1}( \partial_j x)-\sum_{j=1}^d J_\theta^{-1}L_\theta^{k-1}( \partial^2_j x).
        \end{equation*}
        So by induction on $k$, all $L_\theta^k(x)$ are bounded. Moreover, by convention $L^0(T) = T$, then for all $k\geq 1$ we get
        \begin{equation*}
            L_\theta^k(x)J_\theta^{-d} = \sum_{j=1}^d 2J_\theta^{-1}{\mathcal D}_jL_\theta^{k-1}( \partial_jx)J_\theta^{-d} - \sum_{j=1}^d J_\theta^{-1}L_\theta^{k-1}( \partial^2_jx)J_\theta^{-d}.
        \end{equation*}
        Hence Theorem \ref{Cwikel-type}(\ref{cwikel_estimate_Sob}) ensures $L_\theta^k(x)J_\theta^{-d} \in {\mathcal L}_{1,\infty}$.
    \end{proof}
    
    An immediate corollary of Lemma \ref{L_cwikel_estimate} together with Theorem \ref{L_to_delta}\eqref{L_to_delta_bdd} yields
    \begin{cor}\label{QC_infty}
        For all $x \in \qcS$ and $k\geq 0$, the operator $\delta_\theta^k(x)$ has bounded extension.
    \end{cor}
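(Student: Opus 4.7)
The plan is to apply Theorem \ref{L_to_delta}\eqref{L_to_delta_bdd} directly with $T = x$. I need to verify the three hypotheses of that theorem. First, since $x \in \qcS$, the operator $x$ preserves the Schwartz class ${\mathcal S}({\mathbb R}^d)$: this follows from the fact that elements of $\qcS$ are smooth in the sense that all iterated commutators with the ${\mathcal D}_j$ are bounded, so $x$ maps ${\mathcal S}({\mathbb R}^d)$, which is characterized as the common smooth domain of the ${\mathcal D}_j$'s, into itself.

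Next, I need $L_\theta^k(x)$ to be defined and to have bounded extension for every $k \geq 0$. For $k = 0$ this is immediate since $L_\theta^0(x) = x \in \qcS \subset L_\infty(\qe)$. For $k \geq 1$, this is exactly the content of Lemma \ref{L_cwikel_estimate} (the second conclusion of that lemma, concerning $L_\theta^k(x) J_\theta^{-d} \in {\mathcal L}_{1,\infty}$, will not be needed here; we only invoke the boundedness statement).

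With all hypotheses of Theorem \ref{L_to_delta}\eqref{L_to_delta_bdd} verified, the conclusion that $\delta_\theta^k(x)$ has bounded extension for all $k \geq 0$ follows at once. There is no real obstacle here: the corollary is a transparent combination of the preceding lemma, which provides boundedness of the $L_\theta^k(x)$, together with the bridge from $L_\theta$-iterates to $\delta_\theta$-iterates established in Theorem \ref{L_to_delta}.
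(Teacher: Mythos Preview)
Your proof is correct and follows exactly the paper's approach: the corollary is stated there as an immediate consequence of Lemma \ref{L_cwikel_estimate} together with Theorem \ref{L_to_delta}\eqref{L_to_delta_bdd}. Your only addition is the explicit verification that $x \in \qcS$ preserves ${\mathcal S}({\mathbb R}^d)$, which the paper leaves implicit.
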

%

    The main technical underpinning of Theorem \ref{main-commutator} is the following Lemma:
    \begin{lem}\label{delta_cwikel_estimate}
        Let $x \in \cA$. Then for all $\beta > 0$ and all $k\geq 0$ we have
        \begin{equation*}
            \delta_\theta^k( x)J_\theta^{-\beta} \in {\mathcal L}_{d/\beta,\infty}.
        \end{equation*}
    \end{lem}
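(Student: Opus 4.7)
My plan is to reduce Lemma \ref{delta_cwikel_estimate} to a base estimate via Theorem \ref{L_to_delta}, and then prove that base estimate by induction on $\beta$ using the factorisation property of $\cA$. First, I would apply Theorem \ref{L_to_delta}(ii) to the operator $T := x J_\theta^{-\beta}$. Since $J_\theta^{-\beta}$ commutes with $J_\theta$, we have $\delta_\theta^k(T) = \delta_\theta^k(x) J_\theta^{-\beta}$ and $L_\theta^m(T) = L_\theta^m(x) J_\theta^{-\beta}$ for every $k, m \geq 0$, and $T$ preserves $\mathcal{S}({\mathbb R}^d)$ (both $x \in \qcS$ and the smooth Fourier multiplier $J_\theta^{-\beta}$ do). Thus Theorem \ref{L_to_delta}(ii) reduces the lemma to the statement $L_\theta^m(x) J_\theta^{-\beta} \in \mathcal{L}_{d/\beta,\infty}$ for every $m \geq 0$. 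The identity $L_\theta(y) = \sum_{j=1}^d [2(J_\theta^{-1}\mathcal{D}_j)\partial_j y - J_\theta^{-1} \partial_j^2 y]$ established in Lemma \ref{L_cwikel_estimate}, combined with the fact that the bounded Fourier multipliers $J_\theta^{-1}$ and $J_\theta^{-1}\mathcal{D}_j$ commute with every $\partial_i$, inductively writes $L_\theta^m(x)$ as a finite sum of terms $\mathcal{R} \cdot \partial^\alpha x$, where $\mathcal{R}$ is a bounded operator on $L_2({\mathbb R}^d)$ and $\partial^\alpha x \in \cA$ (the subalgebra $\cA = \mathcal{S}({\mathbb R}^{d_1}) \otimes \mathcal{S}({\mathbb R}^{d-d_1}_{\theta'})$ is closed under partial derivatives by construction). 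Consequently, the proof reduces to the base estimate: for every $y \in \cA$ and every $\beta > 0$, $y J_\theta^{-\beta} \in \mathcal{L}_{d/\beta,\infty}$.

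I would prove this base estimate by induction on $\lceil \beta \rceil$. The case $\beta < d/2$ is immediate from Lemma \ref{Cwikel-type-lem}, since $y \in \qcS \subseteq L_{d/\beta}(\qe)$ and $d/\beta > 2$. For the inductive step, assume $\beta \geq d/2$ and (for simplicity) $d \geq 3$, so that $1 < d/2$. Writing $y = \sum_j u_j v_j$ via Proposition \ref{factorisation}, a direct computation gives
\begin{equation*}
u_j v_j J_\theta^{-\beta} = (u_j J_\theta^{-1})(v_j J_\theta^{-(\beta-1)}) + u_j J_\theta^{-1} [J_\theta, v_j] J_\theta^{-\beta}.
\end{equation*}
The first summand lies in $\mathcal{L}_{d/\beta,\infty}$ by H\"older's inequality, since Lemma \ref{Cwikel-type-lem} gives $u_j J_\theta^{-1} \in \mathcal{L}_{d,\infty}$ while the inductive hypothesis gives $v_j J_\theta^{-(\beta-1)} \in \mathcal{L}_{d/(\beta-1),\infty}$. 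For the commutator term, the first reduction applied at exponent $\beta - 1$ (whose premise is exactly the inductive hypothesis) yields $[J_\theta, v_j] J_\theta^{-(\beta-1)} = \delta_\theta(v_j) J_\theta^{-(\beta-1)} \in \mathcal{L}_{d/(\beta-1),\infty}$; multiplying by the bounded $J_\theta^{-1}$ on the right and by $u_j J_\theta^{-1} \in \mathcal{L}_{d,\infty}$ on the left then places the commutator term in $\mathcal{L}_{d/\beta,\infty}$ as well, closing the induction.

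The main obstacle is this commutator term in the inductive step. The argument above works cleanly when $d \geq 3$, where the integer choice $\beta_1 = 1$ stays strictly below $d/2$. In the edge case $d = 2$, the choice $\beta_1 = 1$ is exactly on the boundary and Lemma \ref{Cwikel-type-lem} no longer applies directly to $u_j J_\theta^{-1}$. To handle that case one must instead take a fractional $\beta_1 \in (0,1)$ (so that $u_j J_\theta^{-\beta_1}$ is in $\mathcal{L}_{d/\beta_1,\infty}$) and expand the fractional-power commutator $[J_\theta^{\beta_1}, v_j]$ via an integral representation of $J_\theta^{\beta_1}$ through resolvents of $J_\theta^2$, analogous to the integral representation \eqref{best_integral} used in the proof of Theorem \ref{L_to_delta}; this expresses the commutator as a norm-convergent integral of operators of the form $(\lambda + J_\theta^2)^{-1} J_\theta L_\theta(v_j) (\lambda + J_\theta^2)^{-1}$, which via the second reduction again reduces to the base estimate at smaller exponents.
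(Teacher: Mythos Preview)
Your argument is correct for $d\geq 3$, and your sketch for $d=2$ is workable (though it essentially anticipates the fractional-power commutator analysis carried out later in Theorem \ref{alpha_lt_one}). However, your route differs from the paper's in one essential respect.

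The paper does not start the induction from the Cwikel estimate at $\beta<d/2$. Instead it first proves the single case $\beta=d$: by Lemma \ref{L_cwikel_estimate} one has $L_\theta^k(xJ_\theta^{-d})\in{\mathcal L}_{1,\infty}$ for all $k$, so Theorem \ref{L_to_delta}\eqref{L_to_delta_cpt} gives $\delta_\theta^k(x)J_\theta^{-d}\in{\mathcal L}_{1,\infty}$ directly. The full range $0<\beta\leq d$ is then obtained in one stroke from the Araki--Lieb--Thirring inequality \eqref{ALT_inequality} with $r=d/\beta$, $A=\delta_\theta^k(x)$, $B=J_\theta^{-\beta}$. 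Only for $\beta>d$ does the paper induct upward (from $\beta$ to $\beta+1$), using factorisation and the Leibniz rule for $\delta_\theta$ in a manner parallel to your inductive step.

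The practical payoff of the paper's approach is that ALT covers $0<\beta\leq d$ uniformly in $d$, so the $d=2$ case needs no special treatment: once $\delta_\theta^k(x)J_\theta^{-2}\in{\mathcal L}_{1,\infty}$ is known, ALT immediately gives $\delta_\theta^k(x)J_\theta^{-1}\in{\mathcal L}_{2,\infty}$. Your approach trades ALT for a longer induction that must be launched from below $d/2$, forcing the fractional-splitting workaround when $d=2$. Conversely, your reduction has the minor conceptual advantage of isolating the single ``base estimate'' $yJ_\theta^{-\beta}\in{\mathcal L}_{d/\beta,\infty}$ for $y\in\cA$ as the only analytic input, making transparent that everything flows from Cwikel-type bounds and the factorisation property.
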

    \begin{proof}
        Let $T =  x J_\theta^{-d}$. Then from Lemma \ref{L_cwikel_estimate} and the fact that $J_{\theta}^{-1}$ commutes with $L_{\theta}$, we have that $L_\theta^k(T) \in {\mathcal L}_{1,\infty}$ for all $k\geq 0$. Thus, it follows from Theorem \ref{L_to_delta}\eqref{L_to_delta_cpt} that $\delta_\theta^k(T) = \delta_\theta^k( x)J_\theta^{-d}$
        is in ${\mathcal L}_{1,\infty}$, and this proves the result for $\beta = d$.
        
        Now if $\beta < d$, we can apply \eqref{ALT_inequality} with $r = d/\beta$, $A = \delta_{\theta}^k(x)$ and $B = J_{\theta}^{-\beta}$ to obtain:
        \begin{equation*}
            \delta_\theta^k( x)J_\theta^{-\beta} \in {\mathcal L}_{d/\beta,\infty}
        \end{equation*}
        thus the result is proved for for $0 < \beta \leq d$. 
        
        We will now complete the proof by an inductive argument, specifically by showing that if the result holds for $\beta$ then it holds for $\beta+1$.
            
        Suppose that the result is true for some $\beta > 0$. Then we write
        \begin{align*}
            \delta_\theta^k( x)J_\theta^{-\beta-1} &= [\delta_\theta^k( x),J_\theta^{-1}]J_\theta^{-\beta} + J_\theta^{-1}\delta_\theta^k( x)J_\theta^{-\beta}\\
                                        &= J_\theta^{-1}[J_\theta,\delta_\theta^k( x)]J_\theta^{-\beta-1}+J_\theta^{-1}\delta_\theta^k( x)J_\theta^{-\beta}\\
                                        &= J_\theta^{-1}\delta_\theta^{k+1}( x)J_\theta^{-\beta-1}+J_\theta^{-1}\delta_\theta^k( x)J_\theta^{-\beta},
        \end{align*}
        By the factorisation property of $\cA$ (see Proposition \ref{factorisation}), we can write $x$ as a finite linear combination of products, $x = \sum_{j=1}^n y_jz_j$, where each $y_j, z_j  \in \cA$. Using the Leibniz rule on the $j$th summand, we deduce
        \begin{align*}
            \delta_\theta^k(y_jz_j)J_\theta^{-\beta-1} &=\sum_{j=0}^{k+1} \binom{k+1}{j} J_\theta^{-1}\delta_\theta^j( y_j)\delta_\theta^{k+1-j}( z_j )J_\theta^{-\beta }J_\theta^{ -1}\\
                                        &\quad + \sum_{j=0}^k \binom{k}{j} J_\theta^{-1}\delta_\theta^j( y_j)\delta_\theta^{k-j}( z_j )J_\theta^{-\beta}.
        \end{align*}
        Hence by the H\"older inequality and the fact that $J_\theta^{ -1}$ is bounded, 
        \begin{equation*}
            \delta_\theta^k(x)J_\theta^{-\beta-1} \in {\mathcal L}_{d/\beta,\infty}\cdot{\mathcal L}_{d,\infty}\subseteq {\mathcal L}_{d/(\beta+1),\infty}.
        \end{equation*}
        Thus the result holds for $\beta+1$, and this completes the proof.
    \end{proof}

 Observing that $L_\theta(xy) =  L_\theta(x) y  + x L_\theta(y) - J_\theta ^{-1} \delta_\theta(x) L_\theta(y) $, the above proof works for $L_\theta^k (x) J_\theta^{-\beta} \in {\mathcal L}_{d/\beta , \infty}$ as well. Moreover, using Proposition \ref{factorisation} and the H\"older inequality, we easily obtain the following ``two-sided" variant of Lemma \ref{delta_cwikel_estimate}:
    \begin{cor}\label{two_sided_cwikel}
        Let $x \in \cA$ and $k\geq 0$. Then for all $\gamma,\beta > 0$ we have:
        \begin{equation*}
            J_\theta^{-\gamma}  L_\theta^k( x)J_\theta^{-\beta} \in {\mathcal L}_{\frac{d}{\beta+\gamma},\infty}  ,\quad   J_\theta^{-\gamma}\delta_\theta^k( x)J_\theta^{-\beta} \in {\mathcal L}_{\frac{d}{\beta+\gamma},\infty}.
        \end{equation*}
    \end{cor}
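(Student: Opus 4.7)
The plan is to deduce Corollary \ref{two_sided_cwikel} from the one-sided Lemma \ref{delta_cwikel_estimate} (together with its stated extension to $L_\theta^k$) by combining two ingredients: a left-sided analogue obtained via adjoints, and the factorisation property of $\cA$ from Proposition \ref{factorisation}. H\"older's inequality for weak Schatten ideals then delivers the conclusion.

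I would first establish the two-sided $\delta_\theta^k$ estimate. The construction of $\cA$ in the proof of Proposition \ref{factorisation} is manifestly $*$-closed, so $x^* \in \cA$ whenever $x \in \cA$. Since $J_\theta$ is self-adjoint, $\delta_\theta^k(x)^* = (-1)^k \delta_\theta^k(x^*)$; applying Lemma \ref{delta_cwikel_estimate} to $x^*$ and taking adjoints yields the left-sided estimate $J_\theta^{-\gamma} \delta_\theta^k(x) \in \mathcal{L}_{d/\gamma, \infty}$. Next, factor $x = \sum_i y_i z_i$ with $y_i, z_i \in \cA$ and apply the binomial Leibniz rule for the derivation $\delta_\theta$:
$$\delta_\theta^k(y_i z_i) = \sum_{\ell=0}^k \binom{k}{\ell}\, \delta_\theta^\ell(y_i)\, \delta_\theta^{k-\ell}(z_i).$$
Splitting each sandwich $J_\theta^{-\gamma} \delta_\theta^\ell(y_i)\, \delta_\theta^{k-\ell}(z_i) J_\theta^{-\beta}$ in the middle, the left factor lies in $\mathcal{L}_{d/\gamma, \infty}$ by the left-sided estimate applied to $y_i$, and the right factor lies in $\mathcal{L}_{d/\beta, \infty}$ by Lemma \ref{delta_cwikel_estimate} applied to $z_i$. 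H\"older's inequality for weak Schatten classes then places the product in $\mathcal{L}_{d/(\gamma+\beta), \infty}$.

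For the $L_\theta^k$ case, I would reduce algebraically to the $\delta_\theta^k$ case. Rearranging the definition gives $L_\theta(x) = \delta_\theta(x) + J_\theta^{-1} \delta_\theta(x) J_\theta$, and the Jacobi identity yields $L_\theta \circ \delta_\theta = \delta_\theta \circ L_\theta$. Induction then produces
$$L_\theta^k(x) = \sum_{j=0}^k \binom{k}{j} J_\theta^{-j} \delta_\theta^k(x) J_\theta^j.$$
Combining with the commutation identity $\delta_\theta^k(x) J_\theta^j = \sum_{i=0}^j (-1)^i \binom{j}{i} J_\theta^{j-i} \delta_\theta^{k+i}(x)$, proved inductively from $\delta_\theta^k(x) J_\theta = J_\theta \delta_\theta^k(x) - \delta_\theta^{k+1}(x)$, collapses $L_\theta^k(x)$ to a finite linear combination $\sum_{i=0}^k c_{k,i} J_\theta^{-i} \delta_\theta^{k+i}(x)$ (with universal constants $c_{k,i}$). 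Sandwiching with $J_\theta^{-\gamma}$ and $J_\theta^{-\beta}$ and applying the two-sided $\delta_\theta$ estimate to each summand yields membership in $\mathcal{L}_{d/(\gamma+i+\beta), \infty} \subseteq \mathcal{L}_{d/(\gamma+\beta), \infty}$, completing the proof.

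The analytic content is entirely concentrated in Lemma \ref{delta_cwikel_estimate}; the two-sided upgrade is then essentially combinatorial. The main bookkeeping point is that the algebraic manipulations in Step 2 involve unbounded powers of $J_\theta$, so they must be interpreted as identities on the Schwartz class $\qcS$ (where everything is well-defined and $J_\theta^j$ acts unambiguously), and one should verify that after the commutation step no stray unbounded factors remain before invoking H\"older.
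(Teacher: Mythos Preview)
Your proof is correct. For the $\delta_\theta^k$ statement you follow essentially the paper's own route: factorise $x$ via Proposition \ref{factorisation}, expand by the Leibniz rule, and apply H\"older, with the left-sided estimate supplied by the adjoint trick (which the paper leaves implicit).

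For the $L_\theta^k$ statement your argument differs from the paper. The paper observes that $L_\theta$ satisfies the modified Leibniz rule $L_\theta(xy)=L_\theta(x)y+xL_\theta(y)-J_\theta^{-1}\delta_\theta(x)L_\theta(y)$ and then simply re-runs the inductive proof of Lemma \ref{delta_cwikel_estimate} with this rule in place of the ordinary one, obtaining $L_\theta^k(x)J_\theta^{-\beta}\in\mathcal{L}_{d/\beta,\infty}$ directly; the two-sided version then follows by factorisation and H\"older exactly as for $\delta_\theta^k$. Your approach instead expresses $L_\theta^k(x)$ as a finite sum $\sum_i c_{k,i}J_\theta^{-i}\delta_\theta^{k+i}(x)$ and reduces to the already-established two-sided $\delta_\theta$ estimate. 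This is a clean algebraic reduction that avoids repeating any induction, at the cost of the intermediate binomial and commutation identities (which, as you note, must be justified on the Schwartz core before extending). The paper's route is shorter to state but requires the reader to re-inspect the proof of Lemma \ref{delta_cwikel_estimate}; yours is more self-contained once the $\delta_\theta$ case is in hand.
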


\subsection{The case $0\leq \alpha \leq \beta+1$}
    For $\zeta \in (0,1)$, taking $\eta = 1$ in \eqref{best_integral} yields
    \begin{equation*}
        s^{-\zeta} = \frac{1}{{\mathrm B}(\zeta,1-\zeta)}\int_0^\infty \frac{1}{\lambda^{\zeta}(\lambda+s)}\,d\lambda.
    \end{equation*}
    If $\alpha = 1-\zeta$, we get the useful identity for $\xi \in {\mathcal S}({\mathbb R}^d)$
    \begin{equation}\label{fractional_power_integral}
        J_\theta^{\alpha}\,\xi = \frac{1}{{\mathrm B} (1-\alpha,\alpha)}\int_0^\infty \lambda^{\alpha-1}\frac{J_\theta}{\lambda+J_\theta}\,\xi\,d\lambda,
    \end{equation}
    where the integrand on the right converges in $L_2({\mathbb R}^d)$, as in the proof of Theorem \ref{L_to_delta}.
    
    The following is the $\alpha \in [0,1)$ and $\beta\geq 0$ case of Theorem \ref{main-commutator}:
    \begin{thm}\label{alpha_lt_one}
        Let $x \in \cA$. Let $\alpha \in [0,1)$ and $\beta \geq 0$ then for all $k\geq 0$
        \begin{equation*}
            [J_\theta^{\alpha},\delta_\theta^k(x)]J_\theta^{-\beta} \in {\mathcal L}_{\frac{d}{\beta-\alpha+1},\infty}.
        \end{equation*}
    \end{thm}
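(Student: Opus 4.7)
The plan is to deduce Theorem \ref{alpha_lt_one} from the integral representation \eqref{fractional_power_integral} of $J_\theta^{\alpha}$, combined with an iterated Duhamel-type expansion of the integrand. The case $\alpha = 0$ is trivial since the commutator vanishes, so I fix $\alpha \in (0,1)$ and set $T = \delta_\theta^k(x)$. Using the identity $[\tfrac{J_\theta}{\lambda+J_\theta},T] = \lambda(\lambda+J_\theta)^{-1}\delta_\theta(T)(\lambda+J_\theta)^{-1}$ together with the commutativity of $J_\theta^{-\beta}$ and $(\lambda+J_\theta)^{-1}$, formula \eqref{fractional_power_integral} gives
\[
[J_\theta^\alpha,T]J_\theta^{-\beta} = \frac{1}{{\mathrm B}(1-\alpha,\alpha)}\int_0^\infty \lambda^\alpha (\lambda+J_\theta)^{-1}\delta_\theta^{k+1}(x)J_\theta^{-\beta}(\lambda+J_\theta)^{-1}\,d\lambda.
\]

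Next, I would iterate the commutator identity $(\lambda+J_\theta)^{-1}A = A(\lambda+J_\theta)^{-1} - (\lambda+J_\theta)^{-1}\delta_\theta(A)(\lambda+J_\theta)^{-1}$ applied to $A = \delta_\theta^{k+1}(x)J_\theta^{-\beta}$ (observing that $\delta_\theta$ annihilates the $J_\theta^{-\beta}$ factor by the Leibniz rule). This expresses the integrand as an explicit finite sum of terms $(-1)^j \delta_\theta^{k+1+j}(x)J_\theta^{-\beta}(\lambda+J_\theta)^{-j-2}$ for $0 \leq j < N$, plus a remainder proportional to $(\lambda+J_\theta)^{-1}\delta_\theta^{k+1+N}(x)J_\theta^{-\beta}(\lambda+J_\theta)^{-N-1}$. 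Integrating each explicit summand against $\lambda^\alpha\,d\lambda$ via \eqref{best_integral} produces a term of the form $c_{\alpha,j}\,\delta_\theta^{k+1+j}(x)J_\theta^{\alpha-j-1-\beta}$; by Lemma \ref{delta_cwikel_estimate} each such term lies in ${\mathcal L}_{d/(\beta-\alpha+1+j),\infty}$, which is contained in the target ideal ${\mathcal L}_{d/(\beta-\alpha+1),\infty}$.

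The remainder requires more care. I would insert $J_\theta^{\gamma_1}J_\theta^{-\gamma_1}$ on the left and $J_\theta^{-\gamma_2}J_\theta^{\gamma_2}$ on the right of $\delta_\theta^{k+1+N}(x)J_\theta^{-\beta}$, with $\gamma_1,\gamma_2 \in (0,1)$ chosen so that $\gamma_1+\gamma_2 = 1-\alpha$, and then invoke the two-sided Cwikel estimate in Corollary \ref{two_sided_cwikel}: the middle factor $J_\theta^{-\gamma_1}\delta_\theta^{k+1+N}(x)J_\theta^{-(\beta+\gamma_2)}$ belongs to ${\mathcal L}_{d/(\beta-\alpha+1),\infty}$, while the outer operator factors $J_\theta^{\gamma_1}(\lambda+J_\theta)^{-1}$ and $J_\theta^{\gamma_2}(\lambda+J_\theta)^{-N-1}$ are bounded by multiples of $(\lambda+1)^{\gamma_1-1}$ and $(\lambda+1)^{\gamma_2-N-1}$ respectively, via elementary scalar estimates on $J_\theta \geq 1$. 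This yields an integrand with quasi-norm of order $\lambda^\alpha(\lambda+1)^{-\alpha-N-1}$, which is integrable on $(0,\infty)$ provided $N \geq 1$, placing the remainder in the target ideal.

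The main technical obstacle is to rigorously justify convergence of these integrals in the quasi-Banach regime $\beta-\alpha+1 \geq d$, where ${\mathcal L}_{d/(\beta-\alpha+1),\infty}$ is not locally convex. As in the proof of Theorem \ref{L_to_delta}\eqref{L_to_delta_cpt}, this can be handled via the non-convex integration machinery of Turpin--Waelbroeck in the form developed in \cite{LeSZ2018}, which requires bounds on higher-order $\lambda$-derivatives of the $(\lambda+J_\theta)^{-n}$ factors; choosing $N$ sufficiently large (depending on $\beta-\alpha+1$) ensures that the relevant summability criterion of that reference is met. Finally, one should verify at the outset that the commutator $[J_\theta^\alpha,T]$ is unambiguously defined on ${\mathcal S}({\mathbb R}^d)$, which follows from the fact that $T = \delta_\theta^k(x)$ preserves ${\mathcal S}({\mathbb R}^d)$ whenever $x \in \cA \subset \qcS$.
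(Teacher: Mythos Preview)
Your proposal is correct and follows a route genuinely different from the paper's. Both proofs begin with the same integral representation \eqref{fractional_power_integral} and the same first Duhamel step, but diverge in how they reach all $\beta \geq 0$. The paper expands only to second order, uses the ordinary triangle inequality to treat the remainder when $\frac{d}{\beta-\alpha+1}>1$ (equivalently $\beta < d+\alpha-1$, so in particular $0<\beta\le 1$), and then runs an induction on $\beta$: assuming the result for $\beta$, it writes $x=\sum y_jz_j$ via the factorisation property of $\cA$, applies the Leibniz rule for $\delta_\theta^k$, and uses H\"older to promote the estimate to $\beta+1$. Your approach instead expands the Duhamel series to depth $N$, evaluates each explicit term by spectral calculus and Lemma \ref{delta_cwikel_estimate}, and handles the remainder for \emph{all} $\beta$ in one stroke via Corollary \ref{two_sided_cwikel} and the non-convex integration machinery of \cite{LeSZ2018}. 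Your route is more uniform and avoids the factorisation trick and the induction on $\beta$ entirely; the paper's route keeps the heavy non-convex integration confined to Theorem \ref{L_to_delta} and is more elementary in the Banach regime, at the price of the extra algebraic step. Both are valid, and the expansion you write down is exactly the one recorded in the Remark following Theorem \ref{bounded_case}.
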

    \begin{proof}
       It follows from \eqref{fractional_power_integral} that for $\xi \in {\mathcal S}({\mathbb R}^d)$,
        \begin{align*}
            [J_\theta^{\alpha},\delta_\theta^k( x)]\,\xi &= \frac{1}{{\mathrm B}(1-\alpha,\alpha)}\int_0^\infty \lambda^{\alpha-1}\left[\frac{J_\theta}{\lambda+J_\theta},\delta_\theta^k( x)\right]\,\xi\,d\lambda\\
                                         &= \frac{1}{{\mathrm B}(1-\alpha,\alpha)}\int_0^\infty \lambda^{\alpha}(\lambda+J_\theta)^{-1}[J_\theta,\delta_\theta^k( x)](\lambda+J_\theta)^{-1}\,\xi\,d\lambda\\
                                         &= \frac{1}{{\mathrm B}(1-\alpha,\alpha)}\int_0^\infty \lambda^{\alpha}(\lambda+J_\theta)^{-1}\delta_\theta^{k+1}( x)(\lambda+J_\theta)^{-1}\,\xi\,d\lambda\\
                                         &= \frac{1}{{\mathrm B}(1-\alpha,\alpha)}\int_0^\infty \lambda^{\alpha}(\lambda+J_\theta)^{-2}\delta_\theta^{k+1}( x)\,\xi\,d\lambda\\
                                         &\quad - \frac{1}{{\mathrm B}(1-\alpha,\alpha)}\int_0^\infty \lambda^{\alpha}(\lambda+J_\theta)^{-1}  [(\lambda+J_\theta)^{-1},\delta_\theta^{k+1}( x)]\,\xi\,d\lambda\\
                                         &= \frac{1}{{\mathrm B}(1-\alpha,\alpha)}\int_0^\infty \lambda^{\alpha}(\lambda+J_\theta)^{-2}\delta_\theta^{k+1}( x)\xi\,d\lambda\\
                                         &\quad + \frac{1}{{\mathrm B}(1-\alpha,\alpha)}\int_0^\infty \lambda^{\alpha}(\lambda+J_\theta)^{-2}\delta_\theta^{k+2}( x)(\lambda+J_\theta)^{-1}\,\xi\,d\lambda.
        \end{align*}       
Since $J_\theta^{-\beta}$ maps ${\mathcal S}({\mathbb R}^d)$ into ${\mathcal S}({\mathbb R}^d)$, using the identity $\int _0^\infty \la^\al\frac{t^{1-\al}}{(\la +t )^2} d\la = {\mathrm B}(1-\al , 1+\al )$ which is easily deduced from \eqref{best_integral} again, we have
        \begin{align*}
            [J_\theta^{\alpha},\delta_\theta^k( x)]J_\theta^{-\beta} \,\xi &= \al\, J_\theta^{\alpha-1}\delta_\theta ^{k+1}( x)J_\theta^{-\beta }\,\xi\\
                                                   &\quad + \frac{1}{{\mathrm B}(1-\alpha,\alpha)}\int_0^\infty \lambda^{\alpha}(\lambda+J_\theta)^{-2}\delta_0^{k+2}( x)J_\theta^{-\beta}(\lambda+J_\theta)^{-1}\,\xi\,d\lambda.\\
                                                   &= \al\, J_\theta^{\alpha-1}\delta_\theta^{k+1}( x)J_\theta^{-\beta} \,\xi \\
                                                   &\quad + \frac{1}{{\mathrm B}(1-\alpha,\alpha)}\int_0^\infty \lambda^{\alpha}\frac{J_\theta^{1-\alpha}}{(\lambda+J_\theta)^2}J_\theta^{\alpha-1}\delta_\theta^{k+2}( x)J_\theta^{-\beta}\frac{1}{\lambda+J_\theta}\,\xi\,d\lambda.
        \end{align*}
        The operator $J_\theta^{\alpha-1}\delta_\theta^{k+1}( x)J_\theta^{-\beta} $ is in ${\mathcal L}_{\frac{d}{\beta-\alpha+1},\infty}$ due to Corollary \ref{two_sided_cwikel}. The second summand is treated in the following.
        
        Assume initially that $\frac{d}{\beta-\alpha+1} > 1$, or equivalently $\alpha < \beta+1< d+\al $. Under this condition, 
        the ideal ${\mathcal L}_{\frac{d}{\beta-\alpha+1},\infty}$ can be given a norm and we can estimate the second summand using the triangle inequality.
        We have
        \begin{equation}\label{alpha_estimate_1}
            \left\|\frac{J_\theta^{1-\alpha}}{(\lambda+J_\theta)^2}\right\|_{\infty} \leq \sup_{t \geq 1} \frac{t^{1-\alpha}}{(t+\lambda)^2}= \begin{cases}
                                                                                \frac{1}{(1+\lambda)^2},\quad \lambda \leq \frac{\alpha+1}{1-\alpha}\\
                                                                                 \frac {C_{\alpha}}{\lambda^{ \alpha+1}}\quad \lambda > \frac{\alpha+1}{1-\alpha}\,,
                                                                        \end{cases}
        \end{equation}
        for a certain constant $C_{\alpha}$. Thus,
        \begin{equation}\label{alpha_estimate_2}
            \left\| \lambda^{\alpha}\frac{J_\theta^{1-\alpha}}{(J_\theta+\lambda)^2}\right\|_\infty  \left\|\frac{1}{ \lambda+J_\theta }\right\|_{\infty}  \leq  \begin{cases}
                                                                                \frac{1}{(1+\lambda)^3},\quad \lambda \leq \frac{\alpha+1}{1-\alpha}\\
                                                                                 \frac{C_{\alpha}}{\lambda(1+\lambda)}\quad \lambda > \frac{\alpha+1}{1-\alpha}\,,
                                                                        \end{cases}
        \end{equation}
which is integrable. If $\alpha < \beta+1$, we get from the triangle inequality that
        \begin{equation*}
            [J_\theta^{\alpha},\delta_\theta^k( x)]J_\theta^{-\beta}\in {\mathcal L}_{\frac{d}{\beta-\alpha+1},\infty}.
        \end{equation*}
        Thus the result is proved if $\beta < d+\alpha-1$. In particular, since $d\geq 2$ we have proved the result for $0 < \beta \leq 1$.
        
        To complete the proof, we need an induction argument as in the proof of Lemma \ref{delta_cwikel_estimate}. Note first that by the assumed factorisation property of $\cA$, for any $x \in \cA$ we can write $x$ as a linear combination of products, $x = \sum_{j=1}^n y_jz_j$ where each $y_j,z_j  \in\cA$. Suppose that $\beta > 0$ is such that $[J_\theta^{\alpha},\delta_\theta^k( x)]J_\theta^{-\beta} \in {\mathcal L}_{\frac{d}{\beta-\alpha+1},\infty}$ for all $k\geq 0$ and all $x \in \cA$. Then applying the Leibniz rule to the $j$th summand, we have:
        \begin{align*}
            J_\theta^{-1}[J_\theta^{\alpha},\delta_\theta^k( y_jz_j)]J_\theta^{-\beta} &=  \sum_{l=0}^k \binom{k}{l}J_\theta^{-1}[J_\theta^{\alpha},\delta_\theta^{k-l}(y_j)\delta_\theta ^l(z_j)]J_\theta^{-\beta}\\
                                                         &=  \sum_{l=0}^k \binom{k}{l}J_\theta^{-1}\delta_\theta^{k-1}(y_j)[J_\theta^{\alpha},\delta_\theta^l(z_j)]J_\theta^{-\beta}\\
                                                         &\quad +  \sum_{l=0}^k \binom{k}{l}J_\theta^{-1}[J_\theta^{\alpha},\delta_\theta^{l}(y_j)]\delta_\theta^{k-l}(z_j)J_\theta^{-\beta}.
        \end{align*}
        Then applying the the H\"older inequality, we have
        \begin{equation}\label{midway_result}
            J_\theta^{-1}[J_\theta^{\alpha},\delta_\theta^k( x)]J_\theta^{-\beta} \in {\mathcal L}_{\frac{d}{1+\beta-\alpha+1},\infty}.
        \end{equation}
        Now we complete the proof by showing that if the required assertion holds for $\beta$, then it holds for $\beta+1$. Indeed,
        \begin{align*}
            [J_\theta^{\alpha},\delta_\theta^k( x)]J_\theta^{-\beta-1} &= [J_\theta^{-1},[J_\theta^{\alpha},\delta_\theta^k( x)]]J_\theta^{-\beta} + J_\theta^{-1}[J_\theta^{\alpha},\delta_\theta^k( x)]J_\theta^{-\beta}\\
                                                   &= -J_\theta^{-1}[J_\theta^{\alpha},\delta_\theta^{k+1}( x)]J^{-\beta}J^{-1}  +J_\theta^{-1}[J_\theta^{\alpha},\delta_\theta^k( x)]J_\theta^{-\beta}.
        \end{align*}
        From \eqref{midway_result}, we conclude that
        \begin{align*}
            [J_\theta^{\alpha},\delta_\theta^k( x)]J_\theta^{-\beta-1} \in {\mathcal L}_{\frac{d}{\beta+1-\alpha+1},\infty}.
        \end{align*}
        Hence the assertion holds for all $\beta > 0$.
    \end{proof}
    
 The cases where $\alpha \geq 1$ are handled by induction on $\al$:
 
    \begin{cor}\label{inducted_alpha}
        Let $x \in \cA$. Let $\alpha \geq 0$, $\beta \geq 0$ satisfy $\alpha < \beta+1$. Then for all $k\geq 0$ we have 
        \begin{equation*}
            [J_\theta^{\alpha},\delta_\theta^k( x)]J_\theta^{-\beta} \in {\mathcal L}_{\frac{d}{\beta-\alpha+1},\infty}.
        \end{equation*}
    \end{cor}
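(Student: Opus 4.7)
The plan is to proceed by induction on $n = \lfloor \alpha \rfloor$. The base case $n = 0$ is exactly Theorem \ref{alpha_lt_one}. For the inductive step, fix $\alpha \in [n, n+1)$ with $n \geq 1$, $x \in \cA$, $k \geq 0$, and $\beta > \alpha - 1$; since $\alpha - 1 \in [n-1, n)$, the inductive hypothesis applies to all commutators with $J_\theta^{\alpha-1}$ for every admissible $(\beta',\ell)$. Factoring $J_\theta^\alpha = J_\theta \cdot J_\theta^{\alpha-1}$ and applying the Leibniz rule gives
\begin{equation*}
    [J_\theta^\alpha, \delta_\theta^k(x)] J_\theta^{-\beta} = \delta_\theta^{k+1}(x) J_\theta^{\alpha-1-\beta} + J_\theta [J_\theta^{\alpha-1}, \delta_\theta^k(x)] J_\theta^{-\beta},
\end{equation*}
and Lemma \ref{delta_cwikel_estimate} places the first summand in ${\mathcal L}_{d/(\beta-\alpha+1),\infty}$ because $\alpha-1-\beta < 0$.

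For the second summand I would commute $J_\theta$ through the inner bracket. Since $[J_\theta, J_\theta^{\alpha-1}] = 0$, the Jacobi identity gives $[J_\theta, [J_\theta^{\alpha-1}, z]] = [J_\theta^{\alpha-1}, \delta_\theta(z)]$, so that
\begin{equation*}
    J_\theta [J_\theta^{\alpha-1}, \delta_\theta^k(x)] J_\theta^{-\beta} = [J_\theta^{\alpha-1}, \delta_\theta^k(x)] J_\theta^{1-\beta} + [J_\theta^{\alpha-1}, \delta_\theta^{k+1}(x)] J_\theta^{-\beta}.
\end{equation*}
The last summand is directly the inductive hypothesis at $(\alpha-1, \beta, k+1)$, hence lies in ${\mathcal L}_{d/(\beta-\alpha+2),\infty} \subseteq {\mathcal L}_{d/(\beta-\alpha+1),\infty}$. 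Whenever $\beta \geq 1$, the remaining term is the inductive hypothesis at $(\alpha-1, \beta-1, k)$, which lands exactly in ${\mathcal L}_{d/(\beta-\alpha+1),\infty}$ and closes the step.

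The main obstacle is the boundary regime $\alpha \in [1,2)$ with $\beta \in (\alpha-1, 1)$, which can only arise in the first inductive step and is exactly the case where $J_\theta^{1-\beta}$ becomes a strictly positive fractional power not directly controlled by the inductive hypothesis. I would resolve this residual case by abandoning the factorisation $J_\theta\cdot J_\theta^{\alpha-1}$ and instead invoking the higher-order integral representation
\begin{equation*}
    J_\theta^\alpha = \frac{1}{\mathrm{B}(\alpha,2-\alpha)} \int_0^\infty \lambda^{\alpha-1} \Bigl(\frac{J_\theta}{\lambda+J_\theta}\Bigr)^{\!2} d\lambda, \qquad \alpha \in (0,2),
\end{equation*}
then expanding $[(\tfrac{J_\theta}{\lambda+J_\theta})^2, \delta_\theta^k(x)]$ by Leibniz as a $\lambda$-scaled sandwich of bounded resolvent-type factors around $\delta_\theta^{k+1}(x)$. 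Absorbing a single negative power $J_\theta^{-(\beta-\alpha+1)}$ into $\delta_\theta^{k+1}(x)$ through Lemma \ref{delta_cwikel_estimate} and bounding the remaining resolvent factors (including the right-end $J_\theta^{-\beta}$) by their operator norm, which decays like $\min(1,\lambda^{-1-\beta})$, produces an expression whose integrand is absolutely integrable in $\lambda$ in the ${\mathcal L}_{d/(\beta-\alpha+1),\infty}$ quasi-norm (the integral converges at infinity precisely because $\alpha < \beta + 1$). This closes the boundary regime and hence the induction.
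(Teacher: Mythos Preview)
Your inductive scheme for $\beta \geq 1$ is correct and is essentially the paper's argument relabeled: the paper factors $J_\theta^{\alpha+1} = J_\theta^\alpha \cdot J_\theta$ and moves up from $\alpha$ to $\alpha+1$, you factor $J_\theta^\alpha = J_\theta \cdot J_\theta^{\alpha-1}$ and move down. The three-term decomposition and the appeals to Lemma~\ref{delta_cwikel_estimate} and the inductive hypothesis match exactly. You are also right that the regime $\alpha\in[1,2)$, $\beta\in(\alpha-1,1)$ is the only place where $\beta-1<0$ and the inductive hypothesis as stated does not literally apply; the paper's proof glosses over the same point.

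However, your proposed fix for this boundary regime does not work. Two concrete errors:
\begin{enumerate}
\item Expanding $[R,z]$ with $R = J_\theta(\lambda+J_\theta)^{-1}$ produces an extra factor of $\lambda$, namely $[R,z] = \lambda(\lambda+J_\theta)^{-1}\delta_\theta(z)(\lambda+J_\theta)^{-1}$. So after Leibniz the prefactor is $\lambda^{\alpha}$, not $\lambda^{\alpha-1}$.
\item Because $\sigma(J_\theta)\subset[1,\infty)$, the factor $J_\theta^{-\beta}$ contributes no extra decay in $\lambda$: for instance $\bigl\|J_\theta^{-\beta}(\lambda+J_\theta)^{-1}\bigr\|_\infty = (1+\lambda)^{-1}$, not $\lambda^{-1-\beta}$.
\end{enumerate}
Concretely, after you absorb $J_\theta^{-(\beta-\alpha+1)}$ into $\delta_\theta^{k+1}(x)$, the first Leibniz term becomes
\[
\lambda^{\alpha}\,\frac{J_\theta}{(\lambda+J_\theta)^2}\,\bigl(\delta_\theta^{k+1}(x)J_\theta^{-(\beta-\alpha+1)}\bigr)\,\frac{J_\theta^{1-\alpha}}{\lambda+J_\theta},
\]
and both outer factors have operator norm $\asymp(1+\lambda)^{-1}$, so the integrand's ${\mathcal L}_{d/(\beta-\alpha+1),\infty}$ quasi-norm is $\asymp\lambda^{\alpha-2}$, which is \emph{not} integrable at infinity for any $\alpha\geq 1$. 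No splitting of the absorbed power between left and right rescues this: the total available resolvent decay from three factors of $(\lambda+J_\theta)^{-1}$ is at most $\lambda^{-2}$ once one power is spent producing $J_\theta$ in the numerator.

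A patch that does work: handle $[J_\theta^{\alpha-1},\delta_\theta^k(x)]J_\theta^{1-\beta}$ by inserting the first-order expansion from the proof of Theorem~\ref{alpha_lt_one} and then commuting the positive power $J_\theta^{1-\beta}$ to the left via Theorem~\ref{alpha_lt_one} applied at the pair $(1-\beta,0)$; this reduces everything to one-sided products $J_\theta^{-\gamma}\delta_\theta^{m}(x)$ with $\gamma>0$, covered by (the adjoint of) Lemma~\ref{delta_cwikel_estimate}. Alternatively, and more economically, note that for the eventual Theorem~\ref{main-commutator} the descent $(\alpha,\beta)\mapsto(\alpha-1,\beta-1)$ in Section~\ref{commutator-estimates} lets you assume $\beta\geq 1$ from the outset, so the boundary regime need never be visited.
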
    
    \begin{proof}
        The case $\alpha\leq 1$ is provided by Theorem \ref{alpha_lt_one}. We proceed by induction.
        Fix $\alpha\geq 0$ Suppose that the claim is true for all $k\geq 0$ and $\beta > \alpha-1$. Now let $\beta > \alpha$. Then using the Leibniz rule and Lemma \ref{delta_cwikel_estimate}
        \begin{align*}
            [J_\theta^{\alpha+1},\delta_\theta^k(x)]J_\theta^{-\beta} &= J_\theta^{\alpha}[J_\theta,\delta_\theta^k(x)]J_\theta^{-\beta}+[J_\theta^{\alpha},\delta_\theta^k( x)]J_\theta^{1-\beta}\\
                                                    &=[J_\theta^{\alpha},[J_\theta,\delta_\theta^k( x)]]J_\theta^{-\beta}+[J_\theta,\delta_\theta^k( x)]J_\theta^{\alpha-\beta}+[J_\theta^{\alpha},\delta_\theta^k( x)]J_\theta^{1-\beta}\\
                                                    &= [J_\theta^{\alpha},\delta_\theta^{k+1}( x)]J_\theta^{-\beta}+\delta_\theta^{k+1}( x)J_\theta^{\alpha-\beta}+[J_\theta^{\alpha},\delta_\theta^k( x)]J_\theta^{1-\beta}\\
                                                    &\in {\mathcal L}_{\frac{d}{\beta-\alpha+1},\infty} + {\mathcal L}_{\frac{d}{\beta-\alpha},\infty} + {\mathcal L}_{\frac{d}{\beta-1-\alpha+1},\infty}\\
                                                    &= {\mathcal L}_{\frac{d}{\beta-\alpha},\infty},
        \end{align*}
        thus proving the claim for $\alpha+1$.
    \end{proof}
    
    Using the triangle inequality holds for the operator norm in place of the ${\mathcal L}_{\frac{d}{\beta-\alpha+1},\infty}$ norm, the first part of the proof of Theorem \ref{alpha_lt_one} can easily be adapted to the case $0\leq \al = \beta+1 $.
    \begin{thm}\label{bounded_case}
        Let $x \in \qcS$, and $\alpha \geq 0$. Then for all $k\geq 0$ the operator:
        \begin{equation*}
            [J_\theta^{\alpha},\delta_\theta^k( x)]J_\theta^{-\alpha+1}
        \end{equation*}
        has bounded extension.
    \end{thm}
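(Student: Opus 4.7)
My plan is to adapt the integral-representation proof of Theorem~\ref{alpha_lt_one} but measure everything in the operator norm, handling the boundary value $\alpha = \beta+1$ by exploiting a cancellation that was unavailable in the strict inequality case. The trivial cases $\alpha = 0$ (commutator vanishes) and $\alpha = 1$ (where the expression equals $\delta_\theta^{k+1}(x)$, bounded by Corollary~\ref{QC_infty}) I would dispose of first. The substantial work is in the range $\alpha \in (0,1)$ and then an induction upward.

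For $\alpha \in (0,1)$, I would apply the identity derived in the proof of Theorem~\ref{alpha_lt_one} with $\beta = \alpha - 1$, using the cancellation $J_\theta^{1-\alpha} J_\theta^{\alpha-1} = 1$ (valid because both are functions of $J_\theta$) to rewrite the integral kernel cleanly:
\begin{equation*}
    [J_\theta^{\alpha},\delta_\theta^k(x)]J_\theta^{1-\alpha} = \alpha\, J_\theta^{\alpha-1}\delta_\theta^{k+1}(x)J_\theta^{1-\alpha} + \frac{1}{{\mathrm B}(1-\alpha,\alpha)}\int_0^\infty \frac{\lambda^{\alpha}}{(\lambda+J_\theta)^2}\,\delta_\theta^{k+2}(x)\,J_\theta^{1-\alpha}\,\frac{1}{\lambda+J_\theta}\,d\lambda.
\end{equation*}
For the integral term I would estimate each factor in operator norm, using $J_\theta \geq 1$ to get $\|(\lambda+J_\theta)^{-2}\| \leq (1+\lambda)^{-2}$ and an elementary calculus bound $\|J_\theta^{1-\alpha}(\lambda+J_\theta)^{-1}\| \leq C_\alpha (1+\lambda)^{-\alpha}$, together with the boundedness of $\delta_\theta^{k+2}(x)$ from Corollary~\ref{QC_infty}. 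The integrand norm is then dominated by $C_\alpha\,\lambda^\alpha(1+\lambda)^{-2-\alpha}$, which is integrable at both ends.

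The main obstacle is the boundary term $\alpha J_\theta^{\alpha-1}\delta_\theta^{k+1}(x)J_\theta^{1-\alpha}$: it contains the unbounded factor $J_\theta^{1-\alpha}$ on the right, and naively one cannot pass to operator norms. My trick is to rewrite
\begin{equation*}
    J_\theta^{\alpha-1}\delta_\theta^{k+1}(x)J_\theta^{1-\alpha} = \delta_\theta^{k+1}(x) - J_\theta^{\alpha-1}\bigl[J_\theta^{1-\alpha},\delta_\theta^{k+1}(x)\bigr].
\end{equation*}
The first summand is bounded by Corollary~\ref{QC_infty}; in the second, $J_\theta^{\alpha-1}$ is bounded (as $\alpha-1 \leq 0$), while $[J_\theta^{1-\alpha},\delta_\theta^{k+1}(x)] \in {\mathcal L}_{d/\alpha,\infty}$ by Theorem~\ref{alpha_lt_one} applied with parameters $\alpha' = 1-\alpha \in (0,1)$ and $\beta' = 0$, so in particular bounded. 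This disposes of the case $\alpha \in (0,1)$.

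For $\alpha \geq 1$ I would induct on $\lfloor \alpha \rfloor$. Using Leibniz and the Jacobi identity, together with $[J_\theta,\delta_\theta^k(x)] = \delta_\theta^{k+1}(x)$, one computes
\begin{equation*}
    [J_\theta^{\alpha},\delta_\theta^k(x)]J_\theta^{1-\alpha} = \delta_\theta^{k+1}(x) + [J_\theta^{\alpha-1},\delta_\theta^k(x)]J_\theta^{2-\alpha} + [J_\theta^{\alpha-1},\delta_\theta^{k+1}(x)]J_\theta^{1-\alpha}.
\end{equation*}
The first summand is bounded; the middle summand is bounded by the inductive hypothesis at exponent $\alpha - 1$; and the last equals $[J_\theta^{\alpha-1},\delta_\theta^{k+1}(x)]J_\theta^{2-\alpha}\cdot J_\theta^{-1}$, which is a bounded operator times $J_\theta^{-1}$ and hence bounded. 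This closes the induction and establishes boundedness for all $\alpha \geq 0$.
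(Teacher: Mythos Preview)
Your argument is correct in spirit but takes a more circuitous route than the paper's, and in doing so introduces a small gap. For $\alpha\in(0,1)$ the paper simply places $J_\theta^{1-\alpha}$ on the \emph{left}: from the integral representation one gets
\[
J_\theta^{1-\alpha}[J_\theta^{\alpha},\delta_\theta^k(x)] = \alpha\,\delta_\theta^{k+1}(x) + \frac{1}{{\mathrm B}(1-\alpha,\alpha)}\int_0^\infty \lambda^{\alpha}\,\frac{J_\theta^{1-\alpha}}{(\lambda+J_\theta)^2}\,\delta_\theta^{k+2}(x)\,\frac{1}{\lambda+J_\theta}\,d\lambda,
\]
so the boundary term is \emph{immediately} $\alpha\,\delta_\theta^{k+1}(x)$, bounded by Corollary~\ref{QC_infty}; the integral is controlled by the same estimates \eqref{alpha_estimate_1}, \eqref{alpha_estimate_2}. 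One then takes the adjoint (using $\delta_\theta^k(x)^* = (-1)^k\delta_\theta^k(x^*)$ and $x^* \in \qcS$) to transfer $J_\theta^{1-\alpha}$ to the right. Your choice to put $J_\theta^{1-\alpha}$ on the right from the start forces the rewriting
\[
J_\theta^{\alpha-1}\delta_\theta^{k+1}(x)J_\theta^{1-\alpha} = \delta_\theta^{k+1}(x) - J_\theta^{\alpha-1}[J_\theta^{1-\alpha},\delta_\theta^{k+1}(x)]
\]
and an appeal to Theorem~\ref{alpha_lt_one}. But Theorem~\ref{alpha_lt_one} is stated only for $x \in \cA$, whereas the present theorem is for $x \in \qcS$; so as written your argument proves a slightly weaker statement. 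The gap is harmless for the applications in Section~\ref{proofs of tf n} (which only need $x \in \cA$), and could be closed by rederiving the boundedness of $[J_\theta^{1-\alpha},\delta_\theta^{k+1}(x)]$ directly from the integral formula and Corollary~\ref{QC_infty}---but the paper's left-then-adjoint trick is shorter and avoids the detour entirely.

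Your explicit induction step for $\alpha \geq 1$ is correct and fills in a detail the paper leaves implicit (its integral formula only covers $\alpha\in(0,1)$).
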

    \begin{proof}
        Beginning with the integral formula from the proof of Theorem \ref{alpha_lt_one}, we have
        \begin{align*}
            J_\theta^{1-\alpha}[J_\theta^{\alpha},\delta_\theta^k( x)]    &= \al \delta_\theta^{k+1}( x)  + \frac{1}{{\mathrm B}(1-\alpha,\alpha)}\int_0^\infty \lambda^{\alpha}\frac{J_\theta^{1-\alpha}}{(J_\theta+\lambda)^2}\delta_\theta^{k+2}( x)\frac{1}{ \lambda+J_\theta  }\,d\lambda.
        \end{align*}
        Thus since $\delta_\theta^{k+1}(x)$ and $\delta_\theta^{k+2}(x)$ are bounded (Corollary \ref{QC_infty}), we can use the triangle inequality for operator norm and the estimates \eqref{alpha_estimate_1} and \eqref{alpha_estimate_2} from the proof
        of Theorem \ref{alpha_lt_one} to conclude that
        \begin{equation*}
            J_\theta^{1-\alpha}[J_\theta^{\alpha},\delta_\theta^k( x)]
        \end{equation*}
        has bounded extension. Taking the adjoint yields the result.
    \end{proof}

\subsection{Proof of Theorem \ref{main-commutator}}

    So far, we have established that Theorem \ref{main-commutator} holds in the following cases
    \begin{equation*}
        0 \leq \alpha \leq \beta+1.
    \end{equation*}
    Indeed, Corollary \ref{inducted_alpha} and Theorem \ref{bounded_case} imply an even stronger statement: for all $k\geq 0$, we have that
    \begin{equation}\label{main_statement}
        \begin{cases}
                                                                                [J_\theta^{\alpha},\delta_\theta^{k}( x)]J_\theta^{-\beta} \in  {\mathcal L}_{\frac{d}{\beta-\alpha+1},\infty} ,\quad \mbox{if}\quad 0\leq \al<\beta +1,\\
                                                                                [J_\theta^{\alpha},\delta_\theta^{k}( x)]J_\theta^{-\beta}\quad \mbox{has bounded extension, if}\quad  0\leq \al=\beta +1\,.
                                                                        \end{cases}
    \end{equation}

    We can conclude the proof by showing that if \eqref{main_statement} holds for $(\alpha,\beta)$ and all $k\geq 0$ then it holds for $(\alpha-1,\beta-1)$ and all $k\geq 0$. This will complete the proof, since for any $\alpha<\beta+1$ we can find $n$ large enough such that $0\leq \alpha+n \leq \beta+n+1$ and hence \eqref{main_statement} holds for $(\alpha+n,\beta+n)$ and all $k\geq 0$.
    
    To this end, suppose that \eqref{main_statement} holds for some $(\alpha,\beta)$ where $  \alpha\leq \beta+1$ and for all $k\geq 0$. From the Leibniz rule, we derive
    \begin{align*}
        [J_\theta^{\alpha-1},\delta^k_\theta( x)]J_\theta^{1-\beta} &= [J_\theta^{\alpha},\delta_\theta^k( x)]J_\theta^{-\beta} + J_\theta^{\alpha}[J_\theta^{-1},\delta_\theta^k( x)]J_\theta^{1-\beta}\\
                                                  &= [J_\theta^{\alpha},\delta_\theta^k( x)]J_\theta^{-\beta} - J_\theta^{\alpha-1}\delta_\theta^{k+1}( x)J_\theta^{-\beta}\\
                                                  &= [J_\theta^{\alpha},\delta_\theta^k( x)]J_\theta^{-\beta} - J_\theta^{-1}[J_\theta^{\alpha},\delta_\theta^{k+1}( x)]J_\theta^{-\beta}\\
                                                  &\quad -J_\theta^{-1}\delta_\theta^{k+1}( x)J_\theta^{\alpha-\beta}\\
                                                  &= [J_\theta^{\alpha},\delta_\theta^k( x)]J_\theta^{-\beta} - J_\theta^{-1}[J_\theta^{\alpha},\delta_\theta^{k+1}( x)]J_\theta^{-\beta}\\
                                                  &\quad -[J_\theta^{-1},\delta_\theta^{k+1}( x)]J_\theta^{\alpha-\beta}-\delta_\theta^{k+1}( x)J_\theta^{\alpha-\beta-1}\\
                                                  &= [J_\theta^{\alpha},\delta_\theta^k( x)]J_\theta^{-\beta} - J_\theta^{-1}[J_\theta^{\alpha},\delta_\theta^{k+1}( x)]J_\theta^{-\beta}\\
                                                  &\quad +J_\theta^{-1}\delta_\theta^{k+2}( x)J_\theta^{\alpha-\beta-1}-\delta_\theta^{k+1}( x)J_\theta^{\alpha-\beta-1}.
    \end{align*}
    Since $\alpha\leq \beta+1$, it follows from Lemma \ref{delta_cwikel_estimate} that $[J_\theta^{\alpha-1},\delta^k_\theta( x)]J_\theta^{1-\beta}$ is in ${\mathcal L}_{\frac{d}{\beta-\alpha+1},\infty}$ if $\alpha < \beta+1$
    or ${\mathcal B}(L_{2}({\mathbb R}^d))$ if $\alpha = \beta+1$.

\begin{rk}
We close this section by some useful remarks.
\begin{enumerate}
\item    It is worth noting that if one continues the expansion in the proof of Theorem \ref{alpha_lt_one} we have the following expansion: for all $n\geq 1$ and $\alpha \in [0,1]$,
    \begin{align*}
        [J_\theta^{\alpha},\delta_\theta^k( x)] &= \sum_{j=1}^n \frac{{\mathrm B}(j-\alpha,1+\alpha)}{{\mathrm B}(1-\alpha,\alpha)}J_\theta^{\alpha-j}\delta_\theta^{k+j}( x)\\
                                    &\quad +\frac{1}{{\mathrm B}(1-\alpha,\alpha)}\int_0^\infty \lambda^{\alpha}(\lambda+J_\theta)^{-(n+1)}\delta_\theta^{k+n+1}( x)(\lambda+J_\theta)^{-1}\,d\lambda.
    \end{align*}
    Here the coefficients come from the choice of $\zeta = -\alpha$ and $\eta = j+1$ in \eqref{best_integral}.
\item Moreover one can easily deduce the ``two-sided" result that:
    \begin{equation}\label{triple_version}
        J_\theta^{-\gamma}[J_\theta^{\alpha},\delta_\theta^k( x)]J_\theta^{-\beta} \in {\mathcal L}_{\frac{d}{\beta+\gamma-\alpha+1},\infty}
    \end{equation}
    whenever $\alpha < \beta+\gamma+1$, and that the above operator has bounded extension whenever $\alpha = \beta+\gamma+1$. An easy way to see how \eqref{triple_version} follows from Theorem \ref{main-commutator} is to use the identity:
    \begin{equation*}
        J_\theta^{-\gamma}[J_\theta^{\alpha},\delta_\theta^k( x)]J_\theta^{-\beta} = [J_\theta^{\alpha-\gamma},\delta_\theta^k( x)]J_\theta^{-\beta}-[J_\theta^{-\gamma},\delta_\theta( x)]J_\theta^{\alpha-\beta}.
    \end{equation*}
\item     The generalisation to $\alpha,\beta \in {\mathbb C} $ with $\Re(\alpha)\leq \Re(\beta)+1$ is immediate.
\end{enumerate} 
\end{rk}

\section{Proofs of Theorems \ref{trace formula} and \ref{necessity}}\label{proofs of tf n}

As in Section \ref{commutator-estimates}, we consider the dense subalgebra $\cA \subset \qcS$ constructed in Proposition \ref{factorisation}.

Using Theorem \ref{main-commutator} and the commutator estimates developed in Section \ref{commutator-estimates}, we are able to establish the trace formula in Theorem \ref{trace formula}, and finally prove Theorem \ref{necessity}. This will be done by showing that for all $x \in \cA$
$$|\qd x |^d   - |A|^d   (1+{\mathcal D}^2)  ^{-d/2} \in {\mathcal L}_1$$
for a certain bounded operator $A$ on ${\mathbb C}^N\otimes L_2({\mathbb R}^d)$ (depending on $x$), and then applying the trace formula given by \cite[Theorem~6.15]{MSZ2018} to $ |A|^d   (1+{\mathcal D}^2)  ^{-d/2} $.

\subsection{Operator difference estimates}
We begin with the construction of the above mentioned operator $A$.
For $1\leq j, k \leq d$, denote $g_{j,k} (t)= \frac{t_j t_k }{|t|^2}$ on ${\mathbb R}^d$.
 Let $x\in \qcS$. Define the operator $A_j$ on $L_2({\mathbb R}^d)$ as
    \begin{equation}\label{def-Aj}
        A_j\xi :=  (\partial_j x)\xi   - \sum_{k=1}^d (M_{g_{j,k}}   \partial_k x)\xi   = (\partial_j x)\xi   - \sum_{k=1}^d  g_{j,k}(  {\mathcal D}_1,\cdots,   {\mathcal D}_d  )(\partial_k x)\xi,\quad \xi \in L_2({\mathbb R}^d)
    \end{equation}
 and define the operator $A$ on ${\mathbb C} ^N \ot L_2({\mathbb R}^d)$
 \be
 A:=   \sum_{j =1} ^d   \gamma_j  \ot A_j,
 \ee
 where $N$ and $\gamma_j$ are the same as in Definition \ref{Dirac}.
    

    The main result in this subsection is the following theorem:    
    \begin{thm}\label{qdx-A}
        Let $x \in \cA$. Then we have:
        \begin{equation*}
            |\qd x|^d-|A|^d(1+ {\mathcal D}^2)^{-d/2} \in {\mathcal L}_1.
        \end{equation*}
    \end{thm}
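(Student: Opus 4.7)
My plan is to compare $\qd x$ with the operator $\ri A J_\theta^{-1}$ modulo a Schatten ideal strictly smaller than ${\mathcal L}_{d,\infty}$, and then to promote this additive approximation to the multiplicative statement at the level of $d$-th powers. The identity $(1+{\mathcal D}^2)^{-1/2}=1\otimes J_\theta^{-1}$ lets me replace $(1+{\mathcal D}^2)^{-d/2}$ by $J_\theta^{-d}$ throughout.

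For the first approximation, I would invoke Lemma \ref{Cwikel-xp} to replace $\sgn({\mathcal D})$ by $\frac{{\mathcal D}}{\sqrt{1+{\mathcal D}^2}}$ at the cost of an error in ${\mathcal L}_p$ for every $p>d/2$, which is strictly smaller than ${\mathcal L}_{d,\infty}$. A Leibniz-rule expansion together with $[J_\theta^{-1},x]=-J_\theta^{-1}\delta_\theta(x)J_\theta^{-1}$ then gives
\begin{equation*}
\ri\Big[\tfrac{{\mathcal D}}{\sqrt{1+{\mathcal D}^2}},\,1\otimes x\Big]= \ri\sum_{j=1}^d\gamma_j\otimes\big((\partial_j x)J_\theta^{-1}-({\mathcal D}_j J_\theta^{-1})\,\delta_\theta(x)\,J_\theta^{-1}\big).
\end{equation*}
The next task is to extract the principal part of $\delta_\theta(x)=[J_\theta,x]$: from $[J_\theta^2,x]=J_\theta\delta_\theta(x)+\delta_\theta(x)J_\theta=2\sum_k{\mathcal D}_k(\partial_kx)-\sum_k\partial_k^2 x$, together with the iterated commutator identity derived in the proof of Theorem \ref{L_to_delta}, one can write $\delta_\theta(x)=\sum_k({\mathcal D}_k J_\theta^{-1})(\partial_kx)+E$, where Theorem \ref{main-commutator}, Corollary \ref{two_sided_cwikel} and the factorisation property of $\cA$ (Proposition \ref{factorisation}) ensure that $E\,J_\theta^{-1}$ lies in a Schatten ideal strictly smaller than ${\mathcal L}_{d,\infty}$. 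Substituting back and observing that the Fourier multiplier $({\mathcal D}_j J_\theta^{-1})({\mathcal D}_k J_\theta^{-1})-M_{g_{j,k}}$ has symbol $\tfrac{-t_jt_k}{|t|^2(1+|t|^2)}$, which lies in $L_{d/2,\infty}({\mathbb R}^d)$, Theorem \ref{Cwikel-type} then yields
\begin{equation*}
\qd x=\ri A\,J_\theta^{-1}+R,\qquad R\in{\mathcal L}_{p,\infty}\text{ for some }p<d.
\end{equation*}

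For the passage to the $d$-th power, the key lemma is the following: if $T_1,T_2\in{\mathcal L}_{d,\infty}$ with $T_1-T_2\in{\mathcal L}_{p,\infty}$ for some $p<d$, then $|T_1|^d-|T_2|^d\in{\mathcal L}_1$. This can be deduced by combining $|T|^d=(T^*T)^{d/2}$, the integral representation $s^{d/2}=c_d\int_0^\infty\lambda^{d/2}\bigl(\tfrac1\lambda-\tfrac1{\lambda+s}\bigr)\,d\lambda$ (iterated if necessary), a resolvent expansion, and H\"older's inequality in weak Schatten classes: each term of the resulting expansion involves the error $R$ once and sits in ${\mathcal L}_{q,\infty}$ with $q^{-1}=\tfrac{d-1}d+\tfrac1p>1$, hence lies in ${\mathcal L}_1$. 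Applied to $T_1=\qd x$ and $T_2=\ri A J_\theta^{-1}$, this gives $|\qd x|^d-|\ri A J_\theta^{-1}|^d\in{\mathcal L}_1$. Finally, $|\ri AJ_\theta^{-1}|^d=(J_\theta^{-1}|A|^2J_\theta^{-1})^{d/2}$, and a similar integral representation combined with iterated use of Theorem \ref{main-commutator} to commute $J_\theta$-powers past $|A|^2$ shows that the difference with $|A|^d J_\theta^{-d}$ is also in ${\mathcal L}_1$. Chaining the two approximations completes the proof.

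The main technical obstacle is the second step: producing an expansion of $\delta_\theta(x)$ whose remainder, after multiplication by $J_\theta^{-1}$, lies in a Schatten ideal strictly smaller than ${\mathcal L}_{d,\infty}$. This relies on the iterative commutator identity used in the proof of Theorem \ref{L_to_delta} together with the factorisation property of $\cA$ to allow sharper product-type estimates via Theorem \ref{main-commutator}, and requires careful book-keeping to ensure that the final remainder really does live in ${\mathcal L}_{p,\infty}$ for some $p$ strictly less than $d$. A secondary difficulty is the $d$-th power step when $d$ is not an even integer, where the binomial identity must be replaced by the integral formula above and extra care is needed to verify that each term lands in a quasi-Banach ideal contained in ${\mathcal L}_1$.
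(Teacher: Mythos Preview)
Your three-step strategy---first approximate $\qd x$ by $\ri A J_\theta^{-1}$ modulo a Schatten ideal strictly smaller than ${\mathcal L}_{d,\infty}$, then promote to $d$-th powers, then compare the symmetrised form $(J_\theta^{-1}|A|^2 J_\theta^{-1})^{d/2}$ with $|A|^d J_\theta^{-d}$---matches the paper's proof exactly, and your first step is carried out essentially as in Propositions~\ref{initial_operator_difference} and~\ref{initial_operator_difference-aux}, landing in ${\mathcal L}_{d/2,\infty}$.

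The differences lie in the technical execution of the two power steps. For the passage from $\qd x - A(1+{\mathcal D}^2)^{-1/2}\in{\mathcal L}_{d/2,\infty}$ to a statement about $|\cdot|^d$, the paper first forms $|\qd x|^2-(1+{\mathcal D}^2)^{-1/2}|A|^2(1+{\mathcal D}^2)^{-1/2}\in{\mathcal L}_{d/3,\infty}$, then invokes Ricard's theorem on fractional powers \cite[Theorem~3.4]{Ricard2018} to pass to square roots (landing in ${\mathcal L}_{5d/6,\infty}$), and finally telescopes over the integer exponent $d$. Your proposed integral representation $s^{d/2}=c_d\int_0^\infty\lambda^{d/2}(\lambda^{-1}-(\lambda+s)^{-1})\,d\lambda$ only converges for $0<d/2<1$, so for $d\geq 2$ the ``iterated if necessary'' hedge hides real work; making a resolvent-expansion argument converge here is delicate, and Ricard's result is precisely what handles the non-integer power cleanly. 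For the final comparison of $(J_\theta^{-1}|A|^2J_\theta^{-1})^{d/2}$ with $|A|^dJ_\theta^{-d}$, the paper appeals to a black-box result \cite[Theorem~B.1]{CLMSZ2018}, reducing the problem to checking four explicit ideal-membership conditions (Proposition~\ref{difference-d>2}); your direct approach via Theorem~\ref{main-commutator} is in the same spirit but would need to reproduce that machinery. In short: same architecture, but the paper outsources the two fractional-power difficulties to existing literature, whereas your sketch would require filling those in by hand.
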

    
    Recall that ${\mathcal D} = \sum_{j=1}^d  \gamma_j \ot {\mathcal D}_j$, and 
    $\qd x =  \ri [\sgn({\mathcal D}), 1\ot x ]$. Let $g(t) = t(1+t^2) ^{-1/2}$ and write
    $$\qd x  = \ri [\sgn({\mathcal D}) - g({\mathcal D}) , 1\ot x ]+ \ri \sum_{j=1}^d     \gamma_j \ot [{\mathcal D}_j J_\theta^{-1}, x]  .$$
    By Lemma \ref{Cwikel-xp}, $[\sgn({\mathcal D}) - g({\mathcal D}) , 1\ot x ]$ belongs to ${\mathcal L}_{p}$ when $ p > \frac d 2 $. Define the auxiliary operator $\widetilde A_j$ for $1\leq j\leq d$ on $L_2({\mathbb R}^d)$ as
    \begin{equation}\label{def-Aj-aux}
        \widetilde  A_j :=  \partial_j x   - \sum_{k=1}^d {\mathcal D}_j {\mathcal D}_k J_\theta^{-2}  \partial_k x  \,.
    \end{equation}
    The following proposition connects the commutator $ [{\mathcal D}_j J_\theta^{-1}, x] $ with $\widetilde A_j$.

    \begin{prop}\label{initial_operator_difference}
        Let $1\leq j \leq d$, and $x \in \cA$. Then,
        \begin{equation*}
            [{\mathcal D}_j J_\theta^{-1}, x]- \widetilde  A_jJ_\theta^{-1} \in {\mathcal L}_{\frac{d}{2},\infty}.
        \end{equation*}
    \end{prop}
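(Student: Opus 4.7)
The plan is to expand $[{\mathcal D}_j J_\theta^{-1}, x]$ by the Leibniz rule and compare directly with $\widetilde A_j J_\theta^{-1}$. Since $[{\mathcal D}_j, x] = \partial_j x$ and ${\mathcal D}_j$ commutes with $J_\theta^{-1}$, I obtain
\begin{equation*}
    [{\mathcal D}_j J_\theta^{-1}, x] - \widetilde A_j J_\theta^{-1} = {\mathcal D}_j [J_\theta^{-1}, x] + \sum_{k=1}^d {\mathcal D}_j {\mathcal D}_k J_\theta^{-2}(\partial_k x) J_\theta^{-1}.
\end{equation*}
Note that Corollary \ref{two_sided_cwikel} only places each of these two terms in ${\mathcal L}_{d,\infty}$, so the whole point of the proof is to exhibit the cancellation that sharpens this to ${\mathcal L}_{d/2,\infty}$.

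Next I would substitute the standard identity $[J_\theta^{-1}, x] = -J_\theta^{-1}\delta_\theta(x)J_\theta^{-1}$ together with the calculation $\sum_k {\mathcal D}_k(\partial_k x) = \tfrac{1}{2}[J_\theta^2, x] + \tfrac{1}{2}\sum_k \partial_k^2 x$ (obtained by direct computation of each $[{\mathcal D}_k^2, x]$) and the definition $L_\theta(x) = J_\theta^{-1}[J_\theta^2, x]$. After simplification the right-hand side becomes
\begin{equation*}
    -{\mathcal D}_j J_\theta^{-1}\bigl(\delta_\theta(x) - \tfrac{1}{2}L_\theta(x)\bigr) J_\theta^{-1} + \tfrac{1}{2}{\mathcal D}_j J_\theta^{-2}\sum_{k=1}^d (\partial_k^2 x)\, J_\theta^{-1}.
\end{equation*}

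The crux of the argument is then the purely algebraic identity
\begin{equation*}
    \delta_\theta(x) - \tfrac{1}{2}L_\theta(x) = \tfrac{1}{2}J_\theta^{-1}\delta_\theta^2(x),
\end{equation*}
which I would verify by expanding $[J_\theta^2, x] = J_\theta \delta_\theta(x) + \delta_\theta(x) J_\theta$ so that $L_\theta(x) = \delta_\theta(x) + J_\theta^{-1}\delta_\theta(x) J_\theta$, and then recognising that $J_\theta \delta_\theta(x) - \delta_\theta(x) J_\theta = \delta_\theta^2(x)$. This identity is what realises $\delta_\theta(x) - \tfrac{1}{2}L_\theta(x)$ as a \emph{second} order commutator with an extra $J_\theta^{-1}$ of smoothing, which is the gain of one Schatten--Lorentz order that the statement requires.

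Substituting this identity, the target operator becomes
\begin{equation*}
    \tfrac{1}{2}{\mathcal D}_j J_\theta^{-2}\Bigl(\sum_{k=1}^d (\partial_k^2 x) - \delta_\theta^2(x)\Bigr) J_\theta^{-1}.
\end{equation*}
Since ${\mathcal D}_j J_\theta^{-1}$ is a bounded Fourier multiplier, ${\mathcal L}_{d/2,\infty}$ membership now follows at once from Corollary \ref{two_sided_cwikel}: applied with $k=2$, $\gamma=\beta=1$ it gives $J_\theta^{-1}\delta_\theta^2(x) J_\theta^{-1} \in {\mathcal L}_{d/2,\infty}$, and applied with $k=0$, $\gamma=\beta=1$ to the element $\partial_k^2 x \in \cA$ (note $\cA$ is closed under $\partial_k$) it gives $J_\theta^{-1}(\partial_k^2 x) J_\theta^{-1} \in {\mathcal L}_{d/2,\infty}$. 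The main obstacle is thus entirely at the algebraic step of spotting the identity for $\delta_\theta(x) - \tfrac{1}{2}L_\theta(x)$; once that is in hand no integral expansions or non-convex integration machinery are needed, in contrast to the more delicate arguments underlying Theorem \ref{main-commutator}.
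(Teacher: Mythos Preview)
Your proof is correct and takes a genuinely different, more elementary route than the paper. Both arguments start the same way, writing
\[
[{\mathcal D}_j J_\theta^{-1},x] = \partial_j x\,J_\theta^{-1} - {\mathcal D}_j J_\theta^{-1}\delta_\theta(x)J_\theta^{-1},
\]
so that the question becomes one of controlling $\delta_\theta(x)-\tfrac12 L_\theta(x)$ modulo terms already known to lie in ${\mathcal L}_{d/2,\infty}$. The paper does this by invoking the full asymptotic expansion \eqref{deltaT} of $\delta_\theta(x)$ in powers of $L_\theta$, derived from the integral representation \eqref{best_integral_op}, and then arguing separately that each higher term $J_\theta^{1-j}L_\theta^j(x)J_\theta^{-1}$ and the integral remainder fall into ${\mathcal L}_{d/2,\infty}$. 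You bypass all of that with the closed-form identity
\[
\delta_\theta(x)-\tfrac12 L_\theta(x)=\tfrac12 J_\theta^{-1}\delta_\theta^2(x),
\]
which follows in two lines from $[J_\theta^2,x]=J_\theta\delta_\theta(x)+\delta_\theta(x)J_\theta$. This immediately exhibits the extra order of smoothing and reduces everything to two direct applications of Corollary~\ref{two_sided_cwikel}. Your argument is cleaner and avoids the integral-formula machinery entirely for this proposition; the paper's route has the minor advantage of reusing the expansion already built for Theorem~\ref{L_to_delta}, but yours shows that none of that is actually needed here. (Your passing remark that $\cA$ is closed under $\partial_k$ is correct: from the construction of $\cA$ as the algebraic tensor product in Proposition~\ref{factorisation}, each $\partial_k$ acts on a single tensor factor and preserves the Schwartz class there.)
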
    
    \begin{proof}
        From the Leibniz rule, we have
        \begin{equation*}
             [{\mathcal D}_j J_\theta^{-1},x] = \partial_j  x   J_\theta^{-1}+  {\mathcal D}_j[J_\theta^{-1},  x] =  \partial_j x J_\theta^{-1} -   {\mathcal D}_j J_\theta^{-1}\delta_\theta( x)J_\theta^{-1}.
        \end{equation*}
        Using the integral formula \eqref{deltaT} from Theorem \ref{L_to_delta}, we have for all $n\geq 0$,
        \begin{align*}
            \delta_\theta(x)J_\theta ^{-1} & = \sum_{j=1}^{n-1} \frac{1}{\pi}{\mathrm B}(j-1/2,3/2)J_\theta ^{1-j}L_\theta^j(x)J_\theta^{-1} \\
                                &\quad + \frac{1}{\pi}\int_0^\infty \lambda^{1/2}\frac{J_\theta^n}{(\lambda+J_\theta^2)^n}L_\theta^n(x)J_\theta^{-2}\frac{J_\theta}{\lambda+J_\theta^2}\,d\lambda.
        \end{align*}
        From Corollary \ref{two_sided_cwikel}, we have that $J_\theta^{1-j}L_\theta^j( x)J_\theta^{-1} \in {\mathcal L}_{d/j,\infty}$ for every $j\geq 1$. Due to a similar argument to the proof of Lemma \ref{L_to_delta}, we have that
        \begin{equation*}
            \int_0^\infty \lambda^{1/2} \frac{J_\theta^n}{(\lambda+J_\theta^2)^n}L_\theta^n(x)J_\theta^{-2}\frac{J_\theta}{\lambda+J_\theta^2}\,d\lambda \in {\mathcal L}_{\frac{d}{2},\infty}
        \end{equation*}
        provided $n$ is sufficiently large.
        So (recalling that ${\mathrm B}(\frac{1}{2},\frac{3}{2}) = \frac{\pi}{2}$) we obtain
        \begin{equation}\label{partial_difference}
            [{\mathcal D}_j  J_\theta^{-1},x] \in    \partial_j x  J_\theta^{-1}-\frac{1}{2}{\mathcal D}_jJ_\theta^{-1}L_\theta(x)J_\theta^{-1} + {\mathcal L}_{\frac{d}{2},\infty}.
        \end{equation}
        By the definition of $L_\theta$, we have:
        \begin{align*}
            {\mathcal D}_j  J_\theta ^{-1} L_\theta(x)J_\theta ^{-1} &= {\mathcal D}_j  J_\theta^{-2}[J_\theta^2,  x]J_\theta^{-1}\\
                                  &= {\mathcal D}_j  J_\theta^{-2}\sum_{k=1}^d [{\mathcal D}_k ^2,  x]J_\theta^{-1}\\
                                  &=  \sum_{k=1}^d {\mathcal D}_j J_\theta ^{-2}({\mathcal D}_k  \partial_kx +  \partial_k x \,{\mathcal D}_k) J_\theta^{-1}\\                                  
                                  &=  \sum_{k=1}^d {\mathcal D}_j J_\theta ^{-2}(2{\mathcal D}_k  \partial_kx -  \partial_{k}^2 x  ) J_\theta^{-1}
        \end{align*}
        From Corollary \ref{two_sided_cwikel}, we have $ {\mathcal D}_j J_\theta ^{-2}   \partial_{k}^2 x    J_\theta^{-1} \in {\mathcal L}_{d/2,\infty}$,
        and therefore 
        \begin{equation}\label{second_partial_difference}
            {\mathcal D}_j  J_\theta^{-1}L_\theta(x)J _\theta ^{-1} \in 2 \sum_{k=1}^d  {\mathcal D}_j {\mathcal D}_k J_\theta^{-2}  \partial_k x J_\theta^{-1} + {\mathcal L}_{d/2,\infty}.
        \end{equation}
        Combining \eqref{partial_difference} and \eqref{second_partial_difference} yields:
        \begin{equation*}
            [{\mathcal D}_jJ_{\theta}^{-1},x] \in \partial_j x J_{\theta}^{-1}-\sum_{k=1}^d {\mathcal D}_j{\mathcal D}_k J_{\theta}^{-2}\partial_k x J_{\theta}^{-1} + {\mathcal L}_{d/2,\infty} = \widetilde{A}_jJ_{\theta}^{-1}+{\mathcal L}_{d/2,\infty}
        \end{equation*}
        as was claimed.
    \end{proof}

    Let us also compare $ \widetilde  A_jJ_\theta^{-1}$ with $   A_jJ_\theta^{-1}$.
    
       \begin{prop}\label{initial_operator_difference-aux}
        Let $1\leq j \leq d$, and $x \in \cA$. Then,
        \begin{equation*}
            A_jJ_\theta^{-1}- \widetilde  A_jJ_\theta^{-1} \in {\mathcal L}_{\frac{d}{2},\infty}.
        \end{equation*}
    \end{prop}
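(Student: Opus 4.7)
The plan is to exhibit $A_j J_\theta^{-1} - \widetilde A_j J_\theta^{-1}$ as a finite sum of (bounded Fourier multiplier) times (an operator of the form $J_\theta^{-2}(\partial_k x) J_\theta^{-1}$), and then apply the two-sided Cwikel-type estimate in Corollary \ref{two_sided_cwikel} to each factor.

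First I would compare the definitions \eqref{def-Aj} and \eqref{def-Aj-aux} directly. The operators $A_j$ and $\widetilde A_j$ share the leading term $\partial_j x$, and differ only in the Fourier multiplier that sits in front of each $\partial_k x$: in $A_j$ it is multiplication by $g_{j,k}(t)=t_j t_k/|t|^2$, while in $\widetilde A_j$ it is ${\mathcal D}_j{\mathcal D}_k J_\theta^{-2}$, i.e.\ multiplication by $t_j t_k/(1+|t|^2)$. Their difference is therefore multiplication by
\[ h_{j,k}(t) := \frac{t_j t_k}{|t|^2} - \frac{t_j t_k}{1+|t|^2} = \frac{t_j t_k}{|t|^2(1+|t|^2)}. \]
The key algebraic observation is that this symbol factors as $h_{j,k}(t)=g_{j,k}(t)\cdot(1+|t|^2)^{-1}$, which at the operator level reads $M_{h_{j,k}}=M_{g_{j,k}} J_\theta^{-2}$, with $M_{g_{j,k}}$ bounded on $L_2({\mathbb R}^d)$ since $|g_{j,k}|\le 1$. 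Consequently
\[ (A_j - \widetilde A_j)J_\theta^{-1} = -\sum_{k=1}^d M_{g_{j,k}}\cdot\bigl(J_\theta^{-2}(\partial_k x) J_\theta^{-1}\bigr). \]

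The second step is to control each summand by Corollary \ref{two_sided_cwikel}. Since $\cA$ is constructed in the proof of Proposition \ref{factorisation} as the algebraic tensor product ${\mathcal S}({\mathbb R}^{d_1})\otimes{\mathcal S}({\mathbb R}^{d-d_1}_{\theta'})$, and every $\partial_k$ acts on one of the two Schwartz tensor factors while annihilating the other, we have $\partial_k x\in\cA$. Applying Corollary \ref{two_sided_cwikel} with $\gamma=2$, $\beta=1$ and commutator index zero to the element $\partial_k x$ yields
\[ J_\theta^{-2}(\partial_k x) J_\theta^{-1}\in{\mathcal L}_{d/3,\infty}\subset{\mathcal L}_{d/2,\infty}. \]
Multiplying by the bounded operator $M_{g_{j,k}}$ preserves membership in ${\mathcal L}_{d/2,\infty}$, and summing over $k$ finishes the argument.

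The calculation is essentially routine once the correct factorisation of the symbol $h_{j,k}$ has been spotted, and I do not expect any substantial obstacle. The only small point that requires verification is that $\partial_k x$ lies in $\cA$ rather than merely in $\qcS$, so that Corollary \ref{two_sided_cwikel} (whose hypothesis is stated for $\cA$) applies directly; this follows immediately from the tensor-product description of $\cA$ recalled above. It is worth noting that the argument in fact produces the stronger containment ${\mathcal L}_{d/3,\infty}$, but only the weaker ${\mathcal L}_{d/2,\infty}$ is needed in the sequel.
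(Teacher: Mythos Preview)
Your proof is correct, and it takes a genuinely different route from the paper's. The paper does not factor the symbol $h_{j,k}$; instead it keeps $(M_{g_{j,k}}-M_{\widetilde g_{j,k}})\partial_k x\,J_\theta^{-1}$ as is, invokes the factorisation property of $\cA$ to write $x$ as a sum of products $yz$, applies the Leibniz rule to $\partial_k(yz)$, and then uses H\"older: one factor ($(M_{g_{j,k}}-M_{\widetilde g_{j,k}})y$ or $(M_{g_{j,k}}-M_{\widetilde g_{j,k}})\partial_k y$) is placed in ${\mathcal L}_{d,\infty}$ via the Cwikel estimate of Theorem~\ref{Cwikel-type}\eqref{L_p cwikel} (since $g_{j,k}-\widetilde g_{j,k}\in L_p({\mathbb R}^d)$ for $p>d/2$), and the other factor ($zJ_\theta^{-1}$ or $(\partial_k z)J_\theta^{-1}$) is placed in ${\mathcal L}_{d,\infty}$ via Lemma~\ref{delta_cwikel_estimate}.

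Your approach is more direct: by writing $M_{h_{j,k}}=M_{g_{j,k}}J_\theta^{-2}$ you avoid the explicit factorisation of $x$ at this stage and simply quote Corollary~\ref{two_sided_cwikel}, which has already absorbed that factorisation argument. This also yields the sharper containment ${\mathcal L}_{d/3,\infty}$. The one point you correctly flagged---that $\partial_k x\in\cA$---is valid for the reason you gave (the derivative formula \eqref{derivative_formula} shows that $\partial_k$ maps each elementary tensor $U_0(f)U_{\theta'}(g)$ to another elementary tensor).
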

    
    \begin{proof}
By definition, $  A_j =   \sum_{k=1}^d M_{ g_{j,k}} \partial_k x $ and $\widetilde  A_j =   \sum_{k=1}^d M_{\widetilde g_{j,k}} \partial_k x $ with $\widetilde g_{j,k} (t)= \frac{t_j t_k }{1+|t|^2}$. So we are reduced to estimating $M_{ g_{j,k}} \partial_k x J_\theta^{-1}  -  M_{\widetilde g_{j,k}} \partial_k x J_\theta^{-1}$ for every $k$. Using the factorisation of $x$ as a linear combination of products $yz$, $y,z \in \cA$ (Proposition \ref{factorisation}) and the Leibniz rule, we have
\be
M_{ g_{j,k}} \partial_k (yz) J_\theta^{-1}  -  M_{\widetilde g_{j,k}} \partial_k (yz) J_\theta^{-1}= (M_{ g_{j,k}}  -  M_{\widetilde g_{j,k}}) \partial_k y\, z  J_\theta^{-1}+ (M_{ g_{j,k}}  -  M_{\widetilde g_{j,k}}) y\,  \partial_k z  J_\theta^{-1}.
\ee
From Lemma \ref{delta_cwikel_estimate}, both $  z  J_\theta^{-1}$ and $ \partial_k z  J_\theta^{-1}$ belong to ${\mathcal L}_{d, \infty}$. On the other hand, one can easily check that $ g_{j,k}-\widetilde g_{j,k} \in L_p({\mathbb R}^d)$ as $p> \frac d 2$, which yields by Theorem \ref{Cwikel-type}(\ref{L_p cwikel}) that 
$$(M_{ g_{j,k}}  -  M_{\widetilde g_{j,k}})  y \in {\mathcal L}_p\subset {\mathcal L}_{d,\infty}, \quad (M_{ g_{j,k}}  - M_{\widetilde g_{j,k}}) \partial_k y \in {\mathcal L}_p\subset {\mathcal L}_{d,\infty} .$$
Thus it follows from the H\"older inequality that 
$$M_{ g_{j,k}} \partial_k x J_\theta^{-1}  - M_{\widetilde g_{j,k}} \partial_k x J_\theta^{-1} \in {\mathcal L}_{d/2, \infty} ,$$
 whence the proposition.
    \end{proof}

   For $g(t) = t(1+t^2) ^{-1/2}$ on ${\mathbb R}$, Propositions \ref{initial_operator_difference} and \ref{initial_operator_difference-aux} imply that
    \begin{equation}\label{gD-estimate}
        \ri[g({\mathcal D}),1\otimes  x] - A(1+{\mathcal D}^2)^{-1/2} \in {\mathcal L}_{\frac{d}{2},\infty}.
    \end{equation}
    This -- combined with Lemma \ref{Cwikel-xp} -- yields:
    \begin{equation*}
        \qd x - A(1+{\mathcal D}^2)^{-1/2} \in {\mathcal L}_{\frac{d}{2},\infty}
    \end{equation*}
    for all $x \in \cA$.
    
    \begin{lem}\label{symmetrised_difference}
        Let $x\in \cA$. We have
        \begin{equation*}
            |\qd x|^d-((1+{\mathcal D}^2)^{-1/2}|A|^2(1+{\mathcal D}^2)^{-1/2})^{d/2} \in {\mathcal L}_1.
        \end{equation*}
    \end{lem}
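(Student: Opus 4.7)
The goal is to compare $|\qd x|^d$ with $|S|^d$, where I set $S := A(1+{\mathcal D}^2)^{-1/2}$. Since $|S|^2 = S^*S = (1+{\mathcal D}^2)^{-1/2}|A|^2(1+{\mathcal D}^2)^{-1/2}$, we have $|S|^d = ((1+{\mathcal D}^2)^{-1/2}|A|^2(1+{\mathcal D}^2)^{-1/2})^{d/2}$, so the claim reduces to $|\qd x|^d - |S|^d \in {\mathcal L}_1$. Three facts are already at hand: (i) $\qd x - S \in {\mathcal L}_{d/2,\infty}$, from \eqref{gD-estimate} combined with Lemma \ref{Cwikel-xp}; (ii) $\qd x \in {\mathcal L}_{d,\infty}$ by Theorem \ref{sufficiency}; and (iii) $S \in {\mathcal L}_{d,\infty}$, because $A$ is bounded (each $M_{g_{j,k}}$ is bounded as $|g_{j,k}|\leq 1$, and $x\in \cA$ is smooth) while $(1+{\mathcal D}^2)^{-1/2} = 1\otimes J_\theta^{-1}$ lies in ${\mathcal L}_{d,\infty}$.

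The first step is to pass from the linear difference to the difference of absolute squares via the identity
\begin{equation*}
|\qd x|^2 - |S|^2 = (\qd x)^*(\qd x - S) + (\qd x - S)^*S.
\end{equation*}
The weak-Schatten H\"older inequality places each summand in ${\mathcal L}_{d,\infty}\cdot {\mathcal L}_{d/2,\infty}\subseteq {\mathcal L}_{d/3,\infty}$, hence so does $|\qd x|^2 - |S|^2$.

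The second step is to raise this estimate to the $d/2$ power. Set $P = |\qd x|^2$, $Q = |S|^2$, so $P, Q \in {\mathcal L}_{d/2,\infty}$ are nonnegative with $P - Q \in {\mathcal L}_{d/3,\infty}$, and $P^k \in {\mathcal L}_{d/(2k),\infty}$ (similarly for $Q^k$). When $d = 2m$ is even, the elementary telescoping
\begin{equation*}
P^m - Q^m = \sum_{k=0}^{m-1} P^k(P-Q)Q^{m-1-k}
\end{equation*}
together with H\"older gives each summand in ${\mathcal L}_{d/(d+1),\infty}\subseteq {\mathcal L}_1$ (the exponent computes as $2k/d + 3/d + 2(m{-}1{-}k)/d = (d{+}1)/d$, independent of $k$). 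When $d = 2m+1$ is odd, I split $P^{d/2} = P^m P^{1/2}$ and write
\begin{equation*}
P^{d/2} - Q^{d/2} = (P^m - Q^m)P^{1/2} + Q^m(P^{1/2} - Q^{1/2}).
\end{equation*}
The first summand is handled as above, landing in ${\mathcal L}_{d/(d+1),\infty}$. For the second summand, I invoke the Birman-Koplienko-Solomyak (Ando-Rotfel'd) inequality $\mu(n, P^{1/2} - Q^{1/2})\leq \mu(n, P-Q)^{1/2}$, which gives $P^{1/2} - Q^{1/2}\in {\mathcal L}_{2d/3,\infty}$; combined with $Q^m\in {\mathcal L}_{d/(d-1),\infty}$ by H\"older, the product lies in ${\mathcal L}_{2d/(2d+1),\infty}\subseteq {\mathcal L}_1$.

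The main obstacle is precisely this half-power extraction when $d$ is odd: purely algebraic telescoping is unavailable for non-integer exponents, and one must appeal to an operator monotonicity inequality (Birman-Koplienko-Solomyak), or equivalently to the integral representation $P^{1/2} = \pi^{-1}\int_0^\infty \lambda^{-1/2}P(\lambda+P)^{-1}d\lambda$ used in Section \ref{commutator-estimates}, to get a good weak-Schatten bound on $P^{1/2}-Q^{1/2}$. Once this tool is in hand, the remaining ideal-index bookkeeping is routine, and in both parity cases one arrives in an ideal strictly smaller than ${\mathcal L}_1$.
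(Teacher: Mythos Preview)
Your strategy is essentially the paper's, but there is one genuine error in your setup. In (iii) you claim that $(1+{\mathcal D}^2)^{-1/2} = 1\otimes J_\theta^{-1}$ lies in ${\mathcal L}_{d,\infty}$; this is false. The operator $J_\theta^{-1}$ is pointwise multiplication by $(1+|t|^2)^{-1/2}$ on $L_2({\mathbb R}^d)$, and a multiplication operator by a function that does not vanish a.e.\ is never compact, let alone in a weak Schatten class. What \emph{is} true is that each product $(\partial_k x)J_\theta^{-1}$ lies in ${\mathcal L}_{d,\infty}$ by the Cwikel estimate (Lemma~\ref{Cwikel-type-lem}), and since $A$ is a finite combination of bounded Fourier multipliers times such factors, $S = A(1+{\mathcal D}^2)^{-1/2}\in {\mathcal L}_{d,\infty}$ follows. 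Alternatively---and this is what the paper actually does---simply write $S = \qd x - (\qd x - S) \in {\mathcal L}_{d,\infty} + {\mathcal L}_{d/2,\infty} = {\mathcal L}_{d,\infty}$. Once (iii) is repaired your argument goes through.

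On the organisation: the paper treats $d>2$ uniformly by first extracting a square root, deducing $|\qd x| - |S| \in {\mathcal L}_{5d/6,\infty}$ from Ricard's fractional-power theorem \cite{Ricard2018} (after embedding ${\mathcal L}_{d/3,\infty}\subset {\mathcal L}_{5d/12}$), and then telescoping $|\qd x|^d - |S|^d$ with the integer exponent $d$. You instead telescope at the level of the squares $P,Q$, which for even $d$ avoids any fractional-power tool entirely---a mild gain in elementarity---and for odd $d$ isolates a single square-root extraction. For that step, note that the pointwise singular-value inequality $\mu(n,P^{1/2}-Q^{1/2})\le \mu(n,P-Q)^{1/2}$ you quote is not the standard Birman--Koplienko--Solomyak statement; what is standard is the submajorisation $P^{1/2}-Q^{1/2}\prec\prec |P-Q|^{1/2}$ for $P,Q\ge 0$ (Ando's inequality for operator-monotone functions), and this suffices because ${\mathcal L}_{2d/3,\infty}$ is fully symmetric for $2d/3>1$.
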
    
    \begin{proof}
        We already know from Lemma \ref{Cwikel-xp} that $\ri[g({\mathcal D}),1\otimes x]-\qd x \in {\mathcal L}_{\frac{d}{2}}$, which together with \eqref{gD-estimate} ensures that
        \begin{equation*}
            \qd x-A(1+{\mathcal D}^2)^{-1/2} \in {\mathcal L}_{\frac{d}{2},\infty}.
        \end{equation*}
        Taking the adjoint:
        \begin{equation*}
            \qd x^*-(1+{\mathcal D}^2)^{-1/2}A^* \in {\mathcal L}_{\frac{d}{2},\infty}.
        \end{equation*}
        Recall that $\qd x\in {\mathcal L}_{d,\infty}$ by Theorem \ref{sufficiency} (as has been proved in Section \ref{section-sufficiency}), so it follows that $A(1+{\mathcal D}^2)^{-1/2} \in {\mathcal L}_{d,\infty}$.
        Using the H\"older inequality, we have
        \begin{align*}
            |\qd x|^2 - (1+ {\mathcal D}^2)^{-1/2}|A|^2(1+ {\mathcal D}^2)^{-1/2} &= \qd x^*\big(\qd x - A(1+  {\mathcal D}^2)^{-1/2}\big)\\
            &\quad +\big(\qd x^*-(1+ {\mathcal D}^2)^{-1/2}A^*\big) A(1+{\mathcal D}^2)^{-1/2}\\
                                                    &\in {\mathcal L}_{\frac{d}{3},\infty}  \subset {\mathcal L}_{\frac{5d}{12}} \,.
        \end{align*} 
        If $d=2$, then we are done. 
        
        Now assume that $d > 2$. We appeal to a recent result from E. Ricard \cite[Theorem 3.4]{Ricard2018}, which says that we can take a power $1/2$ to each term of the preceding inclusion to get
        \begin{equation*}
            |\qd x| - \Big((1+{\mathcal D}^2)^{-1/2}|A|^2(1+ {\mathcal D}^2)^{-1/2}\Big)^{1/2} \in {\mathcal L}_{\frac{5d}{6},\infty}.
        \end{equation*}
        Next we introduce a power $d$:
        \begin{align*}
            &|\qd x|^d-\Big((1+{\mathcal D}^2)^{-1/2}|A|^2(1+{\mathcal D}^2)^{-1/2}\Big)^{d/2} \\
                    &= \sum_{k=0}^{d-1} |\qd x|^{d-k-1}\Big(|\qd x|-\big((1+ {\mathcal D}^2)^{-1/2}|A|^2(1+ {\mathcal D}^2)^{-1/2}\big)^{1/2}\Big)\Big((1+{\mathcal D}^2)^{-1/2}|A|^2(1+{\mathcal D}^2)^{-1/2}\Big)^{\frac{k}{2}}\\
                                                                &\in \sum_{k=0}^{d-1} {\mathcal L}_{\frac{d}{d-k-1},\infty}\cdot {\mathcal L}_{\frac{5d}{6}}\cdot{\mathcal L}_{\frac{d}{k},\infty} \subset {\mathcal L}_{\frac{5d}{5d+1},\infty} \subset {\mathcal L}_1. \qedhere
        \end{align*} 
    \end{proof}

    By definition, $|A|^2 = A^*A$, so we can write $|A|^2$ as a polynomial in elements of $\cA$ and functions of $D_j$, $j=1,\ldots,d$. It then follows from Theorem \ref{main-commutator} that
    \begin{equation}\label{main_A_estimate}
        [|A|^2,(1+ {\mathcal D}^2)^{\alpha/2}](1+{\mathcal D}^2)^{-\beta/2} \in {\mathcal L}_{\frac{d}{\beta-\alpha+1},\infty}
    \end{equation}
    for all $\beta > 0$ and $\alpha < 1$. Therefore, if $d = 2$, letting $\al= -1$ and $\beta=1 $ in \eqref{main_A_estimate}, we have
    \begin{equation*}
        [|A|^2,(1+{\mathcal D}^2)^{-1/2}](1+{\mathcal D}^2)^{-1/2} \in {\mathcal L}_{2/3,\infty} \subset {\mathcal L}_1
    \end{equation*}
    This inclusion can be combined with Lemma \ref{symmetrised_difference} to arrive at
    \begin{equation*}
        |\qd x|^2-|A|^2(1+ {\mathcal D}^2)^{-1} \in {\mathcal L}_1
    \end{equation*}
    which completes the proof of Theorem \ref{qdx-A} for the $d=2$ case.       
    
    For $d>2$, we need 
    
    \begin{prop}\label{difference-d>2}
        Let $d > 2$. Then
        \begin{equation*}
            |A|^d(1+  {\mathcal D}^2)^{-d/2}-((1+ {\mathcal D}^2)^{-1/2}|A|^2(1+{\mathcal D}^2)^{-1/2})^{d/2} \in {\mathcal L}_1.
        \end{equation*}
    \end{prop}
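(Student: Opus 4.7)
The plan is to reduce the estimate $|A|^d J^{-d} - (J^{-1}|A|^2 J^{-1})^{d/2}\in {\mathcal L}_1$ to a sequence of commutator estimates all controlled by \eqref{main_A_estimate}, where $J=(1+{\mathcal D}^2)^{1/2}$. The idea is to introduce the symmetric ``middle'' operator $W = J^{-d/2}|A|^d J^{-d/2}$ and prove
\[
|A|^d J^{-d} - W \in {\mathcal L}_1 \quad\text{and}\quad W - (J^{-1}|A|^2 J^{-1})^{d/2} \in {\mathcal L}_1,
\]
which together yield the claim.

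Setting $B=|A|^2$, the first inclusion is equivalent to $[|A|^d, J^{-d/2}]J^{-d/2}\in {\mathcal L}_1$. For even $d$, the Leibniz rule expands $[B^{d/2}, J^{-d/2}]J^{-d/2}$ into a sum of terms of the shape $B^j[B,J^{-d/2}]B^{d/2-1-j}J^{-d/2}$. Each such term is placed in ${\mathcal L}_1$ by commuting the middle $B^{d/2-1-j}$ past the trailing $J^{-d/2}$ (the correction being a further iterated commutator in a strictly smaller ideal), and then pairing $[B,J^{-d/2}]$ with $J^{-d/2}$ to invoke the sharp case $(\alpha,\beta)=(-d/2,d/2)$ of \eqref{main_A_estimate}, which yields membership in ${\mathcal L}_{d/(d+1),\infty}\subset {\mathcal L}_1$. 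For odd $d$, factor $|A|^d = |A|^{d-1}|A|$: the even-power factor is handled as above, while the remaining $|A|$ is absorbed by a Ricard square-root argument \cite[Theorem 3.4]{Ricard2018}, in the spirit of the concluding step of the proof of Lemma \ref{symmetrised_difference}.

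For the second inclusion, set $Y = J^{-1}BJ^{-1}$. For integer $n=d/2$, introduce the interpolating family $T_k = J^{-(n-k)}B^{n-k}Y^k J^{-(n-k)}$ for $0\le k\le n$, so that $T_0 = W$ and $T_n = Y^n$. Each increment $T_k - T_{k+1}$ is computed by expressing $Y^{k+1} = J^{-1}BJ^{-1}\cdot Y^k$, isolating a single commutator factor $[B,J^{-1}]$, and using $Y^k\in {\mathcal L}_{d/(2k),\infty}$ together with the two-sided form of \eqref{main_A_estimate} to pair every appearing $J^{-\alpha}$ factor optimally. The combinatorics arrange so that every increment lands in an ideal contained in ${\mathcal L}_{d/(d+1),\infty}\subset {\mathcal L}_1$. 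The odd-$d$ case (non-integer exponent $d/2$) is reduced to the even-$d$ integer case by applying Ricard to compare $W^2$ with $Y^d$.

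The main obstacle is the bookkeeping required in the telescoping: intermediate products of the form $B^m J^{-\ell}$ with $m\ne \ell/2$ do not automatically lie in the naively expected weak Schatten class, because $B$ and $J^{-1}$ fail to commute. The resolution is the repeated use of \eqref{main_A_estimate} in its two-sided variant to absorb extra $J^{-1}$ factors into commutators with $B$, so that every remaining single commutator $[B, J^\alpha]$ is flanked by enough $J^{-\beta}$ on both sides to exploit the sharpest form of the estimate. That every term lands in ${\mathcal L}_1$ rests precisely on the inequality $d/(d+1)<1$, which is why the dimension hypothesis $d>2$ suffices.
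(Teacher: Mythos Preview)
Your approach is genuinely different from the paper's. The paper does not telescope at all: it invokes \cite[Theorem~B.1]{CLMSZ2018} as a black box, which reduces the assertion to four concrete conditions (membership of $|A|^{d-2}J^{2-d}$, $J^{-1}|A|^2J^{-1}$, $[|A|^2J^{-1},J^{-1}]$ and $|A|^{d-2}[|A|^2,J^{2-d}]J^{-2}$ in specified ideals), each of which follows in one line from \eqref{main_A_estimate} and Lemma~\ref{delta_cwikel_estimate}. The point of that external theorem is precisely to handle the non-integer exponent $d/2$ uniformly, without an even/odd dichotomy.

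For even $d$ your telescoping outline is plausible. Writing $B=|A|^2$, $n=d/2$, your step $|A|^dJ^{-d}-W=[B^n,J^{-n}]J^{-n}\in{\mathcal L}_1$ does go through: the Leibniz expansion and one further commutation of the intermediate $B$-powers past $J^{-n}$ place every term in ${\mathcal L}_{d/(d+1),\infty}$ or ${\mathcal L}_{d/(d+2),\infty}$, both inside ${\mathcal L}_1$. For the second step the increments $T_k-T_{k+1}$ also reduce to single commutators $[B,J^{-1}]$ flanked by enough $J^{-1}$ factors, though the bookkeeping you left implicit (``the combinatorics arrange so that\ldots'') is not trivial: for small $k$ the naive bound on $[Y^k,J^{-1}]$ only gives ${\mathcal L}_{d/(2k+2),\infty}$, so you must further commute the outer $J^{-(n-k)}$ and $J^{-(n-k-1)}$ through the $B$-factors before the exponents add up to $d+1$.

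The genuine gap is the odd $d$ case. Both in step~1 and step~2 you must at some point control a commutator of $|A|=B^{1/2}$ with a power of $J$, e.g.\ $[|A|,J^{-d/2}]J^{-d/2}$, and nothing in \eqref{main_A_estimate} does this: that estimate applies to $B=|A|^2$ (a polynomial in $\cA$-elements and Fourier multipliers), not to its square root. Your appeal to Ricard's theorem does not close the gap. Ricard compares $X^{1/2}-Y^{1/2}$ given information on $X-Y$ for \emph{positive} $X,Y$; it does not convert $[B,J^{-d/2}]\in{\mathcal L}_{p,\infty}$ into $[B^{1/2},J^{-d/2}]\in{\mathcal L}_{q,\infty}$. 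The natural route via the integral representation $B^{1/2}=\pi^{-1}\int_0^\infty\lambda^{-1/2}B(\lambda+B)^{-1}d\lambda$ runs into a divergent $\int_0^\infty\lambda^{-3/2}d\lambda$ at $\lambda=0$ because $0\in\sigma(B)$, so a nontrivial additional argument would be required. Likewise, your proposed reduction ``compare $W^2$ with $Y^d$'' for step~2 does not help: $W^2=J^{-d/2}|A|^dJ^{-d}|A|^dJ^{-d/2}$ still contains the odd power $|A|^d$, so you have not escaped the fractional exponent.

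A minor point: your closing sentence misidentifies the role of $d>2$. The inequality $d/(d+1)<1$ holds for every $d\ge 1$; the hypothesis $d>2$ is present only because the paper treats $d=2$ separately before this proposition.
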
    
    \begin{proof}
        From \cite[Theorem B.1]{CLMSZ2018}, it suffices to show the following four conditions:
        \begin{enumerate}[{\rm (i)}]
            \item{}\label{con1} $|A|^{d-2}(1+  {\mathcal D}^2)^{1-\frac{d}{2}} \in {\mathcal L}_{\frac{d}{d-2},\infty}.$
            \item{}\label{con2} $(1+{\mathcal D}^2)^{-1/2}|A|^2(1+ {\mathcal D}^2)^{-1/2} \in {\mathcal L}_{\frac{d}{2},\infty}.$
            \item{}\label{con3} $[|A|^2(1+ {\mathcal D}^2)^{-1/2},(1+ {\mathcal D}^2)^{-1/2}] \in {\mathcal L}_{\frac{d}{2},1}.$
            \item{}\label{con4} $|A|^{d-2}[|A|^2,(1+{\mathcal D}^2)^{1-\frac{d}{2}}](1+ {\mathcal D}^2)^{-1} \in {\mathcal L}_1.$
        \end{enumerate}
        Since $d > 2$, we have that $|A|^{d-2} = |A|^{d-3}\sgn(A)A$, so \eqref{con1} follows immediately from Lemma \ref{delta_cwikel_estimate}. 
        Similarly using $|A|^2 = A^*A$, we get also get \eqref{con2} immediately from the H\"older inequality and the fact that $A(1+{\mathcal D}^2)^{-1/2}$ and its adjoint operator belong to $ {\mathcal L}_{d,\infty}$.
        
        For \eqref{con3}, we write:
        \begin{align*}
            [|A|^2(1+ {\mathcal D}^2)^{-1/2},(1+{\mathcal D}^2)^{-1/2}] = [|A|^2,(1+{\mathcal D}^2)^{-1/2}](1+{\mathcal D}^2)^{-1/2}
        \end{align*}
        which is in ${\mathcal L}_{\frac{2d}{5},\infty}$ due to \eqref{main_A_estimate} (with $\alpha=-1$ and $\beta=1$). Since $\frac{2d}{5} < \frac{d}{2}$, it follows that ${\mathcal L}_{2d/5,\infty}\subset {\mathcal L}_{d/2,1}$
        and this proves \eqref{con3}.
        Finally, \eqref{con4} immediately follows from \eqref{main_A_estimate} with $\alpha = 2-d$ and $\beta = 2$.
    \end{proof}

Lemma \ref{symmetrised_difference} and Proposition \ref{difference-d>2} yield Theorem \ref{qdx-A} for the case $d>2$, and thus complete the proof of Theorem \ref{qdx-A}.

\subsection{Proof of Theorem \ref{trace formula}}
Let us quote \cite[Theorem~6.15]{MSZ2018} in the following. Let $C_0 (\qe) $ be the norm closure of $\qcS$ in ${\mathcal B}(L_2({\mathbb R}^d))$. For every $g\in C(\mathbb{S}^{d-1})$, as defined in \eqref{multiplication}, $g\big(   \frac{\ri \nabla_\theta}{(-{\mathcal D}elta_\theta)^{1/2}})$ is the multiplication operator $\xi(t)\mapsto g(\frac{t}{|t|}) \xi(t)$ in ${\mathcal B}(L_2({\mathbb R}^d))$. Moreover,  all $g\big(   \frac{\ri \nabla_\theta}{(-{\mathcal D}elta_\theta)^{1/2}})$ with $g\in C(\mathbb{S}^{d-1})$ form a commutative $C^*$-subalgebra of ${\mathcal B}(L_2({\mathbb R}^d))$. Set $\Pi(C_0 (\qe)+{\mathbb C},  C(\mathbb{S}^{d-1}))$ to be the $C^*$-subalgebra of ${\mathcal B}(L_2({\mathbb R}^d))$ generated by $C_0 (\qe)+{\mathbb C}$ and all those $g\big(   \frac{\ri \nabla_\theta}{(-{\mathcal D}elta_\theta)^{1/2}})$'s. Theorem 3.3 of \cite{MSZ2018} implies that there exists a unique norm-continuous $*$-homomorphism
$${\rm{sym}}: \Pi(C_0 (\qe)+{\mathbb C},  C(\mathbb{S}^{d-1}))\longrightarrow  \big(C_0 (\qe)+{\mathbb C}\big) \ot_{\min}  C(\mathbb{S}^{d-1})$$
 which maps $x \in C_0(\qe)$ to $x\otimes 1$ and $g\big(\frac{\ri \nabla_{\theta}}{(-{\mathcal D}elta_{\theta})}\big)$ to $1\otimes g$. Then \cite[Theorem~6.15]{MSZ2018} says that for every continuous normalised trace $\vf$ on ${\mathcal L}_{1,\infty}$, every $x\in W^d_1(\qe)$, and every $T \in \Pi(C_0 (\qe)+{\mathbb C},  C(\mathbb{S}^{d-1}))$, we have
\beq\label{trace-formula-MSX}
\vf (T x (1-{\mathcal D}elta_\theta)   ^{-d/2}  )  = C_d   \Big(   \tau_\theta \ot \int_{\mathbb{S}^{d-1}}      \Big)   \big(  {\rm{sym}}(T) (x \ot 1)   \big)
 \eeq
where $C_d$ is a certain constant depending only on the dimension $d$.

Now we are able to give the proof of Theorem \ref{trace formula}.

\begin{proof}[Proof of Theorem \ref{trace formula}]
We will assume initially that $x\in \cA$.
For a continuous normalised trace $\vf $ on ${\mathcal L}_{1, \infty}$, Theorem \ref{qdx-A} ensures that 
$$\vf (|\qd x|^d  )  =  \vf \big(  |A|^d   (1+ {\mathcal D}^2) ^{-d/2} \big) . $$
But since $A= \sum_j \gamma_j \ot A_j$ self-adjoint unitary matrices $\gamma_j$, the only part that contributes to the trace on the right hand side above is $ (1\ot  \sum_j A_j^* A_j  )^{d/2} (1+ {\mathcal D}^2) ^{-d/2}$. Hence,
$$\vf (|\qd x|^d  )  =  \vf \big(  ( \sum_j A_j^* A_j  )^{d/2}  (1-{\mathcal D}elta_\theta ) ^{-d/2} \big) . $$

However, note that each $A_j$ is a linear combination of operators of multiplication by a function $x \in \qcS$ and Fourier multiplication by a function $g \in C(\mathbb{S}^{d-1})$, and so is in the algebra $\Pi(C_0(\qe)+{\mathbb C},C(\mathbb{S}^{d-1}))$, with symbol:
\begin{equation*}
    {\rm sym}(A_j) = \partial_j x\otimes 1- \sum_{k=1}^d s_js_k\otimes \partial_k x.
\end{equation*}
Since ${\rm{sym}}$ is a norm-continuous $*$-homomorphism, we have 
$${\rm{sym}}  ( \sum_j A_j^* A_j  )^{d/2}  = \Big( \sum_{j=1}^d\big| \partial_j x - s_j \sum_{k=1}^d  s_k \partial_k x\big|^2 \Big)^{d/2}.$$
Since $d\geq 2$, we can write:
\begin{equation*}
    \left(\sum_j A_j^*A_j\right)^{d/2} = \left(\sum_j A_j^*A_j\right)^{(d-2)/2}(\sum_{j} A_j^*A_j).
\end{equation*}

Recalling the definition of $A_j$,
\begin{equation*}
    A_j = \partial_j x + \sum_{k=1}^d\frac{{\mathcal D}_j{\mathcal D}_k}{-{\mathcal D}elta_{\theta}}\partial_k x
\end{equation*}
We arrive at:
\begin{equation*}
    \left(\sum_j A_j^*A_j\right)^{d/2} = \left(\sum_j A_j^*A_j\right)^{(d-2)/2}\sum_{j=1}^d A_j^*(\partial_j x - \sum_{k=1}^d\frac{{\mathcal D}_j{\mathcal D}_k}{-{\mathcal D}elta_\theta} \partial_k x).
\end{equation*}
Since each $\partial_j x$ is in $W^{d}_1(\qe)$, we can apply \eqref{trace-formula-MSX} to arrive finally at:
\begin{align*} 
    \vf \big(   &( \sum_j A_j^* A_j  )^{d/2} (1-{\mathcal D}elta_\theta )^{-d/2} \big)\\ 
                 &= C_d  \Big(  \tau _\theta  \ot \int_{\mathbb{S}^{d-1}}ds\Big) ({\rm{sym}} ( \sum_j A_j^* A_j  )^{(d-2)/2})(\sum_{j=1}^d {\rm sym}(A_j)^*(\partial_j x-s_j\sum_{k=1}^d s_k\partial_k x)))\\
                &= C_d\int_{\mathbb{S}^{d-1}} \tau_\theta(\left(\sum_{j=1}^d\big| \partial_j x - s_j \sum_{k=1}^d  s_k \partial_k x\big|^2 \right)^{d/2})\,ds. 
\end{align*}


By virtue of Corollary \ref{final_approximation_corollary}, the general case of Theorem \ref{trace formula} is done via an approximation argument, identically to the proof of \cite[Theorem 1.2]{MSX2018}.
\end{proof}

\subsection{Proof of Theorem \ref{necessity}}

Finally, we prove Theorem \ref{necessity}.
%
%
%
%

%

Recall from Theorem \ref{sufficiency} that when $y \in \qcS$ we have $\qd y \in {\mathcal L}_{d,\infty}$. Then if $x \in L_\infty(\qe)$, we have $(\qd y)x \in {\mathcal L}_{d,\infty}$. 
The following lemma shows that $(\qd y)x \in {\mathcal L}_{d,\infty}$ for certain unbounded $x\in L_d(\qe)$. Note that in the strictly noncommutative case of $\det(\theta)\neq 0$, the following lemma
is unnecessary as then we would have $L_d(\qe)\subset L_\infty(\qe)$. 
\begin{lem}\label{qdy_x_lemma}
    Let $d > 2$, and take $x \in L_d(\qe)$ and $y \in \qcS$. Then $(\qd y)x$ has extension in the ideal ${\mathcal L}_{d,\infty}$, with a quasi-norm bound
    \begin{equation*}
        \|(\qd y)x\|_{d,\infty} \lesssim_{d} \|x\|_d\|y\|_{W^{1}_\infty}.
    \end{equation*}
\end{lem}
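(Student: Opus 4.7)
The plan is to split $\qd y$ via the standard smoothing $g(t)=t(1+t^{2})^{-1/2}$, write
$$\qd y = i[\sgn\mathcal{D}-g(\mathcal{D}),\,1\otimes y] + i[g(\mathcal{D}),\,1\otimes y],$$
and estimate each piece using the Cwikel bounds of Theorem \ref{Cwikel-type}. The function $h_j(\xi):=\xi_j/|\xi|-\xi_j/(1+|\xi|^2)^{1/2}$ belongs to $L_d({\mathbb R}^d)$, so
$$i[\sgn\mathcal{D}-g(\mathcal{D}),1\otimes y]\,(1\otimes x)=i\sum_j\gamma_j\otimes\bigl(M_{h_j}(yx)-yM_{h_j}x\bigr).$$
Since $yx\in L_d(\qe)$ (by H\"older, with bound $\|y\|_\infty\|x\|_d$), Theorem \ref{Cwikel-type}\eqref{L_p cwikel} places each term in $\mathcal{L}_d\subset\mathcal{L}_{d,\infty}$ with norm $\lesssim_d \|h_j\|_d\|y\|_\infty\|x\|_d$. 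This part contributes only the $\|y\|_\infty\|x\|_d$ ingredient of the final bound.

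For the second summand, $[g(\mathcal{D}),1\otimes y]=\sum_j\gamma_j\otimes[\mathcal{D}_jJ_\theta^{-1},y]$, and the Leibniz rule yields
$$[\mathcal{D}_jJ_\theta^{-1},y]=(\partial_j y)J_\theta^{-1}-\mathcal{D}_jJ_\theta^{-1}\,\delta_\theta(y)\,J_\theta^{-1}.$$
Applied to $x$, the first term is (bounded operator of norm $\|\partial_j y\|_\infty$)$\times(J_\theta^{-1}x)$. Since $(1+|\xi|^2)^{-1/2}\in L_{d,\infty}({\mathbb R}^d)$, Theorem \ref{Cwikel-type}\eqref{weak L_p cwikel} gives $J_\theta^{-1}x\in\mathcal{L}_{d,\infty}$ with norm $\lesssim_d \|x\|_d$. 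For the second term, both $\mathcal{D}_jJ_\theta^{-1}$ and $\delta_\theta(y)$ are bounded (the latter by Corollary \ref{QC_infty}), and the same Cwikel bound on $J_\theta^{-1}x$ then puts the whole product in $\mathcal{L}_{d,\infty}$ with norm $\lesssim_d\|\delta_\theta(y)\|_\infty\|x\|_d$.

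The hard part is controlling $\|\delta_\theta(y)\|_\infty$ by $\|y\|_{W^1_\infty}$ rather than by a higher Fr\'echet seminorm (the naive expansion from Lemma \ref{L_cwikel_estimate}/Theorem \ref{L_to_delta} would introduce $\partial^2 y$). This is a Calder\'on–Zygmund-type commutator estimate; I would derive it from the integral formula $J_\theta=\tfrac{1}{\pi}\int_0^\infty\mu^{-1/2}J_\theta^{2}(\mu+J_\theta^{2})^{-1}d\mu$ applied to the identity $[J_\theta^{2},y]=\sum_k\{\mathcal{D}_k,\partial_k y\}$, which exhibits $[J_\theta,y]$ as a bounded symmetric combination of operators whose symbols are controlled by $\|\partial_k y\|_\infty$ — a direct, self-contained analogue of the classical fact that $[(1-\Delta)^{1/2},y]$ is bounded on $L_2$ with norm $\lesssim\|\nabla y\|_\infty$.

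Finally, I would establish the estimate for $x\in\qcS$ (where boundedness of $x$ makes every composition unambiguous), then extend to $x\in L_d(\qe)$ by density: picking $x_n\in\qcS$ with $x_n\to x$ in $L_d$, the estimate shows that $\{(\qd y)x_n\}$ is Cauchy in $\mathcal{L}_{d,\infty}$, and a short argument on compactly supported smooth test vectors (using that $\qd y$ is a fixed bounded operator and that $zJ_\theta^{-1}$ is continuous $L_d\to\mathcal{L}_{d,\infty}$) identifies the limit with the desired extension of $(\qd y)x$. The main obstacle to the whole argument is the sharp operator-norm bound $\|\delta_\theta(y)\|_\infty\lesssim\|y\|_{W^1_\infty}$: if any higher seminorm of $y$ leaks into this step, the stated bound with $\|y\|_{W^1_\infty}$ is lost.
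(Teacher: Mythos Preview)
Your overall architecture matches the paper's: the same splitting $\qd y = i[\sgn\mathcal{D}-g(\mathcal{D}),1\otimes y]+i[g(\mathcal{D}),1\otimes y]$, the same Leibniz expansion $[\mathcal{D}_jJ_\theta^{-1},y]=(\partial_j y)J_\theta^{-1}-\mathcal{D}_jJ_\theta^{-1}\delta_\theta(y)J_\theta^{-1}$, and the same appeal to $J_\theta^{-1}x\in\mathcal{L}_{d,\infty}$ from Theorem~\ref{Cwikel-type}\eqref{weak L_p cwikel}. Your treatment of the $h_j$-piece via $h_j\in L_d({\mathbb R}^d)$ and Theorem~\ref{Cwikel-type}\eqref{L_p cwikel} is in fact slightly cleaner than the paper's, which routes this part through $h(\mathcal{D})J_\theta$ bounded and a conjugation $J_\theta^{-1}yJ_\theta$ (thereby using the $[J_\theta,y]$ bound a second time).

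The genuine gap is exactly where you flag it. Your integral representation gives
\[
[J_\theta,y]=\frac{1}{\pi}\int_0^\infty \mu^{1/2}(\mu+J_\theta^2)^{-1}\Bigl(\sum_k\{\mathcal{D}_k,\partial_k y\}\Bigr)(\mu+J_\theta^2)^{-1}\,d\mu,
\]
but the obvious bound $\|(\mu+J_\theta^2)^{-1}\mathcal{D}_k\|\lesssim(\mu+1)^{-1/2}$ together with $\|(\mu+J_\theta^2)^{-1}\|\le(\mu+1)^{-1}$ produces an integrand $\sim\mu^{1/2}(\mu+1)^{-3/2}$, which is \emph{not} integrable at infinity. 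Any attempt to commute $\partial_k y$ past a resolvent to gain extra decay costs a second derivative of $y$, which is precisely what you want to avoid. So the assertion that the formula ``exhibits $[J_\theta,y]$ as a bounded symmetric combination'' controlled by $\|\partial_k y\|_\infty$ needs a different mechanism.

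The clean fix is a double-operator-integral argument, already set up in the paper. On $\mathbb{C}^N\otimes L_2({\mathbb R}^d)$ one has $1\otimes J_\theta=f(\mathcal{D})$ with $f(t)=(1+t^2)^{1/2}$, and
\[
f^{[1]}(\lambda,\mu)=\frac{\lambda+\mu}{\langle\lambda\rangle+\langle\mu\rangle}
=\frac{\lambda}{\langle\lambda\rangle}\cdot\frac{\langle\lambda\rangle}{\langle\lambda\rangle+\langle\mu\rangle}
+\frac{\mu}{\langle\mu\rangle}\cdot\frac{\langle\mu\rangle}{\langle\lambda\rangle+\langle\mu\rangle}.
\]
The factors $\lambda/\langle\lambda\rangle$, $\mu/\langle\mu\rangle$ are bounded functions of one variable, and the remaining factors are handled by the same lemma (\cite[Lemma~8]{LMSZ2017}) that controls $\psi_2$ in Lemma~\ref{commutator-Sob}. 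Hence $T_{f^{[1]}}^{\mathcal{D},\mathcal{D}}$ is bounded on $\mathcal{B}(H)$, and
\[
\|[J_\theta,y]\|_\infty=\|[f(\mathcal{D}),1\otimes y]\|_\infty\lesssim_d\|[\mathcal{D},1\otimes y]\|_\infty\lesssim_d\|y\|_{\dot W^1_\infty}.
\]
Once this is in place your argument concludes correctly. (The paper's own one-line justification ``$J_\theta-\mathcal{D}=(J_\theta+\mathcal{D})^{-1}$ is bounded'' is not literally correct either---that operator has spectrum containing $\langle\xi\rangle+|\xi|$---so the double-operator-integral route is the right way to close this step for both proofs.)
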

\begin{proof}
    On the dense subspace $C^\infty_c({\mathbb R}^d)$, the operator of multiplication by $x$ is meaningful, and since $\qd y$ is bounded, the operator $(\qd y)x$ is well-defined on the subspace $C^\infty_c({\mathbb R}^d)$. Let us show
    that there is a bounded extension in ${\mathcal L}_{d,\infty}$. Applying the Leibniz rule:
    \begin{align*}
        -\ri(\qd y)x &= [\sgn({\mathcal D})-{\mathcal D} J_{\theta}^{-1},y]x+ [{\mathcal D} J_{\theta}^{-1},y]x\\
                     &= (\sgn({\mathcal D})-{\mathcal D} J_{\theta}^{-1})yx-y(\sgn({\mathcal D})-{\mathcal D} J_{\theta}^{-1})x + [{\mathcal D},y]J_{\theta}^{-1}x + {\mathcal D}[J_{\theta}^{-1},y]x\\
                     &= (\sgn({\mathcal D})-{\mathcal D} J_{\theta}^{-1})yx-y(\sgn({\mathcal D})-{\mathcal D} J_{\theta}^{-1})x + [{\mathcal D},y]J_{\theta}^{-1}x- {\mathcal D} J_{\theta}^{-1} [J_{\theta},y]J_{\theta}^{-1}x.
    \end{align*}
    We know from Corollary \ref{QC_infty} that $[J_{\theta},y]$ has bounded extension, and since $[{\mathcal D},y] = \sum_{j=1}^d -\ri \gamma_j\otimes \partial_jy$,
    the commutator $[{\mathcal D},y]$ has bounded extension.  
    
    Let us first bound the terms $[{\mathcal D},y]J_{\theta}^{-1}x$ and $[J_\theta,y]J_{\theta}^{-1}x$. Since $d > 2$, we may apply Lemma \ref{Cwikel-type-lem} to obtain:
    \begin{equation*}
        \|[{\mathcal D},y]J_{\theta}^{-1}x\|_{d,\infty} \leq \|[{\mathcal D},y]\|\|J_\theta^{-1}x\|_{d,\infty} \lesssim_d \|y\|_{\dot{W}^1_\infty}\|x\|_d.
    \end{equation*}
    
    To bound $[J_\theta,y]J_\theta^{-1}x$, we use the fact that:
    \begin{equation*}
        J_{\theta}-{\mathcal D} = \frac{1}{J_{\theta}+{\mathcal D}}
    \end{equation*}
    is bounded, so again applying Lemma \ref{Cwikel-type-lem}, it follows that:
    \begin{equation}\label{J_T,y}
        \|[J_{\theta},y]\| \lesssim_{d} \|y\|_\infty+\|[{\mathcal D},y]\| \leq \|y\|_{W^1_\infty}.
    \end{equation}
    Thus,
    \begin{equation*}
        \|[J_\theta,y]J_\theta^{-1}x\|_{d,\infty}\lesssim_{d} \|y\|_{W^1_\infty}\|x\|_d.
    \end{equation*}
        
    Denoting $h({\mathcal D}) := \sgn({\mathcal D})- {\mathcal D} J_{\theta}^{-1}$, we have so far:
    \begin{equation}\label{qdy_x}
         \|(\qd y)x\|_{d,\infty} \lesssim_{d}  \|h({\mathcal D})yx\|_{d,\infty}+\|yh({\mathcal D})x\|_{d,\infty} + \|y\|_{W^1_\infty}\|x\|_d.
    \end{equation} 
    As was already noted in the proof of Lemma \ref{Cwikel-xp}, we can write $h({\mathcal D}) := \sum_{j=1}^d \gamma_j\otimes h_j(\ri \nabla_\theta)$
    where:
    \begin{equation*}
        h_j(t) = \frac{t_j}{|t|(1+|t|^2)^{1/2}(|t|+(1+|t|^2)^{1/2})}, \quad 1\leq j\leq d.
    \end{equation*}
    Thus,
    \begin{equation*}
        \sup_{t \in {\mathbb R}^d} |h_j(t)|(1+|t|^2) < \infty.
    \end{equation*}
    It follows that $h({\mathcal D})J_\theta$ has bounded extension. Lemma \ref{Cwikel-type-lem} then yields
    \begin{align}
        \|yh({\mathcal D})x\|_{d,\infty} &\leq \|y\|_\infty\|h({\mathcal D})J_\theta\|_\infty\|J_{\theta}^{-1}x\|_{d,\infty}\nonumber\\
                               &\lesssim_d \|y\|_\infty\|x\|_d\label{yhDx}.
    \end{align}
    Similarly,
    \begin{equation*}
        \|h({\mathcal D})yx\|_{d,\infty} = \|h({\mathcal D})J_{\theta}J_\theta^{-1}yJ_{\theta}J_\theta^{-1}x\|_{d,\infty} \lesssim_{d} \|J_\theta^{-1}yJ_\theta\|\|J_\theta^{-1}x\|_{d,\infty} 
    \end{equation*}
    We can write $J_{\theta}^{-1}yJ_{\theta}$ as:
    \begin{equation*}
        J_{\theta}^{-1}yJ_{\theta} = -J_{\theta}^{-1}[J_\theta,y]+y
    \end{equation*}
    Applying \eqref{J_T,y} again allows us to bound the norm of the above by $\|y\|_{W^1_\infty}$, so we arrive at
    the quasinorm bound:
    \begin{equation}\label{hDyx}
        \|h({\mathcal D})yx\|_{d,\infty} \lesssim_{d} \|y\|_{W^1_\infty}\|x\|_d.
    \end{equation}
    
%
%
    Combining \eqref{yhDx}, \eqref{hDyx} and \eqref{J_T,y} with \eqref{qdy_x} yields $\|(\qd y)x\|_{d,\infty} \lesssim_{d} \|x\|_d\|y\|_{W^1_\infty}$ as desired.
\end{proof}


Before proceeding to the proof of Theorem \ref{necessity}, we make the following remark concerning integration of operator-valued functions. Let $\psi \in {\mathcal S}({\mathbb R}^d)$, and let $x \in W^{1}_d(\qe)$. Then (formally), one has: 
\begin{equation}\label{formal_bochner_computation}
    \|\qd (\psi\ast x)\|_{d,\infty} = \left\|\int_{{\mathbb R}^d} \psi(t)\qd(T_{-t}(x))\,dt\right\|_{d,\infty} \leq \|\psi\|_1\sup_{t \in {\mathbb R}^d} \|\qd (T_{-t}(x))\|_{d,\infty}.
\end{equation}
This formal computation is justified by the continuity of the mapping $t\mapsto T_{-t}(x)$ in the $W^{1}_d(\qe)$ norm (Theorem \ref{continuity_of_translation}), which combines with 
Theorem \ref{sufficiency} to imply that the mapping $t\mapsto \qd(T_{-t}x)$ is continuous in the ${\mathcal L}_{d,\infty}$ topology. Since $d > 1$, the ideal ${\mathcal L}_{d,\infty}$
can be equipped with an equivalent Banach norm, and so the functions:
\begin{equation*}
    t\mapsto \psi(t)(T_{-t}x),\quad t\mapsto \psi(t)\qd (T_{-t}(x))
\end{equation*}
are both Bochner measurable in the Banach spaces $W^1_d(\qe)$ and ${\mathcal L}_{d,\infty}$ respectively. Theorem \ref{sufficiency} implies that $x\mapsto \qd x$ is a bounded linear map
from $W^1_d(\qe)$ to ${\mathcal L}_{d,\infty}$, and hence:
\begin{equation*}
    \qd\left(\int_{{\mathbb R}^d} \psi(t)T_{-t}(x)\,dt\right) = \int_{{\mathbb R}^d} \psi(t)\qd (T_{-t}(x))\,dt
\end{equation*}
where both integrals are Bochner integrals. This justifies \eqref{formal_bochner_computation}.

Noting that $T_{-t}$ both commutes with Fourier multipliers and is unitary on $L_2(\qe)$, it follows that:
\begin{equation*}
    \|\qd (T_{-t}x)\|_{d,\infty} = \|\qd x\|_{d,\infty},\quad t \in {\mathbb R}^d
\end{equation*}
and hence \eqref{formal_bochner_computation} implies:
\begin{equation}\label{convolution_upper_bound}
    \|\qd (\psi\ast x)\|_{d,\infty} \lesssim_d \|\psi\|_1\|\qd x\|_{d,\infty},\quad x \in W^1_d(\qe)
\end{equation}
(the constant which appears results from the necessity of switching to an equivalent norm for ${\mathcal L}_{d,\infty}$).

 We now proceed to the proof of Theorem \ref{necessity}. 
 

\begin{proof}[Proof of Theorem \ref{necessity}]
     We assume that $d > 2$ and $x\in L_d(\qe)+L_\infty(\qe)$. Suppose that $\qd x \in {\mathcal L}_{d,\infty}$. 
    
    From Corollary \ref{space_frequency_approximation} and Lemma \ref{Schwartz_approximation_is_possible}, we may select $\{\psi_{\varepsilon}\}_{\varepsilon>0}$, $\{\phi_{\varepsilon}\}_{\varepsilon> 0}$ and $\{\chi_{\varepsilon}\}_{\varepsilon>0}$ such that $\psi_{\varepsilon}\ast (U(\phi_{\varepsilon})U(\chi_{\varepsilon})x) \in \qcS$.

    The upper bound \eqref{convolution_upper_bound} implies:
    \begin{equation*}
        \|\qd (\psi_{\varepsilon}\ast (U(\phi_{\varepsilon})U(\chi_{\varepsilon})x))\|_{d,\infty} \lesssim_d \|\psi_{\varepsilon}\|_1\|\qd (U(\phi_{\varepsilon})U(\chi_{\varepsilon})x)\|_{d,\infty}.
    \end{equation*}
    Expanding the commutator using the Leibniz rule, the quasi-triangle inequality and Theorem \ref{sufficiency}:
    \begin{align*} 
        \|\qd (\psi_{\varepsilon}\ast (U(\phi_{\varepsilon})U(\chi_{\varepsilon})x))\|_{d,\infty} &\lesssim_d \|\psi_{\varepsilon}\|_1\|\qd(U(\phi_{\varepsilon})U(\chi_{\varepsilon})x)\|_{d,\infty}\\
                                                                                                  &\lesssim_d \|\psi_{\varepsilon}\|_1(\|(\qd U(\phi_{\varepsilon}))U(\chi_{\varepsilon})x\|_{d,\infty} + \|U(\phi_{\varepsilon})\qd(U(\chi_{\varepsilon}))x\|_{d,\infty}\\
                                                                                                  &\quad + \|U(\phi_{\varepsilon})U(\chi_{\varepsilon})\qd x\|_{d,\infty})\\
                                                                                                  &\lesssim_d \|\psi_{\varepsilon}\|_1(\|(\qd U(\phi_{\varepsilon}))U(\chi_{\varepsilon})x\|_{d,\infty} + \|U(\phi_{\varepsilon})\qd(U(\chi_{\varepsilon}))x\|_{d,\infty}\\
                                                                                                  &\quad + \|U(\phi_{\varepsilon})\|_\infty\|U(\chi_{\varepsilon})\|_\infty\|\qd x\|_{d,\infty}).
    \end{align*}

    By construction $\|\psi_{\varepsilon}\|_1$ is constant as $\varepsilon\to 0$, and applying Proposition \ref{HY-ineq}, we also have that $\|U(\phi_{\varepsilon})\|_\infty$
    and $\|U(\chi_{\varepsilon})\|_{\infty}$ are uniformly bounded as $\varepsilon\to 0$.
    We now argue that $\|(\qd U(\phi_{\varepsilon}))U(\chi_{\varepsilon})x\|_{d,\infty}$ and $\|U(\phi_{\varepsilon})\qd(U(\chi_{\varepsilon}))x\|_{d,\infty}$ are also uniformly bounded as $\varepsilon\to 0$. To see this,
    write $x$ as $x_0+x_1$, where $x_0 \in L_\infty(\qe)$ and $x_1 \in L_d(\qe)$. Then Theorem \ref{sufficiency} and Lemma \ref{qdy_x_lemma} yield the bound:
    \begin{equation*}
        \|U(\phi_{\varepsilon})\qd(U(\chi_{\varepsilon}))x\|_{d,\infty} \lesssim_d \|\phi_{\varepsilon}\|_1\|U(\chi_{\varepsilon})\|_{\dot{W}^1_d}\|x_0\|_\infty+\|\phi_{\varepsilon}\|_1\|U(\chi_{\varepsilon})\|_{W^1_\infty}\|x_1\|_d
    \end{equation*}
    and a similar bound for $\|(\qd U(\phi_{\varepsilon}))U(\chi_{\varepsilon})x\|_{d,\infty}$.
    
    Due to Lemma \ref{cancellation_lemma}, the seminorms $\|U(\phi_{\varepsilon})\|_{\dot{W}^1_d}$ and $\|U(\chi_{\varepsilon})\|_{\dot{W}^1_d}$ are uniformly bounded as $\varepsilon\to 0$. Similarly, the $W^1_\infty$-norms of $U(\chi_{\varepsilon})$
    and $U(\psi_{\varepsilon})$ are uniformly bounded as $\varepsilon\to 0$.     
    
    It follows that
    $\{\qd\big(\psi_{\varepsilon}\ast (U(\phi_\varepsilon)U(\chi_\varepsilon)x)\big) \}_{\varepsilon>0}$ is uniformly bounded in ${\mathcal L}_{d,\infty}$ as $\varepsilon\to 0$.
    Now applying Corollary \ref{trace formula-bound} to $\qd\big(\psi_{\varepsilon}\ast(U(\phi_{\varepsilon})U(\chi_{\varepsilon})x)\big)$, it follows that $\{\psi_{\varepsilon}\ast(U(\phi_{\varepsilon})U(\chi_{\varepsilon})x)\}_{\varepsilon>0}$ is uniformly bounded in $\dot{W}_d^1(\qe)$, so for every $1\leq j \leq d$, $\{\partial_j \big(\psi_{\varepsilon}\ast(U(\phi_{\varepsilon})U(\chi_{\varepsilon})x)\big)\}_{\varepsilon>0}$ is uniformly bounded in $L_d(\qe)$. Since $d\geq 2$, the space $L_d(\qe)$
    is reflexive and therefore $\{\partial_j(\psi_{\varepsilon}\ast (U(\phi_{\varepsilon})U(\chi_{\varepsilon})x))  \}_{\varepsilon> 0}$ has a weak limit point in $L_d(\qe)$. But we know from Theorem \ref{space_frequency_approximation} that if $y \in L_{d/(d-1)}(\qe)$
    or $y \in L_1(\qe)$, then $U(\chi_{\varepsilon})U(\phi_{\varepsilon})(\psi_{\varepsilon}\ast y)\rightarrow y$ in the $L_{d/(d-1)}(\qe)$ sense or in the $L_1(\qe)$ sense respectively; hence that $\psi_{\varepsilon}\ast(U(\phi_{\varepsilon})U(\chi_{\varepsilon})x) \ra  x $ in the the distributional sense. It follows that the weak limit point of $\{\partial_j(\psi_{\varepsilon}\ast (U(\phi_{\varepsilon})U(\chi_{\varepsilon})x))\}_{\varepsilon>0}$ in $L_d(\qe)$ must also be $\partial_j x$.  
    
    Therefore, $\partial_j x \in L_d(\qe)$ for every $1\leq j \leq d $. That is, $x \in W^{1}_{d}(\qe)$.
    
    Finally, we obtain the bound on the norm using Corollary \ref{trace formula-bound}. That result implies that there exists a constant $c_d >0 $ such that for all continuous normalised traces $\vf$ on ${\mathcal L}_{1,\infty}$,
    $$\|x\|_{\dot{W}_d^1} \lesssim_{d}  \vf(|\qd x|^d )  ^{\frac 1 d }.$$
    Since $\vf$ is continuous,
    \begin{equation*}
        \|x\|_{\dot{W}_d^1} \lesssim_{d}\|\vf\|_{({\mathcal L}_{1,\infty})^*}\|\qd x\|_{d,\infty}.
    \end{equation*}
    Selecting a continuous normalised trace $\vf$ of norm $1$ completes the proof  for $d > 2$.

    For $d = 2$, we make the stronger assumption that $x \in L_\infty(\qe)$. This permits us to carry out the same proof, but instead we use the bounds:
    \begin{align*}
        &\|U(\phi_{\varepsilon})\qd(U(\chi_{\varepsilon}))x\|_{2,\infty} \lesssim_d \|\phi\|_1\|\chi_{\varepsilon}\|_{W^1_2}\|x\|_\infty,\\
        &\|\qd(U(\phi_{\varepsilon}))U(\chi_{\varepsilon})x\|_{2,\infty} \lesssim_d \|\chi\|_1\|\phi_{\varepsilon}\|_{W^1_2}\|x\|_\infty
    \end{align*}
    to prove that $\{\qd\big(\psi_{\varepsilon}\ast (U(\phi_\varepsilon)U(\chi_\varepsilon)x)\}$ is uniformly bounded in $L_{2,\infty}$.
    
%
%

\end{proof}

\medskip

\noindent{\bf Acknowledgements.} 
The authors would like to thank to anonymous referees for numerous helpful comments and corrections to Lemma \ref{Schwartz_approximation_is_possible}. We also
extend our gratitude to Galina Levitina for noticing a gap in our original proof of Theorem \ref{necessity}.
This work was done when the third author was visiting the University of New South Wales; he wishes to express his gratitude to the first two authors for their kind hospitality. The authors are supported by Australian Research Council (grant no. FL170100052); X. Xiong is also partially supported by the National Natural Science Foundation of China (grant no. 11301401).

\end{document}